\theoremstyle{plain}
\newtheorem{thm}{Theorem}[section]
\newtheorem*{thm*}{Theorem}
\newtheorem{lem}[thm]{Lemma}
\newtheorem*{lem*}{Lemma}
\newtheorem*{cor*}{Corollary}
\newtheorem{prop}[thm]{Proposition}
\newtheorem*{prop*}{Proposition}
\newtheorem*{conj*}{Conjecture}
\newtheorem*{ques*}{Question}
\theoremstyle{definition}
\newtheorem{df}[thm]{Definition}
\newtheorem*{df*}{Definition}
\newtheorem*{dfs*}{Definitions}
\newtheorem*{exercise*}{Exercise}
\theoremstyle{remark}
\newtheorem{rem}[thm]{Remark}
\newtheorem*{rem*}{Remark}
\newtheorem{example}[thm]{Example}
\newtheorem*{example*}{Example}
\patchcmd{\thmhead}{(#3)}{#3}{}{}
\g@addto@macro\bfseries{\boldmath}
\newcommand{\cl}[1]{\mathcal{#1}}
\newcommand{\fk}[1]{\mathfrak{#1}}
\newcommand{\sr}[1]{\mathscr{#1}}
\newcommand{\Z}{\mathbf{Z}} % integers
\newcommand{\R}{\mathbf{R}} % reals
\newcommand{\C}{\mathbf{C}} % complex numbers
\newcommand{\A}{\mathbf{A}} % affine space
\newcommand{\F}{\mathbf{F}}
\newcommand{\x}{\times}
\renewcommand{\sl}{\fk{sl}}
\newcommand{\ol}[1]{\overline{#1}}
\newcommand{\pt}{\mathrm{pt}}
\newcommand{\B}{\mathrm{B}}
\DeclareMathOperator{\Hom}{Hom}
\DeclareMathOperator{\Id}{Id}
\DeclareMathOperator{\SU}{SU}
\DeclareMathOperator{\U}{U}
\DeclareMathOperator{\End}{End}
\DeclareMathOperator{\Sym}{Sym}
\DeclareMathOperator{\Gr}{Gr}
\tikzset{
	neg/.style={postaction=decorate,
	decoration={markings,
	mark=at position 0.05cm with \node{$\circ$};
	}}
}
\renewcommand{\B}{\mathbf{B}}
\newcommand{\wzero}{
\:\begin{gathered}
	\includegraphics[width=.06\textwidth]{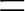}
	\vspace{-3pt}
\end{gathered}\:
}
\newcommand{\wladder}{
\:\begin{gathered}
	\includegraphics[width=.06\textwidth]{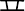}
	\vspace{-3pt}
\end{gathered}\:
}
\newcommand{\wcup}{
\:\begin{gathered}
	\includegraphics[width=.06\textwidth]{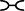}
	\vspace{-3pt}
\end{gathered}\:
}
\newcommand{\wcupgray}{
\:\begin{gathered}
	\includegraphics[width=.06\textwidth]{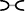}
	\vspace{-3pt}
\end{gathered}\:
}
\newcommand{\wfullcup}{
\:\begin{gathered}
	\includegraphics[width=.06\textwidth]{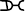}
	\vspace{-3pt}
\end{gathered}\:
}
\newcommand{\vzero}{
\:\begin{gathered}
	\includegraphics[width=.06\textwidth]{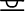}
	\vspace{-3pt}
\end{gathered}\:
}
\newcommand{\vone}{
\:\begin{gathered}
	\includegraphics[width=.06\textwidth]{ladder.pdf}
	\vspace{-3pt}
\end{gathered}\:
}
\newcommand{\vtwo}{
\:\begin{gathered}
	\includegraphics[width=.06\textwidth]{fullcup.pdf}
	\vspace{-3pt}
\end{gathered}\:
}
\newcommand{\pizerolarge}{
\:\begin{gathered}
	\includegraphics[width=.12\textwidth]{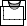}
	\vspace{-3pt}
\end{gathered}\:
}
\newcommand{\pitwolarge}{
\:\begin{gathered}
	\includegraphics[width=.12\textwidth]{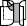}
	\vspace{-3pt}
\end{gathered}\:
}
\newcommand{\iotazerolarge}{
\:\begin{gathered}
	\includegraphics[width=.12\textwidth]{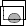}
	\vspace{-3pt}
\end{gathered}\:
}
\newcommand{\iotatwolarge}{
\:\begin{gathered}
	\includegraphics[width=.12\textwidth]{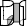}
	\vspace{-3pt}
\end{gathered}\:
}
\newcommand{\wzerolarge}{
\:\begin{gathered}
	\labellist
	\small
	\pinlabel $2$ at -.7 4.8
	\pinlabel $2$ at -.7 0.3
	% \pinlabel $1$ at 2.2 2.9
	% \pinlabel $1$ at 4.2 5.8
	\pinlabel $2$ at 12.1 4.8
	\pinlabel $2$ at 12.1 0.3
	\endlabellist
	\includegraphics[width=.12\textwidth]{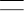}
\end{gathered}\:
}
\newcommand{\wtwolarge}{
\:\begin{gathered}
	\labellist
	\small
	\pinlabel $2$ at -.7 4.8
	\pinlabel $2$ at -.7 0.3
	\pinlabel $4$ at 5.7 3.6
	\pinlabel $2$ at 12.1 4.8
	\pinlabel $2$ at 12.1 0.3
	\endlabellist
	\includegraphics[width=.12\textwidth]{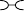}
\end{gathered}\:
}
\newcommand{\vzerolarge}{
\:\begin{gathered}
	\labellist
	\small
	\pinlabel $2$ at -.7 4.8
	\pinlabel $2$ at -.7 0.3
	\pinlabel $1$ at 2.2 2.9
	\pinlabel $1$ at 4.2 5.8
	\pinlabel $2$ at 12.1 4.8
	\pinlabel $2$ at 12.1 0.3
	\endlabellist
	\includegraphics[width=.12\textwidth]{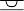}
	% \vspace{-5pt}
\end{gathered}\:
}
\newcommand{\vonelarge}{
\:\begin{gathered}
	\labellist
	\small
	\pinlabel $2$ at -.7 4.8
	\pinlabel $2$ at -.7 0.3
	\pinlabel $1$ at 2.2 2.9
	\pinlabel $1$ at 4.2 5.8
	\pinlabel $2$ at 12.1 4.8
	\pinlabel $2$ at 12.1 0.3
	\pinlabel $1$ at 9.2 2.9
	\pinlabel $3$ at 6.7 1.2
	\endlabellist
	\includegraphics[width=.12\textwidth]{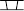}
	% \vspace{-5pt}
\end{gathered}\:
}
\newcommand{\vtwolarge}{
\:\begin{gathered}
	\labellist
	\small
	\pinlabel $2$ at -.7 4.8
	\pinlabel $2$ at -.7 0.3
	\pinlabel $1$ at 2.2 2.9
	\pinlabel $1$ at 4.2 5.8
	\pinlabel $2$ at 12.1 4.8
	\pinlabel $2$ at 12.1 0.3
	\pinlabel $4$ at 7.6 3.6
	\pinlabel $3$ at 6.1 0.3
	\endlabellist
	\includegraphics[width=.12\textwidth]{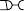}
	% \vspace{-5pt}
\end{gathered}\:
}
\newcommand{\vzeroxone}{
\:\begin{gathered}
	\includegraphics[width=.06\textwidth]{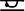}
	\vspace{-3pt}
\end{gathered}\:
}
\newcommand{\vzeroxtwo}{
\:\begin{gathered}
	\includegraphics[width=.06\textwidth]{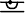}
	\vspace{-3pt}
\end{gathered}\:
}
\newcommand{\vonexone}{
\:\begin{gathered}
	\includegraphics[width=.06\textwidth]{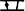}
	\vspace{-3pt}
\end{gathered}\:
}
\newcommand{\vonextwo}{
\:\begin{gathered}
	\includegraphics[width=.06\textwidth]{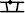}
	\vspace{-3pt}
\end{gathered}\:
}
\newcommand{\vtwoxone}{
\:\begin{gathered}
	\includegraphics[width=.06\textwidth]{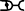}
	\vspace{-3pt}
\end{gathered}\:
}
\newcommand{\vtwoxtwo}{
\:\begin{gathered}
	\includegraphics[width=.06\textwidth]{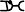}
	\vspace{-3pt}
\end{gathered}\:
}
\newcommand{\wladderone}{
\:\begin{gathered}
	\labellist
	\small
	\pinlabel $1$ at 1.7 2.6
	\pinlabel $1$ at 10 2.6
	\endlabellist
	\includegraphics[width=.06\textwidth]{ladder}
	\vspace{-3pt}
\end{gathered}\:
}
\newcommand{\wladdertwo}{
\:\begin{gathered}
	\labellist
	\small
	\pinlabel $2$ at 1.6 2.6
	\pinlabel $2$ at 10 2.6
	\endlabellist
	\includegraphics[width=.06\textwidth]{ladder}
	\vspace{-3pt}
\end{gathered}\:
}
\newcommand{\threewzerolarge}{
\:\begin{gathered}
	\labellist
	\small
	\pinlabel $3$ at -.7 4.8
	\pinlabel $3$ at -.7 0.3
	% \pinlabel $1$ at 2.2 2.9
	% \pinlabel $1$ at 4.2 5.8
	\pinlabel $3$ at 12.1 4.8
	\pinlabel $3$ at 12.1 0.3
	\endlabellist
	\includegraphics[width=.12\textwidth]{identity_large}
	\vspace{-3pt}
\end{gathered}\:
}
\newcommand{\threewthreelarge}{
\:\begin{gathered}
	\labellist
	\small
	\pinlabel $3$ at -.7 4.8
	\pinlabel $3$ at -.7 0.3
	\pinlabel $6$ at 5.7 3.6
	\pinlabel $3$ at 12.1 4.8
	\pinlabel $3$ at 12.1 0.3
	\endlabellist
	\includegraphics[width=.12\textwidth]{cup_large}
	\vspace{-3pt}
\end{gathered}\:
}
\newcommand{\threewonelarge}{
\:\begin{gathered}
	\labellist
	\small
	\pinlabel $3$ at -.7 4.8
	\pinlabel $3$ at -.7 0.3
	\pinlabel $1$ at 2.2 2.9
	\pinlabel $2$ at 4.2 5.8
	\pinlabel $3$ at 12.1 4.8
	\pinlabel $3$ at 12.1 0.3
	\pinlabel $1$ at 9.2 2.9
	\pinlabel $4$ at 6.7 1.2
	\endlabellist
	\includegraphics[width=.12\textwidth]{vone_large}
	\vspace{-3pt}
\end{gathered}\:
}
\newcommand{\threewtwolarge}{
\:\begin{gathered}
	\labellist
	\small
	\pinlabel $3$ at -.7 4.8
	\pinlabel $3$ at -.7 0.3
	\pinlabel $2$ at 2.1 2.9
	\pinlabel $1$ at 4.2 5.8
	\pinlabel $3$ at 12.1 4.8
	\pinlabel $3$ at 12.1 0.3
	\pinlabel $2$ at 9.2 2.9
	\pinlabel $5$ at 6.7 1.2
	\endlabellist
	\includegraphics[width=.12\textwidth]{vone_large}
	\vspace{-3pt}
\end{gathered}\:
}
\newcommand{\threewoneonelarge}{
\:\begin{gathered}
	\labellist
	\small
	\pinlabel $3$ at -.7 4.8
	\pinlabel $3$ at -.7 0.3
	\pinlabel $1$ at 1.7 2.9
	\pinlabel $1$ at 4.0 2.9
	\pinlabel $2$ at 3.4 5.8
	\pinlabel $1$ at 6.7 5.8
	\pinlabel $3$ at 12.1 4.8
	\pinlabel $3$ at 12.1 0.3
	\pinlabel $2$ at 9.2 2.9
	\pinlabel $4$ at 3.8 -.9
	\pinlabel $5$ at 6.7 -.9
	\endlabellist
	\includegraphics[width=.12\textwidth]{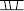}
	\vspace{-3pt}
\end{gathered}\:
}
\newcommand{\threewonetwolarge}{
\:\begin{gathered}
	\labellist
	\small
	\pinlabel $3$ at -.7 4.8
	\pinlabel $3$ at -.7 0.3
	\pinlabel $1$ at 2.2 2.9
	\pinlabel $2$ at 4.2 5.8
	\pinlabel $3$ at 12.1 4.8
	\pinlabel $3$ at 12.1 0.3
	\pinlabel $6$ at 7.6 3.6
	\pinlabel $4$ at 6.1 0.3
	\endlabellist
	\includegraphics[width=.12\textwidth]{vtwo_large}
	\vspace{-3pt}
\end{gathered}\:
}
\newcommand{\threewtwoonelarge}{
\:\begin{gathered}
	\labellist
	\small
	\pinlabel $3$ at -.7 4.8
	\pinlabel $3$ at -.7 0.3
	\pinlabel $2$ at 2.1 2.9
	\pinlabel $1$ at 4.2 5.8
	\pinlabel $3$ at 12.1 4.8
	\pinlabel $3$ at 12.1 0.3
	\pinlabel $6$ at 7.6 3.6
	\pinlabel $5$ at 6.1 0.3
	\endlabellist
	\includegraphics[width=.12\textwidth]{vtwo_large}
	\vspace{-3pt}
\end{gathered}\:
}
\newcommand{\threewoneoneonelarge}{
\:\begin{gathered}
	\labellist
	\small
	\pinlabel $3$ at -.7 4.8
	\pinlabel $3$ at -.7 0.3
	\pinlabel $1$ at 0.9 2.9
	\pinlabel $1$ at 2.8 2.9
	\pinlabel $2$ at 2.4 5.8
	\pinlabel $1$ at 4.3 5.7
	\pinlabel $3$ at 12.1 4.8
	\pinlabel $3$ at 12.1 0.3
	\pinlabel $6$ at 7.6 3.6
	\pinlabel $4$ at 2.8 -0.9
	\pinlabel $5$ at 6.1 0.3
	\endlabellist
	\includegraphics[width=.12\textwidth]{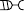}
	\vspace{-3pt}
\end{gathered}\:
}
\newcommand{\generalwrr}{
\:\begin{gathered}
	\labellist
	\small
	\pinlabel $b$ at -.7 4.8
	\pinlabel $a$ at -.7 0.2
	\pinlabel $r$ at 2.1 2.4
	\pinlabel $b-r$ at 5.7 6
	\pinlabel $c$ at 12.1 4.8
	\pinlabel $d$ at 12.1 0.4
	\pinlabel $l+r$ at 10.5 2.6
	\pinlabel $a+r$ at 5.7 -1
	\endlabellist
	\includegraphics[width=.12\textwidth]{vone_large}
	\vspace{-3pt}
\end{gathered}\:
}
\newcommand{\zipgray}{
\begin{gathered}
	\includegraphics[width=.055\textwidth]{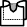}
	\vspace{-3pt}
\end{gathered}	
}
\newcommand{\unzipgray}{
\begin{gathered}
	\includegraphics[width=.055\textwidth]{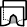}
	\vspace{-3pt}
\end{gathered}	
}
\newcommand{\twograygray}{
\begin{gathered}
	\includegraphics[width=.055\textwidth]{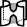}
	\vspace{-3pt}
\end{gathered}
}
\newcommand{\TRladder}{
\begin{gathered}
	\includegraphics[width=.06\textwidth]{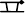}
	\vspace{-3pt}
\end{gathered}
}
\newcommand{\TLladder}{
\begin{gathered}
	\includegraphics[width=.06\textwidth]{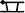}
	\vspace{-3pt}
\end{gathered}
}
\newcommand{\TRdotgray}{
\begin{gathered}
	\includegraphics[width=.045\textwidth]{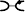}
	\vspace{-3pt}
\end{gathered}
}
\newcommand{\TLdotgray}{
\begin{gathered}
	\includegraphics[width=.045\textwidth]{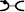}
	\vspace{-4.5pt}
\end{gathered}
}
\newcommand{\Qetwo}{
\begin{gathered}
	\includegraphics[width=.06\textwidth]{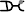}
	\vspace{-3pt}
\end{gathered}
}
\newcommand{\Qeonehone}{
\begin{gathered}
	\includegraphics[width=.06\textwidth]{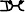}
	\vspace{-3pt}
\end{gathered}
}
\newcommand{\Qhtwo}{
\begin{gathered}
	\includegraphics[width=.06\textwidth]{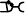}
	\vspace{-1.5pt}
\end{gathered}
}
\newcommand{\vtwodel}{
\begin{gathered}
	\includegraphics[width=.12\textwidth]{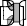}
	\vspace{-3pt}
\end{gathered}
}
\newcommand{\vzerodel}{
\begin{gathered}
	\includegraphics[width=.12\textwidth]{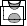}
	\vspace{-3pt}
\end{gathered}
}
\newcommand{\zonezerolarge}{
\begin{gathered}
	\includegraphics[width=.12\textwidth]{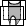}
	\vspace{-3pt}
\end{gathered}
}
\newcommand{\zzeroonelarge}{
\begin{gathered}
	\includegraphics[width=.12\textwidth]{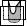}
	\vspace{-3pt}
\end{gathered}
}
\newcommand{\ztwoonelarge}{
\begin{gathered}
	\includegraphics[width=.12\textwidth]{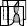}
	\vspace{-3pt}
\end{gathered}
}
\newcommand{\zonetwolarge}{
\begin{gathered}
	\includegraphics[width=.12\textwidth]{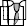}
	\vspace{-3pt}
\end{gathered}
}
\title{The minimal Rickard complexes of braids on two strands}
\author{Joshua Wang}
\date{}
\begin{document}
\maketitle

\begin{abstract}
	The Rickard complex of a braid with strands colored by positive integers is a chain complex of singular Soergel bimodules. The complex determines the colored triply-graded homology and colored $\sl_N$ homology of the braid closure, when closure is color-compatible. For each braid on two strands with any colors, we construct a minimal complex that is homotopy equivalent to its Rickard complex. It is not obtained by laborious simplification; instead, it is defined directly by explicit formulas obtained by educated guesswork and reverse engineering. 
\end{abstract}

\section{Introduction}\label{sec:introduction}

The following chain complex, which we denote by $\sr{P}_1$, is well-known and has appeared in many guises. \[
	\begin{tikzcd}[column sep=40pt]
		\wzero &[-12pt] t^{-1}q\wcupgray \ar[l,swap,"{\:\:\zipgray}" {yshift=5pt}] & t^{-2}q^3\wcupgray \ar[l,swap,"{\:\:\TRdotgray \displaystyle{-} \TLdotgray}" {yshift=10pt}] &[-12pt] t^{-3}q^5\wcupgray \ar[l,swap,"{\:\:\twograygray}" {yshift=5pt}] & t^{-4}q^7\wcupgray \ar[l,swap,"{\:\:\TRdotgray \displaystyle{-} \TLdotgray}" {yshift=10pt}] &[-12pt] \cdots \ar[l,swap,"{\:\:\twograygray}" {yshift=5pt}]
	\end{tikzcd}
\]Webs are oriented from right to left, and foams are read from top to bottom. If we ignore the gray edges and facets, then we may interpret $\sr{P}_1$ as living in Bar-Natan's dotted cobordism category \cite{MR2174270}. With this interpretation, $\sr{P}_1$ is the categorified Jones--Wenzl projector on two strands of Cooper--Krushkal \cite{MR2901969} and Rozansky \cite{MR3205575}. When $\sr{P}_1$ is interpreted to lie in the $\sl_3$ analogue of Bar-Natan's category \cite{MR2336253,MR2457839}, it is Rose's categorification \cite{MR3176309} of the quantum $\sl_3$ projector on two strands.

We explain the meaning of ``projector'' in the context of $\sl_N$ webs and foams \cite{MR3545951,MR4164001}. The Euler characteristic of $\sr{P}_1$ is \[
	\wzero - q(1 - q^2 + q^4 - q^6 + \cdots)\wcupgray \quad = \quad \wzero - \frac{1}{[2]} \wcupgray
\]where $[2] = q^{-1}+q$. This $q$-linear combination of webs diagrammatically represents an endomorphism of $V \otimes V$ where $V$ is the vector representation of the quantum group $U_q(\sl_N)$. Recalling the decomposition $V \otimes V \cong \Sym^2(V) \oplus \Lambda^2(V)$ into irreducibles, this endomorphism is the idempotent projection onto $\Sym^2(V)$, the highest-weight irreducible within $V \otimes V$. 
At the categorified level, $\sr{P}_1$ itself is remarkably also idempotent, in the sense that $\sr{P}_1 \otimes \sr{P}_1$ is homotopy equivalent to $\sr{P}_1$ \cite{MR3412353}. Our preference is to view $\sr{P}_1$ as living in the homotopy category of Soergel bimodules, which is equipped with a functor to each $\sl_N$ foam category that recovers the previous interpretations.
Idempotence in the category of Soergel bimodules, proven for $\sr{P}_1$ in \cite{MR3811775}, implies idempotence in each $\sl_N$ foam category. 

We highlight two features of $\sr{P}_1$ that distinguish it within its homotopy class. First, let $\sr{F}^k(\sr{P}_1)$ denote the subcomplex of $\sr{P}_1$ consisting of its leftmost $1+k$ terms. 
Then $\sr{F}^k(\sr{P}_1)$ is homotopy equivalent to the Rouquier complex of the braid $\sigma^k$ where $\sigma$ is the positive generator of the braid group $\mathrm{Br}_2$. When $k = 3$, for example, we have\[
	\begin{gathered}
		\includegraphics[width=.09\textwidth]{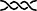}
		\vspace{-3pt}
	\end{gathered} \quad \simeq \quad \begin{tikzcd}[column sep=40pt]
		\wzero &[-12pt] t^{-1}q\wcupgray \ar[l,swap,"{\:\:\zipgray}" {yshift=5pt}] & t^{-2}q^3\wcupgray \ar[l,swap,"{\:\:\TRdotgray \displaystyle{-} \TLdotgray}" {yshift=10pt}] &[-12pt] t^{-3}q^5\wcupgray. \ar[l,swap,"{\:\:\twograygray}" {yshift=5pt}]
	\end{tikzcd}
\]Second, $\sr{P}_1$ and $\sr{F}^k(\sr{P}_1)$ are \textit{minimal} in the sense that any self homotopy equivalence is an isomorphism. A minimal complex has no contractible direct summands, and any equivalent complex admits a deformation retract onto it. Minimality determines the complex up to isomorphism within its homotopy equivalence class. 

We report the discovery of a family of complexes ${}^b_a\sr{P}^c_d$ where $a + b = c + d$ generalizing $\sr{P}_1 \eqcolon {}^1_1\sr{P}^1_1$. They are complexes of singular Soergel bimodules \cite{MR2844932} but may be interpreted as living in the category of $\sl_N$ webs and foams with four fixed endpoints colored by $a,b,c,d$. For simplicity in the introduction, we only discuss the case $a = b = c = d$. For $b \ge 1$, set $\sr{P}_b = {}^b_b\sr{P}^b_b$. 

\begin{thm}\label{thm:mainIntroThm}
	The complex $\sr{P}_b$ has the following properties. \begin{enumerate}[noitemsep]
		\item $\sr{P}_b$ is idempotent up to homotopy in the sense that $\sr{P}_b\otimes \sr{P}_b\simeq \sr{P}_b$. The Euler characteristic of $\sr{P}_b$ is the highest-weight idempotent corresponding to the two-column $b \x 2$ Young diagram. 
		\item $\sr{P}_b$ is invariant up to homotopy under composition with crossings. \vspace{1pt}\[
			\sr{P}_b  \otimes \quad\begin{gathered}
				\labellist
				\pinlabel $b$ at -2 5.5
				\pinlabel $b$ at -2 0.5
				\pinlabel $b$ at 14 0.5
				\pinlabel $b$ at 14 5.5
				\endlabellist
				\includegraphics[width=.06\textwidth]{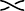}
				\vspace{-3pt}
			\end{gathered} \quad \simeq \sr{P}_b \simeq \quad\begin{gathered}
				\labellist
				\pinlabel $b$ at -2 5
				\pinlabel $b$ at -2 0.5
				\pinlabel $b$ at 14 0.5
				\pinlabel $b$ at 14 5.1
				\endlabellist
				\includegraphics[width=.06\textwidth]{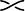}
				\vspace{-3pt}
			\end{gathered}\quad \otimes \sr{P}_b \vspace{-1pt}
		\]Furthermore, for $r \in \{1,\ldots,b\}$, the following four tensor products are contractible. \vspace{2pt}\[
			\sr{P}_b \otimes \quad \begin{gathered}
				\labellist
				\pinlabel $b$ at -2 5.5
				\pinlabel $b$ at -2 0.5
				\pinlabel $r$ at 1.5 2.5
				\pinlabel $b+r$ at 11.5 0.5
				\pinlabel $b-r$ at 11.5 5.5
				\endlabellist
				\includegraphics[width=0.035\textwidth]{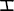}
				\vspace{-3pt}
			\end{gathered} \hspace{25pt} \simeq 0 \hspace{20pt} \sr{P}_b \otimes \quad \begin{gathered}
				\labellist
				\pinlabel $b$ at -2 5.5
				\pinlabel $b$ at -2 0.5
				\pinlabel $r$ at 1.5 2.5
				\pinlabel $b-r$ at 11.5 0.5
				\pinlabel $b+r$ at 11.5 5.5
				\endlabellist
				\includegraphics[width=0.035\textwidth]{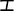}
				\vspace{-3pt}
			\end{gathered} \hspace{25pt} \simeq 0 \hspace{20pt} \hspace{25pt} \begin{gathered}
				\labellist
				\pinlabel $b-r$ at -5 5.5
				\pinlabel $b+r$ at -5 0.5
				\pinlabel $r$ at 1.5 2.5
				\pinlabel $b$ at 8.5 0.5
				\pinlabel $b$ at 8.5 5.5
				\endlabellist
				\includegraphics[width=0.035\textwidth]{rung_diag}
				\vspace{-3pt}
			\end{gathered} \quad \otimes \sr{P}_b \simeq 0 \hspace{20pt}\hspace{25pt} \begin{gathered}
				\labellist
				\pinlabel $b+r$ at -5 5.5
				\pinlabel $b-r$ at -5 0.5
				\pinlabel $r$ at 1.5 2.5
				\pinlabel $b$ at 8.5 0.5
				\pinlabel $b$ at 8.5 5.5
				\endlabellist
				\includegraphics[width=0.035\textwidth]{rung}
				\vspace{-3pt}
			\end{gathered} \quad \otimes \sr{P}_b \simeq 0 \vspace{-0pt}
		\]
		\item $\sr{P}_b$ has an exhaustive increasing filtration $\sr{F}^0(\sr{P}_b) \subseteq \sr{F}^1(\sr{P}_b) \subseteq \cdots$ by bounded subcomplexes. The complex $\sr{F}^k(\sr{P}_b)$ is homotopy equivalent to the Rickard complex of the braid $\sigma^k \in \mathrm{Br}_2$ with both strands colored by $b$.
		\item $\sr{P}_b$ and $\sr{F}^k(\sr{P}_b)$ for $k \ge 0$ are minimal.
	\end{enumerate}
\end{thm}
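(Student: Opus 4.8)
The plan is to lean on the explicit formulas defining $\sr{P}_b$ throughout, proving the four items in the order (2) $\Rightarrow$ (3) $\Rightarrow$ (1) $\Rightarrow$ (4), with the Euler-characteristic statement folded into (1); as a preliminary one records that the formulas really do define a chain complex of singular Soergel bimodules, the identity $d^2=0$ being a finite check in each homological degree via the foam relations, organized uniformly in $b$ according to the color $r$ of the relevant rung web. For item (2) I would first establish the four contractibility statements. By the explicit description, $\sr{P}_b\otimes(\text{rung of color }r)$ with $r\ge 1$ is a complex of rung webs glued to the fixed rung, and I expect almost every component of its differential to be an isomorphism, so that iterated Gaussian elimination collapses it to $0$; the four pictured tensor products are interchanged by the evident horizontal and vertical flip (anti)automorphisms of the foam category, so one case suffices. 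Granting this, crossing-invariance follows by writing the Rickard complex of a single colored crossing $\sigma$ (both strands colored $b$) as an iterated mapping cone assembled from the identity web and the rung webs of colors $1,\dots,b$: tensoring with $\sr{P}_b$ annihilates every summand of the cone except the identity, and cancelling the resulting acyclic pieces by Gaussian elimination gives $\sr{P}_b\otimes\sigma\simeq\sr{P}_b$, and symmetrically on the other side with $\sigma^{-1}$.

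For item (3), exhaustivity is immediate, since the $\sr{F}^k(\sr{P}_b)$ are by construction the bounded subcomplexes spanned by the leftmost terms. For the identification with Rickard complexes I would induct on $k$: writing $\sr{R}_k$ for the Rickard complex of $\sigma^k$ with both strands colored $b$, one has $\sr{R}_{k+1}\simeq\sr{R}_k\otimes\sr{R}_1$, so it is enough to show that $\sr{F}^k(\sr{P}_b)\otimes\sr{R}_1$ reduces by Gaussian elimination to $\sr{F}^{k+1}(\sr{P}_b)$. Here the cup part of $\sr{F}^k(\sr{P}_b)$ absorbs the extra crossing via the same colored Reidemeister-I and turnback identities used in (2), reproducing a copy of $\sr{F}^k(\sr{P}_b)$, while the leftmost identity term of $\sr{F}^k(\sr{P}_b)$ supplies precisely the one new term by which $\sr{F}^{k+1}$ extends $\sr{F}^k$; the base case $k=0$ just records that the leftmost term of $\sr{P}_b$ is the identity web, i.e.\ the Rickard complex of the trivial braid.

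Item (1) then follows from the homotopy-colimit argument of Cooper--Krushkal and Rozansky. By (3), $\sr{P}_b$ is the increasing union $\varinjlim_k\sr{F}^k(\sr{P}_b)$ of complexes homotopy equivalent to the $\sr{R}_k$, so tensoring with $\sr{P}_b$ and applying crossing-invariance term by term exhibits $\sr{P}_b\otimes\sr{P}_b$ as a direct limit of copies of $\sr{P}_b$ along maps that are degreewise eventually invertible, hence homotopy equivalent to $\sr{P}_b$. The Euler characteristic of $\sr{P}_b$ is a direct telescoping sum of its explicit terms; to identify the resulting $q$-rational combination of webs with the highest-weight idempotent labeled by the $b\times 2$ Young diagram, I would note that this combination is idempotent (decategorifying item (1)), equals the identity web plus a combination of webs factoring through rungs of positive color, and annihilates every such rung (decategorifying the turnback statements), which characterizes it uniquely as the projection onto the top irreducible summand of $\Lambda^b V\otimes\Lambda^b V$.

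For item (4), minimality is checked directly from the explicit differentials. Each term of $\sr{P}_b$ is a single shifted indecomposable singular Soergel bimodule (a web), whose endomorphism ring is local over the ground field, so the ambient additive category is Krull--Schmidt; every component of the differential is either a morphism between non-isomorphic graded bimodules (differing in color or in $q$-degree shift) or an explicitly non-invertible foam morphism (a difference of dotted foams, a zip, a neck-cutting-type map), hence never a unit, and a complex over a Krull--Schmidt category whose differential has no unit entries is minimal. The same argument applies to each bounded subcomplex $\sr{F}^k(\sr{P}_b)$, and in the unbounded case one argues that any self-homotopy-equivalence of $\sr{P}_b$, written in block form with respect to the homological filtration, has invertible diagonal part and is therefore an isomorphism. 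I expect the main obstacle to be the Gaussian-elimination work underlying item (2) and the inductive step of item (3): both demand genuinely confronting the differential of $\sr{P}_b$ — which for general $b$ involves families of foams indexed by rung colors — and carefully identifying which components are isomorphisms, uniformly in $b$. The cone decomposition of the colored crossing, and the matching of cancelled pairs across $\sr{F}^k(\sr{P}_b)\otimes\sr{R}_1$, are where the explicit formulas must be used in earnest; once crossing-invariance and the filtration are in place, items (1) and (4) are comparatively formal.
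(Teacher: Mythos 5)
Your proposal inverts the paper's logical order, and the two places where you put the real work are precisely where the argument as sketched breaks down. You want to prove the turnback-killing statements first by direct Gaussian elimination on $\sr{P}_b\otimes(\text{rung})$, expecting ``almost every component of the differential to be an isomorphism''; no mechanism is offered, and this is exactly the kind of direct simplification the paper declares infeasible and deliberately avoids. In the paper the implication runs the other way: property 3 (contractibility) is \emph{deduced} from property 2 (identification of $\sr{F}^k(\sr{P})$ with shifted Rickard complexes), because after fork-sliding (Lemma~\ref{lem:rungSlide}) the complex $\sr{F}^k(\sr{P})\otimes(\text{rung})$ is equivalent to one supported in cohomological degrees $\le -kr$, and the exhaustive filtration with degree bounds tending to $-\infty$ forces contractibility of the limit. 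The Rickard identification itself is the hard core, proved by a double induction on the color $b$ and on $k$ whose engine is: the splitting of the subquotients $\sr{U}_{k,r}(\sr{P})$ as a rung tensored with $\sr{F}^{k-1}$ of a \emph{lower-color} projector (Proposition~\ref{prop:tensorSplitting}); Reidemeister~II invariance plus homological perturbation to reduce the connecting chain maps to elements of the rank-one Hom spaces of Proposition~\ref{prop:homSpaces}; and a primitivity argument (Lemma~\ref{lem:primitive}, via $e_{d-b+1}(\mathbf{D}-\B)$ and $e_{c-b+1}(\C-\B)$ after suitable quotients) that pins those maps down as $\pm\zeta^{(r-1)r}$. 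Your substitute for this --- the induction $\sr{F}^k(\sr{P}_b)\otimes\sr{R}_1\simeq\sr{F}^{k+1}(\sr{P}_b)$ --- is wrong in its stated details: $\sr{F}^{k+1}$ extends $\sr{F}^k$ by $(k+1)^j-k^j$ new terms in each color $j$, not by one term, and the turnback identities you invoke hold only for the full infinite complex $\sr{P}_b$, never for the truncations $\sr{F}^k$ (indeed $\sr{F}^k\otimes(\text{rung})$ is nonzero, merely concentrated in low degrees), so they cannot be used inside that induction. Also, your reduction of the four turnback statements to one by ``flips'' needs justification, since $\sr{P}_b$ is not manifestly carried to itself by those dualities.

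Your minimality argument rests on a false premise: the terms of $\sr{P}_b$ are not single shifted indecomposables --- already $W_2^{1,1}\cong[2]W_2$ and $W_3^{1,1,1}\cong[3][2]W_3$ --- and the ground ring is $\Z$, so ``endomorphism ring local over the ground field'' is unavailable; with decomposable terms one must genuinely rule out unit entries of $d$ after splitting them, which your degree bookkeeping does not do. The paper's mechanism is the perverse filtration coming from Proposition~\ref{prop:homSpaces} ($\Hom^{<0}=0$, $\Hom^0(W_r,W_r)\cong\Z$): every component of the differential factors through $Z_{(r\pm1)r}$ or $Q_t$, whose associated graded maps vanish (for $Q_t$ by comparison with $e_{l+t}(\B-x_t-\cdots-x_b)=0$), so $\mathrm{gr}(d)=0$ and any $f\,g$ with $\Id-\,f\,g=d\,h+h\,d$ is an isomorphism; some argument of this strength is needed. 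By contrast, your treatment of idempotence and the Euler characteristic is fine in spirit and close to the paper's (which simply decomposes $\sr{P}_b\otimes\sr{P}_b$ termwise and kills every $W^{g}_r\otimes\sr{P}_b$ with $r\ge1$ by the turnback statements), and your derivation of crossing invariance from contractibility matches the paper's proof of Theorem~\ref{thm:mainIntroThm}; but those are the formal parts, and the proposal leaves the two essential steps --- turnback killing and the Rickard identification --- without a workable argument.
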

The main contribution of this paper is really the discovery of the explicit minimal complexes $\sr{F}^k(\sr{P}_b)$, and they have the property that they limit to a complex $\sr{P}_b$ with the stated properties. 

\begin{rem}
	The total number of indecomposable singular Soergel bimodules appearing in $\sr{F}^k(\sr{P}_b)$ across all degrees is $1 + k + k^2 + \cdots + k^b$. There are $b+1$ isomorphism types of indecomposable bimodules, and the $b+1$ terms in the sum are the counts for each isomorphism type. Minimality of $\sr{F}^k(\sr{P}_b)$ implies that any equivalent complex has at least as many indecomposable direct summands. 
\end{rem}
\begin{rem}
	When $k = 1$, the complex $\sr{F}^1(\sr{P}_b)$ is the definitional Rickard complex assigned to the positive crossing. The $k = 2$ case corresponds to the full twist. Beliakova and Habiro constructed a complex in the setting of categorified quantum $\sl_2$ that they conjectured to be homotopy equivalent to the full twist \cite[Conjecture 1.3]{MR4264234}. Hogancamp, Rose, and Wedrich considered the analogue of their complex in the setting of singular Bott--Samelson bimodules and resolved their conjecture affirmatively in this setting \cite[Theorem 3.24]{hogancamp2021skein}. The complex $\sr{F}^2(\sr{P}_b)$ is isomorphic to Hogancamp--Rose--Wedrich's version of Beliakova--Habiro's complex. For $k \ge 3$ and $b \ge 2$, the complexes $\sr{F}^k(\sr{P}_b)$ are new. The challenge of constructing these minimal complexes was originally posed by Wedrich \cite{MR3470705}.
\end{rem}

We present $\sr{P}_2$ in detail in Example~\ref{example:P2}, and we provide a sketch of $\sr{P}_3$ in Example~\ref{example:P3}. In preparation for $\sr{P}_2$, let $W_0, V_0,V_1,V_2$, and $W_2$ be the following five webs\vspace{1pt}\[
	\begin{tikzcd}
		\wzerolarge & \vzerolarge & \vonelarge & \vtwolarge & \wtwolarge
	\end{tikzcd}
\]respectively, and consider the following foams.\[
	\begin{tikzcd}[column sep=63pt]
		\wzero \ar[r,swap,shift right=1.5,"\textstyle \iota = \iotazerolarge \phantom{= \iota}" {yshift=-5pt}] & \vzero \ar[l,swap,shift right=1.5,"\textstyle \pi = \pizerolarge \phantom{=\pi}" {yshift=5pt}] \ar[r,swap,shift right=1.5,"\textstyle Z_{10} = \zonezerolarge \phantom{=Z_{10}}" {yshift=-5pt}] & \vone \ar[l,swap,shift right=1.5,"\textstyle Z_{01} = \zzeroonelarge \phantom{= Z_{01}}" {yshift=5pt}] \ar[r,swap,shift right=1.5,"\textstyle Z_{21} = \ztwoonelarge \phantom{=Z_{21}}" {yshift=-5pt}] & \vtwo \ar[l,swap,shift right=1.5,"\textstyle Z_{12} = \zonetwolarge \phantom{= Z_{01}}" {yshift=5pt}] \ar[r,swap,shift right=1.5,"\textstyle \pi = \pitwolarge \phantom{= \pi}" {yshift=-5pt}] & \wcup \ar[l,swap,shift right=1.5,"\textstyle \iota = \iotatwolarge \phantom{=\iota}" {yshift=5pt}]
	\end{tikzcd}
\]Recall that the nil-Hecke algebra $\cl{H}_2$ is the endomorphism algebra of $\Z[x_1,x_2]$ as a module over $\Z[x_1,x_2]^{\fk{S}_2}$. It is generated by $x_1,x_2$ and the divided difference operator $\partial_1$, which sends $p(x_1,x_2) \mapsto (p(x_1,x_2) - p(x_2,x_1))/(x_1 - x_2)$. The simple transposition $s_1$ sending $p(x_1,x_2) \mapsto p(x_2,x_1)$ is $s_1 = \Id - \,(x_1 - x_2)\partial_1 \in \cl{H}_2$. 
There is an action of $\Z[x_1,x_2]$ on $V_0,V_1$, and $V_2$ that extends to an action of $\cl{H}_2$ on $V_0$ and $V_2$ via\[
	\begin{tikzcd}[column sep=10pt, row sep=10pt]
		x_1 = &[-10pt] \vzeroxone & \vonexone & \vtwoxone\\
		x_2 = &[-10pt] \vzeroxtwo & \vonextwo & \vtwoxtwo\\
	\end{tikzcd} \hspace{30pt} \begin{tikzcd}[column sep=20pt]
		\partial_1 = &[-20pt] \vzerodel & \vtwodel
	\end{tikzcd}\vspace{-5pt}
\]So $s_1 = \Id - \,(x_1 - x_2)\partial_1 \in \cl{H}_2$ thereby acts on $V_0$ and $V_2$. For both $V_0$ and $V_2$, note that $\partial_1 = \iota\,\pi$ where $\iota,\pi$ are the foams defined above, which parallels the factoring of $\partial_1\colon \Z[x_1,x_2] \to \Z[x_1,x_2]$ through $\Z[x_1,x_2]^{\fk{S}_2}$. Lastly, define endomorphisms $Q_1$ and $Q_2$ of $V_1$ and $V_2$, respectively, by \[
	Q_1 = \TRladder - \TLladder \hspace{30pt} Q_2 = \Qetwo - \Qeonehone + \Qhtwo
\]where a black dot is $e_1$ and a gray dot is $e_2$. We note that each $Z_{ij}$ has degree $2$, the maps $x_1,x_2,\partial_1,s_1,\pi,\iota$ have degrees $2,2,-2,0,-1,-1$, respectively, and $Q_1$ and $Q_2$ have degrees $2$ and $4$, respectively.

Given a foam $F\colon q^d\,W_1 \to W_2$ of degree $d$, its \emph{adjoint} foam $F^*\colon q^d\,W_2 \to W_1$, also of degree $d$, is obtained by reflecting $F$ across a horizontal mirror. We call $F\colon q^d\,W \to W$ \emph{self-adjoint} if $F^* = F$ and \emph{skew-adjoint} if $F^* = -F$. The endomorphisms $x_1,x_2,\partial_1,Q_1,Q_2$ are self-adjoint, $s_1$ is skew-adjoint, and $Z^*\!\!\!_{10} = Z_{01}$, $Z^*\!\!\!_{21} = Z_{12}$, and $\pi^* = \iota$.

\begin{example}\label{example:P2}
	$\sr{P}_2$ is the following bicomplex \vspace{-5pt} \[
		\begin{tikzcd}[column sep=24pt,row sep=30pt]
			&[-5pt] & &[15pt] & t^{-8}q^{20}\wcup \ar[d,swap,"\displaystyle Q_2\iota"] &[15pt] \cdots \ar[l,swap,"\displaystyle \pi Z_{21}Z_{12}s_1" {xshift=2pt,yshift=2pt}] \\
			& & & t^{-6}q^{14}\wcup \ar[d,swap,"\displaystyle Z_{21}Z_{12}\iota"] & t^{-7}q^{17}\wfullcup \ar[l,swap,"\displaystyle \pi Q_2 s_1" {xshift=2pt,yshift=2pt}] \ar[d,swap,"\displaystyle Z_{21}Z_{12}"] & \cdots \ar[l,swap,"\displaystyle s^*\hspace{-4pt}_1Z_{21}Z_{12}\partial_1" {xshift=2pt,yshift=2pt}]\\
			& & t^{-4}q^8\wcup \ar[d,swap,"\displaystyle Q_2\iota"] & t^{-5}q^{11}\wfullcup \ar[l,swap,"\displaystyle \pi Z_{21}Z_{12}s_1" {xshift=2pt,yshift=2pt}] \ar[d,swap,"\displaystyle Q_2"] & t^{-6}q^{13}\wfullcup \ar[l,swap,"\displaystyle s^*\hspace{-4pt}_1Q_2\partial_1" {xshift=2pt,yshift=2pt}] \ar[d,swap,"\displaystyle Q_2"] & \cdots \ar[l,swap,"\displaystyle \partial^*\hspace{-4pt}_1 Z_{21}Z_{12}s_1" {xshift=2pt,yshift=2pt}]\\
			& t^{-2}q^2\wcup \ar[d,swap,"\displaystyle Z_{12}\iota"] & t^{-3}q^5\wfullcup \ar[l,swap,"\displaystyle \pi Q_2s_1" {xshift=2pt,yshift=2pt}] \ar[d,swap,"\displaystyle Z_{12}"] & t^{-4}q^7\wfullcup \ar[l,swap,"\displaystyle s^*\hspace{-4pt}_1 Z_{21}Z_{12} \partial_1" {xshift=2pt,yshift=2pt}] \ar[d,swap,"\displaystyle Z_{12}"] & t^{-5}q^9\wfullcup \ar[l,swap,"\displaystyle \partial^*\hspace{-4pt}_1 Q_2s_1" {yshift=2pt,xshift=2pt}] \ar[d,swap,"\displaystyle Z_{12}"] & \cdots \ar[l,swap,"\displaystyle s^*\hspace{-4pt}_1 Z_{21}Z_{12}\partial_1" {xshift=2pt,yshift=2pt}] \\
			\wzero & t^{-1}q^1\wladder \ar[l,swap,"\textstyle \pi Z_{01}" {xshift=2pt,yshift=2pt}] & t^{-2}q^3\wladder \ar[l,swap,"\textstyle Q_1" {xshift=2pt,yshift=2pt}] & t^{-3}q^5\wladder \ar[l,swap,"\textstyle Z_{10}s^*\hspace{-4pt}_1 \partial_1 Z_{01}" {xshift=2pt,yshift=2pt}] & t^{-4}q^7\wladder \ar[l,swap,"\textstyle Q_1" {yshift=2pt,xshift=2pt}] & \cdots \ar[l,swap,"\textstyle Z_{10}s^*\hspace{-4pt}_1\partial_1 Z_{01}" {xshift=2pt,yshift=2pt}]
		\end{tikzcd}
	\]Using the fact that $V_2$ is isomorphic to $[2]W_2$, we see that the Euler characteristic of $\sr{P}_2$ is \[
		\wzero - \frac{1}{[2]}\wladder + \frac{1}{[3]}\wcup
	\]which is the highest-weight idempotent in $\End(\Lambda^2(V) \otimes \Lambda^2(V))$ when interpreted in the setting of $\sl_N$ webs. The subcomplex $\sr{F}^k(\sr{P}_2)$ is defined to consist of the leftmost $1 + k$ columns of the bicomplex, and $\sr{F}^k(\sr{P}_2)$ is homotopy equivalent to the Rickard complex of the braid $\sigma^k$ where both strands are colored by $2$. By counting the number of indecomposable bimodules appearing in $\sr{F}^k(\sr{P}_2)$, we see \[
		1\text{ copy of }\wzero, \quad k\text{ copies of }\wladder, \quad \text{ and } k^2 \text{ copies of }\wcup
	\]so there are $1 + k + k^2$ in total. 
\end{example}

Next, we provide a sketch of $\sr{P}_3$. Consider the following webs.\[
	\begin{tikzcd}[column sep=12pt,row sep=15pt]
		W_0 = \hspace{5pt}\threewzerolarge & W_1 = \hspace{5pt}\threewonelarge & W_2 = \hspace{5pt}\threewtwolarge & W_2^{1,1} = \hspace{5pt}\threewoneonelarge\\
		W_3 = \hspace{5pt}\threewthreelarge & W_3^{1,2} = \hspace{5pt}\threewonetwolarge & W_3^{2,1} = \hspace{5pt}\threewtwoonelarge & W_3^{1,1,1} = \hspace{5pt}\threewoneoneonelarge
	\end{tikzcd}
\]The indecomposable bimodules are $W_0,W_1,W_2$, and $W_3$. There are isomorphisms $W_2^{1,1} \cong [2]W_2$, $W_3^{1,2} \cong W_3^{2,1} \cong [3] W_3$, and $W_3^{1,1,1} \cong [3][2] W_3$.

\begin{example}\label{example:P3}
	$\sr{P}_3$ is a tricomplex of the following form, with an explicit differential given in section~\ref{sec:constructionOfP}. \vspace{-5pt}\[
		\begin{tikzcd}[nodes={inner sep=2pt},column sep={39pt,between origins},row sep={25pt,between origins}]
			& & & & & & & & & t^{-9}q^{27}\,W_3 \ar[ddd] & & \cdots \ar[ll]\\
			& & & & & & & & &\\
			& & & & & & & & &\\
			& & & & & & & & & t^{-8}q^{23}\,W_3^{2,1} \ar[ddd] \ar[ddl] & & \cdots \ar[ll]\\
			& & & & & & & & &\\
			& & & & & & t^{-6}q^{15}\,W_3 \ar[ddd] & & t^{-7}q^{19}\,W_3^{1,2} \ar[ddd] \ar[ll] & & \cdots \ar[ll,crossing over]\\
			& & & & & & & & & t^{-7}q^{17}\,W_3^{2,1} \ar[ddd] \ar[ddl] & & \cdots \ar[ll]\\
			& & & & & & & & &\\
			& & & & & & t^{-5}q^{11}\,W_3^{2,1} \ar[ddd] \ar[ddl] & & t^{-6}q^{14}\,W_3^{1,1,1} \ar[ddd] \ar[ll] \ar[ddl] & & \cdots \ar[ll,crossing over]\\
			& & & & & & & & & t^{-6}q^{14}\,W_2 \ar[ddl] & & \cdots \ar[ll]\\
			& & & t^{-3}q^3\,W_3 \ar[ddd] & & t^{-4}q^7\,W_3^{1,2} \ar[ddd] \ar[ll] & & t^{-5}q^9\,W_3^{1,2} \ar[ll,crossing over] & & \cdots \ar[ll,crossing over]\\
			& & & & & & t^{-4}q^8\,W_2 \ar[ddl] & & t^{-5}q^{11}\,W_2^{1,1} \ar[ll] \ar[ddl] & & \cdots \ar[ll]\\
			& & & & & & & & &\\
			& & & t^{-2}q^2\,W_2 \ar[ddl] & & t^{-3}q^5\,W_2^{1,1} \ar[ll] \ar[ddl] & & t^{-4}q^7\,W_2^{1,1} \ar[ll] \ar[ddl] \ar[from=uuu,crossing over] & & \cdots \ar[ll]\\
			& & & & & & & & &\\
			W_0 & & t^{-1}q^1\,W_1 \ar[ll] & & t^{-2}q^3\,W_1 \ar[ll] & & t^{-3}q^5\,W_1 \ar[ll] & & \cdots \ar[ll]
		\end{tikzcd}
	\]The Euler characteristic of $\sr{P}_3$ is \[
		\wzero - \frac{1}{[2]}\wladderone + \frac{1}{[3]}\wladdertwo - \frac{1}{[4]}\wcup.
	\]The subcomplex $\sr{F}^k(\sr{P}_3)$ is defined to consist of the leftmost $1 + k$ layers of the tricomplex. Within $\sr{F}^k(\sr{P}_3)$, we see \[
		1\text{ copy of }\wzero, \quad k\text{ copies of }\wladderone,\quad k^2 \text{ copies of }\wladdertwo,\quad\text{ and } k^3\text{ copies of }\wcup
	\]so there are $1 + k + k^2 + k^3$ in total.
\end{example}

Lastly, we highlight that the construction of ${}^b_a\sr{P}^c_d$ makes use of another complex that we introduce, which we denote by ${}^b_a\sr{K}^c_d$ where we again set $\sr{K}_b \coloneqq {}^b_b\sr{K}^b_b$. 
The complex $\sr{K}_1$, shown below, is Hogancamp's two-strand compact projector \cite{MR3811775}, which plays a key role in the celebrated computation of the triply-graded homology of torus knots and links \cite{MR3880028,hogancamp2017khovanovrozanskyhomologyhighercatalan,MR4404874,hogancamp2019toruslinkhomology}. \[
	\begin{tikzcd}[column sep=50pt]
		\wzero &[-10pt] t^{-1}q\wcupgray \ar[l,swap,"{\:\:\zipgray}" {yshift=1pt}] & t^{-2}q^3\wcupgray \ar[l,swap,"{\:\:\TRdotgray \displaystyle{-} \TLdotgray}" {yshift=9pt}] &[-10pt] t^{-3}q^4\wzero \ar[l,swap,"{\:\:\unzipgray}" {yshift=1pt}]
	\end{tikzcd}
\]It is related to $\sr{P}_1$ by Koszul duality. When $b = 2$, $\sr{K}_2$ is the following bicomplex. \[
	\begin{tikzcd}[column sep={115pt,between origins}, row sep={55pt,between origins}]
		t^{-3}q^6\vzero \ar[d,swap,"\textstyle Z_{10}"] & t^{-4}q^8\vone \ar[l,swap,"\textstyle s_1\: Z_{01}"] \ar[d,swap,"\textstyle Z_{21}"] & t^{-5}q^{10}\vone \ar[l,swap,"\textstyle Q_1"] \ar[d,swap,"\textstyle Z_{21}"] & t^{-6}q^{10}\vzero \ar[l,swap,"\textstyle Z_{10}\:\partial^*\hspace{-4pt}_1"] \ar[d,swap,"\textstyle Z_{10}"]\\
		t^{-2}q^{4}\vone \ar[d,swap,"\textstyle Q_1"] & t^{-3}q^6\vtwo \ar[l,swap,"\textstyle Z_{12}\:s_1"] \ar[d,swap,"\textstyle Q_2"] & t^{-4}q^8\vtwo \ar[l,swap,"\textstyle s^*\hspace{-4pt}_1\: Q_2 \: \partial_1"] \ar[d,swap,"\textstyle Q_2"] & t^{-5}q^8\vone \ar[l,swap,"\textstyle \partial^*\hspace{-4pt}_1\: Z_{21}"] \ar[d,swap,"\textstyle Q_1"]\\
		t^{-1}q^{2}\vone \ar[d,swap,"\textstyle Z_{01}"] & t^{-2}q^{2}\vtwo \ar[l,swap,"\textstyle Z_{12}\:\partial_1"] \ar[d,swap,"\textstyle Z_{12}"] & t^{-3}q^{4}\vtwo \ar[l,swap,"\textstyle \partial^*\hspace{-4pt}_1 \: Q_2 \: s_1"] \ar[d,swap,"\textstyle Z_{12}"] & t^{-4}q^6\vone \ar[l,swap,"\textstyle s^*\hspace{-4pt}_1 \: Z_{21}"] \ar[d,swap,"\textstyle Z_{01}"]\\
		\vzero & t^{-1}\vone \ar[l,swap,"\textstyle \partial_1\:Z_{01}"] & t^{-2}q^{2}\vone \ar[l,swap,"\textstyle Q_1"] & t^{-3}q^{4}\vzero \ar[l,swap,"\textstyle Z_{10}\: s^*\hspace{-4pt}_1"]
	\end{tikzcd}
\]The complex $\sr{K}_b$ is highly structured with many remarkable properties, and we return to it in future work. 

\begin{rem}
	We summarize how the author came to discover the formulas for the differentials of $\sr{P}_b$ and $\sr{K}_b$. We emphasize that the formulas were \textit{not} obtained by a laborious bookkeeping of the differential through a simplification procedure, which remains infeasible. Instead, they arose by educated guesswork and reverse engineering. 

	First, there is a conjectural isomorphism between the colored $\sl_N$ homology of $2$-stranded torus knots and links and the cohomology of certain spaces of $\SU(N)$ representations of their knot groups that the author verified for the trefoil and the Hopf link in \cite{MR4903257}. Based on the relationship between these $\SU(N)$ representation spaces and the principal angles $\pi/2 \ge \theta_b \ge \cdots \ge \theta_1 \ge 0$ between $b$-dimensional subspaces of $\C^N$, the author guessed that the minimal complex equivalent to $\sigma^k$ with strands colored by $b$ should have the shape of a $b$-dimensional simplex. Using the work of \cite{MR3470705,hogancamp2021skein}, the author could make a precise guess of the shape of the complex and all of its objects, but lacked a formula for the differential. Compatibility with the conjecture related to $\SU(N)$ representations gave some hints at the differential, but only for those components that survive the procedure of forming a braid closure. By thoroughly working through the case $b = 2$, the author saw that the components of the differential repeated in a way that all of the data had a chance of being encoded in a bounded complex having the shape of a $3 \x 3$ square. With wishful thinking, the author guessed further symmetries of this square-shaped complex and was ultimately able to pin down exact formulas for $\sr{K}_2$ amenable to generalization. With the formulas in hand, the author could construct and prove correct all of the earlier guesses. Many of these guesses could not have been made without discussions with Matthew Hogancamp, Matthew Stoffregen, and Michael Willis, in particular.

	Complete computations of the colored $\sl_N$ homology of $2$-stranded torus knots and links including a proof of the conjectural connection to $\SU(N)$ representation spaces of the knot group are provided in forthcoming work, where a number of additional applications are also provided. 
\end{rem}

Preliminaries in section~\ref{sec:preliminaries} include an exposition of singular Soergel bimodules through the lens of equivariant cohomology and Bott--Samelson varieties. In section~\ref{sec:constructionOfK}, we construct ${}^b_a\sr{K}^c_d$, and in section~\ref{sec:constructionOfP}, we use it to construct ${}^b_a\sr{P}^c_d$. The main theorem (Theorem~\ref{thm:mainThm}) for ${}^b_a\sr{P}^c_d$ that specializes to Theorem~\ref{thm:mainIntroThm} is stated and proved in section~\ref{sec:main_theorem}. 

%%%%%%%%%%%%%%%%%%%%%%%%%%%%%%%%%%

\theoremstyle{definition}
\newtheorem*{ack}{Acknowledgements}
\begin{ack}
	I thank William Ballinger, Elijah Bodish, Luke Conners, Eugene Gorsky, Matthew Hogancamp, Mikhail Khovanov, Peter Kronheimer, Tomasz Mrowka, David Rose, Raphael Rouquier, Lev Rozansky, Matthew Stoffregen, Catharina Stroppel, Joshua Sussan, Emmanual Wagner, Paul Wedrich, and Michael Willis for many helpful and inspiring conversations. This work is partially supported by the NSF MSPRF grant DMS-2303401 and the Simons Collaboration on ``New Structures in Low-Dimensional Topology''.
\end{ack}

%%%%%%%%%%%%%%%%%%%%%%%%%%%%%%%%%%
%%%%%%%%%%%%%%%%%%%%%%%%%%%%%%%%%%

\section{Preliminaries}\label{sec:preliminaries}

In sections~\ref{subsec:symmetric_polynomials} and \ref{subsec:the_nil_hecke_algebra}, we review symmetric polynomials in differences in alphabets and the nil-Hecke algebra together with geometric interpretations. In section~\ref{subsec:bott_samelson_bimodules}, we review singular Bott--Samelson bimodules through the lens of the equivariant cohomology rings of partial flag manifolds and Bott--Samelson varieties. In section~\ref{subsec:maps_between_singular_bott_samelson_bimodules}, we discuss maps of singular Bott--Samelson bimodules. 

%%%%%%%%%%%%%%%%%%%%%%%%%%%%%%%%%%

\subsection{Symmetric polynomials in differences of alphabets}\label{subsec:symmetric_polynomials}

We review symmetric polynomials in differences in alphabets, which is the algebra relevant to Chern classes of virtual vector bundles. Our exposition leans heavily on \cite[Section 2.1]{hogancamp2021skein}. 

\begin{df}
	An \textit{alphabet} $\A = \{x_1,\ldots,x_a\}$ is a finite set of indeterminates. Let $\Z[\A]\coloneqq \Z[x_1,\ldots,x_a]$ denote the ring of polynomials in $\A$ with integer coefficients, and let $\Sym(\A) = \Z[x_1,\ldots,x_a]^{\fk{S}_a}$ be the ring of symmetric polynomials in $\A$. The elementary symmetric polynomials $e_i(\A)$ and the complete homogeneous symmetric polynomials $h_i(\A)$ are determined by their generating functions \[
		\sum_{i=0}^\infty e_i(\A)t^i = \prod_{x \in \A} (1 + xt) \qquad\qquad \sum_{i=0}^\infty h_i(\A)t^i = \prod_{x \in \A}\frac{1}{1 - xt}.
	\]Recall that $\Sym(\A)$ is isomorphic to the polynomial ring $\Z[e_1(\A),\ldots,e_a(\A)]$ by the fundamental theorem of symmetric polynomials. The $q$-degree of each $x_i$ is defined to be $2 \in \Z$, so that $e_i(\A)$ and $h_i(\A)$ have $q$-degree $2i \in \Z$.
\end{df}
\begin{df}
	Given alphabets $\A$ and $\B$ that are not necessarily disjoint, define the polynomials $e_i(\A + \B), h_i(\A + \B), e_i(\A - \B)$, and $h_i(\A - \B)$ in $\Z[\A \cup \B]$ by the generating functions \begin{align*}
		&\sum_{i=0}^\infty e_i(\A + \B)t^i = \prod_{x \in A} (1 + xt) \prod_{y \in \B} (1 + yt) &&\sum_{i=0}^\infty h_i(\A + \B) t^i = \prod_{x \in \A} \frac{1}{1 -xt}\prod_{y\in\B}\frac{1}{1 -yt}\\
		&\sum_{i=0}^\infty e_i(\A - \B) t^i = \prod_{x \in \A} (1 + xt) \prod_{y \in \B} \frac{1}{1 + yt} &&\sum_{i=0}^\infty h_i(\A - \B) t^i = \prod_{x \in \A} \frac{1}{1 - xt} \prod_{y \in \B} (1 - yt).
	\end{align*}Note that if $\A$ and $\B$ are disjoint, then $e_i(\A + \B) = e_i(\A \cup \B)$ and $h_i(\A + \B) = h_i(\A \cup \B)$. If $\B$ is a subset of $\A$, then $e_i(\A -\B) = e_i(\A\setminus \B)$ and $h_i(\A-\B) = h_i(\A\setminus\B)$. In general, we have the following formulas \begin{align*}
		&e_i(\A + \B) = \sum_{j=0}^i e_{i-j}(\A)e_{j}(\B) &&h_i(\A + \B) = \sum_{j=0}^i h_{i-j}(\A)h_j(\A)\\
		&e_i(\A - \B) = \sum_{j=0}^i (-1)^j e_{i-j}(\A)h_j(\B) &&h_i(\A - \B) = \sum_{j=0}^i (-1)^j h_{i-j}(\A)e_j(\B)
	\end{align*}and we note that $e_i(\A - \B) = (-1)^i h_i(\B - \A)$. These definitions extend to elementary symmetric polynomials and complete homogeneous symmetric polynomials in $\Z$-linear combinations of alphabets by the same formulas. 
\end{df}

The geometric perspective on this algebra is as follows. The alphabets $\A$ and $\B$ correspond to complex vector bundles $A$ and $B$ of ranks $a$ and $b$, respectively, over the same space. The polynomials $e_1(\A),\ldots,e_a(\A)$ represent the Chern classes of $A$, while $x_1,\ldots,x_a$ are its Chern roots. The $q$-degree is just the cohomological degree. The polynomials $(-1)^i h_i(\A)$ are the Segre classes of $A$. Interpreting $\A + \B$ as the direct sum of $A$ and $B$, and interpreting $\A - \B$ as the virtual vector bundle $A - B$, then $e_i(\A + \B)$ and $e_i(\A - \B)$ are just the Chern classes of $A + B$ and $A - B$ expressed in terms of the Chern and Segre classes of $A$ and $B$.

\subsection{The nil-Hecke algebra}\label{subsec:the_nil_hecke_algebra}

We review the nil-Hecke algebra and its connection to the equivariant cohomology rings of partial flag manifolds. 

\begin{df}
	For $n \ge 1$, the \textit{nil-Hecke algebra} $\cl{H}_n$ is the endomorphism algebra of the polynomial ring $\Z[x_1,\ldots,x_n]$ viewed as a module over the ring of symmetric polynomials $\Z[x_1,\ldots,x_n]^{\fk{S}_n}$. As an algebra, $\cl{H}_n$ is generated by the the endomorphisms $x_1,\ldots,x_n,\partial_1,\ldots,\partial_{n-1}$ where $\partial_i$ is the \textit{divided difference operator} or \textit{Demazure operator} given by \[
		\partial_i(P) = \frac{P - s_iP}{x_{i}-x_{i+1}} \qquad \text{ for }P \in \Z[x_1,\ldots,x_n]
	\]where $s_i$ is the simple transposition swapping $x_i$ and $x_{i+1}$. Of course, $s_i$ itself is an element of $\cl{H}_n$, and it may be expressed in terms of the generators as $s_i = \Id - \,(x_i - x_{i+1})\partial_i$. The following is a complete list of relations: \[
	\begin{gathered}
		x_ix_j = x_jx_i \qquad \partial_i\partial_i = 0 \qquad \partial_i \partial_{j}\partial_i = \partial_{j}\partial_i\partial_{j}\text{ for }|i - j| = 1 \qquad \partial_i\partial_j = \partial_j \partial_i \text{ for } |i - j| > 1\\
		\partial_ix_i = \Id + \,x_{i+1}\partial_i \qquad \partial_ix_{i+1} = -\Id + \,x_i\partial_i \qquad \partial_ix_j = x_j \partial_i \text{ for }j \neq i,i+1.
	\end{gathered}
	\]We caution the reader that here, $\partial_ix_j$ denotes the composition of the endomorphisms $\partial_i$ and $x_j$, not the application of $\partial_i$ to the polynomial $x_j$. Divided difference operators satisfy the Leibniz rule with a twist \[
		\partial_i(PQ) = \partial_i(P)s_i(Q) + P \partial_i(Q) = \partial_i(P)Q + s_i(P)\partial_i(Q)
	\]and we note that $s_i\partial_i = \partial_i$ and $\partial_is_i = -\partial_i$. Furthermore, the image of $\partial_i$ and kernel of $\partial_i$ agree and coincide with the set of polynomials that are symmetric in $x_i$ and $x_{i+1}$. We refer to the identities $\partial_i\partial_j = \partial_j\partial_i$, $\partial_is_j = s_j\partial_i$, and $s_is_j = s_js_i$ for $|i-j| > 1$ as \emph{far commutativity}.
\end{df}

\begin{lem}\label{lem:mixedBraidRelation}
	The mixed braid relations \[
		s_is_{i+1}\partial_i = \partial_{i+1}s_is_{i+1} \qquad \partial_is_{i+1}s_i = s_{i+1}s_i\partial_{i+1} \qquad s_i \partial_{i+1} s_i = s_{i+1} \partial_i s_{i+1}
	\]hold in $\cl{H}_n$ for $i = 1,\ldots,n-2$. 
\end{lem}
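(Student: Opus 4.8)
The plan is to derive all three identities from the defining relations of $\cl{H}_n$ listed above, treating $\partial_i$ and $s_i = \Id - (x_i - x_{i+1})\partial_i$ as endomorphisms of the polynomial ring. Since all three relations involve only the three consecutive indices $i, i+1$ (and, in the case of $\partial_{i+1}s_is_{i+1} = s_{i+1}s_i\partial_{i+1}$, we may rename to reduce to $i=1$), it suffices to work inside $\cl{H}_3$ acting on $\Z[x_1,x_2,x_3]$; far commutativity handles the absorption of the remaining variables, which are untouched by $s_1,s_2,\partial_1,\partial_2$. So the entire problem reduces to checking three operator identities on $\Z[x_1,x_2,x_3]$.

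First I would record the key fact that the braid relation for symmetric groups, $s_1s_2s_1 = s_2s_1s_2$, holds (both sides are the permutation sending $x_1 \mapsto x_3$, $x_2 \mapsto x_2$, $x_3 \mapsto x_1$), and likewise $\partial_1\partial_2\partial_1 = \partial_2\partial_1\partial_2$ is given. For the first identity $s_1s_2\partial_1 = \partial_2s_1s_2$: I would expand $s_1 = \Id - (x_1-x_2)\partial_1$ and $s_2 = \Id - (x_2 - x_3)\partial_2$ on both sides, or — cleaner — use the twisted Leibniz rule to move the polynomial factor $(x_1-x_2)$ across $\partial_i$'s. A slicker route: both $s_1s_2\partial_1$ and $\partial_2 s_1 s_2$ are maps $\Z[x_1,x_2,x_3] \to \Z[x_1,x_2,x_3]$, and I would verify equality by checking them on an explicit $\Sym$-module generating set, e.g. the Schubert-type basis $\{1, x_1, x_1x_2, \ldots\}$ or simply on the monomials $1, x_1, x_2, x_1^2$ and using that both sides are $\Sym_3$-linear. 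Equivalently, one verifies that $s_1s_2\partial_1 s_2 s_1 = \partial_2$ (conjugating by the involution $s_1s_2$... careful, $s_1s_2$ is not an involution, so instead conjugate by the longest element route). The most robust version of the computation is: $s_1s_2\partial_1 = \partial_2 s_1 s_2$ is equivalent, after left-multiplying by $s_2s_1$ and using $s_i^2 = \Id$ and the $S_3$ braid relation in the form $s_2s_1s_2 = s_1s_2s_1$, to $s_2s_1s_2\partial_1 = \partial_1 \cdot (\text{something})$ — I would organize this reduction carefully and then fall back on direct expansion via Leibniz if the bookkeeping stalls.

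For the second identity $\partial_1 s_2 s_1 = s_2 s_1 \partial_2$, I expect it to follow from the first by taking adjoints / applying the anti-automorphism of $\cl{H}_n$ that fixes each $x_j$ and each $\partial_i$ and reverses products (this sends $s_i \mapsto \Id - (x_i - x_{i+1})\partial_i = s_i$ as well since $s_i$ is a sum of such terms — actually $s_i$ maps to $\Id - \partial_i(x_i - x_{i+1})$; I would instead use the standard anti-automorphism and track signs), turning $s_1 s_2 \partial_1 = \partial_2 s_1 s_2$ into $\partial_1 s_2 s_1 = s_2 s_1 \partial_2$. For the third, $s_i \partial_{i+1} s_i = s_{i+1}\partial_i s_{i+1}$: I would substitute $s_i = \Id - (x_i - x_{i+1})\partial_i$ into the first two identities and solve for $s_i\partial_{i+1}s_i$, or conjugate $\partial_1\partial_2\partial_1 = \partial_2\partial_1\partial_2$ appropriately; alternatively, note that $s_1 s_2 \partial_1 s_2 s_1 = \partial_2$ combined with $s_1 s_2 s_1 = s_2 s_1 s_2$ gives the third relation directly after inserting $s_1 s_1 = \Id$.

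The main obstacle is purely organizational: the divided difference operators do not commute with polynomial multiplication, so every manipulation that slides an $x_j$ past a $\partial_i$ picks up a correction term via the twisted Leibniz rule, and naive expansion generates many terms. The cleanest path, which I would adopt if the direct algebra becomes unwieldy, is to prove all three at once by verifying them as $\Sym_n$-linear endomorphisms on a small explicit set of module generators of $\Z[x_1,x_2,x_3]$ over $\Z[x_1,x_2,x_3]^{\fk{S}_3}$ (of rank $6$), reducing each identity to finitely many polynomial computations. I expect the first identity to require the most care, after which the other two follow formally by the adjunction anti-automorphism and by substitution of $s_i$ in terms of $\partial_i$.
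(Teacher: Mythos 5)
Your overall skeleton is sound and in fact converges on the same reduction the paper uses: all three relations are formally equivalent (multiply on the left and right by simple transpositions and use $s_j^2=\Id$) to the single identity $s_is_{i+1}\partial_i s_{i+1}s_i = \partial_{i+1}$, and the restriction to three consecutive indices via far commutativity is fine. The genuine gap is that this one identity --- which carries the entire content of the lemma --- is never actually verified. Every route you propose for it is deferred: ``I would organize this reduction carefully and then fall back on direct expansion via Leibniz if the bookkeeping stalls,'' and your displayed reduction trails off at ``$s_2s_1s_2\partial_1 = \partial_1\cdot(\text{something})$.'' The verification is short and you should carry it out: for any polynomial $P$, write $\partial_is_{i+1}s_iP = (s_{i+1}s_iP - s_is_{i+1}s_iP)/(x_i-x_{i+1})$ and apply the ring automorphism $s_is_{i+1}$ to this rational expression; the denominator becomes $x_{i+1}-x_{i+2}$, and the numerator becomes $P - s_{i+1}P$ by the ordinary braid relation $s_is_{i+1}s_i = s_{i+1}s_is_{i+1}$, so the composite is $\partial_{i+1}P$. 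Without some such computation (or a completed finite check, see below) the proposal is a plan, not a proof.

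Two secondary points. First, the fallback of comparing $\Sym$-linear operators on module generators is legitimate, but the set $\{1,x_1,x_2,x_1^2\}$ you list is not a generating set of $\Z[x_1,x_2,x_3]$ over $\Z[x_1,x_2,x_3]^{\fk{S}_3}$ (the module is free of rank $6$, e.g.\ with basis $1,x_1,x_1^2,x_2,x_1x_2,x_1^2x_2$), so the check must be run on a full set of six generators. Second, the anti-automorphism fixing the $x_j$ and the $\partial_i$ sends $s_i = \Id - (x_i-x_{i+1})\partial_i$ to $\Id - \partial_i(x_i-x_{i+1}) = -s_i$, not to $s_i$ (the point you flag but leave unresolved); since $s_i$ appears twice on each side, the signs cancel and your derivation of $\partial_is_{i+1}s_i = s_{i+1}s_i\partial_{i+1}$ still goes through, but it is simpler to obtain both the second and third relations directly from the key identity by multiplying with simple transpositions, exactly as in the first reduction.
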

\begin{proof}
	By applying simple transpositions to both sides of the equation, all three relations are equivalent to the identity $s_is_{i+1}\partial_is_{i+1}s_i = \partial_{i+1}$ which we now verify. Given a polynomial $P \in \Z[x_1,\ldots,x_n]$, we have \[
		s_is_{i+1}\partial_is_{i+1}s_iP = s_is_{i+1}\left( \frac{s_{i+1}s_iP - s_is_{i+1}s_iP}{x_i - x_{i+1}} \right) = \frac{P - s_{i+1}P}{x_{i+1} - x_{i+2}} = \partial_{i+1}P
	\]where we have used the ordinary braid relation $s_is_{i+1}s_i = s_{i+1}s_is_{i+1}$. 
\end{proof}

If $G = \U(n)$ is the unitary group, then the ring $\Z[x_1,\ldots,x_n]^{\fk{S}_n}$ arises as the $G$-equivariant Borel cohomology of a point. The polynomial ring $\Z[x_1,\ldots,x_n]$, as a module over $\Z[x_1,\ldots,x_n]^{\fk{S}_n}$, arises as the $G$-equivariant cohomology of the full flag manifold $X \coloneqq \mathrm{Fl}(\C^n)$. Given a sequence $a_1,\ldots,a_m$ of positive integers for which $a_1 + \cdots + a_m = n$, consider the partial flag manifold $\mathrm{Fl}(a_1,\ldots,a_m;n)$ consisting of pairwise orthogonal $m$-tuples of vector subspaces of $\C^n$ with the given dimensions. The equivariant cohomology of $\mathrm{Fl}(a_1,\ldots,a_m;n)$ may be identified with the ring of polynomials that are invariant under $\fk{S}_{a_1} \x \cdots \x \fk{S}_{a_m} \subseteq \fk{S}_n$. 

Consider the partial flag manifold $X_i\coloneqq \mathrm{Fl}(1,\ldots,1,2,1,\ldots,1;n)$ where the $i$th entry is $2$. Its equivariant cohomology ring is the ring of polynomials that are invariant under the simple transposition $s_i \in \fk{S}_n$. Let $\pi_i\colon X \to X_i$ send $(\Lambda_1,\ldots,\Lambda_n)$ to $(\Lambda_1,\ldots,\Lambda_{i-1},\Lambda_i\oplus\Lambda_{i+1},\Lambda_{i+2},\ldots,\Lambda_n)$. Then the equivariant pullback map $(\pi_i)^*\colon H^*_G(X_i) \to H^*_G(X)$ is the inclusion $\Z[x_1,\ldots,x_n]^{s_i} \hookrightarrow \Z[x_1,\ldots,x_n]$ while the equivariant pushforward map $(\pi_i)_*\colon H^*_G(X) \to H^{*-2}_G(X_i)$ sends $P \in \Z[x_1,\ldots,x_n]$ to $(P - s_iP)/(x_i - x_{i+1}) \in \Z[x_1,\ldots,x_n]^{s_i}$. So the divided difference operator $\partial_i$ on $\Z[x_1,\ldots,x_n]$ is the equivariant push-pull map $(\pi_i)^*(\pi_i)_*$ on $H^*_G(X)$. See for example \cite{MR4655919}. 

%%%%%%%%%%%%%%%%%%%%%%%%%%%%%%%%%%

\subsection{Singular Bott--Samelson bimodules}\label{subsec:bott_samelson_bimodules}

In this section, we review singular Bott--Samelson bimodules following \cite{MR2844932,hogancamp2021skein}. The geometric interpretations follow from \cite{MR4655919}. See also \cite{MR2776785}.  

\begin{df}
	A \textit{(braid-like) web} $\Gamma$ is an oriented graph smoothly embedded in $[0,1] \x \R$ where each edge is assigned a positive integer called its color, subject to the following conditions: \begin{itemize}[noitemsep]
		\item $\Gamma \cap \{0,1\} \x \R$ consists of degree $1$ vertices of the graph $\Gamma$. All other vertices of $\Gamma$ are trivalent (degree $3$). 
		\item The restriction of the projection map $[0,1] \x \R \to [0,1]$ to each edge of $\Gamma$ has no critical points. Furthermore, each edge is oriented from right to left (from $1 \x \R$ towards $0 \x \R$). 
		\item For each trivalent vertex of $\Gamma$, the sum of the colors of the incoming edges equals the sum of the colors of the outgoing edges. Hence each trivalent vertex has either two incoming edges and one outgoing edge, in which case it is a \textit{merge vertex}, or it has one incoming edge and two outgoing edges, in which case it is a \textit{split vertex}.
	\end{itemize}See Figure~\ref{fig:exampleWeb} for an example. We identify webs that are isotopic rel boundary through webs satisfying these conditions. So without loss of generality, we may assume that the projection map $[0,1] \x \R \to [0,1]$ is injective when restricted to the set of trivalent vertices of $\Gamma$. For all but finitely many $t \in [0,1]$, the vertical line $t \x \R$ intersects $\Gamma$ transversely. The sum of the colors of the edges that $t \x \R$ intersects is independent of $t$. We refer to this sum as the \textit{width} of $\Gamma$. As a convention, an edge labeled zero should be erased and the resulting bivalent vertices smoothed out.
\end{df}

\begin{figure}[!ht]
	\centering
	\labellist
	\pinlabel $2$ at -2 1
	\pinlabel $2$ at -2 38
	\pinlabel $2$ at 21 38
	\pinlabel $2$ at 21 1
	\pinlabel $3$ at 10 9
	\pinlabel $1$ at 10 45
	\pinlabel $1$ at 3 19
	\pinlabel $1$ at 17 19
	\endlabellist
	\includegraphics[width=2cm,height=1cm]{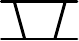}
	\captionsetup{width=.8\linewidth}
	\caption{A web. Edges are always oriented from right to left.}
	\label{fig:exampleWeb}
	\vspace{-10pt}
\end{figure}

Given a braid-like web $\Gamma$, we record the colors of the edges incident to the left vertical line $0 \x \R$ as a tuple $c_L = (a_1,\ldots,a_m)$ ordered from bottom to top. Similarly, the colors of the edges incident to the right vertical line $1 \x \R$ are recorded as a tuple $c_R = (b_1,\ldots,b_l)$, again ordered from bottom to top. We note that $a_1 + \cdots + a_m = b_1 + \cdots + b_l \eqqcolon n$ is the width of $\Gamma$. We say that $\Gamma$ is a web \emph{with boundary data} $c_L,c_R$. 

\begin{df}\label{def:BSbimodule}
	Let $\Gamma$ be a braid-like web with boundary data $c_L,c_R$. The \emph{singular Bott--Samelson bimodule} $B_\Gamma$ \emph{associated to} $\Gamma$ is constructed in the following way. First, assign to each edge $f$ of $\Gamma$ an alphabet $\A_f$ whose size is equal to the color of $f$. Then consider the tensor product $R_\Gamma \coloneqq \bigotimes_{f} \Sym(\A_f)$ over $\Z$ indexed by all edges $f$ of $E$. Next, let $v$ be a trivalent vertex of $\Gamma$, and let $\A^v,\B^v,\C^v$ be the alphabets assigned to the three edges incident to $v$, and assume that $|\C^v| = |\A^v| + |\B^v|$. Let $I_\Gamma$ be the ideal generated by the elements $e_i(\A^v + \B^v) - e_i(\C^v) \in R_\Gamma$ for all $i \ge 1$ as $v$ ranges over all trivalent vertices of $\Gamma$. By homogeneity of the relations, the $q$-grading on $R_\Gamma$ descends to the quotient $R_\Gamma/I_\Gamma$. The quotient $R_\Gamma/I_\Gamma$ is a $(R_L,R_R)$-bimodule where $R_L \coloneqq \bigotimes_f \Sym(A_f)$ where $f$ ranges over the edges incident to left endpoints of $\Gamma$, which lie on $0 \x \R$, while $R_R \coloneqq \bigotimes_f \Sym(A_f)$ where $f$ ranges over the edges incident to right endpoints, which lie on $1 \x \R$. The rings $R_L$ and $R_R$ only depend on the boundary data $c_L,c_R$. 

	The bimodule $B_\Gamma$ is defined to be a particular $q$-grading shift of $R_\Gamma/I_\Gamma$. If $v$ is a merge vertex of $\Gamma$, let $a(v)$ and $b(v)$ be the labels of the two incoming edges of $v$. Then \[
		B_\Gamma \coloneqq q^{-\sum_v a(v)b(v)} R_\Gamma/I_\Gamma
	\]where the sum is over all merge vertices of $\Gamma$. So the element $1 \in B_\Gamma$ has $q$-degree $-\sum_v a(v)b(v) \in \Z$. 
\end{df}

\begin{example}
	The alphabets assigned to the edges of the web of Figure~\ref{fig:exampleWeb} are given names in Figure~\ref{fig:alphabets}. The sizes of these alphabets are $|\A| = |\B| = |\C| = |\mathbf{D}| = 2$, $|\mathbf{X}| = |\mathbf{Y}| = |\mathbf{Z}| = 1$, and $|\mathbf{W}| = 3$. Because $\Sym(\mathbf{K}) = \Z[e_1(\mathbf{K}),\ldots,e_k(\mathbf{K})]$ where $k = |\mathbf{K}|$, the ring $R_\Gamma = \Sym(\A) \otimes \Sym(\B) \otimes \Sym(\C) \otimes \Sym(\mathbf{D}) \otimes \Sym(\mathbf{X}) \otimes \Sym(\mathbf{Y}) \otimes \Sym(\mathbf{Z}) \otimes \Sym(\mathbf{W})$ is a polynomial ring in $2 + 2 + 2 + 2 + 1 + 1 + 1 + 3 = 14$ variables. Let $a_1,a_2,b_1,b_2,c_1,c_2,d_1,d_2,x_1,y_1,z_1,w_1,w_2,w_3$ be the elementary symmetric polynomials of the corresponding alphabets. These are the 14 indeterminates of $R_\Gamma$. The upper left vertex yields the relations $b_1 = x_1 + y_1$ and $b_2 = x_1y_1$ and the upper right vertex yields the relations $c_1 = x_1 + z_1$ and $c_2 = x_1z_1$. The lower two vertices yield the six relations $a_1 + y_1 = w_1 = d_1 + z_1, a_2 + a_1y_1 = w_2 = d_2 + d_1z_1$, and $a_2y_1 = w_3 = d_2z_1$. So $B_\Gamma$ is a $q$-grading shift of the quotient of $R_\Gamma = \Z[a_1,a_2,b_1,b_2,c_1,c_2,d_1,d_2,x_1,y_1,z_1,w_1,w_2,w_3]$ by these ten relations, and it is a $(R_L,R_R)$-bimodule where $R_L = \Sym(\A) \otimes \Sym(\B) = \Z[a_1,a_2,b_1,b_2]$ and $R_R = \Sym(\C) \otimes \Sym(\mathbf{D}) = \Z[c_1,c_2,d_1,d_2]$. The $q$-grading shift is $q^{-1\cdot 2 - 1\cdot 1} = q^{-3}$.
\end{example}

\begin{figure}[!ht]
	\centering
	\vspace{5pt}
	\labellist
	\pinlabel $\A$ at -2 1
	\pinlabel $\B$ at -2 38
	\pinlabel $\C$ at 21 38
	\pinlabel $\mathbf{D}$ at 21 1
	\pinlabel $\mathbf{W}$ at 10 9
	\pinlabel $\mathbf{X}$ at 10 44
	\pinlabel $\mathbf{Y}$ at 2.5 18
	\pinlabel $\mathbf{Z}$ at 17 18
	\endlabellist
	\includegraphics[width=2cm,height=1cm]{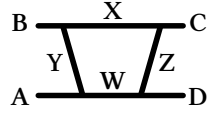}
	\vspace{5pt}
	\captionsetup{width=.8\linewidth}
	\caption{Alphabets assigned to the edges of the web of Figure~\ref{fig:exampleWeb}.}
	\label{fig:alphabets}
\end{figure}

\begin{df}\label{def:BSVariety}
	Let $\Gamma$ be a braid-like web of width $n$. The \textit{Bott--Samelson variety} $V_\Gamma$ associated to $\Gamma$ may be described in the following two ways. 
	\begin{enumerate}
		\item Assign to each edge $e$ of $\Gamma$ of color $a$ an $a$-dimensional vector subspace of $\C^n$. Require that for each vertical line $t \x \R$ that intersects $\Gamma$ transversely, the subspaces assigned to the edges that intersect $t \x \R$ are pairwise orthogonal. The variety $V_\Gamma$ is the space of such configurations of vector subspaces of $\C^n$ indexed by edges of $\Gamma$.
		\item Assign to each region (connected component) of the complement of $\Gamma$ within $[0,1] \x \R^2$ a vector subspace of $\C^n$. Require that for each vertical line $t \x \R$ that intersects $\Gamma$ transversely, the sequence of vector subspaces assigned to the regions that meet the line starting from bottom to top form a partial flag of vector subspaces $W_1 \subseteq \cdots \subseteq W_k$ within $\C^n$ such that the jumps in dimension between two adjacent steps of the partial flag are given by the color of the edge separating the two corresponding regions. The unbounded region below the web is assigned the zero subspace while the unbounded region above the web is assigned $\C^n$. The variety $V_\Gamma$ is the space of such configurations of vector subspaces of $\C^n$ indexed by regions of the complement of $\Gamma$.
	\end{enumerate}See Figure~\ref{fig:exampleBSVariety} for an example. The equivalence between these two descriptions is a straightforward extension of the usual correspondence between a tuple of pairwise orthogonal vector subspaces that span and a partial flag. The boundary data $c_L = (a_1,\ldots,a_m)$ and $c_R = (b_1,\ldots,b_l)$ of $\Gamma$ determine two partial flag manifolds $\mathrm{Fl}_L \coloneqq \mathrm{Fl}(a_1,\ldots,a_m;n)$ and $\mathrm{Fl}_R\coloneqq \mathrm{Fl}(b_1,\ldots,b_l;n)$. The variety $V_\Gamma$ is equipped with forgetful maps to $\mathrm{Fl}_L$ and $\mathrm{Fl}_R$. 
\end{df}

\begin{figure}[!ht]
	\vspace{5pt}
	\centering
	\begin{subfigure}{.49\textwidth}
		\centering
		\labellist
		\pinlabel $\Lambda$ at -2 1
		\pinlabel $\Omega$ at -2 38
		\pinlabel $\Psi$ at 21 38
		\pinlabel $\Phi$ at 21 1
		\pinlabel $A$ at 10 9
		\pinlabel $\alpha$ at 10 43
		\pinlabel $\beta$ at 2.5 18
		\pinlabel $\gamma$ at 17 18
		\endlabellist
		\includegraphics[width=2cm,height=1cm]{bigLadderSide}
		\vspace{5pt}
		\captionsetup{width=.97\linewidth}
		\caption{\RaggedRight \normalsize Each edge is assigned a vector subspace of $\C^4$. $\Omega,\Lambda,\Psi,\Phi$ are $2$-dimensional; $\alpha,\beta,\gamma$ are $1$-dimensional; $A$ is $3$-dimensional. The vector subspaces within each of the following five tuples are required to be pairwise orthogonal: $(\Lambda,\Omega), (\Lambda,\beta,\alpha), (A,\alpha), (\Phi,\gamma,\alpha), (\Phi,\Psi)$. }
	\end{subfigure}
	\begin{subfigure}{.49\textwidth}
		\centering
		\labellist
		\pinlabel $\C^4$ at 10.5 48
		\pinlabel $A$ at 10 18
		\pinlabel $\Lambda$ at 1 18
		\pinlabel $\Phi$ at 18.5 18
		\pinlabel $0$ at 10 -8
		\endlabellist
		\vspace{-5pt}
		\includegraphics[width=2cm,height=1cm]{bigLadderSide}
		\vspace{10pt}
		\captionsetup{width=.97\linewidth}
		\caption{\RaggedRight \normalsize Each region is assigned a vector subspace of $\C^4$. $\Lambda,\Phi$ are $2$-dimensional; $A$ is $3$-dimensional. Each of the following five tuples is required to be a partial flag: $(0, \Lambda, \C^4), (0,\Lambda,A,\C^4), (0,A,\C^4),(0,\Phi,A,\C^4),(0,\Phi,\C^4)$.}
	\end{subfigure}
	\captionsetup{width=.8\linewidth}
	\caption{The singular Bott--Samelson variety assigned to the web of Figure~\ref{fig:exampleWeb}.}
	\label{fig:exampleBSVariety}
\end{figure}

The singular Bott--Samelson bimodule $B_\Gamma$ of a braid-like web $\Gamma$ of width $n$ can be identified with a grading shift of the $G$-equivariant cohomology of the Bott--Samelson variety $V_\Gamma$ where $G = \U(n)$. First, the equivariant cohomology rings of the partial flag manifolds $\mathrm{Fl}_L$ and $\mathrm{Fl}_R$ associated to the boundary data $c_L$ and $c_R$ may be identified with $R_L$ and $R_R$, respectively (see Definition~\ref{def:BSbimodule}). The maps from $V_\Gamma$ to $\mathrm{Fl}_L$ and $\mathrm{Fl}_R$ are $G$-equivariant, and their induced maps give $H^*_G(V_\Gamma)$ the structure of an $(R_L,R_R)$-bimodule. 

\begin{prop}\label{prop:H*BSVariety}
	There is an isomorphism of graded bimodules $B_\Gamma \cong q^{\dim_\C \mathrm{Fl}_L \,-\, \dim_\C V_\Gamma} \:H^*_G(V_\Gamma)$.
\end{prop}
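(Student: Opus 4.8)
The plan is to produce a single explicit map out of equivariant Chern classes of tautological bundles and then prove it is an isomorphism by induction on the number of trivalent vertices of $\Gamma$, using the structure of $V_\Gamma$ as an iterated Grassmann bundle over $\mathrm{Fl}_L$.

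First I would build the map. On $V_\Gamma$ each edge $f$ of $\Gamma$ carries a tautological $G$-equivariant bundle $\mathcal{T}_f$, of rank equal to the color of $f$, whose fiber over a configuration is the subspace assigned to $f$. Since each $\Sym(\A_f)=\Z[e_1(\A_f),\ldots]$ is a polynomial ring, the rule $e_i(\A_f)\mapsto c^G_i(\mathcal{T}_f)$ defines a graded ring map $R_\Gamma\to H^*_G(V_\Gamma)$. At every trivalent vertex $v$ the defining orthogonality forces $\mathcal{T}_{\C^v}\cong\mathcal{T}_{\A^v}\oplus\mathcal{T}_{\B^v}$, so $c^G(\mathcal{T}_{\C^v})=c^G(\mathcal{T}_{\A^v})\,c^G(\mathcal{T}_{\B^v})$ and the generators $e_i(\A^v+\B^v)-e_i(\C^v)$ of $I_\Gamma$ are sent to zero; the map descends to $\bar\phi_\Gamma\colon R_\Gamma/I_\Gamma\to H^*_G(V_\Gamma)$. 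It intertwines the bimodule structures by construction: $R_L$ (resp.\ $R_R$) is generated by the $e_i$ of the edges meeting $0\x\R$ (resp.\ $1\x\R$), whose tautological bundles are the pullbacks of the tautological jump bundles on $\mathrm{Fl}_L$ (resp.\ $\mathrm{Fl}_R$) along the forgetful maps, so $\bar\phi_\Gamma$ carries the $R_L$- and $R_R$-actions onto the images of the equivariant pullbacks from $H^*_G(\mathrm{Fl}_L)$ and $H^*_G(\mathrm{Fl}_R)$.

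Then I would show $\bar\phi_\Gamma$ is a degree-preserving isomorphism by induction on the number $N$ of trivalent vertices. If $N=0$ then $\Gamma$ is a family of parallel strands and $V_\Gamma=\mathrm{Fl}_L=\mathrm{Fl}_R=\U(n)/(\U(a_1)\x\cdots\x\U(a_m))$, so $H^*_G(V_\Gamma)=H^*(B\U(a_1))\otimes\cdots\otimes H^*(B\U(a_m))=\bigotimes_i\Sym(\A_i)=R_\Gamma/I_\Gamma$, via exactly the identification of $e_j(\A_i)$ with the $j$th equivariant Chern class of the $i$th tautological bundle (Borel's description of the equivariant cohomology of a partial flag manifold). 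For $N\ge1$, assume the trivalent vertices have distinct $x$-coordinates and slice $\Gamma$ along a vertical line just to the left of the rightmost vertex, so $\Gamma=\Gamma'\cup E$ with $E$ an elementary slab containing one vertex, $\Gamma'$ a web with $N-1$ vertices and the same left boundary, and $V_\Gamma=V_{\Gamma'}\times_F V_E$ over the partial flag manifold $F$ at the cut. If the vertex of $E$ is a split, then $V_E\to F$ is an isomorphism, hence $V_\Gamma\cong V_{\Gamma'}$, and algebraically the one new relation sets the new (coarse) variables equal to a fixed polynomial in the old generators, so $R_\Gamma/I_\Gamma\cong R_{\Gamma'}/I_{\Gamma'}$ compatibly with $\bar\phi$, and the claim follows from the inductive hypothesis. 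If the vertex of $E$ is a merge with incoming colors $a,b$, then $V_E\to\mathrm{Fl}_R$ is an isomorphism and $V_E\to F$ is a $\Gr(a,a+b)$-bundle, so $V_\Gamma\to V_{\Gamma'}$ is a $\Gr(a,a+b)$-bundle whose tautological rank-$a$ subbundle and rank-$b$ quotient are $\mathcal{T}_\A$ and $\mathcal{T}_\B$, with ambient bundle $\mathcal{T}_\C$ pulled back from $V_{\Gamma'}$. The integral equivariant Grassmann bundle formula then presents $H^*_G(V_\Gamma)$ as $H^*_G(V_{\Gamma'})$ with $c^G_\bullet(\mathcal{T}_\A)$ and $c^G_\bullet(\mathcal{T}_\B)$ adjoined modulo $c^G(\mathcal{T}_\A)\,c^G(\mathcal{T}_\B)=c^G(\mathcal{T}_\C)$ — this is term for term the description of $R_\Gamma/I_\Gamma$ over $R_{\Gamma'}/I_{\Gamma'}$ obtained by adjoining $e_\bullet(\A),e_\bullet(\B)$ and imposing $e_i(\A+\B)=e_i(\C)$ — and $\bar\phi_\Gamma$ is the induced map, hence an isomorphism by the inductive hypothesis. (Integral coefficients cause no trouble, since every space in sight is an iterated Grassmann bundle over a partial flag manifold, so its equivariant cohomology stays free over $\Z$ and concentrated in even degrees, a property the induction itself maintains.)

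Finally I would reconcile the grading shifts. Peeling a split vertex leaves $\dim_\C V_\Gamma$, $\dim_\C\mathrm{Fl}_L$, and the multiset of merge vertices unchanged; peeling a merge vertex with incoming colors $a,b$ increases $\dim_\C V_\Gamma$ by $ab=\dim_\C\Gr(a,a+b)$, leaves $\dim_\C\mathrm{Fl}_L$ unchanged, and adds a merge vertex contributing $a(v)b(v)=ab$. So by induction $\dim_\C V_\Gamma-\dim_\C\mathrm{Fl}_L=\sum_v a(v)b(v)$ summed over merge vertices, whence $q^{\dim_\C\mathrm{Fl}_L-\dim_\C V_\Gamma}=q^{-\sum_v a(v)b(v)}$ and the degree-preserving isomorphism $\bar\phi_\Gamma$ upgrades to the asserted $B_\Gamma\cong q^{\dim_\C\mathrm{Fl}_L-\dim_\C V_\Gamma}H^*_G(V_\Gamma)$. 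I expect the main obstacle to be the merge step of the induction: checking that the $a+b$ relations $e_i(\A^v+\B^v)-e_i(\C^v)$ really cut out, over $R_{\Gamma'}/I_{\Gamma'}$, a free module presented just like the Grassmann bundle cohomology — equivalently, that they let one eliminate $e_\bullet(\B^v)$ in favor of $e_\bullet(\A^v)$ and $e_\bullet(\C^v)$, leaving exactly the relations $e_m(\C^v-\A^v)=0$ for $m>b$ — and that this algebraic extension is identified with the Leray--Hirsch extension compatibly with the Chern-class map $\bar\phi_\Gamma$. The remaining inputs (Borel's computation and the integral equivariant Grassmann bundle theorem) are classical; cf.\ \cite{MR4655919}.
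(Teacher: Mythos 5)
Your proof is correct and follows essentially the same route as the paper's: the Chern-class map on $R_\Gamma/I_\Gamma$ killed by the Whitney relations, induction on the number of trivalent vertices with the rightmost vertex handled via the (trivial) split case and the equivariant Grassmann-bundle presentation in the merge case, and the grading shift obtained by counting $a(v)b(v)$ over merge vertices. The only cosmetic differences are that the paper uses Chern classes of the \emph{duals} of the tautological bundles and phrases the peeling step by disconnecting and dragging loose ends rather than slicing off a one-vertex slab and taking a fiber product; neither affects the argument.
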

\begin{proof}
	Following the notation of Definition~\ref{def:BSbimodule}, we construct a map $R_\Gamma/I_\Gamma \to H^*_G(V_\Gamma)$ that we subsequently show is an isomorphism. Using the first description of $V_\Gamma$ given in Definition~\ref{def:BSVariety}, we see that there is a tautological vector bundle over $V_\Gamma$ for each edge of $\Gamma$. Given an edge $f$, the fiber of the associated tautological vector bundle over a given configuration of vector subspaces is the vector space assigned to $f$. Define $R_\Gamma \to H^*_G(V_\Gamma)$ by sending the elementary symmetric polynomials in the alphabet associated to $f$ to the equivariant Chern classes of the dual of this tautological vector bundle. The relations generating the ideal $I_\Gamma$ are sent to zero by the Whitney sum formula relating the vector bundles associated to the three edges incident to a vertex. We thereby obtain a map $R_\Gamma/I_\Gamma \to H^*_G(V_\Gamma)$ that is easily seen to be a bimodule map. 

	We show that this map is an isomorphism by induction on the number of vertices of $\Gamma$. If $\Gamma$ has no vertices, then the map $V_\Gamma \to \mathrm{Fl}_L$ is an isomorphism, and $R_\Gamma/I_\Gamma \to H^*_G(V_\Gamma)$ is just the Borel presentation of $H^*_G(\mathrm{Fl}_L)$. If $\Gamma$ has vertices, consider the rightmost vertex $v$. If $v$ is a split vertex, consider the web $\Gamma'$ obtained in the following way. Let $f$ be the edge whose left endpoint is $v$ and note that the right endpoint of $f$ lies on $1 \x \R$. Let $g$ and $h$ be the other two edges incident to $v$.  Disconnect the three edges incident to $v$, and drag the loose ends of $g$ and $h$ to $1 \x \R$ by following along either side of $f$. Then erase $f$ and call the resulting web $\Gamma'$. Then $\Gamma'$ has one fewer vertex than $\Gamma$, and it is straightforward to see that there are isomorphisms of left-modules, drawn vertically below, that make the diagram \[
		\begin{tikzcd}
			R_\Gamma/I_\Gamma \ar[r] \ar[d] & H^*_G(V_\Gamma) \ar[d]\\
			R_{\Gamma'}/I_{\Gamma'} \ar[r] & H^*_G(V_{\Gamma'})
		\end{tikzcd}
	\]commute. By induction, $R_\Gamma/I_\Gamma \to H^*_G(V_\Gamma)$ is an isomorphism. Now assume that the rightmost vertex $v$ is a merge vertex. Let $g$ and $h$ be the incoming edges to $v$ with colors $a$ and $b$, and note that their right endpoints lie on $1 \x \R$. Let $f$ be the outgoing edge of $v$ with color $a + b$. Let $\Gamma'$ be obtained by disconnecting the three edges incident to $v$, dragging the loose end of $f$ to $1 \x \R$ within the region originally bounded by $g,h$, and $1 \x \R$, and then erasing $g$ and $h$. Then the natural map $V_\Gamma \to V_\Gamma'$ that forgets the subspaces assigned to $g$ and $h$ is a fiber bundle with fiber isomorphic to the Grassmannian $\Gr(a,a+b)$ of $a$-dimensional subspaces within $\C^{a + b}$. Furthermore, $\Gamma$ can be obtained by taking the Grassmannian bundle $\Gr(a,-)$ of the tautological bundle $F$ over $\Gamma'$ associated to the elongated version of $f$. By the formula for the equivariant cohomology of a Grassmannian bundle \cite[Proposition 4.5.1]{MR4655919}, we find that \[
		H^*_G(V_\Gamma) \cong \frac{H^*_G(V_{\Gamma'}) \otimes \Sym(\A) \otimes \Sym(\B)}{(c_i(F^\vee) - e_i(\A + \B) \text{ for }i \ge 1)} \cong \frac{R_{\Gamma'}/I_{\Gamma'} \otimes \Sym(\A) \otimes \Sym(\B)}{(c_i(\mathbf{F}) - e_i(\A + \B) \text{ for }i \ge 1)} \cong R_{\Gamma}/I_\Gamma
	\]Here $\A$ and $\B$ are alphabets of size $a$ and $b$ respectively, and $\F$ is the alphabet of size $a + b$ associated to the edge $f$. The first isomorphism is as modules over $H^*_G(V_{\Gamma'})$ and the elementary symmetric polynomials in $\Sym(\A)$ and $\Sym(\B)$ are identified with the Chern classes of the duals of the tautological bundles over $V_\Gamma$ assigned to the edges $g$ and $h$. The second isomorphism is by the inductive hypothesis. The third isomorphism follows from the definitions of $R_\Gamma$ and $I_\Gamma$.

	This inductive argument also shows that $V_\Gamma$ is a tower of bundles with Grassmannians as fibers over $\mathrm{Fl}_L$. The steps of the tower are in bijection with the merge vertices of $\Gamma$, and the fiber of the bundle corresponding to the merge vertex $v$ is the Grassmannian $\Gr(a(v),a(v)+b(v))$ where $a(v)$ and $b(v)$ are the colors of the incoming edges to $v$. Since $\dim_\C \Gr(a(v),a(v)+b(v)) = a(v)b(v)$, we have \[
		\sum_v a(v)b(v) = \dim_\C V_\Gamma - \dim_\C \mathrm{Fl}_L.
	\]where the sum is over merge vertices $v$ of $\Gamma$. This identifies the grading shifts which finishes the proof.
\end{proof}

The singular Bott--Samelson bimodule $B_\Gamma$ associated to a web $\Gamma$ is given in Definition~\ref{def:BSbimodule}. In general, a \emph{singular Bott--Samelson bimodule} $B$ relative to boundary data $c_L,c_R$ is any $\Z$-graded $(R_L,R_R)$-bimodule that is isomorphic to a finite direct sum of $q$-grading shifts of singular Bott--Samelson bimodules assigned to webs. So\[
	B \cong \bigoplus_{j=1}^m q^{i_j} B_{\Gamma_j}
\]for $i_j \in \Z$ and webs $\Gamma_j$ with boundary data $c_L,c_R$. 

\subsection{Maps between singular Bott--Samelson bimodules}\label{subsec:maps_between_singular_bott_samelson_bimodules}

If $B$ and $C$ are singular Bott--Samelson bimodules relative to the same boundary data $c_L,c_R$, then \[
	\Hom(B,C) \coloneqq \bigoplus_{i \in \Z} \Hom^i(B,C)
\]where $\Hom^i(B,C)$ is the space of $(R_L,R_R)$-bimodule maps from $B$ to $C$ that are homogeneous of $q$-degree $i$. We note that $\Hom^i(B,C) = \Hom^0(q^iB,C) = \Hom^0(B,q^{-i}C)$. Using the same grading shift notation as before, we have \[
	q^i\Hom(B,C) = \Hom(q^{-i}B,C) = \Hom(B,q^iC).
\]The notation $f\colon B \to C$ is reserved for degree $0$ bimodule maps. So $g\colon q^iB \to q^jC$ denotes a map $g \in \Hom^0(q^iB,q^jC) = \Hom^{j-i}(B,C)$. 
The category of singular Bott--Samelson bimodules relative to $c_L,c_R$ is thereby a full subcategory of the category of $\Z$-graded $(R_L,R_R)$-bimodules. The category of \emph{singular Soergel bimodules} is the smallest full subcategory of $\Z$-graded $(R_L,R_R)$-bimodules containing singular Bott--Samelson bimodules that is closed under taking direct summands. In this paper, we will not encounter any singular Soergel bimodules that are not singular Bott--Samelson bimodules.

Next, let $c_L,c_M,c_R$ be tuples of positive integers that have the same sum. If $\Gamma$ and $\Delta$ are webs with boundary data $c_L,c_M$ and $c_M,c_R$, respectively, then we may glue the right endpoints of $\Gamma$ to the left endpoints of $\Delta$ to obtain a web $\Gamma\Delta$ with boundary data $c_L,c_R$. For example,\[
	\Gamma = \begin{gathered}
		\includegraphics[width=.12\textwidth]{vtwo_large}
		\vspace{-3pt}
	\end{gathered} \hspace{40pt} \Delta = \begin{gathered}
		\includegraphics[width=.12\textwidth]{vone_large}
		\vspace{-3pt}
	\end{gathered} \hspace{40pt} \Gamma\Delta = \begin{gathered}
		\includegraphics[width=.12\textwidth]{vtwo_large}
		\hspace{-2pt}
		\includegraphics[width=.12\textwidth]{vone_large}
		\vspace{-3pt}
	\end{gathered}
\]Then there is a natural isomorphism\[
	B_{\Gamma\Delta} = B_\Gamma \otimes_{R_M} B_\Delta
\]where $R_M$ is the ring associated to $c_M$. Similarly, the Bott--Samelson variety $V_{\Gamma\Delta}$ is the fiber product of $V_\Gamma$ and $V_\Delta$ over $\mathrm{Fl}_M$. The operation on bimodules distributes over finite direct sums of $q$-grading shifts by \[
	\left( \bigoplus_j q^{i_j} B_{\Gamma_j} \right) \otimes \left( \bigoplus_k q^{i_k} B_{\Delta_k} \right) = \bigoplus_{j,k} q^{i_j+i_k} B_{\Gamma_j\Delta_k}
\]and defines a bifunctor between the relevant categories of singular Bott--Samelson bimodules. 

We now discuss a number of symmetries and dualities that singular Bott--Samelson bimodules enjoy. 

\begin{df}
	If $\Gamma$ is a web with boundary data $c_L,c_R$, then let $\Gamma^\vee$ be the web with boundary data $c_R,c_L$ obtained by reflecting $\Gamma \subseteq [0,1] \x \R$ across the vertical line $\frac12 \x \R$ and reversing the orientation of all edges. For example, \[
		\Gamma = \begin{gathered}
			\includegraphics[width=0.12\textwidth]{vtwo_large}
		\end{gathered} \hspace{40pt} \Gamma^\vee = \begin{gathered}
			\includegraphics[width=0.12\textwidth]{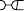}
		\end{gathered}
	\]
\end{df}

Recall that the Bott--Samelson variety $V_\Gamma$ of $\Gamma$ is equipped with a pair of equivariant fiber bundle maps $(\pi_L,\pi_R)$ to partial flag manifolds $(\mathrm{Fl}_L,\mathrm{Fl}_R)$. The Bott--Samelson variety $V_{\Gamma^\vee}$ of $\Gamma^\vee$ is the same variety except that its two maps have swapped roles. Similarly, their singular Bott--Samelson bimodules $B_\Gamma$ and $B_{\Gamma^\vee}$ differ just by swapping the left and right actions. It follows that there is an identification \[
	\Hom(B_\Gamma,B_\Delta) = \Hom(B_{\Gamma^\vee},B_{\Delta^\vee})
\]that we denote by $f \leftrightarrow \ol{f}$. We reserve the notation $f \leftrightarrow f^\vee$ for a contravariant duality \[
	\Hom(B_\Gamma,B_\Delta) \cong \Hom(B_{\Delta^\vee},B_{\Gamma^\vee})
\]that we now explain. 

Set $R_L \coloneqq H^*_G(\mathrm{Fl}_L)$ and $R_R\coloneqq H^*_G(\mathrm{Fl}_R)$ as before where $G = \U(n)$, so that $B_\Gamma$ is an $(R_L,R_R)$-bimodule. Ignoring the left action of $R_L$ for a moment, we may form the right-dual space \[
	B_\Gamma^* \coloneqq \Hom_{\Z,R_R}(B_\Gamma,R_R).
\]This right-dual has the structure of a $(R_R,R_L)$-bimodule inherited from the $(R_L,R_R)$-bimodule structure on $B_\Gamma$. This operation extends to a contravariant functor from the category of singular Bott--Samelson bimodules with boundary data $c_L,c_R$ to the category of $\Z$-graded $(R_R,R_L)$-bimodules. Similarly, we may define the left-dual \[
	{}^*\!B_\Gamma\coloneqq \Hom_{R_L,\Z}(B_\Gamma,R_L)
\]as another $(R_R,R_L)$-bimodule, yielding another functor with the same source and target categories. 

\begin{prop}\label{prop:duality}
	For each web $\Gamma$, there are $(R_R,R_L)$-bimodule isomorphisms \[
		q^{d} B_\Gamma^* \cong B_{\Gamma^\vee} \cong q^{-d} {}^*\!B_\Gamma
	\]where $d\coloneqq \dim_\C\mathrm{Fl}_L - \dim_\C\mathrm{Fl}_R$. The composite isomorphism $q^d B^*_\Gamma \cong q^{-d}{}^*\!B_\Gamma$ is a natural isomorphism of functors.
\end{prop}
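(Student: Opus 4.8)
The plan is to reduce the whole statement, via Proposition~\ref{prop:H*BSVariety}, to equivariant Poincar\'e--Lefschetz duality along the fibers of the two forgetful maps $\pi_L\colon V_\Gamma\to\mathrm{Fl}_L$ and $\pi_R\colon V_\Gamma\to\mathrm{Fl}_R$. The first observation is that $V_{\Gamma^\vee}$ is literally the variety $V_\Gamma$, only with the roles of $\pi_L$ and $\pi_R$ interchanged. Hence Proposition~\ref{prop:H*BSVariety} presents $B_\Gamma$ and $B_{\Gamma^\vee}$ as the grading shifts $q^{\dim_\C\mathrm{Fl}_L-\dim_\C V_\Gamma}H^*_G(V_\Gamma)$ and $q^{\dim_\C\mathrm{Fl}_R-\dim_\C V_\Gamma}H^*_G(V_\Gamma)$ of the same ring, carrying the bimodule structures induced by $(\pi_L^*,\pi_R^*)$ and by $(\pi_R^*,\pi_L^*)$ respectively. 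So the claim reduces to identifying $\Hom_{\Z,R_R}(H^*_G(V_\Gamma),R_R)$ and $\Hom_{R_L,\Z}(H^*_G(V_\Gamma),R_L)$, each with a grading shift of $H^*_G(V_\Gamma)$, in a compatible way.

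To construct the isomorphisms, I would run the inductive argument from the proof of Proposition~\ref{prop:H*BSVariety} for $\Gamma$ and for $\Gamma^\vee$; this exhibits $\pi_L$ and $\pi_R$ as towers of Grassmannian bundles (with steps indexed by the merge vertices, resp.\ split vertices, of $\Gamma$), so $V_\Gamma$ is smooth and projective and $\pi_L,\pi_R$ are smooth and proper of relative dimensions $\dim_\C V_\Gamma-\dim_\C\mathrm{Fl}_L$ and $c\coloneqq\dim_\C V_\Gamma-\dim_\C\mathrm{Fl}_R$. Passing to Borel constructions gives oriented fiber bundles of closed manifolds, hence an equivariant Gysin pushforward $(\pi_R)_!\colon H^*_G(V_\Gamma)\to H^{*-2c}_G(\mathrm{Fl}_R)=R_R$ --- concretely the composite of the relative-Grassmannian pushforwards, each a divided-difference/symmetrization operator on Chern roots of the kind in section~\ref{subsec:the_nil_hecke_algebra} --- and likewise $(\pi_L)_!$. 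By the projection formula and commutativity of $H^*_G(V_\Gamma)$,
\[
	\Phi_R\colon H^*_G(V_\Gamma)\longrightarrow \Hom_{\Z,R_R}\!\big(H^*_G(V_\Gamma),R_R\big),\qquad \alpha\longmapsto\big(\beta\mapsto(\pi_R)_!(\alpha\beta)\big),
\]
is a homogeneous $(R_R,R_L)$-bimodule map of degree $-2c$, and symmetrically for $\Phi_L$; using $d=c-(\dim_\C V_\Gamma-\dim_\C\mathrm{Fl}_L)$ one checks that once $\Phi_R,\Phi_L$ are isomorphisms they yield exactly $q^dB_\Gamma^*\cong B_{\Gamma^\vee}$ and $B_{\Gamma^\vee}\cong q^{-d}\,{}^*\!B_\Gamma$.

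The substantive step, and the one I expect to be the main obstacle, is showing $\Phi_R$ and $\Phi_L$ are isomorphisms, i.e.\ that the fiberwise intersection pairing is perfect over $R_R$ (resp.\ $R_L$). I would prove this one Grassmannian bundle at a time and iterate up the tower: for $\Gr(a,E)\to Y$ with $E$ of rank $a+b$, the equivariant cohomology is a free $H^*_G(Y)$-module on the Schur classes $s_\lambda$ of the dual tautological subbundle with $\lambda$ inside the $a\times b$ box, the pushforward of $s_\lambda$ vanishes unless $\lambda$ is the full box, in which case it is $1$, and therefore the Gram matrix of the pairing against the complementary-partition basis is triangular with $1$'s on the diagonal, so invertible over $H^*_G(Y)$ (cf.\ \cite{MR4655919}). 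This is the classical Poincar\'e self-duality of $H^*(\Gr(a,a+b))$ made relative; equivalently it is the assertion that $\Sym(\A)\otimes\Sym(\B)$ is a free graded Frobenius extension of $\Sym(\A\cup\B)$. Since a composite of Gysin maps is the Gysin map of the composite, perfectness propagates up the tower, giving that $\Phi_R$ (and, applied to the tower for $\Gamma^\vee$, $\Phi_L$) is an isomorphism, and matching fiber dimensions reconfirms that $d$ is the correct shift.

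Finally, naturality of the composite $\rho_\Gamma\colon q^dB_\Gamma^*\cong B_{\Gamma^\vee}\cong q^{-d}\,{}^*\!B_\Gamma$. Under the identifications above $\rho_\Gamma=\Phi_L\circ\Phi_R^{-1}$ carries $\phi$ to the functional $b\mapsto(\pi_L)_!(\phi^\sharp b)$, where $\phi^\sharp\in H^*_G(V_\Gamma)$ is determined by $\phi(b)=(\pi_R)_!(\phi^\sharp b)$; so for a degree-$0$ bimodule map $f\colon B_\Gamma\to B_\Delta$ the naturality square unwinds to the identity $(\pi_L^\Delta)_!\big(y\,f(b)\big)=(\pi_L^\Gamma)_!\big(f^\dagger(y)\,b\big)$, where $f^\dagger$ denotes the transpose of $f$ with respect to the $R_R$-pairings. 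I would verify this by the standard reduction for singular Soergel bimodules: both dualities $B\mapsto B^*$ and $B\mapsto{}^*\!B$ together with the comparison $\rho$ are compatible with the composition product $\otimes_{R_M}$, and every singular Bott--Samelson bimodule and every map between such is assembled from the elementary merge and split bimodules, for which the identity is a direct computation with the Frobenius extension $\Sym(\A)\otimes\Sym(\B)\supset\Sym(\A\cup\B)$. I expect the perfectness of the pairings to be the genuinely substantive point, with this last compatibility being routine but somewhat lengthy bookkeeping.
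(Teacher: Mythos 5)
Your construction of the two isomorphisms is essentially the paper's: both proofs identify $B_{\Gamma^\vee}$ with the same ring $H^*_G(V_\Gamma)$ carrying the swapped actions, and realize the dualities through the fiberwise Poincar\'e pairings $(x,y)\mapsto(\pi_L)_*(x\cup y)$ and $(x,y)\mapsto(\pi_R)_*(x\cup y)$, with the same degree bookkeeping for the shift $d$. The only difference in that part is that you prove perfectness of the pairings by hand, one Grassmannian bundle at a time up the tower (free Schur basis, block-triangular Gram matrix with unit diagonal), where the paper simply cites nondegeneracy from \cite[section 3.7]{MR4655919}; your argument is correct and makes that step self-contained. Where you genuinely diverge is naturality. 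You correctly reduce it to the identity $g_L=g_R$, the equality of the $\pi_L$- and $\pi_R$-transposes of an arbitrary bimodule map $f$, but you propose to verify it by reduction to elementary merge/split generators. That route needs (i) the theorem that all morphisms between singular Bott--Samelson bimodules are $\Z$-spanned by foams (the paper uses this fact, but elsewhere and for different purposes), including the dot/polynomial generators and maps of arbitrary degree, and (ii) compatibility of both duals and of the comparison map with the horizontal composition $\otimes_{R_M}$; neither is carried out, and (ii) is precisely where the nontrivial bookkeeping lives. The paper instead disposes of naturality with a short trick: introduce the absolute Poincar\'e pairings coming from $V_\Gamma\to\pt$ and $V_\Delta\to\pt$, note that $f$ has a unique transpose $g$ for those, and push the defining identities for $g_L$ and $g_R$ forward under $\mathrm{Fl}_L\to\pt$ and $\mathrm{Fl}_R\to\pt$ to conclude $g_L=g=g_R$. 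So your route is workable in principle but considerably heavier, and as written the naturality step is a plausible sketch resting on unverified (and nontrivial) inputs rather than a complete proof; the uniqueness argument via the global pairing is the cleaner way to finish.
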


In particular, the right-dual and left-dual functors land in the category of singular Bott--Samelson bimodules with boundary data $c_R,c_L$. The duality isomorphism $f \leftrightarrow f^\vee$ stated above is defined using either isomorphism between $B_{\Gamma^\vee}$ and $q^d B_\Gamma^*$ or $q^{-d}{}^*\!B_\Gamma$, and it satisfies $(f^\vee)^\vee = f$. Before proving Proposition~\ref{prop:duality}, we introduce a few maps in the following lemma. 

\begin{lem}\label{lem:unitcounit}
	Each web $\Gamma$ is equipped with bimodule maps \[
		\begin{tikzcd}[row sep=0pt]
			\varepsilon_{L} \colon q^d B_{\Gamma\Gamma^\vee} \to R_L & \varepsilon_{R} \colon q^{-d} B_{\Gamma^\vee\Gamma} \to R_R\\
			\eta_{L} \colon R_L \to q^{-d} B_{\Gamma\Gamma^\vee} & \eta_{R} \colon R_R \to q^{d} B_{\Gamma^\vee\Gamma}
		\end{tikzcd}
	\]where $d \coloneq \dim_\C \mathrm{Fl}_L - \dim_\C \mathrm{Fl}_R$ such that the composites \[
		\begin{tikzcd}[column sep=35pt,row sep=0pt]
			B_{\Gamma^\vee} \ar[r,"\Id \otimes \,\eta_{L}"] & q^{-d}B_{\Gamma^\vee\Gamma\Gamma^\vee} \ar[r,"\varepsilon_R \otimes \,\Id"] & B_{\Gamma^\vee} & B_{\Gamma^\vee} \ar[r,"\eta_R \otimes\,\Id"] & q^{d}B_{\Gamma^\vee\Gamma\Gamma^\vee} \ar[r,"\Id\otimes \,\varepsilon_L"] & B_{\Gamma^\vee}\\
			B_\Gamma \ar[r,"\Id \otimes\,\eta_R"] & q^d B_{\Gamma\Gamma^\vee\Gamma} \ar[r,"\varepsilon_L\otimes\,\Id"] & B_\Gamma & B_\Gamma \ar[r,"\eta_L\otimes\,\Id"] & q^d B_{\Gamma\Gamma^\vee\Gamma} \ar[r,"\Id\otimes\,\varepsilon_R"] & B_\Gamma
		\end{tikzcd}
	\]are the identity maps. In particular, the four maps are the equivariant push-pull maps in both directions of the following two correspondences \[
		\begin{tikzcd}
			\mathrm{Fl}_L & V_{\Gamma} \ar[l,swap,"\pi_L"] \ar[r,"\delta"] & V_{\Gamma} \x_{\mathrm{Fl}_R} V_{\Gamma^\vee} & V_{\Gamma^\vee} \x_{\mathrm{Fl}_L} V_\Gamma & V_\Gamma \ar[l,swap,"\delta"] \ar[r,"\pi_R"] & \mathrm{Fl}_R
		\end{tikzcd}
	\]where $\delta$ denotes the diagonal embedding.
\end{lem}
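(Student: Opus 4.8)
The plan is to construct the four maps geometrically, as equivariant push-pull maps along the two correspondences displayed at the end of the statement, and then verify the zig-zag (triangle/adjunction) identities. By Proposition~\ref{prop:H*BSVariety}, each $B_\Gamma$ is a grading shift of $H^*_G(V_\Gamma)$, and tensor products of Bott--Samelson bimodules over the middle ring correspond to fiber products of Bott--Samelson varieties over the middle partial flag manifold; so $B_{\Gamma\Gamma^\vee}$ is a grading shift of $H^*_G(V_\Gamma \x_{\mathrm{Fl}_R} V_{\Gamma^\vee})$, and since $V_{\Gamma^\vee} = V_\Gamma$ as varieties (only the roles of the two projection maps are swapped), this is $H^*_G(V_\Gamma \x_{\mathrm{Fl}_R} V_\Gamma)$. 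First I would define $\varepsilon_L$ as the composite $(\pi_L)_* \circ \delta^*$ along $\mathrm{Fl}_L \xleftarrow{\pi_L} V_\Gamma \xrightarrow{\delta} V_\Gamma \x_{\mathrm{Fl}_R} V_\Gamma$, i.e.\ restrict a class on the fiber product to the diagonal and then push forward to $\mathrm{Fl}_L$; dually, define $\eta_L = \delta_* \circ (\pi_L)^*$. The maps $\varepsilon_R, \eta_R$ are defined by the mirror-image correspondence with $\pi_R$ in place of $\pi_L$. One then checks that these have the asserted $q$-degrees: the pushforward $(\pi_L)_*$ lowers cohomological degree by $2\dim_\C(\text{fiber of }\pi_L) = 2(\dim_\C V_\Gamma - \dim_\C \mathrm{Fl}_L)$, and the Gysin map $\delta_*$ raises degree by the real codimension of the diagonal, which is $2\dim_\C V_\Gamma - 2\dim_\C \mathrm{Fl}_R$; bookkeeping with the grading-shift conventions of Definition~\ref{def:BSbimodule} and Proposition~\ref{prop:H*BSVariety} produces exactly the shifts $q^{\pm d}$ with $d = \dim_\C\mathrm{Fl}_L - \dim_\C\mathrm{Fl}_R$. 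That these are genuine $(R_L,R_L)$- resp.\ $(R_R,R_R)$-bimodule maps is the projection formula for equivariant pushforward.

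Next I would verify the four zig-zag identities. Consider the first one: $B_{\Gamma^\vee} \xrightarrow{\Id\otimes\eta_L} q^{-d}B_{\Gamma^\vee\Gamma\Gamma^\vee} \xrightarrow{\varepsilon_R\otimes\Id} B_{\Gamma^\vee}$. Translating through Proposition~\ref{prop:H*BSVariety}, $B_{\Gamma^\vee\Gamma\Gamma^\vee}$ is (a shift of) the equivariant cohomology of the triple fiber product $V_{\Gamma^\vee}\x_{\mathrm{Fl}_L} V_\Gamma \x_{\mathrm{Fl}_R} V_{\Gamma^\vee}$, and the composite becomes a push-pull around the following diagram: pull back along the partial diagonal that collapses the first two factors (this realizes $\Id\otimes\eta_L$), then push forward along the partial diagonal collapsing the last two factors (this realizes $\varepsilon_R\otimes\Id$). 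The key geometric input is a base-change/transversality statement: the relevant square of fiber products is Cartesian with one leg a diagonal embedding, so that the composite push-pull equals the push-pull along the ``outer'' correspondence, which degenerates to the identity self-map of $V_{\Gamma^\vee}$. Concretely, after the first partial diagonal the middle factor $V_\Gamma$ is pinned to equal the first $V_{\Gamma^\vee}$ (as a point of $V_\Gamma = V_{\Gamma^\vee}$), and then pushing forward along the second partial diagonal integrates over a fiber that has become a single point — giving $\Id$. The three remaining zig-zags are handled by the same argument with the roles of $\mathrm{Fl}_L$ and $\mathrm{Fl}_R$, and of $\Gamma$ and $\Gamma^\vee$, interchanged.

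The main obstacle I anticipate is the transversality/base-change bookkeeping in the zig-zag verification: one must check that the squares of Bott--Samelson varieties involved are honestly Cartesian (or at least Tor-independent) so that $\delta^* \circ (\text{pushforward}) = (\text{pushforward}) \circ \delta^*$ can be applied, and that the fiber of the relevant composite is reduced to a point after imposing the diagonal conditions. This is cleanest to organize using the inductive tower-of-Grassmannian-bundles structure of $V_\Gamma$ from the proof of Proposition~\ref{prop:H*BSVariety}: each $V_\Gamma$ is an iterated Grassmannian bundle over $\mathrm{Fl}_L$, and along such bundles the push-pull formulas and base change are standard (e.g.\ via \cite[Proposition 4.5.1]{MR4655919}). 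A purely algebraic alternative, which I would present as a remark or fallback, is to define $\varepsilon_L,\eta_L,\varepsilon_R,\eta_R$ directly on the bimodule side — $\eta_L$ sends $1 \in R_L$ to the class in $B_{\Gamma\Gamma^\vee}$ corresponding under the fiber-product identification to the ``diagonal'' element, and $\varepsilon_L$ is (up to the shift) the composite of the multiplication map $B_{\Gamma\Gamma^\vee} \to B_{\Gamma\Gamma^\vee}$ with the Demazure-type operator that is equivariant pushforward along $\pi_L$ — and then the zig-zags reduce to the identities $\partial(\text{top Schubert class}) = 1$-type computations in the cohomology of the Grassmannian fibers, together with the Leibniz rule. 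Either way, once the maps are in place the degree computation and the bimodule-map property are routine, and the final assertion that the four maps are precisely the stated equivariant push-pull maps is immediate from the construction.
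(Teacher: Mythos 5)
Your proposal matches the paper's proof: the four maps are defined as the equivariant push-pulls in both directions of the two stated correspondences, and each zig-zag identity is verified exactly as you outline, by forming the Cartesian square whose top corner is $V_\Gamma$ (the fiber product of the two partial-diagonal maps into $V_{\Gamma}\times_{\mathrm{Fl}_R}V_{\Gamma^\vee}\times_{\mathrm{Fl}_L}V_\Gamma$) and trading the lower push-pull for the upper pull-push by base change, after which the outer legs compose to identity maps. The only slip is expository: $\Id\otimes\,\eta_L$ is a pullback along a projection followed by a Gysin pushforward along a partial diagonal, and $\varepsilon_R\otimes\,\Id$ is a restriction to a partial diagonal followed by a pushforward along a projection (not as you labeled them), but this does not affect the argument, and your degree bookkeeping and base-change concerns are consistent with the paper's conventions.
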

\begin{proof}
	By definition, the map $(\Id\otimes\,\varepsilon_R)\circ(\eta_L\otimes\Id)$ is the composite of the maps induced by the following maps \[
		\begin{tikzcd}[column sep={55pt,between origins}]
			& V_{\Gamma} \x_{\mathrm{Fl}_L} V_\Gamma \ar[dl,swap,"\pi_L \x\, \Id"] \ar[dr,"\delta\,\x\,\Id"] & & V_\Gamma \x_{\mathrm{Fl}_R} V_\Gamma \ar[dl,swap,"\Id \x \,\delta"] \ar[dr,"\Id \x\, \pi_R"] \\
			\mathrm{Fl}_L \x_{\mathrm{Fl}_L} V_{\Gamma} & & V_{\Gamma} \x_{\mathrm{Fl}_R} V_{\Gamma^\vee} \x_{\mathrm{Fl}_L} V_{\Gamma} & & V_{\Gamma} \x_{\mathrm{Fl}_R} \mathrm{Fl}_R
		\end{tikzcd}
	\]from left to right, using the equivariant push or pull depending on the direction of the arrow. We add to the diagram the fiber product of the two maps in the center to obtain \[
		\begin{tikzcd}[column sep={55pt,between origins}]
			& & V_\Gamma \ar[dl,swap,"\delta"] \ar[dr,"\delta"] & &\\
			& V_{\Gamma} \x_{\mathrm{Fl}_L} V_\Gamma \ar[dl,swap,"\pi_L \x\, \Id"] \ar[dr,"\delta\,\x\,\Id"] & & V_\Gamma \x_{\mathrm{Fl}_R} V_\Gamma \ar[dl,swap,"\Id \x \,\delta"] \ar[dr,"\Id \x\, \pi_R"] \\
			\mathrm{Fl}_L \x_{\mathrm{Fl}_L} V_{\Gamma} & & V_{\Gamma} \x_{\mathrm{Fl}_R} V_{\Gamma^\vee} \x_{\mathrm{Fl}_L} V_{\Gamma} & & V_{\Gamma} \x_{\mathrm{Fl}_R} \mathrm{Fl}_R
		\end{tikzcd}
	\]By trading the lower push-pull for the upper pull-push in the pullback square, we see that the overall composite is the identity map. Minor variations of this argument prove the other three identities. 
\end{proof}

\begin{proof}[Proof of Proposition~\ref{prop:duality}]
	The maps $\varepsilon_L$ and $\varepsilon_R$ defined in Lemma~\ref{lem:unitcounit} are just the Poincar\'e pairings for the fiber bundles $\pi_L\colon V_\Gamma \to \mathrm{Fl}_L$ and $\pi_R\colon V_\Gamma \to \mathrm{Fl}_R$. In particular, they are induced by the maps \[
		q^{2\dim_\C\mathrm{Fl}_L - 2\dim_\C V_\Gamma} H^*_G(V_{\Gamma}) \x H^*_G(V_{\Gamma^\vee}) \to H^*_G(\mathrm{Fl}_L) \hspace{40pt} q^{2\dim_\C\mathrm{Fl}_R - 2\dim_\C V_\Gamma} H^*_G(V_{\Gamma^\vee}) \x H^*_G(V_\Gamma) \to H^*_G(\mathrm{Fl}_R)
	\]given by $(x,y) \mapsto (\pi_L)_*(x \cup y)$ and $(x,y) \mapsto (\pi_R)_*(x \cup y)$ respectively. Nondegeneracy of these pairings \cite[section 3.7]{MR4655919} allows us to identify \[
		B_\Gamma^* = \Hom_{\Z,R_R}(q^{\dim_\C \mathrm{Fl}_L - \dim_\C V_\Gamma} H^*_G(V_\Gamma),R_R) = q^{2\dim_\C\mathrm{Fl}_R - \dim_\C V_\Gamma - \dim_\C\mathrm{Fl}_L}H^*_G(V_{\Gamma^\vee}) = q^{\dim_\C \mathrm{Fl}_R - \dim_\C\mathrm{Fl}_L} B_{\Gamma^\vee}
	\]and \[
		{}^*\!B_\Gamma = \Hom_{R_L,\Z}(q^{\dim_\C \mathrm{Fl}_L - \dim_\C V_\Gamma}H^*_G(V_\Gamma),R_L) = q^{\dim_\C\mathrm{Fl}_L - \dim_\C V_\Gamma} H_G^*(V_{\Gamma^\vee}) = q^{\dim_\C \mathrm{Fl}_L - \dim_\C\mathrm{Fl}_R} B_{\Gamma^\vee}
	\]which gives the desired isomorphisms. 

	We now prove naturality. Suppose $\Delta$ is another web with boundary data $c_L,c_R$ and let $f\colon q^c H^*_G(V_\Gamma) \to H^*_G(V_\Delta)$ be a bimodule map. By nondegeneracy of the pairings, there are unique maps $g_L,g_R\colon q^{c - \dim_\C V_\Delta + \dim_\C V_\Gamma} H^*_G(V_\Delta) \to H^*_G(V_\Gamma)$ for which \[
		(\pi_L)_*(f(x) \cup y) = (\pi_L)_*(x \cup g_L(y)) \hspace{40pt} (\pi_R)_*(f(x) \cup y) = (\pi_R)_*(x \cup g_R(y))
	\]for all $x \in H^*_G(V_\Gamma)$ and $y \in H^*_G(V_\Delta)$. Our goal is to show that $g_L = g_R$. To do so, we use the nondegenerate Poincar\'e pairings \[
		q^{2\dim_\C V_\Gamma} H^*_G(V_\Gamma) \x H^*_G(V_\Gamma) \to H^*_G(\pt) \hspace{40pt} q^{2\dim_\C V_\Delta} H^*_G(V_\Delta) \x H^*_G(V_\Delta) \to H^*_G(\pt)
	\]induced by the maps $\pi\colon V_\Gamma \to \pt$ and $\pi\colon V_\Delta \to \pt$. There is a unique map $g\colon q^{c - \dim_\C V_\Delta + \dim_\C V_\Gamma}H^*_G(V_\Delta) \to H^*_G(V_\Gamma)$ for which \[
		\pi_*(f(x) \cup y) = \pi_*(x \cup g(y))
	\]for all $x \in H^*_G(V_\Gamma)$ and $y \in H^*_G(V_\Delta)$. By applying the pushforward map under $\mathrm{Fl}_L \to \pt$ and $\mathrm{Fl}_R \to \pt$ to the identities characterizing $g_L$ and $g_R$, we see that $g_L = g = g_R$ by uniqueness of $g$. 
\end{proof}

Lemma~\ref{lem:unitcounit} also provides the left and right adjoints of the tensor product operation on singular Bott--Samelson bimodules. 

\begin{prop}\label{prop:adjunction}
	Let $\Gamma$ be a web with boundary data $c_L,c_R$. There are natural isomorphisms of $\Z$-graded abelian groups \[
		\begin{tikzcd}[row sep=0]
			\Hom(B_{\Theta\Gamma},B_\Delta) \cong q^{-d} \Hom(B_\Theta, B_{\Delta\Gamma^\vee}) & \Hom(B_{\Phi},B_{\Gamma\Psi}) \cong q^{d}\Hom(B_{\Gamma^\vee\Phi},B_{\Psi})\\
			\Hom(B_\Delta,B_{\Theta\Gamma}) \cong q^{-d} \Hom(B_{\Delta\Gamma^\vee},B_\Theta) & \Hom(B_{\Gamma\Psi},B_{\Phi}) \cong q^{d}\Hom(B_{\Psi},B_{\Gamma^\vee\Phi})
		\end{tikzcd}
	\]for webs $\Theta,\Delta,\Phi,\Psi$ with appropriate boundary data, where $d \coloneqq \dim_\C\mathrm{Fl}_L - \dim_\C\mathrm{Fl}_R$. 
\end{prop}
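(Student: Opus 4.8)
The plan is to obtain all four isomorphisms formally from Lemma~\ref{lem:unitcounit}. The four zig-zag composites recorded there split into two packages: the maps $\eta_L$ and $\varepsilon_R$, together with two of the zig-zag identities, exhibit tensoring with $B_{\Gamma^\vee}$ (up to the grading shift $q^{-d}$) as \emph{right} adjoint to tensoring with $B_\Gamma$, while $\eta_R$ and $\varepsilon_L$, together with the remaining two identities, exhibit tensoring with $B_{\Gamma^\vee}$ (up to $q^{d}$) as \emph{left} adjoint. Since adjunction data can be tensored on either side, each package produces two of the four displayed isomorphisms: one with $\Gamma$ glued on the right inside the $\Hom$ and one with $\Gamma$ glued on the left. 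So everything reduces to the textbook fact that an adjunction induces a natural bijection of $\Hom$-groups, and the only real content is that the needed triangle identities are exactly those established in Lemma~\ref{lem:unitcounit}.

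Concretely, for the first isomorphism I would send a map $f\colon B_{\Theta\Gamma}\to B_\Delta$ to the composite
\[
	B_\Theta \xrightarrow{\;\Id\otimes\,\eta_L\;} q^{-d}B_{\Theta\Gamma\Gamma^\vee} \xrightarrow{\;f\otimes\,\Id\;} q^{-d}B_{\Delta\Gamma^\vee},
\]
and conversely send $g\colon B_\Theta\to q^{-d}B_{\Delta\Gamma^\vee}$ to
\[
	B_{\Theta\Gamma} \xrightarrow{\;g\otimes\,\Id\;} q^{-d}B_{\Delta\Gamma^\vee\Gamma} \xrightarrow{\;\Id\otimes\,\varepsilon_R\;} B_\Delta.
\]
To see that these are mutually inverse, compose one with the other and reassociate; the middle-four interchange for $\otimes$ (that is, $(\alpha\otimes\Id)\circ(\Id\otimes\beta)=(\Id\otimes\beta)\circ(\alpha\otimes\Id)$ for maps acting on separate tensor factors) lets one slide $f$, resp.\ $g$, across the inserted $\eta_L$ and $\varepsilon_R$, so the composite collapses to $f$, resp.\ $g$, with one of the zig-zag composites of Lemma~\ref{lem:unitcounit} attached on one side; that composite is the identity. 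Naturality in $\Theta$ and in $\Delta$ is then immediate, since every arrow in sight is a bimodule map and $\otimes$ is a bifunctor, and the grading shift $q^{-d}$ in the statement is precisely the shift carried by $\eta_L$ (equivalently $\varepsilon_R$). The three remaining isomorphisms are produced the same way: gluing $\Gamma$ on the left instead --- using $\Id\otimes f$ followed by $\varepsilon_R\otimes\Id$, with inverse built from $\eta_L\otimes\Id$ followed by $\Id\otimes g$ --- gives the second; replacing the pair $(\eta_L,\varepsilon_R)$ by $(\eta_R,\varepsilon_L)$, which is the left-adjoint package and carries the shift $q^{d}$, gives the third and the fourth. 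Alternatively, the third and fourth can be deduced from the first and second by applying the contravariant duality $f\leftrightarrow f^\vee$ of Proposition~\ref{prop:duality} and replacing $\Gamma$ by $\Gamma^\vee$ (which flips the sign of $d$).

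A more hands-on variant avoids the adjunction formalism: by Proposition~\ref{prop:H*BSVariety} each singular Bott--Samelson bimodule is finitely generated and free as a one-sided module (its Bott--Samelson variety being an iterated Grassmannian bundle over either flag manifold), so ordinary tensor--hom adjunction together with the identification $B_\Gamma^*\cong q^{d}B_{\Gamma^\vee}$ from Proposition~\ref{prop:duality}, applied after cutting down from one-sided maps to $(R_L,R_R)$-bimodule maps, yields the same four isomorphisms. Either way, I do not expect any genuine difficulty; the main obstacle is purely organizational --- keeping the left and right module structures straight, matching each of $\eta_L,\eta_R,\varepsilon_L,\varepsilon_R$ to its slot, and verifying that the $q^{\pm d}$ shifts come out as stated. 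The zig-zag identities of Lemma~\ref{lem:unitcounit} supply all the substance.
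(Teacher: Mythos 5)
Your proposal is correct and matches the paper's proof essentially verbatim: the paper also defines the first isomorphism by $f \mapsto (f\otimes\Id)\circ(\Id\otimes\eta_L)$ with inverse $g\mapsto(\Id\otimes\varepsilon_R)\circ(g\otimes\Id)$, cites the zig-zag identities of Lemma~\ref{lem:unitcounit} for the mutual-inverse check, and treats the remaining three isomorphisms "similarly" with the other unit/counit pairings. Only a cosmetic caveat: the sign of the $q^{\pm d}$ shift attached to the $(\eta_R,\varepsilon_L)$ package depends on which side of the $\Hom$ the web $\Gamma$ sits (the third isomorphism carries $q^{-d}$, the fourth $q^{d}$), but this is exactly the bookkeeping you already flagged and does not affect the argument.
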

\begin{proof}
	This is just the construction of adjoint functors from a unit and counit, which are provided by Lemma~\ref{lem:unitcounit}. For example, a map $f \in \Hom^i(B_{\Theta\Gamma},B_\Delta)$ is sent to the composite \[
		\begin{tikzcd}[column sep=35pt]
			B_\Theta \ar[r,"\Id \otimes \,\eta_L"] & q^{-d}B_{\Theta\Gamma\Gamma^\vee} \ar[r,"f\otimes \,\Id"] & q^{-d-i}B_{\Delta\Gamma^\vee}
		\end{tikzcd}
	\]in $\Hom^{i+d}(B_\Theta,B_{\Delta\Gamma^\vee})$. The inverse isomorphism is given by sending $g \in \Hom^{i+d}(B_\Theta,B_{\Delta\Gamma^\vee})$ to \[
		\begin{tikzcd}[column sep=35pt]
			B_{\Theta\Gamma} \ar[r,"g\,\otimes\,\Id"] & q^{-i-d} B_{\Delta\Gamma^\vee\Gamma} \ar[r,"\Id\otimes \,\varepsilon_R"] & q^{-i} B_{\Delta}
		\end{tikzcd}
	\]The fact that these are inverses follows from the identities in Lemma~\ref{lem:unitcounit}. The other three isomorphisms are defined similarly.
\end{proof}

Given webs $\Gamma,\Delta$ with the same boundary data, we now have two identifications \[
	\Hom(B_\Gamma,B_\Delta) = \Hom(B_{\Gamma^\vee},B_{\Delta^\vee}) \qquad \Hom(B_\Gamma,B_\Delta) \cong \Hom(B_{\Delta^\vee},B_{\Gamma^\vee})
\]denoted $f \leftrightarrow \ol{f}$ and $f \leftrightarrow f^\vee$, respectively. Both operations are involutive. We define one more involutive duality isomorphism \[
	\Hom(B_\Gamma,B_\Delta) \cong \Hom(B_\Delta,B_\Gamma)
\]by $f \leftrightarrow f^*$ where $f^* = (\ol{f})^\vee = \ol{(f^\vee)}$, and we refer to $f^*$ as the \emph{adjoint} of $f$, as mentioned in the introduction. 

We briefly explain foams, which are essentially cobordisms for webs, and refer to \cite{MR3545951,queffelec2018annularevaluationlinkhomology,MR3982970,MR4164001,hogancamp2021skein} for a more detailed account of the theory. In our setting, foams live within $[0,1] \x [0,1] \x \R$, are read from top to bottom, and have the property that generic horizontal slices (intersections with $t \x [0,1] \x \R$) are braid-like webs. 
Certain bimodule maps between singular Bott--Samelson bimodules associated to webs are graphically represented by foams. Foams that are isotopic rel boundary through foams satisfying the horizontal slice condition represent the same bimodule map. Here are the basic examples, from which all other foams can be created. 

For any web $\Gamma$, the identity bimodule map $B_\Gamma \to B_\Gamma$ is represented by the product foam $[0,1] \x \Gamma \subset [0,1] \x [0,1] \x \R$. \[
	\begin{gathered}
		\includegraphics[width=.12\textwidth]{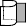}
		\vspace{-3pt}
	\end{gathered}
\]If $f$ is an edge of $\Gamma$ with associated alphabet $\A_f$, then the endomorphism of $B_\Gamma$ given by multiplication by $e_i(\A_f)$ is represented by the identity foam with a dot labeled by $i$ on the facet of the foam corresponding to $f$. A further shorthand for this foam is simply a picture of $\Gamma$ with a dot labeled by $i$ on the edge $f$. \[
	\begin{gathered}
		\labellist
		\pinlabel $i$ at 6 10.8
		\endlabellist
		\includegraphics[width=.12\textwidth]{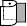}
		\vspace{-3pt}
	\end{gathered} \qquad \eqqcolon \qquad \begin{gathered}
		\labellist
		\small
		\pinlabel $i$ at 5.5 5.7
		\endlabellist
		\includegraphics[width=.12\textwidth]{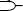}
		\vspace{-3pt}
		\end{gathered}
\]The following foam has a tetrahedral point.\vspace{5pt}\[
	\begin{gathered}
		\includegraphics[width=.12\textwidth]{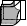}
		\vspace{-3pt}
	\end{gathered} \vspace{5pt}
\]The Bott--Samelson varieties of the webs on the top and bottom of this foam are canonically identified, and this foam represents the induced isomorphism of bimodules. The tensor product of two maps represented by foams is represented by the foam obtained by gluing the two foams together, extending the operation of gluing webs. There is another tensor product operation that we have not discussed that corresponds to taking webs of width $n$ and $m$ and placing one above the other to obtain a web of width $n + m$. The juxtaposition of two foams in a similar manner represents the tensor product of the bimodule maps. 

The adjunction isomorphisms of Proposition~\ref{prop:adjunction} transform foams in the following way. Let $F \subseteq [0,1] \x [0,1] \x \R$ be a foam representing a bimodule map in $\Hom(B_{\Theta\Gamma},B_\Delta)$. Then the boundary of $F$ is naturally segmented into four pieces. On the top is $\Theta\Gamma = F \cap 1 \x [0,1] \x \R$ and on the bottom is $\Delta = F \cap 0 \x [0,1] \x \R$. On the left $F \cap [0,1] \x 0 \x \R$ and on the right $F \cap [0,1] \x 1 \x \R$ are a union of vertical lines. There is an isotopy of $\partial([0,1] \x [0,1] \x \R)$ that rotates $\Gamma$ from lying on the top first to the right and then to the bottom, so that at the end we see $\Theta$ on top and $\Delta\Gamma^\vee$ on the bottom. By isotopy extension, we may drag the foam $F$ along with it, and the resulting foam from $\Theta$ to $\Delta\Gamma^\vee$ represents the corresponding bimodule map in $q^{-d}\Hom(B_{\Gamma},B_{\Delta\Gamma^\vee})$ given by adjunction. The other isomorphisms are also given by dragging $\Gamma$ between the top and bottom by passing either to the right or the left. This is a version of the ``bending trick'' described in \cite[2.3.3]{MR3877770}. 

\begin{example}\label{example:bending}
	Let $\Gamma$ be the web\[
		\begin{gathered}
			\labellist
			\small
			\pinlabel $b$ at -.8 4.8
			\pinlabel $a$ at -.8 0.2
			\pinlabel $a+b$ at 13 2.6
			\endlabellist
			\includegraphics[width=.12\textwidth]{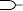}
		\end{gathered}
	\]with boundary data $c_L = (a,b)$ and $c_R = (a+b)$. Then the isomorphisms given in Proposition~\ref{prop:adjunction} \[
		\begin{tikzcd}[column sep={120pt,between origins}]
			\Hom^{ab}(B_{\smash{\Gamma\ol{\Gamma}}},R_L) \ar[r,no head] \ar[rd,no head] & \Hom^0(B_\Gamma,B_\Gamma) \ar[r,no head] \ar[rd,no head] & \Hom^{-ab}(B_{\hspace{0.5pt}\smash{\ol{\Gamma}\Gamma}},R_R)\\
			\Hom^{ab}(R_L,B_{\smash{\Gamma\ol{\Gamma}}}) \ar[r,no head] \ar[ru, no head] & \Hom^0(B_{\hspace{0.5pt}\smash{\ol{\Gamma}}},B_{\hspace{0.5pt}\smash{\ol{\Gamma}}}) \ar[r,no head] \ar[ru, no head] & \Hom^{-ab}(R_R,B_{\hspace{0.5pt}\smash{\ol{\Gamma}\Gamma}})
		\end{tikzcd}
	\]identify the bimodule maps represented by the following foams \[
		\begin{tikzcd}[column sep={120pt,between origins},row sep=0pt]
			\begin{gathered}
				\includegraphics[width=.12\textwidth]{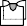}
				\vspace{-3pt}
			\end{gathered} \ar[r,no head] \ar[rd,no head] & \begin{gathered}
				\includegraphics[width=.12\textwidth]{identityFork1.pdf}
				\vspace{-3pt}
			\end{gathered} \ar[r,no head] \ar[rd,no head] & \begin{gathered}
				\includegraphics[width=.12\textwidth]{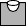}
				\vspace{-3pt}
			\end{gathered}\\
			\begin{gathered}
				\includegraphics[width=.12\textwidth]{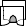}
				\vspace{-3pt}
			\end{gathered} \ar[r,no head] \ar[ru, no head] & \begin{gathered}
				\includegraphics[width=.12\textwidth]{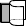}
				\vspace{-3pt}
			\end{gathered} \ar[r,no head] \ar[ru, no head] & \begin{gathered}
				\includegraphics[width=.12\textwidth]{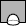}
				\vspace{-3pt}
			\end{gathered}
		\end{tikzcd}
	\]The following formulas for these maps are drawn from \cite[Appendix A]{hogancamp2021skein}. Assign the alphabets $\A = \{x_1,\ldots,x_a\}$, $\B = \{x_{a+1},\ldots,x_{a+b}\}$, and $\A \sqcup \B$ to the three edges of $\Gamma$. Then $R_L = \Z[x_1,\ldots,x_{a+b}]^{\fk{S}_a \x \fk{S}_b}$ and $R_R = \Z[x_1,\ldots,x_{a+b}]^{\fk{S}_{a+b}}$. The six bimodule maps are given by\[
		\begin{tikzcd}[column sep={120pt,between origins},row sep=5pt]
			\substack{\textstyle R_L \otimes_{R_R} R_L' \to R_L\\[2pt]\textstyle f \otimes g' \mapsto fg'} & \Id\colon R_L \to R_L & \substack{\textstyle R_L \to R_R\\[2pt]\textstyle f \mapsto \partial_{a,b}f}\\
			\substack{\textstyle R_L \to R_L \otimes_{R_R} R_L'\\[2pt]\textstyle 1 \mapsto \fk{s}_{b^a}(\A - \B')} & \Id \colon R_L \to R_L & R_R \hookrightarrow R_L
		\end{tikzcd}
	\]where $\partial_{a,b} \coloneqq (\partial_b\cdots\partial_1)(\partial_{b+1}\cdots\partial_2)\cdots(\partial_{a+b-1}\cdots\partial_a)$ and $\fk{s}_{b^a}(\A - \B')$ denotes the extension of the Schur polynomial $\fk{s}_{b^a}$ to differences of alphabets, which can again be understood in terms of characteristic classes of vector bundles. We will only make use of these formulas in the special cases where either $a = 1$ or $b = 1$ where $\fk{s}_{1^a} = e_a$ and $\fk{s}_{b^1} = h_b$. 
\end{example}

The three dualities given by $f \mapsto \ol{f}, f^\vee, f^*$ have the following interpretations in terms of foams. In short, the three dualities act on foams via the Klein four-group action on the square $[0,1] \x [0,1]$. Let $F$ be a foam that represents $f$. \begin{itemize}
	\item The foam $\ol{F}$ obtained by reflecting $F$ across $[0,1] \x \frac12 \x \R$ represents $\ol{f}$. In Example~\ref{example:bending}, this duality swaps the foams in the middle column but fixes each of the four other foams. 
	\item The foam $F^\vee$ obtained by rotation $F$ by $180^\circ$ within the $[0,1] \x [0,1]$ factor of $[0,1] \x [0,1] \x \R$ represents $f^\vee$. In Example~\ref{example:bending}, this duality swaps the two foams within each of the three columns. 
	\item The foam $F^*$ obtained by reflecting $F$ across $\frac12 \x [0,1] \x \R$ represents $f^*$. In Example~\ref{example:bending}, this duality swaps the two foams within the right column and within the left column but fixes each of the foams in the middle column.
\end{itemize}
Proposition~\ref{prop:duality} is encoded by the fact that rotation by $180^\circ$ results in the same foam whether the rotation is clockwise or counterclockwise. 

Any bimodule map between singular Bott--Samelson bimodules turns out to be a $\Z$-linear combination of maps representable by foams, which follows from \cite{MR3177365,MR3682839,MR3545951}. 
Furthermore, there is a functor from the category of singular Bott--Samelson bimodules to the category of $\sl_N$ webs and foams. It sends the bimodule associated to a web to that web, viewed as an object in the $\sl_N$ foam category, and it sends the bimodule associated to a foam to that foam, viewed as morphism in the $\sl_N$ foam category. See \cite{MR3545951,MR3982970,hogancamp2021skein}.

We state the correspondence between foams and bimodule maps for the following four foams that we will later use. \[
	\begin{gathered}
		\includegraphics[width=.12\textwidth]{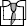}
		\vspace{-3pt}
	\end{gathered} \hspace{40pt} \begin{gathered}
		\includegraphics[width=.12\textwidth]{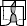}
		\vspace{-3pt}
	\end{gathered} \hspace{40pt} \begin{gathered}
		\includegraphics[width=.12\textwidth]{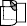}
		\vspace{-3pt}
	\end{gathered} \hspace{40pt} \begin{gathered}
		\includegraphics[width=.12\textwidth]{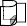}
		\vspace{-3pt}
	\end{gathered}
\]From left to right, these four foams represent the equivariant pullback and pushforward of the map of Bott--Samelson varieties \[
	\begin{gathered}
	 	\labellist
		\small
		\pinlabel $\Omega$ at -1 4.8
		\pinlabel $\Lambda \oplus \Psi$ at -2.5 0.2
		\pinlabel $\Lambda$ at 4.6 2.5
		\pinlabel $\Omega \oplus \Lambda$ at 14.2 4.8
		\pinlabel $\Psi$ at 12.5 0.2
		\endlabellist
		\includegraphics[width=.12\textwidth]{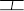}
		\vspace{-3pt}
	\end{gathered}\hspace{30pt} \longmapsto \hspace{30pt}\begin{gathered}
		\labellist
		\small
		\pinlabel $\Omega$ at -1 4.8
		\pinlabel $\Lambda \oplus \Psi$ at -2.5 0.2
		\pinlabel $\Lambda \oplus \Omega \oplus \Psi$ at 8.8 1.3
		\pinlabel $\Lambda\oplus\Omega$ at 20.2 4.8
		\pinlabel $\Psi$ at 18.5 0.2
		\endlabellist
		\includegraphics[width=.19\textwidth]{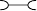}
		\vspace{-3pt}
	\end{gathered}\vspace{5pt}
\]and the equivariant pushforward and pullback of \vspace{5pt}\[
	\begin{gathered}
	 	\labellist
		\small
		\pinlabel $\Omega \oplus \Lambda$ at -2.5 4.8
		\pinlabel $\Psi$ at -.9 0.2
		\pinlabel $\Lambda$ at 4.3 2.5
		\pinlabel $\Omega$ at 7.5 5
		\pinlabel $\Omega \oplus \Lambda \oplus \Psi$ at 15.2 2.5
		\pinlabel $\Lambda \oplus \Psi$ at 8.4 -0.4
		\endlabellist
		\includegraphics[width=.12\textwidth]{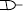}
		\vspace{-3pt}
	\end{gathered}\hspace{45pt} \longmapsto \hspace{25pt}\begin{gathered}
		\labellist
		\small
		\pinlabel $\Omega \oplus \Lambda$ at -2.5 4.8
		\pinlabel $\Psi$ at -.9 0.2
		\pinlabel $\Omega \oplus \Lambda \oplus \Psi$ at 15.2 2.5
		\endlabellist
		\includegraphics[width=.12\textwidth]{pitchfork2}
		\vspace{-3pt}
	\end{gathered}\hspace{30pt}\vspace{5pt}
\]where $\Omega,\Lambda,\Psi$ are pairwise orthogonal subspaces. 

\begin{rem}
	From now on, we abuse notation by confusing a web with its associated singular Bott--Samelson bimodule. For example, an action of an algebra on a web $\Gamma$ means an action on $B_\Gamma$ through bimodule maps. Additionally, we confuse a foam with the bimodule map it represents. 
\end{rem}

%%%%%%%%%%%%%%%%%%%%%%%%%%%%%%%%%%
%%%%%%%%%%%%%%%%%%%%%%%%%%%%%%%%%%

\section{Construction of $\sr{K}$}\label{sec:constructionOfK}

Fix positive integers $a,b,c,d$ for which $a + b = c + d$ and $b = \min(a,b,c,d)$. We focus on webs with boundary data $c_L = (a,b)$ and $c_R = (d,c)$. The purpose of this section is to construct $\sr{K} \coloneqq {}^b_a \sr{K}^c_d$. It is a bounded chain complex of singular Bott--Samelson bimodules with boundary data $c_L,c_R$. In section~\ref{subsec:webs_and_foams_in_K}, we introduce all of the relevant webs and foams needed to construct $\sr{K}$. In section~\ref{subsec:the_shape_of_K}, we explain how the objects and the components of the differential of $\sr{K}$ are formally modeled on the vertices and edges of the $b$-dimensional cube $[0,3]^b$. In section~\ref{subsec:the_objects_of_K}, we define the objects of $\sr{K}$, and in section~\ref{subsec:the_differential_of_K}, we define the components of the differential of $\sr{K}$.

Let $n$ be the common sum $a + b = c + d$, which is also the width of any web with boundary data $c_L,c_R$. Let $l$ be the common difference $c - b = a - d$ which is nonnegative by the requirement $b = \min(a,b,c,d)$. 

%%%%%%%%%%%%%%%%%%%%%%%%%%%%%%%%%%

\subsection{The webs and foams in $\sr{K}$}\label{subsec:webs_and_foams_in_K}

For organizational purposes, we first summarize the webs, foams, and relevant identities before providing the definitions.
In Definition~\ref{def:websfoamsK}, we define \begin{itemize}[noitemsep]
	\item a web $V_r$ with boundary data $c_L,c_R$ for $r \in \{0,1,\ldots,b\}$, 
	\item an endomorphism $Q_t \in \Hom^{2l+2t}(V_r,V_r)$ for $t \in \{1,\ldots,b\}$, and
	\item a pair of adjoint foams $Z_{(r+1)r}\in \Hom^d(V_r,V_{r+1})$ and $Z_{r(r+1)}\in\Hom^d(V_{r+1},V_r)$ for $r \in \{0,\ldots,b-1\}$.
\end{itemize}

\begin{lem}\label{lem:relationsK}
	The webs and foams defined in Definition~\ref{def:websfoamsK} satisfy the following properties.\begin{itemize}[noitemsep]
		\item The web $V_r$ is equipped with an action of the subalgebra of the nil-Hecke algebra $\cl{H}_b$ generated by $x_1,\ldots,x_b$ and $\partial_1,\ldots,\partial_{r-1},\partial_{r+1},\ldots,\partial_{b-1}$. In particular, $s_1,\ldots,s_{r-1},s_{r+1},\ldots,s_{b-1} \in \cl{H}_b$ act on $V_r$. The stated generators act on $V_r$ by self-adjoint endomorphisms while the simple transpositions act by skew-adjoint endomorphisms.
		\item The endomorphism $Q_t$ is self-adjoint, commutes with $x_1,\ldots,x_b$, and satisfies \[
			\partial_iQ_t = \begin{cases}
				Q_t\partial_i & i \neq t-1\\
				Q_{t-1}s_t + Q_t \partial_{t-1} & i = t-1
			\end{cases}
		\]for $i \in \{1,\ldots,r-1,r+1,\ldots,b-1\}$. If $t > r$, then $Q_t$ is actually the zero endomorphism of $V_r$. 
		\item The foams $Z_{(r+1)r}$ and $Z_{r(r+1)}$ commute with $Q_t$ for $t \in \{1,\ldots,b\}$ and with $x_1,\ldots,x_b,\partial_1,\ldots,\partial_{r-1},\partial_{r+2},\ldots,\partial_{b-1}$. For $r \in \{1,\ldots,b-1\}$, both $Z_{(r+1)r}\,Z_{r(r-1)}$ and its adjoint $Z_{(r-1)r}\,Z_{r(r+1)}$ commute with $\partial_r$, and we have the identity \[
			Z_{r(r+1)}\,s_r\,Z_{(r+1)r} = Z_{r(r-1)}\,s_r\,Z_{(r-1)r}.
		\]
	\end{itemize}
\end{lem}

\begin{df}\label{def:websfoamsK}
	For $r \in \{0,\ldots,b\}$, let $V_r$ be the web given by\vspace{5pt}\[
		\begin{gathered}
			\labellist
			\pinlabel $a$ at -0.5 0.1
			\pinlabel $b$ at -0.5 7.7
			\pinlabel $c$ at 22.3 7.7
			\pinlabel $d$ at 22.4 0.1
			\pinlabel $1$ at 1.3 6
			\pinlabel $1$ at 2.85 6
			\pinlabel $\cdots$ at 4.6 5.8
			\pinlabel $1$ at 5.8 6
			\pinlabel $1$ at 13.1 1
			\pinlabel $1$ at 13.1 2.5
			\pinlabel $\vdots$ at 13.1 4.5
			\pinlabel $1$ at 13.1 5.5
			\pinlabel $1$ at 13.1 7
			\pinlabel $l+r$ at 21.2 6
			\pinlabel $a+r$ at 13.1 {-.6}
			\endlabellist
			\includegraphics[width=.45\textwidth]{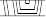}
		\end{gathered}
	\]with the following alphabets assigned to edges: \vspace{5pt}\[
		\begin{gathered}
			\labellist
			\pinlabel $\A$ at -0.5 0.1
			\pinlabel $\B$ at -0.5 7.7
			\pinlabel $\C$ at 22.3 7.7
			\pinlabel $\mathbf{D}$ at 22.4 0.1
			\pinlabel $x_1$ at 1.2 6
			\pinlabel $x_2$ at 2.7 6
			\pinlabel $\cdots$ at 4.4 5.9
			\pinlabel $x_r$ at 5.7 6
			\pinlabel $x_{r+1}$ at 13.1 1
			\pinlabel $x_{r+2}$ at 13.1 2.5
			\pinlabel $\vdots$ at 13.1 4.5
			\pinlabel $x_{b-1}$ at 13.1 5.5
			\pinlabel $x_b$ at 13.1 7
			\pinlabel $\mathbf{E}_r$ at 20.8 6
			\pinlabel $\F_r$ at 13.1 {-.7}
			\endlabellist
			\includegraphics[width=.45\textwidth]{Vgeneral}
		\end{gathered}\vspace{5pt}
	\]For $t \in \{1,\ldots,b\}$, let $Q_t$ be the endomorphism of $V_r$ given by multiplication with \[
		e_{l+t}(\C - x_t - x_{t+1} - \cdots - x_b).
	\]For $r \in \{0,\ldots,b-1\}$, define $Z_{(r+1)r}$ to be the following foam. For clarity, we only draw the portion of the foam in the region near the edges with alphabets $x_{r+1},\mathbf{E}_r,\F_r$ and $x_{r+1},\mathbf{E}_{r+1},\F_{r+1}$. Away from this region, the foam agrees with the identity foam.\[
		\begin{gathered}
			\includegraphics[width=.163\textwidth]{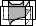}
			\vspace{-3pt}
		\end{gathered} \hspace{10pt} = \hspace{10pt} \begin{gathered}
			\includegraphics[width=.163\textwidth]{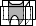}
			\\
			\includegraphics[width=.163\textwidth]{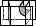}
			\vspace{-3pt}
		\end{gathered} \hspace{10pt} = \hspace{10pt} \begin{gathered}
			\includegraphics[width=.163\textwidth]{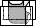}
			\\
			\includegraphics[width=.163\textwidth]{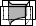}
			\vspace{-3pt}
		\end{gathered}\vspace{5pt}
	\]Let $Z_{r(r+1)}$ be the adjoint of $Z_{(r+1)r}$. Algebraic formulas for these bimodule maps are given in the proof of Lemma~\ref{lem:relationsK}. 
\end{df}

\begin{proof}[Proof of Lemma~\ref{lem:relationsK}]
	The singular Bott--Samelson bimodule associated to $V_r$ is a grading shift of the quotient of \[
		\Z[x_1,\ldots,x_b] \otimes \Sym(\A) \otimes \Sym(\B) \otimes \Sym(\C) \otimes \Sym(\mathbf{D}) \otimes \Sym(\mathbf{E}_r) \otimes \Sym(\F_r)
	\]by the ideal $I_r$ generated by the relations \[
		\begin{tikzcd}[row sep = 0pt]
			e_i(\B) = e_i(x_1 + \cdots + x_b) & e_i(\C) = e_i(\mathbf{E}_r + x_{r+1} + \cdots + x_b)\\
			e_i(\A) = e_i(\F_r - x_1 - \cdots - x_r) & e_i(\mathbf{D}) = e_i(\F_r - \mathbf{E}_r)
		\end{tikzcd}
	\]for $i \ge 1$ using the notation explained in section~\ref{subsec:symmetric_polynomials}. For $j \neq r$, the divided difference operator $\partial_j$ of $\Z[x_1,\ldots,x_b]$ preserves $I_r$ so it descends to an endomorphism of the quotient. It commutes with the actions of the elementary symmetric polynomials in the alphabets $\A,\B,\C$, and $\mathbf{D}$ so it is a bimodule endomorphism. If $i \in \{1,\ldots,r-1\}$, then the foam representing $\partial_i$ is given locally by \[
		\begin{gathered}
			\includegraphics[width=.13\textwidth]{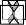}
		\end{gathered}
	\]For $i = r+1,\ldots,b-1$, it is given by a similar local foam except that the front white sheet is rotated around a vertical axis to the right, which drags along and stretches the shaded facets into an arc. In particular, the foam $\partial_i$ factors through the web obtained by merging the parallel edges with alphabets $x_i$ and $x_{i+1}$ into a single edge colored by $2$ with alphabet $\{x_i,x_{i+1}\}$. The map $\partial_i$ is the equivariant push-pull map induced by the natural projection from the Bott--Samelson variety of $V_r$ to that of this web, so it is self-adjoint. Each $x_i$ is self-adjoint as is multiplication by an elementary symmetric polynomial in an alphabet assigned to any edge. The endomorphism $s_i = \Id - \,(x_i - x_{i+1})\partial_i$ is skew-adjoint because \[
		s^*\hspace{-4pt}_i = \Id^* - \,\partial^*\hspace{-4pt}_i(x^*\hspace{-4pt}_i - x^*\hspace{-4pt}_{i+1}) = \Id - \,\partial_i x_i + \partial_i x_{i+1} = \Id - \,(\Id + \,x_{i+1}\partial_i) + (-\Id + \,x_i\partial_i) = -s_i.
	\]

	The endomorphism $Q_t$ clearly commutes with $x_1,\ldots,x_b$, and since $e_{l+t}(\C - x_t - \cdots - x_b)$ is symmetric in $x_1,\ldots,x_{t-1}$ and in $x_t,\ldots,x_b$, it follows from the Leibniz rule for $\partial_i$ that $Q_t$ commutes with $\partial_i$ whenever $i\neq t-1$. By applying $\partial_{t-1}$ to $e_{l+t}(\C - x_t - \cdots - x_b)$, we obtain \begin{align*}
		\partial_{t-1}\left(\sum_{j=0}^{l+t} (-1)^j e_{l+t-j}(\C)h_{j}(x_t,\ldots,x_b) \right) &= \sum_{j=0}^{l+t} (-1)^j e_{l+t-j}(\C)\partial_{t-1} h_{j}(x_t,\ldots,x_b)\\
		&= \sum_{j=1}^{l+t}(-1)^{j+1} e_{l+t-j}(\C)h_{j-1}(x_{t-1},x_t,\ldots,x_b)
	\end{align*}which is $e_{l+t-1}(\C - x_{t-1}-x_t-\cdots-x_b)$. The second equality uses the identity $\partial_{t-1}h_j(x_t,\ldots,x_b) = -h_{j-1}(x_{t_1},x_t,\ldots,x_b)$ for $j \ge 1$, which follows from the computation \begin{align*}
		\frac{h_j(x_t,x_{t+1},\cdots,x_b) - h_j(x_{t-1},x_{t+1},\ldots,x_b)}{x_{t-1}-x_t} &= \sum_{m=0}^j \left( \frac{x_t^m-x_{t-1}^m}{x_{t-1}-x_t} \right) h_{j-m}(x_{t+1},\ldots,x_b)\\
		&= -\sum_{m=1}^j h_{m-1}(x_{t-1},x_t)h_{j-m}(x_{t+1},\ldots,x_b).
	\end{align*}Hence, the Leibniz rule for $\partial_{t-1}$ implies that $\partial_{t-1} Q_t = Q_{t-1}s_{t-1} + Q_t \partial_{t-1}$. If $t = r + k$ for $k \ge 1$, then \begin{align*}
		e_{l+r+k}(\C - x_{r+k}-\cdots-x_b) &= e_{l+r+k}(\mathbf{E} + x_{r+1} + \cdots + x_{r+k-1})\\
		&= \sum_{j=0}^{l+r+k} e_{l+r+k-j}(\mathbf{E})e_{j}(x_{r+1},\ldots,x_{r+k-1}) = 0
	\end{align*}where the last equality follows from the observation that $e_{l+r+k-j}(\mathbf{E}) = 0$ when $j < k$ while $e_j(x_{r+1},\ldots,x_{r+k-1}) = 0$ when $j \ge k$. 

	Next, we note that $Z_{r(r+1)}$ is the ring map \[
		\begin{tikzcd}
			\Z[x_1,\ldots,x_b] \otimes \Sym(\A) \otimes \Sym(\B) \otimes \Sym(\C) \otimes \Sym(\mathbf{D}) \otimes \Sym(\mathbf{E}_{r+1}) \otimes \Sym(\F_{r+1})/I_{r+1} \ar[d]\\
			\Z[x_1,\ldots,x_b] \otimes \Sym(\A) \otimes \Sym(\B) \otimes \Sym(\C) \otimes \Sym(\mathbf{D}) \otimes \Sym(\mathbf{E}_r) \otimes \Sym(\F_r)/I_r
		\end{tikzcd}
	\]that is linear over the first five tensor factors and sends $e_i(\mathbf{E}_{r+1}) \mapsto e_i(\mathbf{E}_r + x_{r+1})$ and $e_i(\F_{r+1}) \mapsto e_i(\F_r + x_{r+1})$. Its adjoint $Z_{(r+1)r}$ is given by the map \[
		\begin{tikzcd}
			q^{2d}\,\Z[x_1,\ldots,x_b] \otimes \Sym(\A) \otimes \Sym(\B) \otimes \Sym(\C) \otimes \Sym(\mathbf{D}) \otimes \Sym(\mathbf{E}_r) \otimes \Sym(\F_r)/I_r \ar[d]\\
			\Z[x_1,\ldots,x_b] \otimes \Sym(\A) \otimes \Sym(\B) \otimes \Sym(\C) \otimes \Sym(\mathbf{D}) \otimes \Sym(\mathbf{E}_{r+1}) \otimes \Sym(\F_{r+1})/I_{r+1}
		\end{tikzcd}
	\]that sends $1 \mapsto e_d(\mathbf{D} - x_{r+1})$, is linear over the first five tensor factors, and intertwines $e_i(\mathbf{E}_r)$ with $e_i(\mathbf{E}_{r+1}-x_{r+1})$ and $e_i(\F_r)$ with $e_i(\F_{r+1}-x_{r+1})$. 
	It is straightforward to see that $Z_{r(r+1)}$ and $Z_{(r+1)r}$ commute with the stated endomorphisms. To see that $Z_{(r-1)r}\,Z_{r(r+1)}$ commutes with $\partial_r$, note that $Z_{(r-1)r}\,Z_{r(r+1)}$ sends $e_i(\mathbf{E}_{r+1}) \mapsto e_i(\mathbf{E}_{r-1} + x_{r} + x_{r+1})$ and $e_i(\F_{r+1})\mapsto e_i(\F_{r-1} + x_{r} + x_{r+1})$. These polynomials are invariant under $s_r$ so $Z_{(r-1)r}\,Z_{r(r+1)}$ commutes with $\partial_{r}$. By taking adjoints, it follows that $Z_{(r+1)r}\,Z_{r(r-1)}$ does as well. Lastly, a direct computation shows that $Z_{r(r+1)}\,s_r\,Z_{(r+1)r}$ and $Z_{r(r-1)}\,s_r\,Z_{(r-1)r}$ are both given by the map \[
		\begin{tikzcd}
			q^{2d}\,\Z[x_1,\ldots,x_b] \otimes \Sym(\A) \otimes \Sym(\B) \otimes \Sym(\C) \otimes \Sym(\mathbf{D}) \otimes \Sym(\mathbf{E}_r) \otimes \Sym(\F_r)/I_r \ar[d]\\
			\Z[x_1,\ldots,x_b] \otimes \Sym(\A) \otimes \Sym(\B) \otimes \Sym(\C) \otimes \Sym(\mathbf{D}) \otimes \Sym(\mathbf{E}_{r}) \otimes \Sym(\F_{r})/I_{r}
		\end{tikzcd}
	\]that sends $1\mapsto e_d(\mathbf{D}-x_r)$, is linear over the last six tensor factors and the actions of $x_1,\ldots,x_{r-1},x_{r+2},\ldots,x_b$, and intertwines the action of $x_r$ with $x_{r+1}$ and the action of $x_{r+1}$ with $x_r$. 
\end{proof}

%%%%%%%%%%%%%%%%%%%%%%%%%%%%%%%%%%

\subsection{The shape of $\sr{K}$}\label{subsec:the_shape_of_K}

Consider the $b$-dimensional cube $[0,3]^b \subset \R^b$ with edges of length $3$. The standard cubulation of a $b$-dimensional cube has $2^b$ vertices, $2^{b-1}b$ edges, $2^{b-2}\binom{b}{2}$ faces, and $2^{b-k}\binom{b}{k}$ facets of dimension $k$. The cube $[0,3]^b$ can be thought of as having either this standard cubulation or the finer cubulation with $4^b$ vertices, $4^{b-1} 3b$ edges, $4^{b-2}9b$ faces, and $4^{b-k}3^k \binom{b}{k}$ facets of dimension $k$ arising from the standard cubulation of $\R^b$. The $4^b$ vertices are the integer lattice points within $[0,3]^b$. We refer to the standard cubulation as the \textit{coarse} cubulation and the finer cubulation as the \textit{fine} cubulation. See Figure~\ref{fig:cubulations}.

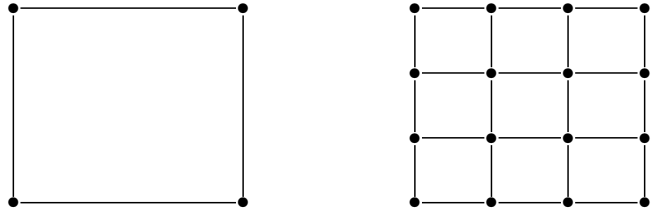
\begin{figure}[!ht]
	\centering
	\vspace{-5pt}\[
		\begin{tikzcd}[nodes={inner sep=0pt},row sep=20pt]
			\bullet \ar[rrr, no head] & \phantom{\bullet} & \phantom{\bullet} & \bullet\\
			\phantom{\bullet} & \phantom{\bullet} & \phantom{\bullet} & \phantom{\bullet}\\
			\phantom{\bullet} & \phantom{\bullet} & \phantom{\bullet} & \phantom{\bullet}\\
			\bullet \ar[rrr, no head] \ar[uuu, no head] & \phantom{\bullet} & \phantom{\bullet} & \bullet \ar[uuu, no head]
		\end{tikzcd} \qquad\qquad\qquad\begin{tikzcd}[nodes={inner sep=0pt},row sep=20pt]
			\bullet \ar[r, no head] & \bullet \ar[r, no head] & \bullet \ar[r, no head] & \bullet\\
			\bullet \ar[r, no head] \ar[u, no head] & \bullet \ar[r, no head] \ar[u, no head] & \bullet \ar[r, no head] \ar[u, no head] & \bullet \ar[u, no head]\\
			\bullet \ar[r, no head] \ar[u, no head] & \bullet \ar[r, no head] \ar[u, no head] & \bullet \ar[r, no head] \ar[u, no head] & \bullet \ar[u, no head]\\
			\bullet \ar[r, no head] \ar[u, no head] & \bullet \ar[r, no head] \ar[u, no head] & \bullet \ar[r, no head] \ar[u, no head] & \bullet \ar[u, no head]
		\end{tikzcd}\vspace{-5pt}
	\]
	\captionsetup{width=.8\linewidth}
	\caption{The coarse and fine cubulations of $[0,3]^b \subset \R^b$ for $b = 2$.}
	\label{fig:cubulations}
\end{figure}

We will define the chain complex $\sr{K} = {}^b_a\sr{K}^c_d$ in the following way. To each vertex $\varepsilon = (\varepsilon_1,\ldots,\varepsilon_b) \in [0,3]^b \cap \Z^b$ of the fine cubulation, we assign an object $V(\varepsilon)$, which is just one of the webs $V_0,\ldots,V_b$ from section~\ref{subsec:webs_and_foams_in_K} with a quantum grading shift. 
The complex $\sr{K}$ is the direct sum $\bigoplus_\varepsilon t^{-|\varepsilon|} V(\varepsilon)$ over all $4^b$ vertices $\varepsilon \in [0,3]^b \cap \Z^b$ where $|\varepsilon| \coloneqq \sum_{i=1}^b \varepsilon_i$. We then define the differential explicitly by components. The nontrivial components are precisely the ones lying along the $4^{b-1}3b$ edges of the fine cubulation. In particular, the differential decrements a coordinate of $\varepsilon$ by one. The differential squares to zero when traveling along consecutive edges in the same direction. Each of the $4^{b-2}9b$ faces of the fine cubulation yields a commutative square, which is made anti-commutative with appropriate signs added later. 
So $\sr{K}$ is a $b$-fold complex, where a $2$-fold complex is a bicomplex and a $3$-fold complex is a tricomplex. The component of the differential assigned to an edge parallel to the $i$th coordinate direction is negated when forming the total complex if the sum of the first $i-1$ coordinates is odd. 

The involutive symmetry $\iota\colon [0,3]^b \to [0,3]^b$ of the cube given by $\iota(x_1,\ldots,x_b) = (3-x_1,\ldots,3-x_b)$ will play an important role. Note that it induces an involution on the set of $k$-dimensional facets of the fine cubulation. On facets, it is a fixed-point-free involution except for the central $b$-dimensional facet. 

\begin{df}
	Two $k$-dimensional facets of the fine cubulation are \textit{dual} if they are paired by the involution. Given a vertex $\varepsilon = (\varepsilon_1,\ldots,\varepsilon_b) \in [0,3]^b \cap \Z^b$, we denote its dual vertex by $\varepsilon^* \coloneqq (3-\varepsilon_1,\ldots,3-\varepsilon_b)$.
\end{df}

\subsection{The objects of $\sr{K}$}\label{subsec:the_objects_of_K}
 
Let $\varepsilon = (\varepsilon_1,\ldots,\varepsilon_b) \in [0,3]^b \cap \Z^b$ be a vertex of the fine cubulation. Let $r(\varepsilon) \in \{0,1,\ldots,b\}$ be the number of coordinates of $\varepsilon$ that are equal to either $1$ or $2$. In symbols, we have $r(\varepsilon) = \sum_{i=1}^b \delta(\varepsilon_i - 1) + \delta(\varepsilon_i - 2)$ where $\delta\colon \Z \to \{0,1\}$ is the Dirac delta function. Note that $r(\varepsilon)$ is the dimension of the unique facet of the coarse cubulation of $[0,3]^b$ that contains $\varepsilon$ in its interior. Set \[
	V(\varepsilon) \coloneqq q^{\,G(\varepsilon)} V_{r(\varepsilon)}
\] where the grading shift function $G\colon [0,3]^b \cap \Z^b \to \Z$ has $G(0,\ldots,0) = 0$ and satisfies the following rule. 

While specifying the rule, we define a function $\upsilon$ that assigns an integer to each oriented edge of the fine cubulation of $[0,3]^b$. Fix $1 \leq i \leq b$ and $\varepsilon_1,\ldots,\varepsilon_{i-1},\varepsilon_{i+1},\ldots,\varepsilon_b \in \{0,1,2,3\}$. Let $\varepsilon^j \coloneqq (\varepsilon_1,\ldots,\varepsilon_{i-1},j,\varepsilon_{i+1},\ldots,\varepsilon_b) \in [0,3]^b \cap \Z^b$ for $j = 0,1,2,3$, and let $\varepsilon^{[0,1]},\varepsilon^{[1,2]},\varepsilon^{[2,3]}$ be the following three oriented edges of the fine cubulation \[
	\begin{tikzcd}[column sep={70pt,between origins}]
		\varepsilon^0 & \varepsilon^1 \ar[l,swap,"\varepsilon^{[0,1]}"] & \varepsilon^2 \ar[l,swap,"\varepsilon^{[1,2]}"] & \varepsilon^3 \ar[l,swap,"\varepsilon^{[2,3]}"]
	\end{tikzcd}
\]The function $G$ is required to satisfy the following equations which also define the values of $\upsilon$ on these three edges. \begin{align*}
	G(\varepsilon^0) - G(\varepsilon^1) &= \upsilon(\varepsilon^{[0,1]}) \coloneqq 2\left(\sum_{k=i+1}^b \delta(\varepsilon_k - 0) + \delta(\varepsilon_k-1)\right) - d\\
	G(\varepsilon^1) - G(\varepsilon^2) &= \upsilon(\varepsilon^{[1,2]}) \coloneqq -2\left(\sum_{k=1}^{i-1} \delta(\varepsilon_k - 1) + \delta(\varepsilon_k - 2)\right) - 2 - 2l\\
	G(\varepsilon^2) - G(\varepsilon^3) &= \upsilon(\varepsilon^{[2,3]}) \coloneqq 2\left(\sum_{k=i+1}^b \delta(\varepsilon_k - 2) + \delta(\varepsilon_k-3)\right) - d.
\end{align*}So to obtain $G(\varepsilon^0)$ from $G(\varepsilon^1)$, subtract $d$ and add twice the number of $0$'s and $1$'s among $\varepsilon_{i+1},\ldots,\varepsilon_b$. To obtain $G(\varepsilon^1)$ from $G(\varepsilon^2)$, subtract $2 + 2l$ and then subtract twice the number of $1$'s and $2$'s among $\varepsilon_{1},\ldots,\varepsilon_{i-1}$. To obtain $G(\varepsilon^2)$ from $G(\varepsilon^3)$, subtract $d$ and add twice the number of $2$'s and $3$'s among $\varepsilon_{i+1},\ldots,\varepsilon_b$. 
See Figures~\ref{fig:bequals2objects} and \ref{fig:bequals3objects} for examples. 

\begin{figure}[!ht]
	\centering
	\[
		\begin{tikzcd}[nodes={inner sep=4pt},row sep={27pt,between origins}, column sep={27pt,between origins}]
			03 \ar[dotted,r, no head] & 13 \ar[dotted,r, no head] & 23 \ar[dotted,r, no head] & 33\\
			02 \ar[dotted,r, no head] \ar[dotted,u, no head] & 12 \ar[dotted,r, no head] \ar[dotted,u, no head] & 22 \ar[dotted,r, no head] \ar[dotted,u, no head] & 32 \ar[dotted,u, no head]\\
			01 \ar[dotted,r, no head] \ar[dotted,u, no head] & 11 \ar[dotted,r, no head] \ar[dotted,u, no head] & 21 \ar[dotted,r, no head] \ar[dotted,u, no head] & 31 \ar[dotted,u, no head]\\
			00 \ar[dotted,r, no head] \ar[dotted,u, no head] & 10 \ar[dotted,r, no head] \ar[dotted,u, no head] & 20 \ar[dotted,r, no head] \ar[dotted,u, no head] & 30 \ar[dotted,u, no head]
		\end{tikzcd} \qquad\qquad \begin{tikzcd}[nodes={inner sep=4pt},row sep={27pt,between origins}, column sep={27pt,between origins}]
			0 \ar[dotted,r, no head] & 1 \ar[dotted,r, no head] & 1 \ar[dotted,r, no head] & 0\\
			1 \ar[dotted,r, no head] \ar[dotted,u, no head] & 2 \ar[dotted,r, no head] \ar[dotted,u, no head] & 2 \ar[dotted,r, no head] \ar[dotted,u, no head] & 1 \ar[dotted,u, no head]\\
			1 \ar[dotted,r, no head] \ar[dotted,u, no head] & 2 \ar[dotted,r, no head] \ar[dotted,u, no head] & 2 \ar[dotted,r, no head] \ar[dotted,u, no head] & 1 \ar[dotted,u, no head]\\
			0 \ar[dotted,r, no head] \ar[dotted,u, no head] & 1 \ar[dotted,r, no head] \ar[dotted,u, no head] & 1 \ar[dotted,r, no head] \ar[dotted,u, no head] & 0 \ar[dotted,u, no head]
		\end{tikzcd} \qquad\qquad \begin{tikzcd}[nodes={inner sep=4pt},row sep={27pt,between origins}, column sep={27pt,between origins}]
			6 \ar[dotted,r, no head] & 8 \ar[dotted,r, no head] & 10 \ar[dotted,r, no head] & 10\\
			4 \ar[dotted,r, no head] \ar[dotted,u, no head] & 6 \ar[dotted,r, no head] \ar[dotted,u, no head] & 8 \ar[dotted,r, no head] \ar[dotted,u, no head] & 8 \ar[dotted,u, no head]\\
			2 \ar[dotted,r, no head] \ar[dotted,u, no head] & 2 \ar[dotted,r, no head] \ar[dotted,u, no head] & 4 \ar[dotted,r, no head] \ar[dotted,u, no head] & 6 \ar[dotted,u, no head]\\
			0 \ar[dotted,r, no head] \ar[dotted,u, no head] & 0 \ar[dotted,r, no head] \ar[dotted,u, no head] & 2 \ar[dotted,r, no head] \ar[dotted,u, no head] & 4 \ar[dotted,u, no head]
		\end{tikzcd}
		\vspace{-10pt}
	\]
	\captionsetup{width=.8\linewidth}
	\caption{On the left are the vertices $\varepsilon \in [0,3]^b \cap \Z^b$ for $b = 2$. In the middle are the values of $r(\varepsilon) \in \{0,1,2\}$. On the right are the values of $G(\varepsilon) \in \Z$ for $a = b = c = d = 2$.}
	\label{fig:bequals2objects}
\end{figure}

\begin{figure}[!ht]
	\[
		\begin{tikzcd}[nodes={inner sep=3pt},column sep={29pt,between origins},row sep={4pt,between origins}]
			& & & 033 \ar[dotted,no head, dddl] \ar[dotted,no head, dddd] & & & & 133 \ar[dotted,no head, dddl] \ar[dotted,no head, dddd] & & & & 233 \ar[dotted,no head, dddl] \ar[dotted,no head, dddd] & & & & 333 \ar[dotted,no head, dddl] \ar[dotted,no head, dddd]\\
			& & & & & & & & & & & & & & &\\
			& & & & & & & & & & & & & & &\\
			& & 023 \ar[dotted,no head, dddl] \ar[dotted,no head, dddd] & & & & 123 \ar[dotted,no head, dddl] \ar[dotted,no head, dddd] & & & & 223 \ar[dotted,no head, dddl] \ar[dotted,no head, dddd] & & & & 323 \ar[dotted,no head, dddl] \ar[dotted,no head, dddd] &\\
			& & & 032 \ar[dotted,no head, dddl] \ar[dotted,no head, dddd] & & & & 132 \ar[dotted,no head, dddl] \ar[dotted,no head, dddd] & & & & 232 \ar[dotted,no head, dddl] \ar[dotted,no head, dddd] & & & & 332 \ar[dotted,no head, dddl] \ar[dotted,no head, dddd]\\
			& & & & & & & & & & & & & & &\\
			& 013 \ar[dotted,no head, dddl] \ar[dotted,no head, dddd] & & & & 113 \ar[dotted,no head, dddl] \ar[dotted,no head, dddd] & & & & 213 \ar[dotted,no head, dddl] \ar[dotted,no head, dddd] & & & & 313 \ar[dotted,no head, dddl] \ar[dotted,no head, dddd] & &\\
			& & 022 \ar[dotted,no head, dddl] \ar[dotted,no head, dddd] & & & & 122 \ar[dotted,no head, dddl] \ar[dotted,no head, dddd] & & & & 222 \ar[dotted,no head, dddl] \ar[dotted,no head, dddd] & & & & 322 \ar[dotted,no head, dddl] \ar[dotted,no head, dddd] &\\
			& & & 031 \ar[dotted,no head, dddl] \ar[dotted,no head, dddd] & & & & 131 \ar[dotted,no head, dddl] \ar[dotted,no head, dddd] & & & & 231 \ar[dotted,no head, dddl] \ar[dotted,no head, dddd] & & & & 331 \ar[dotted,no head, dddl] \ar[dotted,no head, dddd]\\
			003 \ar[dotted,no head, dddd] & & & & 103 \ar[dotted,no head, dddd] & & & & 203 \ar[dotted,no head, dddd] & & & & 303 \ar[dotted,no head, dddd] & & &\\
			& 012 \ar[dotted,no head, dddl] \ar[dotted,no head, dddd] & & & & 112 \ar[dotted,no head, dddl] \ar[dotted,no head, dddd] & & & & 212 \ar[dotted,no head, dddl] \ar[dotted,no head, dddd] & & & & 312 \ar[dotted,no head, dddl] \ar[dotted,no head, dddd] & &\\
			& & 021 \ar[dotted,no head, dddl] \ar[dotted,no head, dddd] & & & & 121 \ar[dotted,no head, dddl] \ar[dotted,no head, dddd] & & & & 221 \ar[dotted,no head, dddl] \ar[dotted,no head, dddd] & & & & 321 \ar[dotted,no head, dddl] \ar[dotted,no head, dddd] &\\
			& & & 030 \ar[dotted,no head, dddl] & & & & 130 \ar[dotted,no head, dddl] & & & & 230 \ar[dotted,no head, dddl] & & & & 330 \ar[dotted,no head, dddl]\\
			002 \ar[dotted,no head, dddd] & & & & 102 \ar[dotted,no head, dddd] & & & & 202 \ar[dotted,no head, dddd] & & & & 302 \ar[dotted,no head, dddd] & & &\\
			& 011 \ar[dotted,no head, dddl] \ar[dotted,no head, dddd] & & & & 111 \ar[dotted,no head, dddl] \ar[dotted,no head, dddd] & & & & 211 \ar[dotted,no head, dddl] \ar[dotted,no head, dddd] & & & & 311 \ar[dotted,no head, dddl] \ar[dotted,no head, dddd] & &\\
			& & 020 \ar[dotted,no head, dddl] & & & & 120 \ar[dotted,no head, dddl] & & & & 220 \ar[dotted,no head, dddl] & & & & 320 \ar[dotted,no head, dddl] &\\
			& & & & & & & & & & & & & & &\\
			001 \ar[dotted,no head, dddd] & & & & 101 \ar[dotted,no head, dddd] & & & & 201 \ar[dotted,no head, dddd] & & & & 301 \ar[dotted,no head, dddd] & & &\\
			& 010 \ar[dotted,no head, dddl] & & & & 110 \ar[dotted,no head, dddl] & & & & 210 \ar[dotted,no head, dddl] & & & & 310 \ar[dotted,no head, dddl] & &\\
			& & & & & & & & & & & & & & &\\
			& & & & & & & & & & & & & & &\\
			000 & & & & 100 & & & & 200 & & & & 300 & & &
		\end{tikzcd}
	\]\vspace{-10pt}
	\[
		\begin{tikzcd}[nodes={inner sep=3pt},column sep={29pt,between origins},row sep={4pt,between origins}]
			& & & 0 \ar[dotted,no head, dddl] \ar[dotted,no head, dddd] & & & & 1 \ar[dotted,no head, dddl] \ar[dotted,no head, dddd] & & & & 1 \ar[dotted,no head, dddl] \ar[dotted,no head, dddd] & & & & 0 \ar[dotted,no head, dddl] \ar[dotted,no head, dddd]\\
			& & & & & & & & & & & & & & &\\
			& & & & & & & & & & & & & & &\\
			& & 1 \ar[dotted,no head, dddl] \ar[dotted,no head, dddd] & & & & 2 \ar[dotted,no head, dddl] \ar[dotted,no head, dddd] & & & & 2 \ar[dotted,no head, dddl] \ar[dotted,no head, dddd] & & & & 1 \ar[dotted,no head, dddl] \ar[dotted,no head, dddd] &\\
			& & & 1 \ar[dotted,no head, dddl] \ar[dotted,no head, dddd] & & & & 2 \ar[dotted,no head, dddl] \ar[dotted,no head, dddd] & & & & 2 \ar[dotted,no head, dddl] \ar[dotted,no head, dddd] & & & & 1 \ar[dotted,no head, dddl] \ar[dotted,no head, dddd]\\
			& & & & & & & & & & & & & & &\\
			& 1 \ar[dotted,no head, dddl] \ar[dotted,no head, dddd] & & & & 2 \ar[dotted,no head, dddl] \ar[dotted,no head, dddd] & & & & 2 \ar[dotted,no head, dddl] \ar[dotted,no head, dddd] & & & & 1 \ar[dotted,no head, dddl] \ar[dotted,no head, dddd] & &\\
			& & 2 \ar[dotted,no head, dddl] \ar[dotted,no head, dddd] & & & & 3 \ar[dotted,no head, dddl] \ar[dotted,no head, dddd] & & & & 3 \ar[dotted,no head, dddl] \ar[dotted,no head, dddd] & & & & 2 \ar[dotted,no head, dddl] \ar[dotted,no head, dddd] &\\
			& & & 1 \ar[dotted,no head, dddl] \ar[dotted,no head, dddd] & & & & 2 \ar[dotted,no head, dddl] \ar[dotted,no head, dddd] & & & & 2 \ar[dotted,no head, dddl] \ar[dotted,no head, dddd] & & & & 1 \ar[dotted,no head, dddl] \ar[dotted,no head, dddd]\\
			0 \ar[dotted,no head, dddd] & & & & 1 \ar[dotted,no head, dddd] & & & & 1 \ar[dotted,no head, dddd] & & & & 0 \ar[dotted,no head, dddd] & & &\\
			& 2 \ar[dotted,no head, dddl] \ar[dotted,no head, dddd] & & & & 3 \ar[dotted,no head, dddl] \ar[dotted,no head, dddd] & & & & 3 \ar[dotted,no head, dddl] \ar[dotted,no head, dddd] & & & & 2 \ar[dotted,no head, dddl] \ar[dotted,no head, dddd] & &\\
			& & 2 \ar[dotted,no head, dddl] \ar[dotted,no head, dddd] & & & & 3 \ar[dotted,no head, dddl] \ar[dotted,no head, dddd] & & & & 3 \ar[dotted,no head, dddl] \ar[dotted,no head, dddd] & & & & 2 \ar[dotted,no head, dddl] \ar[dotted,no head, dddd] &\\
			& & & 0 \ar[dotted,no head, dddl] & & & & 1 \ar[dotted,no head, dddl] & & & & 1 \ar[dotted,no head, dddl] & & & & 0 \ar[dotted,no head, dddl]\\
			1 \ar[dotted,no head, dddd] & & & & 2 \ar[dotted,no head, dddd] & & & & 2 \ar[dotted,no head, dddd] & & & & 1 \ar[dotted,no head, dddd] & & &\\
			& 2 \ar[dotted,no head, dddl] \ar[dotted,no head, dddd] & & & & 3 \ar[dotted,no head, dddl] \ar[dotted,no head, dddd] & & & & 3 \ar[dotted,no head, dddl] \ar[dotted,no head, dddd] & & & & 2 \ar[dotted,no head, dddl] \ar[dotted,no head, dddd] & &\\
			& & 1 \ar[dotted,no head, dddl] & & & & 2 \ar[dotted,no head, dddl] & & & & 2 \ar[dotted,no head, dddl] & & & & 1 \ar[dotted,no head, dddl] &\\
			& & & & & & & & & & & & & & &\\
			1 \ar[dotted,no head, dddd] & & & & 2 \ar[dotted,no head, dddd] & & & & 2 \ar[dotted,no head, dddd] & & & & 1 \ar[dotted,no head, dddd] & & &\\
			& 1 \ar[dotted,no head, dddl] & & & & 2 \ar[dotted,no head, dddl] & & & & 2 \ar[dotted,no head, dddl] & & & & 1 \ar[dotted,no head, dddl] & &\\
			& & & & & & & & & & & & & & &\\
			& & & & & & & & & & & & & & &\\
			0 & & & & 1 & & & & 1 & & & & 0 & & &
		\end{tikzcd}
	\]\vspace{-10pt}
	\[
		\begin{tikzcd}[nodes={inner sep=3pt},column sep={29pt,between origins},row sep={4pt,between origins}]
			& & & 14 \ar[dotted,no head, dddl] \ar[dotted,no head, dddd] & & & & 17 \ar[dotted,no head, dddl] \ar[dotted,no head, dddd] & & & & 19 \ar[dotted,no head, dddl] \ar[dotted,no head, dddd] & & & & 18 \ar[dotted,no head, dddl] \ar[dotted,no head, dddd]\\
			& & & & & & & & & & & & & & &\\
			& & & & & & & & & & & & & & &\\
			& & 13 \ar[dotted,no head, dddl] \ar[dotted,no head, dddd] & & & & 16 \ar[dotted,no head, dddl] \ar[dotted,no head, dddd] & & & & 18 \ar[dotted,no head, dddl] \ar[dotted,no head, dddd] & & & & 17 \ar[dotted,no head, dddl] \ar[dotted,no head, dddd] &\\
			& & & 11 \ar[dotted,no head, dddl] \ar[dotted,no head, dddd] & & & & 14 \ar[dotted,no head, dddl] \ar[dotted,no head, dddd] & & & & 16 \ar[dotted,no head, dddl] \ar[dotted,no head, dddd] & & & & 15 \ar[dotted,no head, dddl] \ar[dotted,no head, dddd]\\
			& & & & & & & & & & & & & & &\\
			& 11 \ar[dotted,no head, dddl] \ar[dotted,no head, dddd] & & & & 12 \ar[dotted,no head, dddl] \ar[dotted,no head, dddd] & & & & 14 \ar[dotted,no head, dddl] \ar[dotted,no head, dddd] & & & & 15 \ar[dotted,no head, dddl] \ar[dotted,no head, dddd] & &\\
			& & 10 \ar[dotted,no head, dddl] \ar[dotted,no head, dddd] & & & & 13 \ar[dotted,no head, dddl] \ar[dotted,no head, dddd] & & & & 15 \ar[dotted,no head, dddl] \ar[dotted,no head, dddd] & & & & 14 \ar[dotted,no head, dddl] \ar[dotted,no head, dddd] &\\
			& & & 9 \ar[dotted,no head, dddl] \ar[dotted,no head, dddd] & & & & 10 \ar[dotted,no head, dddl] \ar[dotted,no head, dddd] & & & & 12 \ar[dotted,no head, dddl] \ar[dotted,no head, dddd] & & & & 13 \ar[dotted,no head, dddl] \ar[dotted,no head, dddd]\\
			8 \ar[dotted,no head, dddd] & & & & 9 \ar[dotted,no head, dddd] & & & & 11 \ar[dotted,no head, dddd] & & & & 12 \ar[dotted,no head, dddd] & & &\\
			& 8 \ar[dotted,no head, dddl] \ar[dotted,no head, dddd] & & & & 9 \ar[dotted,no head, dddl] \ar[dotted,no head, dddd] & & & & 11 \ar[dotted,no head, dddl] \ar[dotted,no head, dddd] & & & & 12 \ar[dotted,no head, dddl] \ar[dotted,no head, dddd] & &\\
			& & 6 \ar[dotted,no head, dddl] \ar[dotted,no head, dddd] & & & & 7 \ar[dotted,no head, dddl] \ar[dotted,no head, dddd] & & & & 9 \ar[dotted,no head, dddl] \ar[dotted,no head, dddd] & & & & 10 \ar[dotted,no head, dddl] \ar[dotted,no head, dddd] &\\
			& & & 6 \ar[dotted,no head, dddl] & & & & 7 \ar[dotted,no head, dddl] & & & & 9 \ar[dotted,no head, dddl] & & & & 10 \ar[dotted,no head, dddl]\\
			5 \ar[dotted,no head, dddd] & & & & 6 \ar[dotted,no head, dddd] & & & & 8 \ar[dotted,no head, dddd] & & & & 9 \ar[dotted,no head, dddd] & & &\\
			& 4 \ar[dotted,no head, dddl] \ar[dotted,no head, dddd] & & & & 3 \ar[dotted,no head, dddl] \ar[dotted,no head, dddd] & & & & 5 \ar[dotted,no head, dddl] \ar[dotted,no head, dddd] & & & & 8 \ar[dotted,no head, dddl] \ar[dotted,no head, dddd] & &\\
			& & 3 \ar[dotted,no head, dddl] & & & & 4 \ar[dotted,no head, dddl] & & & & 6 \ar[dotted,no head, dddl] & & & & 7 \ar[dotted,no head, dddl] &\\
			& & & & & & & & & & & & & & &\\
			3 \ar[dotted,no head, dddd] & & & & 2 \ar[dotted,no head, dddd] & & & & 4 \ar[dotted,no head, dddd] & & & & 7 \ar[dotted,no head, dddd] & & &\\
			& 1 \ar[dotted,no head, dddl] & & & & 0 \ar[dotted,no head, dddl] & & & & 2 \ar[dotted,no head, dddl] & & & & 5 \ar[dotted,no head, dddl] & &\\
			& & & & & & & & & & & & & & &\\
			& & & & & & & & & & & & & & &\\
			0 & & & & {-1} & & & & 1 & & & & 4 & & &
		\end{tikzcd}
	\]
	\captionsetup{width=.8\linewidth}
	\caption{On the top are the vertices $\varepsilon \in [0,3]^b \cap \Z^b$ for $b = 3$. In the middle are the values of $r(\varepsilon) \in \{0,1,2,3\}$. On the bottom are the values of $G(\varepsilon) \in \Z$ for $a = b = c = d = 3$.}
	\label{fig:bequals3objects}
\end{figure}
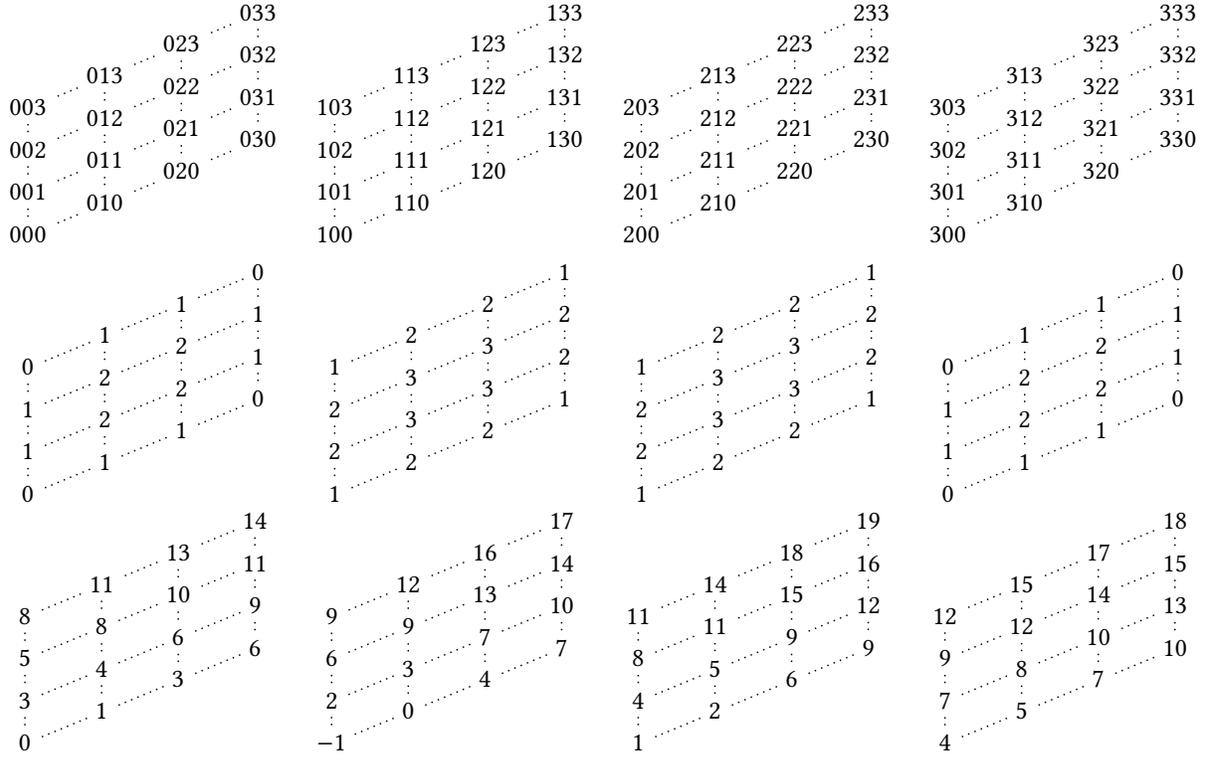

\begin{lem}\label{lem:gradingFunctionK}
	There is a unique function $G\colon [0,3]^b \cap \Z^b \to \Z$ for which $G(0,\ldots,0) = 0$ that satisfies the above rule. Furthermore, this function has the property that for any $\varepsilon \in [0,3]^b \cap \Z^b$,\[
		\frac{G(\varepsilon) + G(\varepsilon^*)}2 = \binom{a+2}{2} - \binom{a-b+2}{2}.
	\]
\end{lem}
\begin{proof}
	We view $\upsilon$ as a $1$-cochain on the cube. We must show that $\upsilon$ is a coboundary, and to do so, it suffices to show that it is coclosed. Fix $1 \leq i_1 < i_2 \leq b$ and $\varepsilon_1,\ldots,\varepsilon_{i_1 - 1}, \varepsilon_{i_1+1},\ldots,\varepsilon_{i_2-1},\varepsilon_{i_2+1},\ldots,\varepsilon_b \in \{0,1,2,3\}$. For $j_1,j_2 \in \{0,1,2,3\}$, let $\varepsilon^{j_1,j_2} \coloneqq (\varepsilon_1,\ldots,\varepsilon_{i_1 - 1}, j_1,\varepsilon_{i_1+1},\ldots,\varepsilon_{i_2-1},j_2,\varepsilon_{i_2+1},\ldots,\varepsilon_b)$. For $j_1,j_2 \in \{0,1,2\}$, consider the face of the fine cubulation with the following vertices and oriented edges. \vspace{-3pt}\[
		\begin{tikzcd}[column sep={100pt,between origins},row sep={35pt,between origins}]
			\varepsilon^{j_1,j_2+1} \ar[d,swap,"\varepsilon^{j_1,[j_2,j_2+1]}"] & \varepsilon^{j_1+1,j_2+1} \ar[l,swap,"\varepsilon^{[j_1,j_1+1],j_2+1}"] \ar[d,swap,"\varepsilon^{j_1+1,[j_2,j_2+1]}"]\\
			\varepsilon^{j_1,j_2} & \varepsilon^{j_1+1,j_2} \ar[l,swap,"\varepsilon^{[j_1,j_1+1],j_2}"]
		\end{tikzcd}
	\]We must verify that $\upsilon(\varepsilon^{j_1,[j_2,j_2+1]}) + \upsilon(\varepsilon^{[j_1,j_1+1],j_2+1}) = \upsilon(\varepsilon^{[j_1,j_1+1],j_2}) + \upsilon(\varepsilon^{j_1+1,[j_2,j_2+1]})$. Observe that the contributions from the fixed coordinates $\varepsilon_k$ for $k \in \{1,\ldots,b\}\setminus \{i_1,i_2\}$ agree so we may assume that $i_1 = 1$, $i_2 = 2$, and $b = 2$. Similarly, subtracting constants in different orders commute so we may set $l$ and $d$ to be any fixed constants we like, so we set $d = 0$ and $l = -1$. We are reduced to checking that the following $1$-cochain on $[0,3]^2$ is coclosed, which is easy to see. \[
		\begin{tikzcd}[nodes={inner sep=1pt},row sep={30pt,between origins}, column sep={35pt,between origins}]
			\varepsilon^{03} \ar[d,swap,"0"] & \varepsilon^{13} \ar[d,swap,"0"] \ar[l,swap,"0"] & \varepsilon^{23} \ar[d,swap,"0"] \ar[l,swap,"0"] & \varepsilon^{33} \ar[d,swap,"0"] \ar[l,swap,"2"]\\
			\varepsilon^{02} \ar[d,swap,"0"] & \varepsilon^{12} \ar[d,swap,"-2"] \ar[l,swap,"0"] & \varepsilon^{22} \ar[d,swap,"-2"] \ar[l,swap,"0"] & \varepsilon^{32} \ar[d,swap,"0"]\ar[l,swap,"2"]\\
			\varepsilon^{01} \ar[d,swap,"0"] & \varepsilon^{11} \ar[d,swap,"0"] \ar[l,swap,"2"] & \varepsilon^{21} \ar[d,swap,"0"] \ar[l,swap,"0"] & \varepsilon^{31} \ar[d,swap,"0"]\ar[l,swap,"0"]\\
			\varepsilon^{00} & \varepsilon^{10} \ar[l,swap,"2"] & \varepsilon^{20} \ar[l,swap,"0"] & \varepsilon^{30} \ar[l,swap,"0"]
		\end{tikzcd}
	\]Hence $\upsilon$ is a coboundary, and any two functions on vertices having $\upsilon$ as their coboundary differ by an overall constant. 

	Let $G\colon [0,3]^b \cap \Z^b \to \Z$ the unique function with $G(0,\ldots,0) = 0$ having $\upsilon$ as its coboundary. Define another function $F\colon [0,3]^b \cap \Z^b \to \Z$ by $F(\varepsilon) \coloneqq -G(\varepsilon^*)$. Then the coboundary of $F$ is also $\upsilon$. Indeed \begin{align*}
		F(\varepsilon^0) - F(\varepsilon^1) &= G((\varepsilon^1)^*) - G((\varepsilon^0)^*) = 2\left(\sum_{k=i+1}^b \delta((3-\varepsilon_k)-2) + \delta((3-\varepsilon_k)-3) \right) - d = \upsilon(\varepsilon^{[0,1]})\\
		F(\varepsilon^1) - F(\varepsilon^2) &= G((\varepsilon^2)^*) - G((\varepsilon^1)^*) = -2\left(\sum_{k=1}^{i-1} \delta((3-\varepsilon_k)-1) +\delta((3-\varepsilon_k)-2) \right) - 2 - 2l = \upsilon(\varepsilon^{[1,2]})\\
		F(\varepsilon^2) - F(\varepsilon^3) &= G((\varepsilon^3)^*) - G((\varepsilon^2)^*) = 2\left(\sum_{k=i+1}^b \delta((3-\varepsilon_k)-0) + \delta((3-\varepsilon_k)-1) \right) - d  = \upsilon(\varepsilon^{[2,3]}).
	\end{align*}It follows that $G - F$ is constant. Hence $G(\varepsilon) + G(\varepsilon^*) = G(\varepsilon) - F(\varepsilon)$ is independent of $\varepsilon$, and in particular \[
		\frac{G(\varepsilon) + G(\varepsilon^*)}{2} = \frac{G(0,\ldots,0) + G(3,\ldots,3)}{2} = \frac{G(3,\ldots,3)}{2}.
	\]We now compute $G(3,\ldots,3)$. Fix $1 \leq i \leq b$ and set $\varepsilon_1 = \cdots = \varepsilon_{i-1} = 3$ and $\varepsilon_{i+1} = \cdots = \varepsilon_b = 0$. Then \[
		G(\varepsilon^0) - G(\varepsilon^1) = 2(b-i) - d \qquad G(\varepsilon^1) - G(\varepsilon^2) = - 2 - 2l \qquad G(\varepsilon^2) - G(\varepsilon^3) = -d.
	\]Hence $G(\varepsilon^0) - G(\varepsilon^3) = -2(a-b+1+i)$ using the identity $l = a - d$, so \[
		\frac{G(3,\ldots,3)}{2} = -\frac12(G(0,\ldots,0) - G(3,\ldots,3)) = \sum_{i=1}^b (a - b + 1 + i) = \binom{a+2}{2} - \binom{a-b+2}{2}.\qedhere
	\]
\end{proof}

\subsection{The differential of $\sr{K}$}\label{subsec:the_differential_of_K}

To define the differential of $\sr{K}$, we must first introduce some notation. A \emph{descending string} $\alpha$ is defined to be a finite string in the symbols $\{\partial,s\}$ where each symbol in the string is additionally given a subscript. The subscripts are positive integers that are required to sequentially decrement from left to right. A concrete example is $\alpha = \partial_8\,\partial_7\,\partial_6\,s_5\,\partial_4\,s_3$. An \emph{ascending string} $\alpha^*$ is defined to be a finite string in the symbols $\{\partial^*, s^*\}$ equipped with positive integral subscripts that sequentially increment from left to right. An example is $\alpha^* = s^*\hspace{-4pt}_3 \,\partial^*\hspace{-4pt}_4\,s^*\hspace{-4pt}_5\,\partial^*\hspace{-4pt}_6\,\partial^*\hspace{-4pt}_7\,\partial^*\hspace{-4pt}_8$. 

Given a descending string $\alpha$, we make the following definitions. \begin{itemize}[noitemsep]
	\item Its \emph{opposite string} $\hat{\alpha}$ is obtained by replacing each $\partial$ with $s$ and each $s$ with $\partial$ while keeping the subscripts the same. The opposite string to $\alpha = \partial_8\,\partial_7\,\partial_6\,s_5\,\partial_4\,s_3$ is $\hat{\alpha} = s_8\, s_7\, s_6\, \partial_5\, s_4\,\partial_3$. 
	\item Its \emph{adjoint string} $\alpha^*$ is obtained by reversing the order of the sequence of symbols with their subscripts while also adding a superscript ${}^*$ to each symbol. The adjoint string to $\alpha = \partial_8\,\partial_7\,\partial_6\,s_5\,\partial_4\,s_3$ is $\alpha^* = s^*\hspace{-4pt}_3 \,\partial^*\hspace{-4pt}_4\,s^*\hspace{-4pt}_5\,\partial^*\hspace{-4pt}_6\,\partial^*\hspace{-4pt}_7\,\partial^*\hspace{-4pt}_8$. 
\end{itemize}Forming the adjoint string defines a bijection between descending and ascending strings, so we may uniquely denote any ascending string by $\alpha^*$ for a descending string $\alpha$. The operations of forming the opposite and adjoint strings to a given ascending string are defined in the natural way so that the operations commute and are involutions. The \textit{largest} and \textit{smallest subscripts} of a nonempty descending string $\alpha$ are its first and last subscripts, respectively, while the \emph{largest} and \emph{smallest subscripts} of a nonempty ascending string $\alpha^*$ are its last and first subscripts, respectively. Technically, for each nonnegative integer $t$, we have an empty sequence, viewed as both ascending and descending, that is defined to have smallest subscript $t+1$ and largest subscript $t$. 

Now fix $i \in \{1,\ldots,b\}$ and $\varepsilon_1,\ldots,\varepsilon_{i-1},\varepsilon_{i+1},\ldots,\varepsilon_b\in\{0,1,2,3\}$. Let $\varepsilon^j \coloneqq (\varepsilon_1,\ldots,\varepsilon_{i-1},j,\varepsilon_{i+1},\ldots,\varepsilon_b)$ for $j = 0,1,2,3$. We define the three components of the differential of $\sr{K}$\[
	\begin{tikzcd}[column sep=60pt]
		V({\varepsilon^{0}}) & V({\varepsilon^{1}}) \ar[l,swap,"\textstyle \phi"] & V({\varepsilon^{2}}) \ar[l,swap,"\textstyle \psi"] & V({\varepsilon^{3}}) \ar[l,swap,"\textstyle \chi"]
	\end{tikzcd}
\]First set $r\coloneqq r(\varepsilon^0) + 1 = r(\varepsilon^1) = r(\varepsilon^2) = r(\varepsilon^3) + 1$. Then define \[
	\phi = \alpha \, Z_{(r-1)r}\, \beta \qquad\qquad \psi = \beta^* \, Q_r \, \hat{\beta} \qquad\qquad \chi = \hat{\beta}^* \, Z_{r(r-1)} \, \hat{\alpha}^*
\]where $\alpha$ is a descending string and $\beta^*$ is an ascending string, defined in the following ways. The smallest subscript of $\alpha$ is declared to be $r$, and its string of symbols is obtained from the sequence $\varepsilon_{i+1},\ldots,\varepsilon_b$ of numbers by deleting the $1$'s and $2$'s and replacing $0$ by $\partial$ and $3$ by $s$. The largest subscript of $\beta^*$ is declared to be $r - 1$, and its string of symbols is obtained from the sequence $\varepsilon_{i+1},\ldots,\varepsilon_b$ of numbers by deleting the $0$'s and $3$'s and replacing $1$ by $\partial^*$ and $2$ by $s^*$. 

\begin{example}
	Let $\varepsilon^j = (1,j,2,0,2,3,1)$. First, we compute $r = r(\varepsilon^1) = 5$ by counting the number of $1$'s and $2$'s in ``$1120231$''. The descending string $\alpha$ is obtained by considering the tail sequence ``$20231$'', erasing the $1$'s and $2$'s to obtain ``$03$'', replacing $0$ by $\partial$ and $3$ by $s$ to obtain $\partial\,s$, and filling in the descending subscripts with smallest subscript $5$ to obtain $\alpha = \partial_6 \, s_5$. The ascending string $\beta^*$ is obtained by considering the tail sequence ``$20231$'', erasing the $0$'s and $3$'s to obtain ``$221$'', replacing $1$ by $\partial^*$ and $2$ by $s^*$ to obtain $s^*\,s^*\, \partial^*$, and filling in the ascending subscripts with largest subscript $4$ to obtain $\beta^* = s^*\hspace{-4pt}_2\: s^*\hspace{-4pt}_3\: \partial^*\hspace{-4pt}_4$. With $\alpha$ and $\beta^*$ in hand, we then have \[
		\phi = \partial_6 \, s_5 \, Z_{45} \, \partial_4 \, s_3 \, s_2 \qquad\qquad \psi = s^*\hspace{-4pt}_2\: s^*\hspace{-4pt}_3\: \partial^*\hspace{-4pt}_4\: Q_5 \, s_4 \, \partial_3 \, \partial_2 \qquad\qquad \chi = \partial^*\hspace{-4pt}_2\: \partial^*\hspace{-4pt}_3\: s^*\hspace{-4pt}_4 \: Z_{54} \, \partial^*\hspace{-4pt}_5 \: s^*\hspace{-4pt}_6.
	\]
\end{example}

\begin{example}
	See the introduction for the components of the differential of $\sr{K}$ in the cases where all four numbers $a,b,c,d$ are equal to $1$ or equal to $2$. In the case that all four are equal to $3$, the components of the differential of $\sr{K}$ are shown in Figure~\ref{fig:finitetricomplex}. 
\end{example}

\begin{figure}
	\vspace{-10pt}\[
		\begin{tikzcd}[nodes={inner sep=0pt},column sep={31pt,between origins},row sep={13pt,between origins}]
			& & & {}^{14}V_0 \ar[dddl,"\textstyle Z_{10} \partial^*\hspace{-4pt}{}_1" {xshift=5pt},sloped] \ar[dddd,"\textstyle Z_{10}" {xshift=1pt},swap] &[-10pt] & & & {}^{17}V_1 \ar[dddl,"\textstyle Z_{21} \partial^*\hspace{-4pt}{}_2" {xshift=5pt},sloped] \ar[dddd,"\textstyle Z_{21}" {xshift=1pt},swap] &[-10pt] & & & {}^{19}V_1 \ar[dddl,"\textstyle Z_{21} \partial^*\hspace{-4pt}{}_2" {xshift=5pt},sloped] \ar[dddd,"\textstyle Z_{21}" {xshift=1pt},swap] &[-10pt] & & & {}^{18}V_0 \ar[dddl,"\textstyle Z_{10} \partial^*\hspace{-4pt}{}_1" {xshift=5pt},sloped] \ar[dddd,"\textstyle Z_{10}" {xshift=1pt},swap]\\
			& & & & & & & & & & & & & & &\\
			& & & & & & & & & & & & & & &\\
			& & {}^{13}V_1 \ar[dddl,"\textstyle Q_1" {xshift=5pt},sloped] \ar[dddd,"\textstyle Z_{21}" {xshift=1pt},swap] & & & & {}^{16}V_2 \ar[dddl,"\textstyle Q_2" {xshift=5pt},sloped] \ar[dddd,"\textstyle Z_{32}" {xshift=1pt},swap] & & & & {}^{18}V_2 \ar[dddl,"\textstyle Q_2" {xshift=5pt},sloped] \ar[dddd,"\textstyle Z_{32}" {xshift=1pt},swap] & & & & {}^{17}V_1 \ar[dddl,"\textstyle Q_1" {xshift=5pt},sloped] \ar[dddd,"\textstyle Z_{21}" {xshift=1pt},swap] &\\
			& & & {}^{11}V_1 \ar[dddl,"\textstyle \partial^*\hspace{-4pt}{}_1 Z_{21}" {xshift=5pt},sloped] \ar[dddd,"\textstyle Q_1" {xshift=1pt},swap] & & & & {}^{14}V_2 \ar[dddl,"\textstyle \partial^*\hspace{-4pt}{}_2 Z_{32}" {xshift=5pt},sloped] \ar[dddd,"\textstyle Q_2" {xshift=1pt},swap] & & & & {}^{16}V_2 \ar[dddl,"\textstyle \partial^*\hspace{-4pt}{}_2 Z_{32}" {xshift=5pt},sloped] \ar[dddd,"\textstyle Q_2" {xshift=1pt},swap] & & & & {}^{15}V_1 \ar[dddl,"\textstyle \partial^*\hspace{-4pt}{}_1 Z_{21}" {xshift=5pt},sloped] \ar[dddd,"\textstyle Q_1" {xshift=1pt},swap]\\
			& & & & & & & & & & & & & & &\\
			& {}^{11}V_1 \ar[dddl,"\textstyle s_1Z_{01}" {xshift=5pt},sloped] \ar[dddd,"\textstyle Z_{21}" {xshift=1pt},swap] & & & & {}^{12}V_2 \ar[dddl,"\textstyle s_2Z_{12}" {xshift=5pt},sloped] \ar[dddd,"\textstyle Z_{32}" {xshift=1pt},swap] & & & & {}^{14}V_2 \ar[dddl,"\textstyle s_2Z_{12}" {xshift=5pt},sloped] \ar[dddd,"\textstyle Z_{32}" {xshift=1pt},swap] & & & & {}^{15}V_1 \ar[dddl,"\textstyle s_1Z_{01}" {xshift=5pt},sloped] \ar[dddd,"\textstyle Z_{21}" {xshift=1pt},swap] & &\\
			& & {}^{10}V_2 \ar[dddl,sloped,"\textstyle s^*\hspace{-4pt}_1Q_2\partial_1" {xshift=5pt}] \ar[dddd,"\textstyle Q_2" {xshift=1pt},swap] & & & & {}^{13}V_3 \ar[dddl,sloped,"\textstyle s^*\hspace{-4pt}_2Q_3\partial_2" {xshift=5pt}] \ar[dddd,"\textstyle Q_3" {xshift=1pt},swap] & & & & {}^{15}V_3 \ar[dddl,sloped,"\textstyle s^*\hspace{-4pt}_2Q_3\partial_2" {xshift=5pt}] \ar[dddd,"\textstyle Q_3" {xshift=1pt},swap] & & & & {}^{14}V_2 \ar[dddl,sloped,"\textstyle s^*\hspace{-4pt}_1Q_2\partial_1" {xshift=5pt}] \ar[dddd,"\textstyle Q_2" {xshift=1pt},swap] &\\
			& & & {}^{9}V_1 \ar[dddl,sloped,"\textstyle s^*\hspace{-4pt}_1 Z_{21}" {xshift=5pt}] \ar[dddd,swap,"\textstyle Z_{01}" {xshift=1pt}] & & & & {}^{10}V_2 \ar[dddl,sloped,"\textstyle s^*\hspace{-4pt}_2 Z_{32}" {xshift=5pt}] \ar[dddd,swap,"\textstyle Z_{12}" {xshift=1pt}] & & & & {}^{12}V_2 \ar[dddl,sloped,"\textstyle s^*\hspace{-4pt}_2 Z_{32}" {xshift=5pt}] \ar[dddd,swap,"\textstyle Z_{12}" {xshift=1pt}] & & & & {}^{13}V_1 \ar[dddl,sloped,"\textstyle s^*\hspace{-4pt}_1 Z_{21}" {xshift=5pt}] \ar[dddd,swap,"\textstyle Z_{01}" {xshift=1pt}]\\
			{}^{8}V_0 \ar[dddd,swap,"\textstyle Z_{10}" {xshift=1pt}] & & & & {}^{9}V_1 \ar[dddd,swap,"\textstyle Z_{21}" {xshift=1pt}] & & & & {}^{11}V_1 \ar[dddd,swap,"\textstyle Z_{21}" {xshift=1pt}] & & & & {}^{12}V_0 \ar[dddd,swap,"\textstyle Z_{10}" {xshift=1pt}] & & &\\
			& {}^{8}V_2 \ar[dddl,sloped,"\textstyle Z_{12}s_1" {xshift=5pt}] \ar[dddd,swap,"\textstyle Q_2" {xshift=1pt}] & & & & {}^{9}V_3 \ar[dddl,sloped,"\textstyle Z_{23}s_2" {xshift=5pt}] \ar[dddd,swap,"\textstyle Q_3" {xshift=1pt}] & & & & {}^{11}V_3 \ar[dddl,sloped,"\textstyle Z_{23}s_2" {xshift=5pt}] \ar[dddd,swap,"\textstyle Q_3" {xshift=1pt}] & & & & {}^{12}V_2 \ar[dddl,sloped,"\textstyle Z_{12}s_1" {xshift=5pt}] \ar[dddd,swap,"\textstyle Q_2" {xshift=1pt}] & &\\
			& & {}^{6}V_2 \ar[dddl,sloped,"\textstyle \partial^*\hspace{-4pt}_1 Q_2 s_1" {xshift=5pt}] \ar[dddd,swap,"\textstyle Z_{12}" {xshift=1pt}] & & & & {}^{7}V_3 \ar[dddl,sloped,"\textstyle \partial^*\hspace{-4pt}_2 Q_3 s_2" {xshift=5pt}] \ar[dddd,swap,"\textstyle Z_{23}" {xshift=1pt}] & & & & {}^{9}V_3 \ar[dddl,sloped,"\textstyle \partial^*\hspace{-4pt}_2 Q_3 s_2" {xshift=5pt}] \ar[dddd,swap,"\textstyle Z_{23}" {xshift=1pt}] & & & & {}^{10}V_2 \ar[dddl,sloped,"\textstyle \partial^*\hspace{-4pt}_1 Q_2 s_1" {xshift=5pt}] \ar[dddd,swap,"\textstyle Z_{12}" {xshift=1pt}] &\\
			& & & {}^{6}V_0 \ar[dddl,sloped,"\textstyle Z_{10}s^*\hspace{-4pt}_1" {xshift=5pt}] & & & & {}^{7}V_1 \ar[dddl,sloped,"\textstyle Z_{21}s^*\hspace{-4pt}_2" {xshift=5pt}] & & & & {}^{9}V_1 \ar[dddl,sloped,"\textstyle Z_{21}s^*\hspace{-4pt}_2" {xshift=5pt}] & & & & {}^{10}V_0 \ar[dddl,sloped,"\textstyle Z_{10}s^*\hspace{-4pt}_1" {xshift=5pt}]\\
			{}^{5}V_1 \ar[dddd,swap,"\textstyle Q_1" {xshift=1pt}] & & & & {}^{6}V_2 \ar[dddd,swap,"\textstyle Q_2" {xshift=1pt}] & & & & {}^{8}V_2 \ar[dddd,swap,"\textstyle Q_2" {xshift=1pt}] & & & & {}^{9}V_1 \ar[dddd,swap,"\textstyle Q_1" {xshift=1pt}] & & &\\
			& {}^{4}V_2 \ar[dddl,sloped,"\textstyle Z_{12}\partial_1" {xshift=5pt}] \ar[dddd,swap,"\textstyle Z_{12}" {xshift=1pt}] & & & & {}^{3}V_3 \ar[dddl,sloped,"\textstyle Z_{23}\partial_2" {xshift=5pt}] \ar[dddd,swap,"\textstyle Z_{23}" {xshift=1pt}] & & & & {}^{5}V_3 \ar[dddl,sloped,"\textstyle Z_{23}\partial_2" {xshift=5pt}] \ar[dddd,swap,"\textstyle Z_{23}" {xshift=1pt}] & & & & {}^{8}V_2 \ar[dddl,sloped,"\textstyle Z_{12}\partial_1" {xshift=5pt}] \ar[dddd,swap,"\textstyle Z_{12}" {xshift=1pt}] & &\\
			& & {}^{3}V_1 \ar[dddl,sloped,"\textstyle Q_1" {xshift=5pt}] & & & & {}^{4}V_2 \ar[dddl,sloped,"\textstyle Q_2" {xshift=5pt}] & & & & {}^{6}V_2 \ar[dddl,sloped,"\textstyle Q_2" {xshift=5pt}] & & & & {}^{7}V_1 \ar[dddl,sloped,"\textstyle Q_1" {xshift=5pt}] &\\
			& & & & & & & & & & & & & & &\\
			{}^{3}V_1 \ar[dddd,swap,"\textstyle Z_{01}" {xshift=1pt}] & & & & {}^{2}V_2 \ar[dddd,swap,"\textstyle Z_{12}" {xshift=1pt}] & & & & {}^{4}V_2 \ar[dddd,swap,"\textstyle Z_{12}" {xshift=1pt}] & & & & {}^{7}V_1 \ar[dddd,swap,"\textstyle Z_{01}" {xshift=1pt}] & & &\\
			& {}^{1}V_1 \ar[dddl,sloped,"\textstyle \partial_1 Z_{01}" {xshift=5pt}] & & & & {}^{0}V_2 \ar[dddl,sloped,"\textstyle \partial_2 Z_{12}" {xshift=5pt}] & & & & {}^{2}V_2 \ar[dddl,sloped,"\textstyle \partial_2 Z_{12}" {xshift=5pt}] & & & & {}^{5}V_1 \ar[dddl,sloped,"\textstyle \partial_1 Z_{01}" {xshift=5pt}] & &\\
			& & & & & & & & & & & & & & &\\
			& & & & & & & & & & & & & & &\\
			{}^{0}V_0 & & & & {}^{-1}V_1 & & & & {}^{1}V_1 & & & & {}^{4}V_0 & & &\\
		\end{tikzcd}
	\]\vspace{-15pt}\[
		\begin{tikzcd}[nodes={inner sep=0pt},column sep={31pt,between origins},row sep={16pt,between origins}]
			& & & {}^{14}V_0 &[-10pt] & & & {}^{17}V_1 \ar[llll,swap,"\textstyle s_2s_1 Z_{01}" {yshift=-1pt}] &[-10pt] & & & {}^{19}V_1 \ar[llll,swap,"\textstyle Q_1" {yshift=-1pt}] &[-10pt] & & & {}^{18}V_0 \ar[llll,swap,"\textstyle Z_{10}\partial^*\hspace{-4pt}_1 \partial^*\hspace{-4pt}_2" {yshift=-1pt}]\\
			& & {}^{13}V_1 & & & & {}^{16}V_2 \ar[llll,swap,"\textstyle s_2 Z_{12} s_1" {yshift=-1pt}] & & & & {}^{18}V_2 \ar[llll,swap,"\textstyle s^*\hspace{-4pt}_1 Q_2 \partial_1" {yshift=-1pt}] & & & & {}^{17}V_1 \ar[llll,swap,"\textstyle \partial^*\hspace{-4pt}_1 Z_{21}\partial^*\hspace{-4pt}_2" {yshift=-1pt}] &\\
			& {}^{11}V_1 & & & & {}^{12}V_2 \ar[llll,swap,"\textstyle s_2 Z_{12} \partial_1" {yshift=-1pt}] & & & & {}^{14}V_2 \ar[llll,swap,"\textstyle \partial^*\hspace{-4pt}_1 Q_2 s_1" {yshift=-1pt}] & & & & {}^{15}V_1 \ar[llll,swap,"\textstyle s^*\hspace{-4pt}_1 Z_{21} \partial^*\hspace{-4pt}_2" {yshift=-1pt}] & &\\
			{}^{8}V_0 & & & & {}^{9}V_1 \ar[llll,swap,"\textstyle \partial_2 s_1 Z_{01}" {yshift=-1pt}] & & & & {}^{11}V_1 \ar[llll,swap,"\textstyle Q_1" {yshift=-1pt}] & & & & {}^{12}V_0 \ar[llll,swap,"\textstyle Z_{10}\partial^*\hspace{-4pt}_1 s^*\hspace{-4pt}_2" {yshift=-1pt}] & & &\\
			& & & & & & & & & & & & & & &\\
			& & & {}^{11}V_1 & & & & {}^{14}V_2 \ar[llll,swap,"\textstyle s_2Z_{12}s_1" {yshift=-1pt}] & & & & {}^{16}V_2 \ar[llll,swap,"\textstyle s^*\hspace{-4pt}_1 Q_2 \partial_1" {yshift=-1pt}] & & & & {}^{15}V_1 \ar[llll,swap,"\textstyle \partial^*\hspace{-4pt}_1 Z_{21}\partial^*\hspace{-4pt}_2" {yshift=-1pt}]\\
			& & {}^{10}V_2 & & & & {}^{13}V_3 \ar[llll,swap,"\textstyle Z_{23}s_2s_1" {yshift=-1pt}] & & & & {}^{15}V_3 \ar[llll,swap,"\textstyle s^*\hspace{-4pt}_1 s^*\hspace{-4pt}_2 Q_3 \partial_2\partial_1" {yshift=-1pt}] & & & & {}^{14}V_2 \ar[llll,swap,"\textstyle \partial^*\hspace{-4pt}_1 \partial^*\hspace{-4pt}_2 Z_{32}" {yshift=-1pt}] &\\
			& {}^{8}V_2 & & & & {}^{9}V_3 \ar[llll,swap,"\textstyle Z_{23}s_2\partial_1" {yshift=-1pt}] & & & & {}^{11}V_3 \ar[llll,swap,"\textstyle \partial^*\hspace{-4pt}_1 s^*\hspace{-4pt}_2 Q_3 \partial_2 s_1" {yshift=-1pt}] & & & & {}^{12}V_2 \ar[llll,swap,"\textstyle s^*\hspace{-4pt}_1 \partial^*\hspace{-4pt}_2 Z_{32}" {yshift=-1pt}] & &\\
			{}^{5}V_1 & & & & {}^{6}V_2 \ar[llll,swap,"\textstyle \partial_2 Z_{12} s_1" {yshift=-1pt}] & & & & {}^{8}V_2 \ar[llll,swap,"\textstyle s^*\hspace{-4pt}_1 Q_2 \partial_1" {yshift=-1pt}] & & & & {}^{9}V_1 \ar[llll,swap,"\textstyle \partial^*\hspace{-4pt}_1 Z_{21} s^*\hspace{-4pt}_2" {yshift=-1pt}] & & &\\
			& & & & & & & & & & & & & & &\\
			& & & {}^{9}V_1 & & & & {}^{10}V_2 \ar[llll,swap,"\textstyle s_2 Z_{12}\partial_1" {yshift=-1pt}] & & & & {}^{12}V_2 \ar[llll,swap,"\textstyle \partial^*\hspace{-4pt}_1 Q_2 s_1" {yshift=-1pt}] & & & & {}^{13}V_1 \ar[llll,swap,"\textstyle s^*\hspace{-4pt}_1 Z_{21} \partial^*\hspace{-4pt}_2" {yshift=-1pt}]\\
			& & {}^{6}V_2 & & & & {}^{7}V_3 \ar[llll,swap,"\textstyle Z_{23}\partial_2 s_1" {yshift=-1pt}] & & & & {}^{9}V_3 \ar[llll,swap,"\textstyle s^*\hspace{-4pt}_1 \partial^*\hspace{-4pt}_2 Q_3 s_2 \partial_1" {yshift=-1pt}] & & & & {}^{10}V_2 \ar[llll,swap,"\textstyle \partial^*\hspace{-4pt}_1 s^*\hspace{-4pt}_2 Z_{32}" {yshift=-1pt}] &\\
			& {}^{4}V_2 & & & & {}^{3}V_3 \ar[llll,swap,"\textstyle Z_{23} \partial_2\partial_1" {yshift=-1pt}] & & & & {}^{5}V_3 \ar[llll,swap,"\textstyle \partial^*\hspace{-4pt}_1\partial^*\hspace{-4pt}_2 Q_3 s_2 s_1" {yshift=-1pt}] & & & & {}^{8}V_2 \ar[llll,swap,"\textstyle s^*\hspace{-4pt}_1 s^*\hspace{-4pt}_2 Z_{32}" {yshift=-1pt}] & &\\
			{}^{3}V_1 & & & & {}^{2}V_2 \ar[llll,swap,"\textstyle \partial_2 Z_{12}\partial_1" {yshift=-1pt}] & & & & {}^{4}V_2 \ar[llll,swap,"\textstyle \partial^*\hspace{-4pt}_1 Q_2 s_1" {yshift=-1pt}] & & & & {}^{7}V_1 \ar[llll,swap,"\textstyle s^*\hspace{-4pt}_1 Z_{21}s^*\hspace{-4pt}_2" {yshift=-1pt}] & & &\\
			& & & & & & & & & & & & & & &\\
			& & & {}^{6}V_0 & & & & {}^{7}V_1 \ar[llll,swap,"\textstyle s_2\partial_1 Z_{01}" {yshift=-1pt}] & & & & {}^{9}V_1 \ar[llll,swap,"\textstyle Q_1" {yshift=-1pt}] & & & & {}^{10}V_0 \ar[llll,swap,"\textstyle Z_{10}s^*\hspace{-4pt}_1 \partial^*\hspace{-4pt}_2" {yshift=-1pt}]\\
			& & {}^{3}V_1 & & & & {}^{4}V_2 \ar[llll,swap,"\textstyle \partial_2 Z_{12} s_1" {yshift=-1pt}] & & & & {}^{6}V_2 \ar[llll,swap,"\textstyle s^*\hspace{-4pt}_1 Q_2 \partial_1" {yshift=-1pt}] & & & & {}^{7}V_1 \ar[llll,swap,"\textstyle \partial^*\hspace{-4pt}_1 Z_{21}s^*\hspace{-4pt}_2" {yshift=-1pt}] &\\
			& {}^{1}V_1 & & & & {}^{0}V_2 \ar[llll,swap,"\textstyle \partial_2 Z_{12} \partial_1" {yshift=-1pt}] & & & & {}^{2}V_2 \ar[llll,swap,"\textstyle \partial^*\hspace{-4pt}_1 Q_2 s_1" {yshift=-1pt}] & & & & {}^{5}V_1 \ar[llll,swap,"\textstyle s^*\hspace{-4pt}_1 Z_{21}s^*\hspace{-4pt}_2" {yshift=-1pt}] & &\\
			{}^{0}V_0 & & & & {}^{-1}V_1 \ar[llll,swap,"\textstyle \partial_2\partial_1 Z_{01}" {yshift=-1pt}] & & & & {}^{1}V_1 \ar[llll,swap,"\textstyle Q_1" {yshift=-1pt}] & & & & {}^{4}V_0 \ar[llll,swap,"\textstyle Z_{10}s^*\hspace{-4pt}_1 s^*\hspace{-4pt}_2" {yshift=-1pt}] & & &
		\end{tikzcd}
	\]\vspace{-5pt}
	\captionsetup{width=.8\linewidth}
	\caption{The tricomplex $\sr{K}$ when $a = b = c = d = 3$. Cohomological degree shifts are omitted, and the symbol ${}^i V_r$ is shorthand for $q^iV_r$. Components of the differential that decrement the second or third coordinate of $\varepsilon \in [0,3]^b \cap \Z^b$ are shown at the top, while those that decrement the first coordinate are shown below.}
	\label{fig:finitetricomplex}
\end{figure}
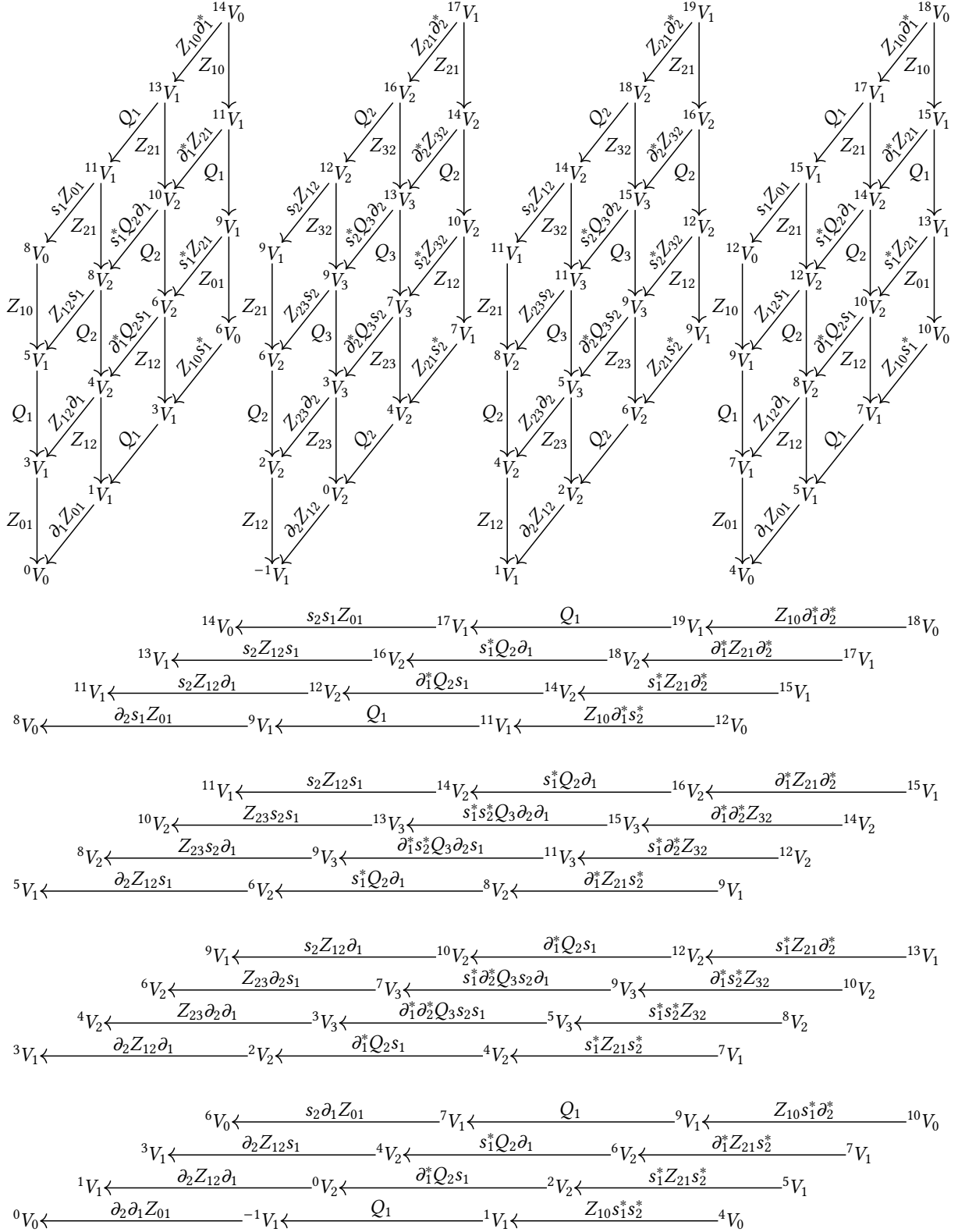

\begin{rem}
	Although $s^*\hspace{-4pt}_i = -s_i$ and $\partial^*\hspace{-4pt}_i = \partial_i$ by Lemma~\ref{lem:relationsK}, we continue to write $s^*\hspace{-4pt}_i$ and $\partial^*\hspace{-4pt}_i$ as a way of keeping track of signs and to make certain symmetries more apparent.
\end{rem}

\begin{lem}\label{lem:selfadjoint}
	The components of the differential assigned to dual edges are adjoint. 
\end{lem}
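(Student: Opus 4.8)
The plan is to verify the claim separately for each of the three edge directions within a fine-cube ``column.'' Fix a direction $i \in \{1,\ldots,b\}$ and transverse coordinates $\varepsilon_1,\ldots,\varepsilon_{i-1},\varepsilon_{i+1},\ldots,\varepsilon_b \in \{0,1,2,3\}$, so that the three edges $\varepsilon^{[0,1]},\varepsilon^{[1,2]},\varepsilon^{[2,3]}$ carry the components $\phi = \alpha\, Z_{(r-1)r}\, \beta$, $\psi = \beta^*\, Q_r\, \hat{\beta}$, $\chi = \hat{\beta}^*\, Z_{r(r-1)}\, \hat{\alpha}^*$. The involution $\iota$ fixes the direction $i$, replaces each transverse coordinate $\varepsilon_k$ by $3-\varepsilon_k$, and — since $j \mapsto 3-j$ swaps $0 \leftrightarrow 3$ and $1 \leftrightarrow 2$ — carries $\varepsilon^{[0,1]} \mapsto \tilde{\varepsilon}^{[2,3]}$, $\varepsilon^{[1,2]} \mapsto \tilde{\varepsilon}^{[1,2]}$, and $\varepsilon^{[2,3]} \mapsto \tilde{\varepsilon}^{[0,1]}$, where $\tilde{\varepsilon}_k := 3-\varepsilon_k$. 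Denoting by $\tilde{\phi},\tilde{\psi},\tilde{\chi}$ the components along these dual edges, it suffices to prove $\phi^* = \tilde{\chi}$, $\psi^* = \tilde{\psi}$, and $\chi^* = \tilde{\phi}$.

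The combinatorial core is the behaviour of the triple $(r,\alpha,\beta)$ under $\iota$. Since $\varepsilon_k \in \{1,2\} \iff 3-\varepsilon_k \in \{1,2\}$, the integer $r = r(\varepsilon^1)$ is unchanged, so the dual edge involves the same webs $V_r, V_{r-1}$ and the same foams $Z_{(r-1)r}, Z_{r(r-1)}, Q_r$. Moreover, the descending string $\tilde{\alpha}$ read off from $\tilde{\varepsilon}_{i+1},\ldots,\tilde{\varepsilon}_b$ is precisely the opposite string $\hat{\alpha}$: deleting the $1$'s and $2$'s is $\iota$-invariant, on the surviving entries $0 \leftrightarrow 3$ interchanges the roles of $\partial$ and $s$, and the smallest subscript is $r$ in both cases so all subscripts agree. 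The same reasoning gives $\tilde{\beta} = \hat{\beta}$ (now $0 \leftrightarrow 3$ governs the deletion and $1 \leftrightarrow 2$ interchanges $\partial^*$ and $s^*$, with common largest subscript $r-1$). A parallel one-line check with the formulas for $\upsilon$ — using that $j \mapsto 3-j$ swaps $\{0,1\} \leftrightarrow \{2,3\}$ and fixes $\{1,2\}$ as a set — shows that each component and its dual have the same degree, which together with Lemma~\ref{lem:gradingFunctionW} (constancy of $G(\varepsilon)+G(\varepsilon^*)$) reconciles the $q$-grading shifts on source and target.

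The conclusion is then a direct substitution. Recall that $(\cdot)^*$ is contravariant, that $Z_{(r-1)r}^* = Z_{r(r-1)}$ by Definition~\ref{def:websfoamsW}, that $Q_r^* = Q_r$ by Lemma~\ref{lem:relationsW}, that the adjoint of the bimodule map represented by a descending (resp. ascending) string is represented by its adjoint string — this is exactly what the notation $\alpha^*,\beta^*$ encodes once one unwinds $\partial^*_i = \partial_i$ and $s^*_i = -s_i$ from Lemma~\ref{lem:relationsW} — and that forming the opposite string commutes with forming the adjoint string and is involutive. Then
\[
\phi^* = (\alpha\, Z_{(r-1)r}\, \beta)^* = \beta^*\, Z_{r(r-1)}\, \alpha^* = \widehat{\tilde{\beta}}^*\, Z_{r(r-1)}\, \widehat{\tilde{\alpha}}^* = \tilde{\chi},
\]
using $\widehat{\tilde{\alpha}} = \widehat{\hat{\alpha}} = \alpha$ and $\widehat{\tilde{\beta}} = \beta$; similarly
\[
\psi^* = (\beta^*\, Q_r\, \hat{\beta})^* = \hat{\beta}^*\, Q_r\, \beta = \tilde{\beta}^*\, Q_r\, \widehat{\tilde{\beta}} = \tilde{\psi}, \qquad \chi^* = (\hat{\beta}^*\, Z_{r(r-1)}\, \hat{\alpha}^*)^* = \hat{\alpha}\, Z_{(r-1)r}\, \hat{\beta} = \tilde{\alpha}\, Z_{(r-1)r}\, \tilde{\beta} = \tilde{\phi}.
\]

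I expect the only delicate point to be the string bookkeeping: keeping the opposite string $\hat{\alpha}$ and the adjoint string $\alpha^*$ rigorously distinct, confirming that the subscripts are untouched by $\iota$ (which is why the common anchor subscript $r$ for $\alpha$ and $r-1$ for $\beta^*$ matters), and checking that the adjoint-string notation already absorbs all the signs coming from $s^*_i = -s_i$, so that no stray $\pm 1$ appears in the identities above. Everything else — the behaviour of $r$, the opposite/adjoint calculus, and the grading-shift reconciliation via Lemma~\ref{lem:gradingFunctionW} — is routine.
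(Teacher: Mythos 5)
Your proposal is correct and follows essentially the same route as the paper's proof: identify the dual column via $\varepsilon_k \mapsto 3-\varepsilon_k$, observe that $r$ is unchanged and that the strings read off from the dual column are the opposite strings $\hat{\alpha},\hat{\beta}$, and then conclude from $Z^*_{(r-1)r} = Z_{r(r-1)}$, $Q^*_r = Q_r$, and the adjoint-string calculus that $\phi^* ,\psi^*,\chi^*$ are the components on the dual edges. The grading-shift discussion is extra (the paper handles $q$-homogeneity in a separate lemma) but harmless.
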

\begin{proof}
	Let $\varepsilon^j = (\varepsilon_1,\ldots,\varepsilon_{i-1},j,\varepsilon_{i+1},\ldots,\varepsilon_b)$ for $j = 0,1,2,3$, and consider the components of the differential \[
		\begin{tikzcd}[column sep = 100pt]
			V({\varepsilon^{0}}) & V({\varepsilon^{1}}) \ar[l,swap,"\textstyle \phi = \alpha\,Z_{(r-1)r}\,\beta"] & V({\varepsilon^{2}}) \ar[l,swap,"\textstyle \psi = \beta^*\,Q_r\, \hat{\beta}"] & V({\varepsilon^{3}}) \ar[l,swap,"\textstyle \chi = \hat{\beta}^*\, Z_{r(r-1)}\,\hat{\alpha}^*"]
		\end{tikzcd}
	\]where $r = r(\varepsilon^1)$. Recall that $\alpha$ is the descending string with smallest subscript $r$ obtained from $\varepsilon_{i+1},\ldots,\varepsilon_b$ by deleting $1$'s and $2$'s and replacing each $0$ by $\partial$ and $3$ by $s$, while $\beta^*$ is the ascending string with largest subscript $r-1$ obtained from $\varepsilon_{i+1},\ldots,\varepsilon_b$ by deleting $0$'s and $3$'s and replacing each $1$ by $\partial^*$ and $2$ by $s^*$. 

	Now let $\eta^j \coloneqq (\varepsilon^{3-j})^* = (3 - \varepsilon_1,\ldots,3-\varepsilon_{i-1},j,3-\varepsilon_{i+1},\ldots,3-\varepsilon_b)$ for $j = 0,1,2,3$. Note that $r(\varepsilon^1) = r(\eta^1)$. Consider the components of the differential \[
		\begin{tikzcd}[column sep = 100pt]
			V({\eta^{0}}) & V({\eta^{1}}) \ar[l,swap,"\textstyle \phi' = \gamma\,Z_{(r-1)r}\,\delta"] & V({\eta^{2}}) \ar[l,swap,"\textstyle \psi' = \delta^*\,Q_r\, \hat{\delta}"] & V({\eta^{3}}) \ar[l,swap,"\textstyle \chi' = \hat{\delta}^*\, Z_{r(r-1)}\,\hat{\gamma}^*"]
		\end{tikzcd}
	\]where $\gamma$ is the descending string with smallest subscript $r$ obtained from $3-\varepsilon_{i+1},\ldots,3-\varepsilon_b$ by deleting $1$'s and $2$'s and replacing each $0$ by $\partial$ and $3$ by $s$, while $\delta^*$ is the ascending string with largest subscript $r-1$ obtained from $3-\varepsilon_{i+1},\ldots,3-\varepsilon_b$ by deleting $0$'s and $3$'s and replacing each $1$ by $\partial^*$ and $2$ by $s^*$. Then $\gamma = \hat{\alpha}$ and $\delta^* = \hat{\beta}^*$. From the identities $Z^*\hspace{-4pt}_{(r-1)r} = Z_{r(r-1)}$ and $Q^*\hspace{-4pt}_r = Q_r$, it follows that $\phi^* = \chi'$, $\psi^* = \psi'$, and $\chi^* = \phi'$ as required.
\end{proof}

\begin{lem}
	The components of the differential are homogeneous with respect to the $q$-grading.
\end{lem}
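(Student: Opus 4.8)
The plan is to compute the $q$-degree of each of the three kinds of differential components directly from the degrees of the elementary pieces composing them, and to match the answer against the grading-shift differences recorded by the cochain $\upsilon$. Fix $i\in\{1,\dots,b\}$ and $\varepsilon_1,\dots,\varepsilon_{i-1},\varepsilon_{i+1},\dots,\varepsilon_b$, write $\varepsilon^j=(\varepsilon_1,\dots,\varepsilon_{i-1},j,\varepsilon_{i+1},\dots,\varepsilon_b)$, and set $r\coloneqq r(\varepsilon^1)$. For $m\in\{0,1,2,3\}$ let $p_m$ be the number of entries of the tail $\varepsilon_{i+1},\dots,\varepsilon_b$ equal to $m$, and let $q$ be the number of entries of the head $\varepsilon_1,\dots,\varepsilon_{i-1}$ equal to $1$ or $2$. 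Each component $\phi,\psi,\chi$ is a composite of homogeneous bimodule maps, hence homogeneous; what must be checked is that, as a map $V(\varepsilon^j)\to V(\varepsilon^{j-1})$ between the grading-shifted bimodules, it has $q$-degree $0$. Equivalently, writing $\deg$ for the $q$-degree of the underlying bimodule map, we must verify $\deg\phi=G(\varepsilon^1)-G(\varepsilon^0)$, $\deg\psi=G(\varepsilon^2)-G(\varepsilon^1)$, and $\deg\chi=G(\varepsilon^3)-G(\varepsilon^2)$, which by the defining equations of $G$ in section~\ref{subsec:the_objects_of_W} equal $-\upsilon(\varepsilon^{[0,1]})$, $-\upsilon(\varepsilon^{[1,2]})$, and $-\upsilon(\varepsilon^{[2,3]})$ respectively.

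I would then record the degrees of the building blocks. By Definition~\ref{def:websfoamsW}, the foams $Z_{(r-1)r}$ and $Z_{r(r-1)}$ have $q$-degree $d$, and $Q_r$ has $q$-degree $2l+2r$; by Lemma~\ref{lem:relationsW}, $\partial_j=\partial^*\hspace{-4pt}_j$ has $q$-degree $-2$ and $s_j=-s^*\hspace{-4pt}_j$ has $q$-degree $0$. From the recipes defining $\alpha$ and $\beta^*$: the string $\alpha$ contains $p_0$ symbols $\partial$ and $p_3$ symbols $s$; the string $\beta^*$ contains $p_1$ symbols $\partial^*$ and $p_2$ symbols $s^*$; the same counts of $\partial$-type and $s$-type symbols persist in $\beta$ and in $\hat\beta^*$ (taking adjoints), while $\hat\beta$, $\hat\alpha$, $\hat\alpha^*$ swap the two symbol types. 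Summing the degrees along each composite gives
\[
	\deg\phi = d - 2(p_0 + p_1), \qquad \deg\psi = 2l + 2r - 2(p_1 + p_2), \qquad \deg\chi = d - 2(p_2 + p_3).
\]

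Finally I would compare with $-\upsilon$. Directly from the definition, $-\upsilon(\varepsilon^{[0,1]}) = d - 2(p_0+p_1)$ and $-\upsilon(\varepsilon^{[2,3]}) = d - 2(p_2+p_3)$, which agree with $\deg\phi$ and $\deg\chi$ on the nose. For the middle component, $-\upsilon(\varepsilon^{[1,2]}) = 2 + 2l + 2q$, so matching $\deg\psi$ reduces to the single identity $r = 1 + q + p_1 + p_2$; this is immediate from $r = r(\varepsilon^1)$, since $r(\varepsilon^1)$ counts the entries of $\varepsilon^1$ equal to $1$ or $2$, namely the $q$ such entries in the head, the $p_1+p_2$ such entries in the tail, and the $i$-th entry, which equals $1$. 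That identity is the only nonroutine ingredient; the remaining work is pure bookkeeping, the one point deserving care being the grading-shift convention — a degree-$0$ map $q^A M \to q^B N$ has underlying $q$-degree $A-B$, so each component must have underlying degree equal to the grading-shift exponent of its source minus that of its target, which by the definition of $G$ is $-\upsilon$ of the corresponding edge rather than $+\upsilon$.
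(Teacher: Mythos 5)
Your proposal is correct and takes essentially the same route as the paper's own proof: record the $q$-degrees of the string symbols ($\partial,\partial^*$ of degree $-2$, $s,s^*$ of degree $0$), of $Z_{(r-1)r},Z_{r(r-1)}$ (degree $d$) and of $Q_r$ (degree $2l+2r$), sum them along each composite, and match against $-\upsilon$ of the corresponding edge, with the only nonroutine point being the identity $r = 1 + (\text{number of }1\text{'s and }2\text{'s in } \varepsilon_1,\ldots,\varepsilon_{i-1}) + p_1 + p_2$, which is exactly the counting identity the paper invokes; your grading-shift convention (underlying degree equals source shift minus target shift, i.e.\ $-\upsilon$) is the one the paper actually uses. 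The only cosmetic quibble is reusing the letter $q$ for the head count, which clashes with the grading variable.
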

\begin{proof}
	Let $\varepsilon^j \coloneqq (\varepsilon_1,\ldots,\varepsilon_{i-1},j,\varepsilon_{i+1},\ldots,\varepsilon_b)$ for $j = 0,1,2,3$, and consider the components of the differential \[
		\begin{tikzcd}[column sep = 100pt]
			V({\varepsilon^{0}}) & V({\varepsilon^{1}}) \ar[l,swap,"\textstyle \phi = \alpha\,Z_{(r-1)r}\,\beta"] & V({\varepsilon^{2}}) \ar[l,swap,"\textstyle \psi = \beta^*\,Q_r\, \hat{\beta}"] & V({\varepsilon^{3}}) \ar[l,swap,"\textstyle \chi = \hat{\beta}^*\, Z_{r(r-1)}\,\hat{\alpha}^*"]
		\end{tikzcd}
	\]where $r = r(\varepsilon^1)$. We must show that $\deg\phi = -\upsilon(\varepsilon^{[0,1]})$, $\deg\psi = -\upsilon(\varepsilon^{[1,2]})$, and $\deg\chi = -\upsilon(\varepsilon^{[2,3]})$ where $\upsilon$ is the $1$-cochain defined in the previous section. Since $\deg \partial_k = \deg \partial^*\hspace{-4pt}_k = -2$ and $\deg s_k = \deg s^*\hspace{-4pt}_k = 0$, we find that \begin{align*}
		&\deg \alpha = \deg \alpha^* = -2\sum_{k=i+1}^b \delta(\varepsilon_k-0) &&\deg \beta = \deg \beta^* = -2\sum_{k=i+1}^b \delta(\varepsilon_k-1)\\
		&\deg \hat{\alpha} = \deg \hat{\alpha}^* = -2\sum_{k=i+1}^b \delta(\varepsilon_k-3) &&\deg \hat{\beta} = \deg \hat{\beta}^* = -2\sum_{k=i+1}^b \delta(\varepsilon_k-2).
	\end{align*}Since $\deg Z_{(r-1)r} = \deg Z_{r(r-1)} = d$ and $\deg Q_r = 2l + 2r$, it follows that \begin{align*}
		\deg \phi &= -2\left(\sum_{k=i+1}^{b} \delta(\varepsilon_{k} - 0) + \delta(\varepsilon_{k} - 1) \right) + d = -\upsilon(\varepsilon^{[0,1]})\\
		\deg \psi &= -2\left(\sum_{k=i+1}^{b} \delta(\varepsilon_{k} - 1) + \delta(\varepsilon_{k} - 2) \right) + 2l + 2r = -\upsilon(\varepsilon^{[1,2]})\\
		\deg \chi &= -2\left(\sum_{k=i+1}^{b} \delta(\varepsilon_{k} - 2) + \delta(\varepsilon_{k} - 3)\right) + d = -\upsilon(\varepsilon^{[2,3]})
	\end{align*}where the second line uses the fact that $r = (\sum_{k=1}^{i-1} \delta(\varepsilon_k - 1) + \delta(\varepsilon_k - 2)) + 1 + (\sum_{k=i+1}^b \delta(\varepsilon_k - 1) + \delta(\varepsilon_k - 2))$. 
\end{proof}

\begin{lem}\label{lem:consecutiveK}
	Consecutive components of the differential that are assigned to edges that are parallel compose to zero. 
\end{lem}
\begin{proof}
	Let $\varepsilon^j = (\varepsilon_1,\ldots,\varepsilon_{i-1},j,\varepsilon_{i+1},\ldots,\varepsilon_b)$ for $j = 0,1,2,3$, and consider the components of the differential \[
		\begin{tikzcd}[column sep = 100pt]
			V({\varepsilon^{0}}) & V({\varepsilon^{1}}) \ar[l,swap,"\textstyle \phi = \alpha\,Z_{(r-1)r}\,\beta"] & V({\varepsilon^{2}}) \ar[l,swap,"\textstyle \psi = \beta^*\,Q_r\, \hat{\beta}"] & V({\varepsilon^{3}}) \ar[l,swap,"\textstyle \chi = \hat{\beta}^*\, Z_{r(r-1)}\,\hat{\alpha}^*"]
		\end{tikzcd}
	\]where $r = r(\varepsilon^1)$. We must show that $\phi\circ\psi = 0$ and $\psi\circ\chi = 0$. Note that for any descending string $\gamma$, the composite map $\gamma\gamma^*$ is either $0$ or $\pm\Id$ because $s_k\, s^*\hspace{-4pt}_k = -\Id$ and $\partial_k \, \partial^*\hspace{-4pt}_k = 0$. It therefore suffices to show that $Z_{(r-1)r}\,Q_r = 0$ and $Q_r\,Z_{r(r-1)} = 0$. By Lemma~\ref{lem:relationsK}, we have that $Z_{(r-1)r}\, Q_{r} = Q_{r}\, Z_{(r-1)r} = 0$ and $Q_{r}\, Z_{r(r-1)} = Z_{r(r-1)}\, Q_{r} = 0$ because $Q_{r}$ acts as the zero endomorphism of the web $V_{r-1}$.
\end{proof}

\begin{prop}\label{prop:commutativeSquaresK}
	The square associated to each face of the fine cubulation of $[0,3]^b$ is commutative. 
\end{prop}
\begin{proof}
	Fix $1 \leq i_1 < i_2\leq b$ and let $\varepsilon^{j_1,j_2} = (\varepsilon_1,\ldots,\varepsilon_{i_1-1},j_1,\varepsilon_{i_1+1},\ldots,\varepsilon_{i_2-1},j_2,\varepsilon_{i_2+1},\ldots,\varepsilon_b)$ for $j_1,j_2 \in \{0,1,2,3\}$. We must verify that the following nine squares are commutative \[
		\begin{tikzcd}[column sep={70pt,between origins}, row sep={45pt,between origins}]
			V(\varepsilon^{03}) \ar[d,swap,"\textstyle \chi^{0*}"] & V(\varepsilon^{13}) \ar[l,swap,"\textstyle \phi^{*3}"] \ar[d,swap,"\textstyle \chi^{1*}"] & V(\varepsilon^{23}) \ar[l,swap,"\textstyle \psi^{*3}"] \ar[d,swap,"\textstyle \chi^{2*}"] & V(\varepsilon^{33}) \ar[l,swap,"\textstyle \chi^{*3}"] \ar[d,swap,"\textstyle \chi^{3*}"]\\
			V(\varepsilon^{02}) \ar[d,swap,"\textstyle \psi^{0*}"] & V(\varepsilon^{12}) \ar[l,swap,"\textstyle \phi^{*2}"] \ar[d,swap,"\textstyle \psi^{1*}"] & V(\varepsilon^{22}) \ar[l,swap,"\textstyle \psi^{*2}"] \ar[d,swap,"\textstyle \psi^{2*}"] & V(\varepsilon^{32}) \ar[l,swap,"\textstyle \chi^{*2}"] \ar[d,swap,"\textstyle \psi^{3*}"]\\
			V(\varepsilon^{01}) \ar[d,swap,"\textstyle \phi^{0*}"] & V(\varepsilon^{11}) \ar[l,swap,"\textstyle \phi^{*1}"] \ar[d,swap,"\textstyle \phi^{1*}"] & V(\varepsilon^{21}) \ar[l,swap,"\textstyle \psi^{*1}"] \ar[d,swap,"\textstyle \phi^{2*}"] & V(\varepsilon^{31}) \ar[l,swap,"\textstyle \chi^{*1}"] \ar[d,swap,"\textstyle \phi^{3*}"]\\
			V(\varepsilon^{00}) & V(\varepsilon^{10}) \ar[l,swap,"\textstyle \phi^{*0}"] & V(\varepsilon^{20}) \ar[l,swap,"\textstyle \psi^{*0}"] & V(\varepsilon^{30}) \ar[l,swap,"\textstyle \chi^{*0}"]
		\end{tikzcd}
	\]where we have added superscripts to the maps that indicate their sources and targets. 
	We will use the following definition. Given a descending string $\alpha$, its \emph{incremented string} $\alpha_+$ is obtained by increasing each subscript by $1$ and keeping the symbols the same. The incremented string of $\alpha = \partial_8\,\partial_7\,\partial_6\,s_5\,\partial_4\,s_3$ is $\alpha_+ = \partial_9\,\partial_8\,\partial_7\,s_6\,\partial_5\,s_4$. The incremented string of an ascending string is defined in the same way. The three operations of forming the opposite, adjoint, and incremented strings pairwise commute so the notation $\hat{\alpha}^*_+$ is unambiguous. 

	Throughout, we let $r \coloneqq r(\varepsilon^{10})$. Also, let $t \ge 0$ be the number of $0$'s and $3$'s in the sequence $\varepsilon_{i_2+1},\ldots,\varepsilon_b$ and let $u \ge 0$ be the number of $1$'s and $2$'s so that $t + u = b - i_2$.  We now define four strings $\alpha,\beta^*,\gamma,\delta^*$ using the fixed data. \begin{itemize}[noitemsep]
		\item Let $\alpha$ be the descending string with smallest subscript $r$ obtained from $\varepsilon_{i_2+1},\ldots,\varepsilon_b$ by deleting $1$'s and $2$'s and replacing each $0$ by $\partial$ and $3$ by $s$. The largest subscript of $\alpha$ is $r + t - 1$. 
		\item Let $\beta^*$ be the ascending string with largest subscript $r-1$ obtained from $\varepsilon_{i_2+1},\ldots,\varepsilon_b$ by deleting $0$'s and $3$'s and replacing each $1$ by $\partial^*$ and $2$ by $s^*$. The smallest subscript of $\beta^*$ is $r - u$.
		\item Let $\gamma$ be the descending string with smallest subscript $r + t + 1$ obtained from $\varepsilon_{i_1+1},\ldots,\varepsilon_{i_2-1}$ by deleting $1$'s and $2$'s and replacing each $0$ by $\partial$ and $3$ by $s$. 
		\item Let $\delta^*$ be the ascending string with largest subscript $r - u - 1$ obtained from $\varepsilon_{i_1+1},\ldots,\varepsilon_{i_2-1}$ by deleting $0$'s and $3$'s and replacing each $1$ by $\partial^*$ and $2$ by $s^*$.
	\end{itemize}

	\vspace{-5pt}

	\noindent\hrulefill
	\vspace{2pt}

	\textbf{The bottom left square}. \[
		\begin{tikzcd}[column sep=75pt,row sep=30pt]
			V(\varepsilon^{01}) \ar[d,swap,"\textstyle \phi^{0*} = \alpha \, Z_{(r-1)r} \, \beta"] & & V(\varepsilon^{11}) \ar[ll,swap,"\textstyle \phi^{*1} = \gamma \,\alpha_+\,Z_{r(r+1)}\,\beta_+ \,\partial_{r-u}\,\delta"] \ar[d,"\textstyle \phi^{1*} = \alpha_+\, Z_{r(r+1)}\,\beta_+"]\\
			V(\varepsilon^{00}) & & V(\varepsilon^{10}) \ar[ll,swap,"\textstyle \phi^{*0} = \gamma\, \partial_{r+t}\, \alpha\, Z_{(r-1)r}\, \beta \,\delta"]
		\end{tikzcd}
	\]Lemma~\ref{lem:relationsK} implies \begin{align*}
		\phi^{0*}\phi^{*1} &= \alpha\, Z_{(r-1)r}\, \beta\,\gamma\,\alpha_+\, Z_{r(r+1)}\, \beta_+\, \partial_{r-u}\,\delta = \gamma\,\alpha\,\alpha_+\,Z_{(r-1)r} \, Z_{r(r+1)}\, \beta\,\beta_+ \,\partial_{r-u}\,\delta\\
		\phi^{*0}\phi^{1*} &= \gamma\, \partial_{r+t} \,\alpha\, Z_{(r-1)r}\,\beta\,\delta\,\alpha_+\,Z_{r(r+1)}\,\beta_+ = \gamma\,\partial_{r+t}\,\alpha\,\alpha_+\, Z_{(r-1)r} \, Z_{r(r+1)}\,\beta\,\beta_+\,\delta.
	\end{align*}We show that $\beta\,\beta_+\,\partial_{r-u} = \partial_r \,\beta\,\beta_+$ by induction on the length $u$ of $\beta$. The base case $u = 0$ is vacuous. For $\theta \in \{\partial,s\}$, we must show that $\beta \,\theta_{r-u-1}\,\beta_+\,\theta_{r-u}\,\partial_{r-u-1} = \partial_r \,\beta\,\theta_{r-u-1}\,\beta_+\,\theta_{r-u}$ assuming that $\beta\,\beta_+\,\partial_{r-u} = \partial_r\,\beta\,\beta_+$. Far commutativity implies that $\theta_{r-u-1}$ and $\beta_+$ commute so the desired equation follows from the identity $\theta_{r-u-1}\,\theta_{r-u}\,\partial_{r-u-1} = \partial_{r-u}\,\theta_{r-u-1}\,\theta_{r-u}$ of Lemma~\ref{lem:mixedBraidRelation} and the inductive hypothesis. The same argument gives $\partial_{r+t}\,\alpha\,\alpha_+ = \alpha\,\alpha_+ \,\partial_r$. Commutativity of the square now follows from the identity $Z_{(r-1)r}\,Z_{r(r+1)}\,\partial_r = \partial_r\,Z_{(r-1)r}\,Z_{r(r+1)}$ of Lemma~\ref{lem:relationsK}. 

	\noindent\hrulefill
	\vspace{2pt}

	\textbf{The bottom middle square}. \[
		\begin{tikzcd}[column sep=75pt,row sep=30pt]
			V(\varepsilon^{11}) \ar[d,swap,"\textstyle \phi^{1*} = \alpha_+\, Z_{r(r+1)}\,\beta_+"] & & V(\varepsilon^{21}) \ar[ll,swap,"\textstyle \psi^{*1} = \delta^*\,\partial^*\hspace{-4pt}_{r-u}\:\beta^*_+\,Q_{r+1}\,\hat{\beta}_+\,s_{r-u}\,\hat{\delta}"] \ar[d,"\textstyle \phi^{2*} = \alpha_+\, Z_{r(r+1)}\,\beta_+"]\\
			V(\varepsilon^{10}) & & V(\varepsilon^{20}) \ar[ll,swap,"\textstyle \psi^{*0} = \delta^*\,\beta^*\,Q_r\,\hat{\beta}\,\hat{\delta}"]
		\end{tikzcd}
	\]We have \begin{align*}
		\phi^{1*}\psi^{*1} &= \alpha_+\,Z_{r(r+1)}\,\beta_+\,\delta^*\,\partial^*\hspace{-4pt}_{r-u}\:\beta^*_+\,Q_{r+1}\,\hat{\beta}_+\, s_{r-u}\,\hat{\delta} = \delta^*\,\alpha_+\,Z_{r(r+1)}\,\beta_+\,\partial^*\hspace{-4pt}_{r-u}\:\beta^*_+\,Q_{r+1}\,\hat{\beta}_+\, s_{r-u}\,\hat{\delta}\\
		\psi^{*0}\phi^{2*} &= \delta^*\,\beta^*\,Q_r\,\hat{\beta}\,\hat{\delta}\,\alpha_+\,Z_{r(r+1)}\,\beta_+ = \delta^*\,\alpha_+\,Z_{r(r+1)}\,\beta^*\,Q_r\,\hat{\beta}\,\beta_+\,\hat{\delta}
	\end{align*}so it suffices to show that $Z_{r(r+1)}\,\beta_+\,\partial^*\hspace{-4pt}_{r-u}\:\beta^*_+\,Q_{r+1}\,\hat{\beta}_+\,s_{r-u} = Z_{r(r+1)}\,\beta^*\, Q_r\,\hat{\beta}\,\beta_+$. 

	We show that $\beta_+\,\partial^*\hspace{-4pt}_{r-u}\:\beta^*_+ = \beta^*\,\partial^*\hspace{-4pt}_r\:\beta$ and $\beta\,\hat{\beta}_+\,s_{r-u} = s_r\,\hat{\beta}\,\beta_+$ by induction on the length $u$ of $\beta$. The base cases $u = 0$ are vacuous. For the first identity, we must show that $\beta_+\,\theta_{r-u}\,\partial^*\hspace{-4pt}_{r-u-1}\:\theta^*\hspace{-4pt}_{r-u}\:\beta^*_+ = \theta^*\hspace{-4pt}_{r-u-1}\:\beta^*\,\partial^*\hspace{-4pt}_r\:\beta\,\theta_{r-u-1}$ assuming $\beta_+\,\partial^*\hspace{-4pt}_{r-u}\:\beta^*_+ = \beta^*\,\partial^*\hspace{-4pt}_r\:\beta$ for $\theta\in\{\partial,s\}$. For this, we use the identity $\theta_{r-u}\,\partial^*\hspace{-4pt}_{r-u-1}\:\theta^*\hspace{-4pt}_{r-u} = \theta^*\hspace{-4pt}_{r-u-1}\:\partial^*\hspace{-4pt}_{r-u}\:\theta_{r-u-1}$ of Lemma~\ref{lem:mixedBraidRelation}, far commutativity of $\theta^*\hspace{-4pt}_{r-u-1}$ and $\beta_+$, and the inductive hypothesis. For the second identity, we must show that $\beta\,\theta_{r-u-1}\,\hat{\beta}_+\,\hat{\theta}_{r-u}\,s_{r-u-1} = s_r\,\hat{\beta}\,\hat{\theta}_{r-u-1}\,\beta_+\,\theta_{r-u}$ assuming $\beta\,\hat{\beta}_+\,s_{r-u} = s_r\,\hat{\beta}\,\beta_+$ for $\theta \in \{\partial,s\}$. For this, we use far commutativity of $\theta_{r-u-1}$ and $\hat{\beta}_+$, the identity $\theta_{r-u-1}\,\hat{\theta}_{r-u}\,s_{r-u-1} = s_{r-u}\,\hat{\theta}_{r-u-1}\,\theta_{r-u}$ of Lemma~\ref{lem:mixedBraidRelation}, and the inductive hypothesis. Altogether, we have \begin{align*}
		Z_{r(r+1)}\,\beta_+\,\partial^*\hspace{-4pt}_{r-u}\:\beta^*_+\,Q_{r+1}\,\hat{\beta}_+\,s_{r-u} &= Z_{r(r+1)}\,\beta^*\,\partial^*\hspace{-4pt}_r\,Q_{r+1}\,s_r\,\hat{\beta}\,\beta_+\\
		&= Z_{r(r+1)} \,\beta^* \,Q_r\,\hat{\beta}\,\beta_+ + Z_{r(r+1)}\,\beta^*\, Q_{r+1}\,\partial^*\hspace{-4pt}_r\,s_r\,\hat{\beta}\,\beta_+
	\end{align*}where the first equality follows from the two identities just established together with commutativity of $\beta$ and $Q_{r+1}$. The second equality uses $\partial^*\hspace{-4pt}_r\, Q_{r+1} = Q_r\, s_r + Q_{r+1}\,\partial^*\hspace{-4pt}_r$ of Lemma~\ref{lem:relationsK}. Lastly, the second term in the sum vanishes by commutativity of $\beta^*$ and $Q_{r+1}$ together with the identity $Z_{r(r+1)}Q_{r+1} = 0$ verified in the proof of Lemma~\ref{lem:consecutiveK}. 

	\noindent\hrulefill
	\vspace{2pt}

	\textbf{The bottom right square}. \[
		\begin{tikzcd}[column sep=75pt,row sep=30pt]
			V(\varepsilon^{21}) \ar[d,swap,"\textstyle \phi^{2*} = \alpha_+\, Z_{r(r+1)}\,\beta_+"] & & V(\varepsilon^{31}) \ar[ll,swap,"\textstyle \chi^{*1} = \hat{\delta}^*\,s^*\hspace{-4pt}_{r-u}\:\hat{\beta}^*_+\,Z_{(r+1)r}\,\hat{\alpha}^*_+\,\hat{\gamma}^*"] \ar[d,"\textstyle \phi^{3*} = \alpha\, Z_{(r-1)r}\,\beta"]\\
			V(\varepsilon^{20}) & & V(\varepsilon^{30}) \ar[ll,swap,"\textstyle \chi^{*0} = \hat{\delta}^*\,\hat{\beta}^*\,Z_{r(r-1)} \,\hat{\alpha}^*\,s^*\hspace{-4pt}_{r+t}\:\hat{\gamma}^*"]
		\end{tikzcd}
	\]Commutativity between $\phi^{2*}$ and $\hat{\delta}^*$ and between $\hat{\gamma}^*$ and $\phi^{3*}$ implies that it suffices to show that \[
		\alpha_+\,Z_{r(r+1)}\,\beta_+\,s^*\hspace{-4pt}_{r-u}\:\hat{\beta}^*_+\,Z_{(r+1)r}\,\hat{\alpha}^*_+ = \hat{\beta}^*\,Z_{r(r-1)} \,\hat{\alpha}^*\,s^*\hspace{-4pt}_{r+t}\:\alpha\,Z_{(r-1)r}\,\beta.
	\]We show that $\beta_+\, s^*\hspace{-4pt}_{r-u}\:\hat{\beta}^*_+ = \hat{\beta}^* s^*\hspace{-4pt}_{r}\: \beta$ by induction on the length $u$ of $\beta$. The base case $u = 0$ is vacuous. For $\theta\in\{\partial,s\}$, we must show that $\beta_+\,\theta_{r-u}\,s^*\hspace{-4pt}_{r-u-1}\:\hat{\theta}^*\hspace{-4pt}_{r-u}\:\hat{\beta}^*_+ = \hat{\theta}^*\hspace{-4pt}_{r-u-1}\:\hat{\beta}^*\,s^*\hspace{-4pt}_r\:\beta\,\theta_{r-u-1}$ assuming $\beta_+\, s^*\hspace{-4pt}_{r-u}\:\hat{\beta}^*_+ = \hat{\beta}^* s^*\hspace{-4pt}_{r}\: \beta$. For this, we use the identity $\theta_{r-u}\,s^*\hspace{-4pt}_{r-u-1}\:\hat{\theta}^*\hspace{-4pt}_{r-u} = \hat{\theta}^*\hspace{-4pt}_{r-u-1}\:s^*\hspace{-4pt}_{r-u}\:\theta_{r-u-1}$ of Lemma~\ref{lem:mixedBraidRelation}, far commutativity of $\beta_+$ and $\hat{\theta}^*\hspace{-4pt}_{r-u-1}$, and the inductive hypothesis. A similar argument gives $\hat{\alpha}^*\, s^*\hspace{-4pt}_{r+t}\:\alpha = \alpha_+\,s^*\hspace{-4pt}_r\:\hat{\alpha}^*_+$. These two identities, far commutativity, and the identity $Z_{r(r+1)}\,s^*\hspace{-4pt}_r\:Z_{(r+1)r} = Z_{r(r-1)}\,s^*\hspace{-4pt}_r\:Z_{(r-1)r}$ of Lemma~\ref{lem:relationsK} establish the result. 

	\noindent\hrulefill
	\vspace{2pt}

	\textbf{The middle left square}. \[
		\begin{tikzcd}[column sep=75pt,row sep=30pt]
			V(\varepsilon^{02}) \ar[d,swap,"\textstyle \psi^{0*} = \beta^* \, Q_r \, \hat{\beta}"] & & V(\varepsilon^{12}) \ar[ll,swap,"\textstyle \phi^{*2} = \gamma\,\alpha_+\,Z_{r(r+1)}\,\beta_+\,s_{r-u}\,\delta "] \ar[d,"\textstyle \psi^{1*} = \beta^*_+\, Q_{r+1}\,\hat{\beta}_+"]\\
			V(\varepsilon^{01}) & & V(\varepsilon^{11}) \ar[ll,swap,"\textstyle \phi^{*1} = \gamma \,\alpha_+\,Z_{r(r+1)}\,\beta_+ \,\partial_{r-u}\,\delta"]
		\end{tikzcd}
	\]The commutativity relations imply that \begin{align*}
		\psi^{0*} \phi^{*2} &= \beta^*\,Q_r\,\hat{\beta}\, \gamma\,\alpha_+\,Z_{r(r+1)}\,\beta_+\,s_{r-u}\,\delta = \gamma\,\alpha_+\,\beta^* \,Z_{r(r+1)}\,Q_r\,\hat{\beta}\,\beta_+\,s_{r-u}\,\delta\\
		\phi^{*1} \psi^{1*} &= \gamma \,\alpha_+\,Z_{r(r+1)}\,\beta_+ \,\partial_{r-u}\,\delta \, \beta^*_+\, Q_{r+1}\,\hat{\beta}_+ = \gamma\,\alpha_+\,Z_{r(r+1)}\,\beta_+\,\partial_{r-u}\,\beta^*_+\,Q_{r+1}\,\hat{\beta}_+\,\delta
	\end{align*}As established in the case of the bottom middle square, we have $\hat{\beta}\,\beta_+\,s_{r-u} = s_r\,\beta\,\hat{\beta}_+$ and $\beta_+ \,\partial_{r-u}\,\beta^*_+ = \beta^*\,\partial_r\,\beta$. These identities and the commutativity relations imply that it suffices to show that $Z_{r(r+1)}\,\partial_r\,Q_{r+1} = Z_{r(r+1)}\,Q_r\,s_r$. This follows from the identity $\partial_r\,Q_{r+1} = Q_r\,s_r + Q_{r+1}\,\partial_r$ of Lemma~\ref{lem:relationsK} and the identity $Z_{r(r+1)}\,Q_{r+1} = 0$ verified in Lemma~\ref{lem:consecutiveK}.

	\noindent\hrulefill
	\vspace{2pt}

	\textbf{The center square}. \[
		\begin{tikzcd}[column sep=75pt,row sep=30pt]
			V(\varepsilon^{12}) \ar[d,swap,"\textstyle \psi^{1*} = \beta^*_+\, Q_{r+1}\,\hat{\beta}_+"] & & V(\varepsilon^{22}) \ar[ll,swap,"\textstyle \psi^{*2} = \delta^*\,s^*\hspace{-4pt}_{r-u}\:\beta^*_+\,Q_{r+1}\,\hat{\beta}_+\,\partial_{r-u}\,\hat{\delta}"] \ar[d,"\textstyle \psi^{2*} = \beta^*_+\, Q_{r+1}\,\hat{\beta}_+"]\\
			V(\varepsilon^{11}) & & V(\varepsilon^{21}) \ar[ll,swap,"\textstyle \psi^{*1} = \delta^*\,\partial^*\hspace{-4pt}_{r-u}\:\beta^*_+\,Q_{r+1}\,\hat{\beta}_+\,s_{r-u}\,\hat{\delta}"]
		\end{tikzcd}
	\]Commutativity between $\psi^{1*}$ and $\delta^*$ and between $\hat{\delta}$ and $\psi^{2*}$ implies that it suffices to show that \[
		\beta^*_+\,Q_{r+1}\,\hat{\beta}_+\,s^*\hspace{-4pt}_{r-u}\,\beta^*_+\,Q_{r+1}\,\hat{\beta}_+\,\partial_{r-u} = \partial^*\hspace{-4pt}_{r-u}\,\beta^*_+\,Q_{r+1}\,\hat{\beta}_+\,s_{r-u}\,\beta^*_+\,Q_{r+1}\,\hat{\beta}_+.
	\]As established in the case of the bottom right square, we have $\hat{\beta}_+\,s_{r-u}\,\beta^*_+ = \beta^*\, s_r\,\hat{\beta}$. Applying this and the commutativity relations to the two sides of the desired equality, it thereby suffices to show that \[
		\beta^*_+\,\beta^*\,Q_{r+1}\,s^*\hspace{-4pt}_r\:Q_{r+1}\,\hat{\beta}\,\hat{\beta}_+\,\partial_{r-u} = \partial^*\hspace{-4pt}_{r-u}\:\beta^*_+\,\beta^*\, Q_{r+1}\,s_r\,Q_{r+1}\,\hat{\beta}\,\hat{\beta}_+.
	\]From the case of the bottom left square, we have $\hat{\beta}\,\hat{\beta}_+\,\partial_{r-u} = \partial_r\,\hat{\beta}\,\hat{\beta}_+$ and $\partial^*\hspace{-4pt}_{r-u}\:\beta^*_+\,\beta^* = \beta^*_+\,\beta^*\,\partial^*\hspace{-4pt}_r$. So it now suffices to show that $Q_{r+1}\,s^*\hspace{-4pt}_r\:Q_{r+1}\,\partial_r = \partial^*\hspace{-4pt}_r\:Q_{r+1}\,s_r\,Q_{r+1}$. This follows from the computation \begin{align*}
		\partial^*\hspace{-4pt}_r\:Q_{r+1}\,s_r\,Q_{r+1} &= (Q_r\,s_r + Q_{r+1}\,\partial_r)\,s_r\,Q_{r+1}\\
		&= Q_r\,Q_{r+1} - Q_{r+1}\,\partial_r\,Q_{r+1}\\
		&= Q_r\,Q_{r+1} - Q_{r+1}\,(Q_r + s_r\,Q_{r+1}\,\partial_r) = Q_{r+1}\, s^*\hspace{-4pt}_r\:Q_{r+1}\,\partial_r
	\end{align*}where we have used the identities $\partial_r\,Q_{r+1} = Q_r\,s_r + Q_{r+1}\,\partial_r = Q_r + s_r\,Q_{r+1}\,\partial_r$ and $\partial_r\,s_r = -\partial_r$.

	\noindent\hrulefill
	\vspace{2pt}

	\textbf{The remaining squares}. Commutativity of the remaining squares follows from the cases already established by the symmetry of Lemma~\ref{lem:selfadjoint}. This completes the proof.
\end{proof}

%%%%%%%%%%%%%%%%%%%%%%%%%%%%%%%%%%
%%%%%%%%%%%%%%%%%%%%%%%%%%%%%%%%%%

\section{Construction of $\sr{P}$}\label{sec:constructionOfP}

%%%%%%%%%%%%%%%%%%%%%%%%%%%%%%%%%%

Just as in the previous section, we have positive integers $a,b,c,d$ for which $a + b = c + d$ and $b = \min(a,b,c,d)$. We also set $n \coloneq a + b = c + d$ and $l \coloneq c - b = a - d$. The purpose of this section is to construct $\sr{P} \coloneq {}^b_a \sr{P}^c_d$. It is a bounded-above chain complex of singular Bott--Samelson bimodules with boundary data $c_L = (a,b)$ and $c_R = (d,c)$. A key ingredient in the construction is the complex $\sr{K} \coloneqq {}^b_a \sr{K}^c_d$. In section~\ref{subsec:webs_and_foams_in_P}, we introduce all of the relevant webs and foams needed to construct $\sr{P}$. We also relate these webs and foams to those appearing in $\sr{K}$, described in section~\ref{subsec:webs_and_foams_in_K}. In section~\ref{subsec:the_shape_of_P}, we explain the shape of $\sr{P}$, similar to how we described the shape of $\sr{K}$ in section~\ref{subsec:the_shape_of_K}. In sections~\ref{subsec:the_objects_of_P} and \ref{subsec:the_differential_of_P}, we define the objects and differential of $\sr{P}$, respectively. 

\subsection{The webs and foams in $\sr{P}$}\label{subsec:webs_and_foams_in_P}

Recall that a composition of a number $r$ is a sequence of positive integers $(g_1,\ldots,g_m)$ for which $g_1 + \cdots + g_m = r$. By abuse of notation, we let $0$ denote the empty composition of $0$.

\begin{df}\label{def:websfoamsP}
	Given a composition $g = (g_1,\ldots,g_m)$ of $r \in \{0,\ldots,b\}$, let $W^g_r = W^{g_1,\ldots,g_m}_r$ be the following web with boundary data $c_L = (a,b)$ and $c_R = (d,c)$. \vspace{5pt}\[
		\begin{gathered}
			\labellist
			\pinlabel $a$ at -0.5 0.1
			\pinlabel $b$ at -0.5 7.7
			\pinlabel $c$ at 14.8 7.7
			\pinlabel $d$ at 14.8 0.1
			\pinlabel $g_1$ at 1.2 6
			\pinlabel $g_2$ at 2.75 6
			\pinlabel $\cdots$ at 4.35 6
			\pinlabel $g_m$ at 5.6 6
			\pinlabel $b-r$ at 9.4 6.9
			\pinlabel $l+r$ at 13.7 6
			\pinlabel $a+r$ at 9.4 {-.6}
			\endlabellist
			\includegraphics[width=.295\textwidth]{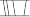}
		\end{gathered}\vspace{5pt}
	\]The edges labeled $g_1,g_2,\ldots,g_m$, and $b-r$ are assigned the alphabets \[
		\{x_1,\ldots,x_{g_1}\}, \{x_{g_1+1},\ldots,x_{g_1+g_2}\},\ldots,\{x_{r-g_m+1},\ldots,x_r\}, \{x_{r+1},\ldots,x_b\},
	\]respectively, while the edges labeled $a,b,c,d,l+r,a+r$ are assigned the alphabets \vspace{5pt}\[
		\begin{gathered}
			\labellist
			\pinlabel $\A$ at -0.6 0.1
			\pinlabel $\B$ at -0.6 7.7
			\pinlabel $\C$ at 14.9 7.7
			\pinlabel $\mathbf{D}$ at 14.9 0.1
			\pinlabel $\cdots$ at 5 5.9
			\pinlabel $\mathbf{E}_r$ at 13.4 6
			\pinlabel $\F_r$ at 9.4 {-.8}
			\endlabellist
			\includegraphics[width=.25\textwidth]{webW}
		\end{gathered}\vspace{5pt}
	\]Let $\xi(g) \coloneq \binom{g_1}{2} + \cdots + \binom{g_m}{2} + \binom{b-r}{2}$, and let $\iota^g \in \Hom^{-\xi(g)}(W^g_r,V_r)$ be the map induced by the natural inclusion map\[
		\Z[x_1,\ldots,x_b]^{\fk{S}_{g_1} \x \cdots \fk{S}_{g_m} \x \fk{S}_{b-r}} \hookrightarrow \Z[x_1,\ldots,x_b]
	\]and is linear over the elementary symmetric polynomials in $\A,\B,\C,\mathbf{D},\mathbf{E}_r,\mathbf{F}_r$. Let $\pi^g \in \Hom^{-\xi(g)}(V_r,W^g_r)$ be the adjoint of $\iota$. A foam representing $\pi$ can be obtained by gluing together local foams that merge adjacent rungs \[
		\begin{gathered}
			\includegraphics[width=.13\textwidth]{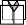}
		\end{gathered}
	\]in any order. A foam representing $\iota$ is obtained in a similar way, or by reflecting a foam representing $\pi$ across a horizontal plane.
\end{df}

\begin{lem}\label{lem:relationsP}
	There is a homogeneous polynomial $p_g \in \Z[x_1,\ldots,x_b]$ of $q$-degree $2\xi(g)$ with the property that $\pi^g\,p_g\,\iota^g = \Id$ on $W^g_r$. A bimodule map $f\colon B \to V_r$ from a singular Bott--Samelson bimodule $B$ factors through $q^{-\xi(g)}W^g_r$ as $f = \iota^g \tilde{f}$ \[
		\begin{tikzcd}
			B \ar[rr,"f"] \ar[dr,swap,dotted,"\tilde{f}"] & & V_r\\
			& q^{-\xi(g)}W^g_r \ar[ur,swap,"\iota^g"] &
		\end{tikzcd}
	\]if and only if $\partial_i f = 0$ for all $i\in\{1,\ldots,b-1\}\setminus\{g_1,g_1+g_2,\ldots,g_1+\cdots+g_m\}$. The map $\tilde{f}$ can be expressed as $\tilde{f} = \pi^g \, p_g \, f$.
\end{lem}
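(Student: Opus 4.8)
The plan is to reduce the whole statement to the nil-Hecke algebra, by recognizing $\iota^g$ as the inclusion of a subbimodule of $V_r$ and $\pi^g$ as a top divided-difference operator, and then to exhibit $p_g$ explicitly. Let $H \coloneqq \fk{S}_{g_1} \times \cdots \times \fk{S}_{g_m} \times \fk{S}_{b-r} \le \fk{S}_b$ be the Young subgroup permuting the $x$-variables within the blocks $\{x_1,\ldots,x_{g_1}\}, \ldots, \{x_{r-g_m+1},\ldots,x_r\}, \{x_{r+1},\ldots,x_b\}$; its simple reflections are exactly the $s_i$ with $i \in \{1,\ldots,b-1\} \setminus \{g_1, g_1+g_2, \ldots, r\}$, which is the index set in the statement. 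First I would observe, by comparing the presentations in Definitions~\ref{def:websfoamsW} and \ref{def:websfoamsP}, that every defining relation of $B_{V_r}$ already lies in $\Z[x_1,\ldots,x_b]^H \otimes \Sym(\A) \otimes \cdots \otimes \Sym(\F_r)$ (each relation is assembled from symmetric functions of the blocks), so that $\iota^g$ identifies $B_{V_r}$ with $\Z[x_1,\ldots,x_b] \otimes_{\Z[x_1,\ldots,x_b]^H} B_{W^g_r}$. Because $\Z[x_1,\ldots,x_b]$ is free over $\Z[x_1,\ldots,x_b]^H$ with $1$ in a homogeneous basis, $\iota^g$ is injective with image the $H$-invariant subbimodule $(B_{V_r})^H$; and using $\ker \partial_i = \Z[x_1,\ldots,x_b]^{s_i}$ from Section~\ref{subsec:the_nil_hecke_algebra}, the $\Z[x_1,\ldots,x_b]^H$-linearity of $\partial_i$ for interior $i$, and the splitness of $\Z[x_1,\ldots,x_b]$ over its invariants, one obtains
\[
	\Image \iota^g \;=\; \bigcap_{i \in \{1,\ldots,b-1\} \setminus \{g_1, \ldots, r\}} \ker\!\big( \partial_i \colon V_r \to V_r \big).
\]
Finally, $\pi^g = (\iota^g)^*$ is the equivariant pushforward along the flag bundle relating the Bott--Samelson variety of $V_r$ to that of $W^g_r$, whose fibre is the product of the flag varieties of the blocks, so---up to the grading shift $\xi(g)$ built into $W^g_r$---it is induced by the top divided difference $\partial_{w_0^H}$ for the longest element $w_0^H$ of $H$ (of length $\ell(w_0^H) = \xi(g)$), exactly in the spirit of Example~\ref{example:bending}.

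For the polynomial $p_g$ I would take the product over the blocks of the staircase monomials: on the block $\{x_{s+1},\ldots,x_{s+k}\}$ the monomial $x_{s+1}^{\,k-1} x_{s+2}^{\,k-2} \cdots x_{s+k-1}^{\,1}$, of $q$-degree $2\binom{k}{2}$, so that $p_g$ is homogeneous of $q$-degree $2\xi(g)$. The classical identity $\partial_{w_0}(x_1^{\,k-1} \cdots x_{k-1}^{\,1}) = 1$ for $\fk{S}_k$, combined with far commutativity (operators of different blocks have indices differing by at least $2$ and each sees only its own variables), gives $\partial_{w_0^H}(p_g) = 1$. Then the twisted Leibniz rule $\partial_i(P u) = \partial_i(P)\, s_i(u) + P\, \partial_i(u)$ shows that $\partial_i(P u) = \partial_i(P)\, u$ whenever $u$ is $s_i$-invariant; iterating along a reduced word for $w_0^H$ yields $\partial_{w_0^H}(p_g\, u) = \partial_{w_0^H}(p_g)\, u = u$ for every $H$-invariant $u$. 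Applying this to $u = \iota^g(w)$ and using $\pi^g = (\iota^g)^{-1} \circ \partial_{w_0^H}$ on $(B_{V_r})^H$ proves $\pi^g \, p_g \, \iota^g = \Id_{W^g_r}$.

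The factorization criterion then falls out. If $f = \iota^g \tilde f$, then $\partial_i f = (\partial_i \iota^g)\, \tilde f = 0$ for every interior $i$, since $\iota^g$ lands in the $s_i$-invariants. Conversely, if $\partial_i f = 0$ for all such $i$, then $\Image f \subseteq \bigcap_i \ker \partial_i = \Image \iota^g$ by the displayed equality; as $\iota^g$ is injective with image the $(R_L,R_R)$-subbimodule $(B_{V_r})^H$ (the left and right actions being by multiplication by $H$-invariant elements), the map $\tilde f \coloneqq (\iota^g)^{-1} \circ f$ is a well-defined bimodule map, and it is the unique one with $\iota^g \tilde f = f$. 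Post-composing $f = \iota^g \tilde f$ with $\pi^g\, p_g$ and invoking the first assertion gives $\tilde f = \pi^g\, p_g\, \iota^g\, \tilde f = \pi^g\, p_g\, f$.

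I expect the main obstacles to be the two inputs I would import rather than reprove: the identification of the adjoint $\pi^g$ with the top divided difference $\partial_{w_0^H}$ (which I would pin down from the ``adjoint $=$ reflected foam'' dictionary together with the pushforward formula for flag bundles in \cite{MR4655919}), and the classical staircase identity $\partial_{w_0}(x_1^{\,k-1}\cdots x_{k-1}^{\,1}) = 1$. The remaining friction is bookkeeping---verifying that every relation of $B_{V_r}$ is $H$-symmetric, that $\bigcap \ker$ commutes with the base change $\Z[x]\otimes_{\Z[x]^H}(-)$ (via splitness), and that the grading shifts are consistent (notably that $W^g_r$ and $V_r$ differ by exactly $q^{\xi(g)}$)---which is routine but easy to slip on.
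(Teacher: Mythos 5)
Your proof is correct and follows essentially the same route as the paper: identify the image of $\iota^g$ with the $\fk{S}_{g_1}\times\cdots\times\fk{S}_{g_m}\times\fk{S}_{b-r}$-invariants (equivalently $\bigcap_i\ker\partial_i$), recognize $\pi^g$ as the push-pull for the flag bundle, and take $p_g$ dual to it under the pairing. The only cosmetic difference is that the paper simply cites the equivariant fundamental class of the partial flag manifold for $p_g$, whereas you exhibit it explicitly as the product of staircase monomials and verify $\partial_{w_0^H}(p_g\,u)=u$ via the twisted Leibniz rule, which is a fine (and slightly more self-contained) realization of the same class.
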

\begin{proof}
	The $\U(b)$-equivariant cohomology of the partial flag manifold $\mathrm{Fl}(g_1,\ldots,g_m,b-r;b)$ of $\C^b$ is naturally identified with $\Z[x_1,\ldots,x_b]^{\fk{S}_{g_1} \x \cdots \x \fk{S}_{g_m} \x \fk{S}_{b-r}}$. The inclusion map of this ring into $\Z[x_1,\ldots,x_b]$ is induced by the forgetful map from the full flag manifold of $\C^b$ to this partial flag manifold. The polynomial $p_g \in \Z[x_1,\ldots,x_b]$ is the equivariant fundamental class of this partial flag manifold, which has the stated property. 

	Clearly, a bimodule map $f$ factors as $f = \iota^g\circ\tilde{f}$ if and only if its image consists of polynomials that are invariant under $\fk{S}_{g_1} \x \cdots \x \fk{S}_{g_m} \x \fk{S}_{b-r}$. The second assertion follows from the fact that $s_if = f$ if and only if $\partial_i f = 0$. Lastly, if $f = \iota^g \tilde{f}$, then $\tilde{f} = \pi^g\,p_g\,\iota^g\tilde{f} = \pi^g\,p_g\,f$. 
\end{proof}

\subsection{The shape of $\sr{P}$}\label{subsec:the_shape_of_P}

Consider the convex set $T \coloneqq \{(x_1,\ldots,x_b) \in \R^b \,|\, x_1 \ge \cdots \ge x_b \ge 0\}$. For each integer $k \ge 0$, let $T_k \subset T$ be the subset consisting of points $(x_1,\ldots,x_b)$ for which $k \ge x_1 \ge \cdots \ge x_b \ge 0$. Then $T_k$ is a $b$-dimensional simplex, and the sequence $T_0 \subset T_1 \subset \cdots$ forms an exhaustive filtration of $T$. Observe that a lattice point $\lambda \in T \cap \Z^b$ within $T$ is a sequence of integers $\lambda = (\lambda_1,\ldots,\lambda_b)$ for which $\lambda_1 \ge \cdots \ge \lambda_b \ge 0$. So $T \cap \Z^b$ may be thought of as the set of partitions with at most $b$ parts. Similarly, the lattice points $T_k \cap \Z^b$ within $T_k$ may be thought of as the set of partitions with at most $b$ parts, each of size at most $k$. In addition to the lattice points, we are also interested in the edges and faces of the standard cubulation of $\R^b$ that are contained within $T$. Convexity of $T$ implies that such an edge lies within $T$ if and only if both of its endpoints do. See Figure~\ref{fig:infinitetetrahedron}. 

\begin{figure}[!ht]
	\[
		\begin{tikzcd}[nodes={inner sep=0pt},row sep=20pt]
			& & & \bullet \ar[r, no head] & \:\cdots\\
			& & \bullet \ar[r, no head] & \bullet \ar[u, no head] \ar[r, no head] & \:\cdots\\
			& \bullet \ar[r, no head] & \bullet \ar[r, no head] \ar[u, no head] & \bullet \ar[u, no head] \ar[r, no head] & \:\cdots \\
			\bullet \ar[r, no head] & \bullet \ar[r, no head] \ar[u, no head] & \bullet \ar[r, no head] \ar[u, no head] & \bullet \ar[u, no head] \ar[r, no head] & \:\cdots
		\end{tikzcd} \hspace{40pt} \begin{tikzcd}[nodes={inner sep=0pt},column sep={16pt,between origins},row sep={10pt,between origins}]
			& & & & & & & & & \bullet \ar[no head,ddd] & & \cdots \ar[no head,ll]\\
			& & & & & & & & &\\
			& & & & & & & & &\\
			& & & & & & & & & \bullet \ar[no head,ddd] \ar[no head,ddl] & & \cdots \ar[no head,ll]\\
			& & & & & & & & &\\
			& & & & & & \bullet \ar[no head,ddd] & & \bullet \ar[no head,ddd] \ar[no head,ll] & & \cdots \ar[no head,ll,crossing over]\\
			& & & & & & & & & \bullet \ar[no head,ddd] \ar[no head,ddl] & & \cdots \ar[no head,ll]\\
			& & & & & & & & &\\
			& & & & & & \bullet \ar[no head,ddd] \ar[no head,ddl] & & \bullet \ar[no head,ddd] \ar[no head,ll] \ar[no head,ddl] & & \cdots \ar[no head,ll,crossing over]\\
			& & & & & & & & & \bullet \ar[no head,ddl] & & \cdots \ar[no head,ll]\\
			& & & \bullet \ar[no head,ddd] & & \bullet \ar[no head,ddd] \ar[no head,ll] & & \bullet \ar[no head,ll,crossing over] & & \cdots \ar[no head,ll,crossing over]\\
			& & & & & & \bullet \ar[no head,ddl] & & \bullet \ar[no head,ll] \ar[no head,ddl] & & \cdots \ar[no head,ll]\\
			& & & & & & & & &\\
			& & & \bullet \ar[no head,ddl] & & \bullet \ar[no head,ll] \ar[no head,ddl] & & \bullet \ar[no head,ll] \ar[no head,ddl] \ar[no head,from=uuu,crossing over] & & \cdots \ar[no head,ll]\\
			& & & & & & & & &\\
			\bullet & & \bullet \ar[no head,ll] & & \bullet \ar[no head,ll] & & \bullet \ar[no head,ll] & & \cdots \ar[no head,ll]
		\end{tikzcd}
	\]
	\captionsetup{width=.8\linewidth}
	\caption{The vertices and edges of the standard cubulation of $\R^b$ contained within $T$.}
	\label{fig:infinitetetrahedron}
\end{figure}
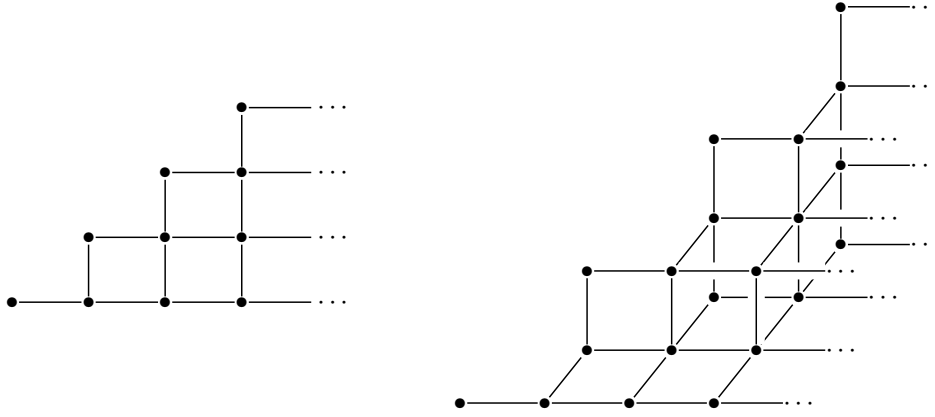

We will define $\sr{P} = {}^b_a \sr{P}^c_d$ as a $b$-fold complex in the following way. To each $\lambda \in T \cap \Z^b$, we assign an object $W(\lambda)$, which is just one of the webs $W^g_r$ from section~\ref{subsec:webs_and_foams_in_P} with a quantum grading shift. The complex $\sr{P}$ is the direct sum $\bigoplus_\lambda t^{-|\lambda|} W(\lambda)$ over all $\lambda \in T \cap \Z^b$ where $|\lambda| \coloneq \sum_{i=1}^b \lambda_i$. The nontrivial components of the differential lie along the edges of the standard cubulation of $\R^b$ that are contained within $T$. In particular, the differential decrements a coordinate of $\lambda$ by one. The differential squares to zero when traveling along consecutive edges in the same direction. Each face of the standard cubulation of $\R^b$ that is contained in $T$ yields a commutative square. 

The filtration $\sr{F}^0(\sr{P}) \subset \sr{F}^1(\sr{P}) \subset \cdots$ will be defined in the following way. For each integer $k \ge 0$, the subcomplex $\sr{F}^k(\sr{P})$ is the direct sum $\bigoplus_\lambda t^{-|\lambda|} W(\lambda)$ over all $\lambda \in T_k \cap \Z^b$, with differential agreeing with that of $\sr{P}$ along all edges in $T_k$. So $\sr{F}^k(\sr{P})$ is modeled on the vertices and edges of the standard cubulation of $\R^b$ that are contained within the simplex $T_k$. 

\subsection{The objects of $\sr{P}$}\label{subsec:the_objects_of_P}

Fix a partition $\lambda = (\lambda_1,\ldots,\lambda_b) \in T \cap \Z^b$ with at most $b$ parts, and let $r(\lambda) \in \{0,\ldots,b\}$ be the largest index for which $\lambda_{r(\lambda)} \neq 0$. So $\lambda_1,\ldots,\lambda_{r(\lambda)}$ are the (nonzero) parts of $\lambda$ and $\lambda_{r(\lambda)+1} = \cdots = \lambda_b = 0$. Let $g(\lambda) = (g_1,\ldots,g_m)$ be the multiplicities of the (nonzero) parts of $\lambda$, and note that $g_1 + \cdots + g_m = r(\lambda)$. Set \[
	W(\lambda) \coloneq q^{H(\lambda)}W^{g(\lambda)}_{r(\lambda)}
\]where the grading shift function $H\colon T \cap \Z^b \to \Z$ is given by the formula \[
	H(\lambda) \coloneqq \left(\sum_{i=1}^{b} 2i \lambda_i \right) - \frac12\left(r(\lambda)^2 + \sum_{j=1}^m g_j^2\right) + (a-b)|\lambda| - (c-b)\#\{\text{odd parts of }\lambda\} - (d-b)\#\{\text{even parts of } \lambda\}.
\]where $|\lambda| = \lambda_1 +\cdots + \lambda_{r(\lambda)}$ and $\#\{\text{even parts of }\lambda\} = \#\{i\in\{1,\ldots,b\}\:|\: \lambda_i \text{ is even and positive }\}$. 

We relate $H$ to the grading shift function $G\colon [0,3]^b \cap \Z^b \to \Z$ and its coboundary $\upsilon$ defined in section~\ref{subsec:the_objects_of_K}. Let $\varepsilon\colon \Z_{\ge 0} \to \{0,1,2\}$ be the function \[
	\varepsilon(j) \coloneqq \begin{cases}
		0 & j = 0\\
		1 & j \text{ is odd }\\
		2 & j \text{ is even and positive}
	\end{cases} 
\]and extend it coordinate-wise to a function $\varepsilon \colon T \cap \Z^b \to [0,2]^b \cap \Z^b$ by $\varepsilon(\lambda)\coloneqq (\varepsilon(\lambda_1),\ldots,\varepsilon(\lambda_b))$. Both functions are named $\varepsilon$ by abuse of notation. We view $\varepsilon(\lambda)$ as a vertex of the fine cubulation of $[0,3]^b$ that happens to lie in the subcube $[0,2]^b$. Next, fix $i \in \{1,\ldots,b\}$ and consider integers satisfying \[
	\lambda_1 \ge \cdots \ge \lambda_{i-1} \ge j+1 > j \ge \lambda_{i+1} \ge \cdots \ge \lambda_b \ge 0.
\]Let $\lambda^{[j,j+1]}$ denote the edge of the standard cubulation of $\R^b$ whose endpoints are $\lambda^j \coloneq (\lambda_1,\ldots,\lambda_{i-1},j,\lambda_{i+1},\ldots,\lambda_b)$ and $\lambda^{j+1} \coloneq (\lambda_1,\ldots,\lambda_{i-1},j+1,\lambda_{i+1},\ldots,\lambda_b)$, which lie in $T \cap \Z^b$. Any edge contained in $T \cap \Z^b$ is of this form. Set $\varepsilon_k \coloneqq \varepsilon(\lambda_k)$ for $k \in \{1,\ldots,b\}\setminus \{i\}$. Let $\varepsilon^{[0,1]}$, $\varepsilon^{[1,2]}$, and $\varepsilon^{[2,3]}$ be the following edges of the fine cubulation of $[0,3]^b$ \[
	\begin{tikzcd}[column sep={70pt,between origins}]
		\varepsilon^0 & \varepsilon^1 \ar[l,swap,"\varepsilon^{[0,1]}"] & \varepsilon^2 \ar[l,swap,"\varepsilon^{[1,2]}"] & \varepsilon^3 \ar[l,swap,"\varepsilon^{[2,3]}"]
	\end{tikzcd}
\]where $\varepsilon^k \coloneq (\varepsilon_1,\ldots,\varepsilon_{i-1},k,\varepsilon_{i+1},\ldots,\varepsilon_b)$ for $k \in \{0,1,2,3\}$. Note that \[
	(\varepsilon(\lambda^j),\varepsilon(\lambda^{j+1})) = \begin{cases}
		(\varepsilon^0,\varepsilon^1) & j = 0\\
		(\varepsilon^1,\varepsilon^2) & j \text{ is odd}\\
		(\varepsilon^2,\varepsilon^1) & j \text{ is even and positive.}
	\end{cases}
\]Now define \[
	\omega(\lambda^{[j,j+1]}) \coloneq \begin{cases}
		\upsilon(\varepsilon^{[0,1]}) + \xi(g(\lambda^{j+1})) - \xi(g(\lambda^j)) & j = 0\\
		\upsilon(\varepsilon^{[1,2]}) + \xi(g(\lambda^{j+1})) - \xi(g(\lambda^j)) & j \text{ is odd }\\
		\upsilon(\varepsilon^{[2,3]}) + \upsilon(\varepsilon^{[0,1]}) + \xi(g(\lambda^{j+1})) - \xi(g(\lambda^j)) & j \text{ is even and positive.}
	\end{cases}
\]

\begin{lem}\label{lem:gradingFunctionP}
	The function $H\colon T \cap \Z^b \to \Z$ satisfies $H(\lambda^j) - H(\lambda^{j+1}) = \omega(\lambda^{[j,j+1]})$ for all edges $\lambda^{[j,j+1]}$ of the standard cubulation of $\R^b$ that are contained in $T$. 
\end{lem}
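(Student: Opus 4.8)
## Proof proposal for Lemma~\ref{lem:gradingFunctionP}

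The plan is to view $\omega$ as a $1$-cochain on the subcomplex of the standard cubulation of $\R^b$ supported on $T$, and to show it is a coboundary. Since $T$ is convex, hence (the cubical complex it supports is) simply connected, it suffices to show that $\omega$ is \emph{coclosed}, i.e.\ that the sum of the signed edge-values around the boundary of every $2$-dimensional face contained in $T$ vanishes. Once that is established, exactness of the cochain complex in degree $1$ gives a function $H$ with $\delta H = \omega$, unique up to an additive constant, and the normalization $H(0,\ldots,0) = 0$ pins it down. (One should phrase ``coboundary'' with the sign convention $H(\lambda^j) - H(\lambda^{j+1}) = \omega(\lambda^{[j,j+1]})$ matching the statement; this is just $\delta H = -\omega$ with the usual sign, immaterial to the argument.)

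The key step is the face computation. Fix $1 \le i_1 < i_2 \le b$ and a face $F$ of the standard cubulation lying in $T$, with the $i_1$-th coordinate ranging over $\{j_1, j_1+1\}$ and the $i_2$-th over $\{j_2, j_2+1\}$, the other coordinates $\lambda_k$ fixed. I want to show $\omega$ evaluates to zero around $\partial F$. The function $\omega$ is built from two pieces: the pulled-back values $\upsilon(\varepsilon^{[\cdots]})$ of the $1$-cochain $\upsilon$ on $[0,3]^b$ from section~\ref{subsec:the_objects_of_W}, and the ``correction'' term $\xi(g(\lambda^{j+1})) - \xi(g(\lambda^j))$. The correction term is, by inspection, the coboundary of the function $\lambda \mapsto \xi(g(\lambda))$ on $T \cap \Z^b$ (with the caveat that for a positive even $j$ one adds $\upsilon(\varepsilon^{[0,1]})$, reflecting the fact that the map $\varepsilon$ folds $\{1,2,\ldots\}$ back and forth between the values $1$ and $2$ — I will track this carefully below); a coboundary is automatically coclosed, so that piece contributes zero around any face. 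For the $\upsilon$-piece, the map $\lambda \mapsto \varepsilon(\lambda)$ sends the lattice edges of $T$ into the edge-set of the fine cubulation of $[0,3]^b$, and Lemma~\ref{lem:gradingFunctionW} already tells us $\upsilon$ is coclosed on $[0,3]^b$. The subtlety is that $\varepsilon$ is not a cubical map in the naive sense: a face $F$ of $T$ need not map to a face of $[0,3]^b$, because along a coordinate direction the sequence $j \mapsto \varepsilon(j)$ traverses the edges $\varepsilon^{[0,1]}, \varepsilon^{[1,2]}, \varepsilon^{[2,1]}, \varepsilon^{[1,2]}, \ldots$ — i.e.\ it goes up to the value $1$, then up to $2$, then \emph{back down} to $1$, then up to $2$ again. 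So the image of $\partial F$ under $\varepsilon$ is a closed loop in the $1$-skeleton of the fine cubulation, and I must argue this loop bounds — equivalently, that $\upsilon$ integrates to zero over it.

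The cleanest way to organize this is: reduce to $b = 2$ (contributions of the frozen coordinates $\lambda_k$, $k \notin \{i_1,i_2\}$, enter $\omega$ additively and identically on opposite edges of $F$, so they cancel around $\partial F$, exactly as in the proof of Lemma~\ref{lem:gradingFunctionW}); then, with $b = 2$, do a finite case analysis on the pair $(j_1, j_2) \bmod$ small integers, distinguishing $j_\bullet = 0$, $j_\bullet$ odd, and $j_\bullet$ even positive — there are only a handful of genuinely distinct cases once one uses that the relevant data only depends on $\varepsilon(j_\bullet), \varepsilon(j_\bullet+1)$ and on the local shape of the grouping $g$. In each case the four edge-values of $\omega$ around the square are read off from the definition, and one checks the alternating sum is $0$; the $\upsilon$-contributions cancel by coclosedness of $\upsilon$ (using the explicit small-cube coclosedness verification in the proof of Lemma~\ref{lem:gradingFunctionW}, possibly traversed twice or with a cancelling back-and-forth), and the $\xi$-contributions cancel because they form a coboundary. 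I expect the main obstacle to be \textbf{bookkeeping the $\varepsilon$-folding cleanly}: making precise, with correct signs, how a face of $T$ that straddles the transition $j = 0 \to 1$ or sits entirely in the $j \ge 1$ region maps to a genuine face, a degenerate (pinched) face, or a back-and-forth bigon in the fine cubulation of $[0,3]^b$, and verifying that $\upsilon$ evaluates to zero on each such image loop. Once the correspondence between $T$-faces and their $\varepsilon$-images is pinned down, the actual vanishing is immediate from Lemma~\ref{lem:gradingFunctionW} and the coboundary observation, and uniqueness plus normalization finish the proof.
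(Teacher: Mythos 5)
Your overall skeleton matches the paper's: interpret $\omega$ as a $1$-cochain on the part of the standard cubulation lying in $T$, verify coclosedness on every $2$-face (splitting off the $\xi$-differences, which are the coboundary of $\lambda \mapsto \xi(g(\lambda))$ and hence cancel for free), run a case analysis on the parities of $j_1,j_2$, and then get $H$ from exactness plus the normalization $H(0,\ldots,0)=0$. The gap is your claim that, once the $\varepsilon$-images of the faces are pinned down, the vanishing of the $\upsilon$-part is ``immediate from Lemma~\ref{lem:gradingFunctionW} and the coboundary observation.'' It is not, because for an edge with $j$ even and positive the $\upsilon$-contribution to $\omega$ is by definition $\upsilon(\varepsilon^{[2,3]}) + \upsilon(\varepsilon^{[0,1]})$, which is \emph{not} the integral of $\upsilon$ along your folded path: writing $\upsilon = \delta G$, the folded traversal from the $\varepsilon^1$-corner to the $\varepsilon^2$-corner contributes $G(\varepsilon^2)-G(\varepsilon^1) = -\upsilon(\varepsilon^{[1,2]})$, whereas the defined value is $\bigl(G(\varepsilon^2)-G(\varepsilon^3)\bigr) + \bigl(G(\varepsilon^0)-G(\varepsilon^1)\bigr)$, exceeding it by $G(\varepsilon^0)-G(\varepsilon^3)$. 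The pullback $G\circ\varepsilon$ is an honest function on $T\cap\Z^b$, so its coboundary cancels around faces for free; the entire content of the lemma is that the leftover terms $G(\varepsilon^0)-G(\varepsilon^3)$, attached to the even-positive edges, also cancel around each face. That quantity depends on the frozen coordinates, and its cancellation is not a consequence of $\upsilon$ being closed (nor of the duality constant in Lemma~\ref{lem:gradingFunctionW}); it requires the explicit coordinate dependence of $\upsilon$ — that $\upsilon(\varepsilon^{[0,1]})$ and $\upsilon(\varepsilon^{[2,3]})$ at position $i$ depend only on the coordinates after $i$ while $\upsilon(\varepsilon^{[1,2]})$ depends only on those before $i$, and how these counts shift when a neighbouring coordinate moves among $0,1,2,3$. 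This is exactly what the paper's six-case computation supplies, via identities such as $\upsilon(\varepsilon^{3,[0,1]}) = \upsilon(\varepsilon^{0,[0,1]})$ and, in the doubly even case, $\upsilon(\varepsilon^{[2,3],0}) = \upsilon(\varepsilon^{[2,3],3}) - 2$ together with $\upsilon(\varepsilon^{[0,1],0}) = \upsilon(\varepsilon^{[0,1],3}) + 2$, whose $\pm 2$'s cancel.

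Relatedly, your parenthetical that the ``correction term'' — into which you fold the extra $\upsilon(\varepsilon^{[0,1]})$ for positive even $j$ — is ``by inspection'' the coboundary of $\lambda \mapsto \xi(g(\lambda))$ is not right as stated: that extra $\upsilon$-value varies with the other coordinates and is not part of any coboundary you exhibit, so the ``a coboundary is automatically coclosed'' step does not cover it. You flag the folding bookkeeping as the main obstacle and promise to track it, but the proposal never does, and closedness of $\upsilon$ alone will not close it. The missing checks are short (the same parity case analysis you set up, carried out from the defining formula of $\upsilon$), and once they are done the remainder of your argument — exactness over the convex, simply connected region $T$ and uniqueness from the normalization — is fine and agrees with the paper.
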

\begin{proof}
	Set $H_\xi(\lambda)\coloneqq H(\lambda) + \xi(g(\lambda))$. Since $\xi(g(\lambda)) - \frac12(r(\lambda)^2 + \sum_{j=1}^m g_j^2) = \binom{b}{2} - br(\lambda)$, we have \begin{align*}
		H_\xi(\lambda) &= \left(\sum_{h=1}^b 2h\lambda_h\right) + \binom{b}{2} - br(\lambda) + (a-b)|\lambda| - (c-b)\#\{\text{odd parts of }\lambda\} - (d-b)\#\{\text{even parts of } \lambda\}\\
		&= \left(\sum_{h=1}^b 2h\lambda_h\right) + \binom{b}{2} + (a-b)|\lambda| - c\#\{\text{odd parts of }\lambda\} - d\#\{\text{even parts of } \lambda\}
	\end{align*}We must show that $H_\xi(\lambda^j) - H_\xi(\lambda^{j+1})$ is equal to \[
		\begin{cases}
			\upsilon(\varepsilon^{[0,1]}) = - 2i + 2b  - d & j = 0\\
			\upsilon(\varepsilon^{[1,2]}) = -2i + 2b - 2c & j \text{ is odd}\\
			\upsilon(\varepsilon^{[2,3]})+\upsilon(\varepsilon^{[0,1]}) = -2i + 2b - 2d & j \text{ is even and positive}
		\end{cases}
	\]where we have evaluated $\upsilon(\varepsilon^{[j,j+1]})$ using the definition of $\varepsilon$ above and the definition of $\upsilon$ given section~\ref{subsec:the_objects_of_K}. From the explicit formula for $H_\xi(\lambda^{j+1})$ above, we see that if we decrease its $i$th entry, which is $j+1$, by $1$, then the first term decreases by $2i$, the second term is unchanged, and the third term decreases by $a - b$. If $j = 0$, the fourth and fifth terms collectively increase by $c$. If $j$ is odd, the fourth and fifth terms collectively increase by $d-c$, while if $j$ is even and positive, then the fourth and fifth terms collectively increase by $c - d$. Using the identity $a = c + d - b$, we obtain \[
		H_\xi(\lambda^j) - H_\xi(\lambda^{j+1}) = -2i - (a - b) + \begin{cases}
			c & j = 0\\
			d-c & j \text{ is odd }\\
			c - d & j \text{ is even and positive }
		\end{cases} = \begin{cases}
			-2i + 2b - d & j = 0\\
			-2i + 2b - 2c & j \text{ is odd }\\
			-2i + 2b - 2d & j \text{ is even }
		\end{cases}
	\]as required.
\end{proof}

\subsection{The differential of $\sr{P}$}\label{subsec:the_differential_of_P}

Fix $i \in \{1,\ldots,b\}$ and integers $\lambda_1 \ge \cdots \ge \lambda_{i-1} \ge j+1 > j \ge \lambda_{i+1} \ge \cdots \ge \lambda_b \ge 0$. Let $\lambda^{[j,j+1]}$ be the edge of the standard cubulation of $\R^b$ contained in $T$ with endpoints $\lambda^j$ and $\lambda^{j+1}$ as before. Set $\varepsilon_k \coloneq \varepsilon(\lambda_k)$ for $k \in \{1,\ldots,b\}\setminus\{i\}$, and consider the following edges of the fine cubulation of $[0,3]^b$ \[
	\begin{tikzcd}[column sep={70pt,between origins}]
		\varepsilon^0 & \varepsilon^1 \ar[l,swap,"\varepsilon^{[0,1]}"] & \varepsilon^2 \ar[l,swap,"\varepsilon^{[1,2]}"] & \varepsilon^3 \ar[l,swap,"\varepsilon^{[2,3]}"]
	\end{tikzcd}
\]where $\varepsilon^k \coloneq (\varepsilon_1,\ldots,\varepsilon_{i-1},k,\varepsilon_{i+1},\ldots,\varepsilon_b)$ for $k \in \{0,1,2,3\}$. Consider the components of the differential of $\sr{K}$ associated with these edges \[
	\begin{tikzcd}[column sep=60pt]
		V({\varepsilon^{0}}) & V({\varepsilon^{1}}) \ar[l,swap,"\textstyle \phi"] & V({\varepsilon^{2}}) \ar[l,swap,"\textstyle \psi"] & V({\varepsilon^{3}}) \ar[l,swap,"\textstyle \chi"]
	\end{tikzcd}
\]We define the component $\zeta\colon W(\lambda^{j+1}) \to W(\lambda^j)$ of the differential of $\sr{P}$ assigned to the edge $\lambda^{[j,j+1]}$ to be the following composite\[
	\begin{cases}
		\begin{aligned}
		\begin{tikzcd}[column sep=60pt,row sep=25pt]
			W(\lambda^j) & W(\lambda^{j+1}) \ar[d,swap,"\textstyle\iota^{g(\lambda^{j+1})}"] \ar[l,dotted,swap,"\textstyle\zeta \coloneq \pi^{g(\lambda^j)}\,p_{g(\lambda^j)}\,\phi\,\iota^{g(\lambda^{j+1})}"]\\
			q^{H(\lambda^j) - G(\varepsilon^0) + \xi(g(\lambda^j))}V(\varepsilon^0) \ar[u,"\textstyle\pi^{g(\lambda^j)}\,p_{g(\lambda^j)}"] & q^{H(\lambda^{j+1})-G(\varepsilon^1) + \xi(g(\lambda^{j+1}))} V(\varepsilon^1) \ar[l,swap,"\textstyle\phi"]
		\end{tikzcd} \hspace{15pt} & j = 0\\[10pt]
		\begin{tikzcd}[column sep=60pt,row sep=25pt]
			W(\lambda^j) & W(\lambda^{j+1}) \ar[d,swap,"\textstyle\iota^{g(\lambda^{j+1})}"] \ar[l,dotted,swap,"\textstyle\zeta\coloneqq \pi^{g(\lambda^j)}\,p_{g(\lambda^j)}\,\psi\,\iota^{g(\lambda^{j+1})}"]\\
			q^{H(\lambda^j) - G(\varepsilon^1) + \xi(g(\lambda^j))} V(\varepsilon^1) \ar[u,"\textstyle\pi^{g(\lambda^j)}\,p_{g(\lambda^j)}"] & q^{H(\lambda^{j+1})-G(\varepsilon^2) + \xi(g(\lambda^{j+1}))} V(\varepsilon^2) \ar[l,swap,"\textstyle\psi"]
		\end{tikzcd} \hspace{15pt} & j \text{ is odd}\\[10pt]
		\begin{tikzcd}[column sep=60pt,row sep=25pt]
			W(\lambda^j) & W(\lambda^{j+1}) \ar[d,swap,"\textstyle\iota^{g(\lambda^{j+1})}"] \ar[l,dotted,swap,"\textstyle\zeta \coloneqq \pi^{g(\lambda^j)}\,p_{g(\lambda^j)}\,\chi\,\phi\,\iota^{g(\lambda^{j+1})}"]\\
			q^{H(\lambda^{j})-G(\varepsilon^2) + \xi(g(\lambda^{j}))} V(\varepsilon^2) \ar[u,"\textstyle\pi^{g(\lambda^j)}\,p_{g(\lambda^j)}"] & q^{H(\lambda^{j+1})-G(\varepsilon^1) + \xi(g(\lambda^{j+1}))} V(\varepsilon^1) \ar[l,swap,"\textstyle\chi\,\phi"]
		\end{tikzcd} \hspace{15pt} & j \text{ is even and positive.}
	\end{aligned}
	\end{cases}
\]See Lemma~\ref{lem:relationsP} for the notation $\pi^g \,p_g$. The differential is $q$-homogeneous by Lemma~\ref{lem:gradingFunctionP}. 

\begin{lem}\label{lem:differentialFactorsP}
	The component $\zeta$ of the differential of $\sr{P}$ assigned to $\lambda^{[j,j+1]}$ is the unique map that makes the following diagram commute \[
		\begin{cases}
		\begin{aligned}
		\begin{tikzcd}[column sep=60pt,row sep=25pt]
			W(\lambda^j) \ar[d,swap,"\textstyle \iota^{g(\lambda^j)}"] & W(\lambda^{j+1}) \ar[d,swap,"\textstyle\iota^{g(\lambda^{j+1})}"] \ar[l,dotted,swap,"\textstyle\zeta"]\\
			q^{H(\lambda^j) - G(\varepsilon^0) + \xi(g(\lambda^j))}V(\varepsilon^0) & q^{H(\lambda^{j+1})-G(\varepsilon^1) + \xi(g(\lambda^{j+1}))} V(\varepsilon^1) \ar[l,swap,"\textstyle\phi"]
		\end{tikzcd} \hspace{15pt} & j = 0\\[10pt]
		\begin{tikzcd}[column sep=60pt,row sep=25pt]
			W(\lambda^j) \ar[d,swap,"\textstyle \iota^{g(\lambda^j)}"] & W(\lambda^{j+1}) \ar[d,swap,"\textstyle\iota^{g(\lambda^{j+1})}"] \ar[l,dotted,swap,"\textstyle\zeta"]\\
			q^{H(\lambda^j) - G(\varepsilon^1) + \xi(g(\lambda^j))} V(\varepsilon^1) & q^{H(\lambda^{j+1})-G(\varepsilon^2) + \xi(g(\lambda^{j+1}))} V(\varepsilon^2) \ar[l,swap,"\textstyle\psi"]
		\end{tikzcd} \hspace{15pt} & j \text{ is odd}\\[10pt]
		\begin{tikzcd}[column sep=60pt,row sep=25pt]
			W(\lambda^j) \ar[d,swap,"\textstyle \iota^{g(\lambda^j)}"] & W(\lambda^{j+1}) \ar[d,swap,"\textstyle\iota^{g(\lambda^{j+1})}"] \ar[l,dotted,swap,"\textstyle\zeta"]\\
			q^{H(\lambda^{j})-G(\varepsilon^2) + \xi(g(\lambda^{j}))} V(\varepsilon^2) & q^{H(\lambda^{j+1})-G(\varepsilon^1) + \xi(g(\lambda^{j+1}))} V(\varepsilon^1) \ar[l,swap,"\textstyle\chi\,\phi"]
		\end{tikzcd} \hspace{15pt} & j \text{ is even and positive.}
		\end{aligned}
		\end{cases}
	\]
\end{lem}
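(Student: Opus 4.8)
The goal is to show that the composite $\zeta$ defined in the previous paragraph is the unique map making the stated square commute, where the square expresses that $\iota^{g(\lambda^j)}\circ\zeta$ equals $\phi\circ\iota^{g(\lambda^{j+1})}$ (respectively $\psi\circ\iota^{g(\lambda^{j+1})}$ or $\chi\phi\circ\iota^{g(\lambda^{j+1})}$). The plan is to apply Lemma~\ref{lem:relationsP} with the source bimodule $B = W(\lambda^{j+1})$ (up to grading shift) and the map $f$ being the relevant composite $\phi\,\iota^{g(\lambda^{j+1})}$, $\psi\,\iota^{g(\lambda^{j+1})}$, or $\chi\phi\,\iota^{g(\lambda^{j+1})}$ into the appropriate $V_r$. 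Lemma~\ref{lem:relationsP} says precisely that such an $f$ factors through $\iota^{g(\lambda^j)}$ if and only if $\partial_i f = 0$ for all $i \notin \{g_1, g_1+g_2,\ldots\}$ (the ``group boundaries'' of the composition $g(\lambda^j)$), and that when it does factor, the factorization is unique and given by $\pi^{g(\lambda^j)}\,p_{g(\lambda^j)}\,f$ — which is exactly the formula defining $\zeta$. So the content of the lemma reduces to two things: (1) verifying the divided-difference vanishing condition $\partial_i f = 0$ for the appropriate indices $i$, and (2) invoking uniqueness, which is immediate from Lemma~\ref{lem:relationsP} once (1) holds.

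First I would set up notation: write $r = r(\lambda^{j+1})$, let $g = g(\lambda^{j+1}) = (g_1,\ldots,g_m)$ and $g' = g(\lambda^j) = (g'_1,\ldots,g'_{m'})$, and note that $r(\lambda^j)$ equals either $r$ or $r\pm 1$ depending on the case ($j=0$, $j$ odd, $j$ even positive). In each case I would identify $\varepsilon(\lambda^j)$ and $\varepsilon(\lambda^{j+1})$ as vertices of the fine cubulation — using the formula $(\varepsilon(\lambda^j),\varepsilon(\lambda^{j+1})) = (\varepsilon^0,\varepsilon^1)$, $(\varepsilon^1,\varepsilon^2)$, or $(\varepsilon^2,\varepsilon^1)$ respectively — so that $\phi$, $\psi$, $\chi\phi$ really are components of the differential of $\sr{K}$ between the indicated $V_r$'s. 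Then, using the explicit descriptions of $\phi = \alpha\,Z_{(r-1)r}\,\beta$, $\psi = \beta^*\,Q_r\,\hat\beta$, and $\chi = \hat\beta^*\,Z_{r(r-1)}\,\hat\alpha^*$ from Definition~\ref{subsec:the_differential_of_W}, together with the commutation relations of Lemma~\ref{lem:relationsW} ($Z$'s and $Q$'s commute with the $\partial_i$'s away from specific indices, and $\partial_i$ kills the image of the inclusion $\iota^g$ precisely at the group-boundary indices of $g$), I would compute $\partial_i$ applied to the relevant composite and show it vanishes for every $i$ outside the group-boundaries of $g'$.

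The heart of the argument — and the step I expect to be the main obstacle — is this divided-difference bookkeeping. One must match up how the grouping $g(\lambda^j)$ relates to $g(\lambda^{j+1})$ when the $i$th coordinate drops from $j+1$ to $j$: passing $j+1 \to j$ can merge two adjacent groups (when $\lambda_{i-1}$ or $\lambda_{i+1}$ becomes equal to the new value, or a group disappears when $j\to 0$), and correspondingly the set of ``allowed'' $\partial_i$'s for $W(\lambda^j)$ is larger than (or differs from) that for $W(\lambda^{j+1})$. I would handle this by observing that $f = \phi\,\iota^{g(\lambda^{j+1})}$ already has $\partial_i f = 0$ for $i$ a group-boundary of $g(\lambda^{j+1})$ that is also ``inherited,'' and that the strings $\alpha,\beta$ (built from the tail coordinates $\varepsilon_{i+1},\ldots,\varepsilon_b$) together with the operator $Z_{(r-1)r}$ or $Q_r$ (which acts at position $r$) are arranged so that the relations $\partial_k Z = Z\partial_k$, $\partial_k Q_t = Q_t \partial_k$ for $k\neq t-1$, and $s_k\partial_k = \partial_k$, $\partial_k^2 = 0$ from Lemma~\ref{lem:relationsW} push a leading $\partial_i$ through to act on $\iota^{g(\lambda^{j+1})}$, where it vanishes. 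The cases $j=0$, $j$ odd, and $j$ even positive must each be checked, with the $j$-even-positive case (involving the two-step composite $\chi\phi$ and a change $r\to r-1\to r$ or similar) being the most delicate; I expect this case to require the identity $Z_{r(r-1)}Z_{(r-1)r}$-type relations and possibly the mixed braid relations of Lemma~\ref{lem:mixedBraidRelation} to move divided differences past the strings $\hat\alpha^*,\hat\beta^*,\alpha,\beta$ simultaneously. Once all vanishing is established, uniqueness of $\zeta$ and its formula follow verbatim from Lemma~\ref{lem:relationsP}, completing the proof.
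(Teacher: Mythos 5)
Your proposal follows the paper's own argument: reduce everything to Lemma~\ref{lem:relationsP} by checking that $\partial_k$ kills the composites $\phi\,\iota^{g(\lambda^{j+1})}$, $\psi\,\iota^{g(\lambda^{j+1})}$, $\chi\,\phi\,\iota^{g(\lambda^{j+1})}$ for all $k$ off the group boundaries of $g(\lambda^j)$, pushing the divided differences through via Lemma~\ref{lem:relationsW} and the mixed braid relations (with the critical index $k=i$ dying against the leading $\partial^*_i$ of $\beta^*$ or $\hat\beta^*$), and get uniqueness from the factorization formula, i.e.\ injectivity of $\iota^{g(\lambda^j)}$. This is essentially identical to the paper's proof, case for case.
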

\begin{proof}
	We show that the diagram is commutative. Uniqueness follows from injectivity of the vertical maps. 

	\underline{Case}: $j = 0$. Then $\lambda^{j+1} = (\lambda_1,\ldots,\lambda_{i-1},1,0,\ldots,0)$ and $\lambda^j = (\lambda_1,\ldots,\lambda_{i-1},0,0,\ldots,0)$. We note that $\varepsilon(\lambda^{j+1}) = \varepsilon^1$ and $\varepsilon(\lambda^j) = \varepsilon^0$. By definition of the component $\phi$ of the differential of $\sr{K}$, we have that \[
		\phi = \partial_{b-1}\,\cdots\,\partial_{i+1}\,\partial_i\,Z_{(i-1)i}
	\]since $r(\varepsilon^1) = r(\lambda^{j+1}) = i$. By Lemma~\ref{lem:relationsP}, it suffices to show that $\partial_k\,\phi\,\iota^{g(\lambda^{j+1})} = 0$ for all $k \in \{1,\ldots,b-1\}$ for which the $k$th and $(k+1)$th coordinates of $\lambda^j$ are equal. 

	If $k$ is such an index and satisfies $k < i-1$, then $\partial_k\,\phi = \phi\,\partial_k$ by Lemma~\ref{lem:relationsK}. Since the $k$th and $(k+1)$th coordinates of $\lambda^{j+1}$ are the same as those of $\lambda^j$, we have that $\partial_k\,\iota^{g(\lambda^{j+1})} = 0$, so $\partial_k\,\phi\,\iota^{g(\lambda^{j+1})} = 0$ as required. If $k \ge i$, then \begin{align*}
		\partial_k\,\phi &= \partial_{b-1}\,\cdots\,\partial_{k+2}\,\partial_k\,\partial_{k+1}\,\partial_k\,\partial_{k-1}\,\cdots\,\partial_i\,Z_{(i-1)i} = \partial_{b-1}\,\cdots\,\partial_{k+2}\,\partial_{k+1}\,\partial_{k}\,\partial_{k+1}\,\partial_{k-1}\,\cdots\,\partial_i\,Z_{(i-1)i} = \phi\,\partial_{k+1}
	\end{align*}Since $\partial_{k+1}\,\iota^{g(\lambda^{j+1})} = 0$, we see that $\partial_k\,\phi\,\iota^{g(\lambda^{j+1})} = 0$ as required. 

	\underline{Case}: $j$ is odd. Let $r\coloneqq r(\lambda^{j+1}) = r(\lambda^j)$ and note that \begin{align*}
		\varepsilon(\lambda^{j+1}) = \varepsilon^2 &= (\varepsilon_1,\ldots,\varepsilon_{i-1},2,\varepsilon_{i+1},\ldots,\varepsilon_r,0,\ldots,0)\\
		\varepsilon(\lambda^{j}) = \varepsilon^1 &= (\varepsilon_1,\ldots,\varepsilon_{i-1},1,\varepsilon_{i+1},\ldots,\varepsilon_r,0,\ldots,0)
	\end{align*}where $\varepsilon_1,\ldots,\varepsilon_{i-1},\varepsilon_{i+1},\ldots,\varepsilon_r \in \{1,2\}$. Let $\beta^*$ be the ascending string with smallest subscript $i$ and largest subscript $r-1$ obtained from $\varepsilon_{i+1},\ldots,\varepsilon_r$ by replacing $1$ by $\partial^*$ and $2$ by $s^*$. Then $\psi = \beta^*\,Q_r\,\hat{\beta}$. By Lemma~\ref{lem:relationsP}, it suffices to show that $\partial_k\,\psi\,\iota^{g(\lambda^{j+1})} = 0$ for all $k \in \{1,\ldots,b-1\}$ for which the $k$th and $(k+1)$th coordinates of $\lambda^j$ are equal. 

	If $k > r$ or $k < i-1$, then $\partial_k\,\psi\,\iota^{g(\lambda^{j+1})} = \psi\,\partial_k\,\iota^{g(\lambda^{j+1})} = 0$ by Lemma~\ref{lem:relationsK}. If $i < k < r$, then $\lambda_k = \lambda_{k+1}$ and $\varepsilon_k = \varepsilon_{k+1}$. It follows that the two symbols within $\beta^*$ with subscripts $k-1$ and $k$ are either both $\partial^*$ or both $s^*$. Since $\partial_k\,\theta^*\hspace{-4pt}_{k-1}\,\theta^*\hspace{-4pt}_k = \theta^*\hspace{-4pt}_{k-1}\,\theta^*\hspace{-4pt}_k\,\partial_{k-1}$ for $\theta \in \{\partial,s\}$ by Lemma~\ref{lem:mixedBraidRelation}, we find that $\partial_k\,\psi\,\iota^{g(\lambda^{j+1})} = \psi\,\partial_k\,\iota^{g(\lambda^{j+1})} = 0$ again. 

	The last possible value of $k \in \{1,\ldots,b\}\setminus \{g_1,g_1+g_2,\ldots,r\}$ is $k = i$. In this case, $j = \lambda_{i+1}$ so $\varepsilon_{i+1} = 1$ because $j$ is odd. The first symbol of $\beta^*$ is therefore $\partial^*\hspace{-4pt}_i$ so $\partial_i\,\psi\,\iota^{g(\lambda^{j+1})} = 0$ because $\partial_i \,\partial^*\hspace{-4pt}_i = 0$. Thus $\partial_k \,\psi\,\iota^{g(\lambda^{j+1})} = 0$ for all $k \in \{1,\ldots,b\}\setminus\{g_1,g_1+g_2,\ldots,r\}$ as required. 

	\underline{Case}: $j$ is even and positive. Again let $r\coloneq r(\lambda^{j+1}) = r(\lambda^j)$ and note that \begin{align*}
		\varepsilon(\lambda^{j+1}) = \varepsilon^1 &= (\varepsilon_1,\ldots,\varepsilon_{i-1},1,\varepsilon_{i+1},\ldots,\varepsilon_r,0,\ldots,0)\\
		\varepsilon(\lambda^{j}) = \varepsilon^2 &= (\varepsilon_1,\ldots,\varepsilon_{i-1},2,\varepsilon_{i+1},\ldots,\varepsilon_r,0,\ldots,0)
	\end{align*}where $\varepsilon_1,\ldots,\varepsilon_{i-1},\varepsilon_{i+1},\ldots,\varepsilon_r \in \{1,2\}$. 
	Let $\beta^*$ be the ascending string from the previous case. Then \[
		\chi\,\phi = \hat{\beta}^*\,Z_{r(r-1)} \,s^*\hspace{-4pt}_r \,\cdots\, s^*\hspace{-4pt}_{b-1}\, \partial_{b-1}\,\cdots\,\partial_r\,Z_{(r-1)r} \,\beta.
	\]Again by Lemma~\ref{lem:relationsP}, it suffices to show that $\partial_k \,\chi\,\phi\,\iota^{g(\lambda^{j+1})} = 0$ for all $k \in \{1,\ldots,b\}$ for which the $k$th and $(k+1)$th coordinates of $\lambda^j$ are equal. 

	If $k < i-1$, then $\partial_k\,\chi\,\phi\,\iota^{g(\lambda^{j+1})} = \chi\,\phi\,\partial_k\,\iota^{g(\lambda^{j+1})} = 0$. The $(i-1)$th and $i$th coordinates of $\lambda^j$ are different so we do not need to consider the possibility that $k = i-1$. If $k = i$, then $j = \lambda_{i+1}$ so $\varepsilon_{i+1} = 2$ because $j$ is even and positive. The first symbol of $\hat{\beta}^*$ is therefore $\partial^*\hspace{-4pt}_i$ so $\partial_i\,\chi\,\phi\,\iota^{g(\lambda^{j+1})} = 0$. If $i < k < r$, then $\lambda_k = \lambda_{k+1}$ and $\varepsilon_k = \varepsilon_{k+1}$ so the symbols in $\hat{\beta}^*$ with subscripts $k-1$ and $k$ are either both $\partial^*$ or both $s^*$. In either case, we have $\partial_k\,\theta^*\hspace{-4pt}_{k-1}\,\theta^*\hspace{-4pt}_k = \theta^*\hspace{-4pt}_{k-1}\,\theta^*\hspace{-4pt}_k\,\partial_{k-1}$ for $\theta \in \{\partial,s\}$ by Lemma~\ref{lem:mixedBraidRelation} which implies that $\partial_k\,\chi\,\phi\,\iota^{g(\lambda^{j+1})} = \chi\,\phi\,\partial_k\,\iota^{g(\lambda^{j+1})} = 0$. The $r$th and $(r+1)$th coordinates of $\lambda^j$ are different so we do not need to consider the possibility that $k = r$. Lastly, if $r < k$, then the mixed braid relation again gives us $\partial_k\,\chi\,\phi\,\iota^{g(\lambda^{j+1})} = \chi\,\phi\,\partial_k\,\iota^{g(\lambda^{j+1})} = 0$ as required.
\end{proof}

\begin{lem}\label{lem:consecutiveP}
	Consecutive components of the differential that are assigned to parallel edges compose to zero. 
\end{lem}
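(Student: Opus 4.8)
The plan is to reduce the statement to the analogous fact already established for $\sr{K}$, namely Lemma~\ref{lem:consecutiveW}, by exploiting the factorization property of Lemma~\ref{lem:differentialFactorsP}. Concretely, the differential components $\zeta$ of $\sr{P}$ are characterized as the unique maps fitting into the commuting squares of Lemma~\ref{lem:differentialFactorsP}, whose bottom edges are built out of the components $\phi,\psi,\chi$ of the differential of $\sr{K}$ (for various translates of the relevant index $i$). Since the vertical maps $\iota^{g(\lambda)}$ in those squares are injective, to prove that a composite $\zeta' \zeta''$ of consecutive $\sr{P}$-components vanishes it suffices to show that the corresponding composite along the bottom row — a composite of $\sr{K}$-components, possibly conjugated by $\iota$'s and $\pi\,p$'s — vanishes after precomposing with the appropriate $\iota^g$.

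First I would fix the notation: a pair of consecutive parallel edges in $T$ in direction $i$ has the form $\lambda^{[j,j+1]}$ followed by $\lambda^{[j+1,j+2]}$, so I must show $\zeta(\lambda^{[j,j+1]})\circ\zeta(\lambda^{[j+1,j+2]}) = 0$ where both $\zeta$'s decrement the $i$th coordinate. I would split into cases according to the parity of $j$, matching the case division in the definition of $\zeta$ and in Lemma~\ref{lem:differentialFactorsP}. In each case, using Lemma~\ref{lem:differentialFactorsP} to replace $\zeta$ by its characterizing square, the composite $\iota^{g(\lambda^j)}\circ\zeta\circ\zeta$ becomes (a $\pi\,p$-sandwiched version of) a composite of two of the maps among $\{\phi,\psi,\chi\}$ for the same index $i$, precomposed with $\iota^{g(\lambda^{j+2})}$. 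For instance, when $j=0$ the first edge contributes $\phi$ (connecting $\varepsilon^1\to\varepsilon^0$) and the second, with $j+1$ odd, contributes $\psi$ (connecting $\varepsilon^2\to\varepsilon^1$); the relevant composite is $\phi\circ\psi$, which is zero by Lemma~\ref{lem:consecutiveW}. When $j$ is odd the two consecutive edges contribute $\psi$ then $\chi\,\phi$, so the relevant composite is $\psi\circ\chi\circ\phi$; here $\psi\circ\chi = 0$ by Lemma~\ref{lem:consecutiveW}. When $j$ is even and positive the edges contribute $\chi\,\phi$ then $\psi'$ for the next block, and again one uses $\phi\circ\psi = 0$ or $\psi\circ\chi=0$ from Lemma~\ref{lem:consecutiveW} applied to the appropriate triple of fine-cube edges. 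In every case a single application of Lemma~\ref{lem:consecutiveW} kills the composite, and injectivity of the $\iota$'s upgrades this to $\zeta\circ\zeta = 0$.

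The main obstacle I anticipate is bookkeeping rather than mathematical: one must check that when $\zeta$ is expressed via Lemma~\ref{lem:differentialFactorsP} the interposed maps $\iota^{g(\lambda^{j+1})}$ and $\pi^{g(\lambda^{j+1})}\,p_{g(\lambda^{j+1})}$ between the two $\sr{K}$-components really do cancel (i.e. $\pi^g\,p_g\,\iota^g = \Id$ by Lemma~\ref{lem:relationsP}), so that one genuinely gets a composite of $\sr{K}$-maps with no leftover factor obstructing the application of Lemma~\ref{lem:consecutiveW}; and one must confirm that the three parity cases above exhaust the possibilities for two consecutive parallel edges contained in $T$ (e.g.\ that $j$ and $j+1$ cannot both force a "$\chi\,\phi$"-type term in a way not covered above). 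A secondary subtlety is that $\sr{P}$ is bounded only above, so "consecutive parallel edges" should be interpreted as any two edges $\lambda^{[j,j+1]},\lambda^{[j+1,j+2]}$ both lying in $T$; the argument is uniform in $j$ and does not see the unbounded direction, so this causes no real trouble.
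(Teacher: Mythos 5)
Your proposal is correct and follows essentially the same route as the paper: express each $\zeta$ via the commuting squares of Lemma~\ref{lem:differentialFactorsP}, kill the bottom composite of $\sr{K}$-components using $\phi\,\psi = 0$ and $\psi\,\chi = 0$ from Lemma~\ref{lem:consecutiveW}, and then remove the $\iota^{g(\lambda^j)}$ using $\pi^g\,p_g\,\iota^g = \Id$ from Lemma~\ref{lem:relationsP}. The anticipated obstacle about cancelling the interposed $\iota$ and $\pi\,p$ does not arise, since stacking the two commuting squares directly gives $\iota^{g(\lambda^j)}\,\zeta^{[j,j+1]}\,\zeta^{[j+1,j+2]} = 0$ without any middle factors, exactly as in the paper.
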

\begin{proof}
	Consider two consecutive edges $\lambda^{[j,j+1]}$ and $\lambda^{[j+1,j+2]}$ of the standard cubulation of $\R^b$ that are contained in $T$, and let $\zeta^{[j,j+1]}$ and $\zeta^{[j+1,j+2]}$ be their associated components of the differential. If $j = 0$, then Lemma~\ref{lem:differentialFactorsP} gives the following commutative diagram \[
		\begin{tikzcd}[column sep=55pt,row sep=25pt]
			W(\lambda^j) \ar[d,swap,"\textstyle \iota^{g(\lambda^j)}"] & W(\lambda^{j+1}) \ar[d,swap,"\textstyle\iota^{g(\lambda^{j+1})}"] \ar[l,dotted,swap,"\textstyle\zeta^{[j,j+1]}"] & W(\lambda^{j+2}) \ar[l,dotted,swap,"\textstyle\zeta^{[j+1,j+2]}"] \ar[d,swap,"\textstyle\iota^{g(\lambda^{j+2})}"]\\
			q^{H(\lambda^j) - G(\varepsilon^0) + \xi(g(\lambda^j))}V(\varepsilon^0) & q^{H(\lambda^{j+1})-G(\varepsilon^1) + \xi(g(\lambda^{j+1}))} V(\varepsilon^1) \ar[l,swap,"\textstyle\phi"] & q^{H(\lambda^{j+2})-G(\varepsilon^2) + \xi(g(\lambda^{j+2}))} V(\varepsilon^2) \ar[l,swap,"\textstyle\psi"]
		\end{tikzcd} 
	\]By Lemma~\ref{lem:consecutiveK}, we know that $\phi\,\psi = 0$ so commutativity of the diagram gives $\iota^{g(\lambda^j)}\,\zeta^{[j,j+1]}\,\zeta^{[j+1,j+2]} = 0$. By Lemma~\ref{lem:relationsP}, we have $\zeta^{[j,j+1]}\,\zeta^{[j+1,j+2]} = \pi^{g(\lambda^j)}\,p_{g(\lambda^j)}\,\iota^{g(\lambda^j)}\,\zeta^{[j,j+1]}\,\zeta^{[j+1,j+2]} = 0$ as required. The other cases follow in the same manner. 
\end{proof}

\begin{prop}\label{prop:commutativeSquaresP}
	The square associated to each face of the standard cubulation of $\R^b$ that is contained in $T$ is commutative. 
\end{prop}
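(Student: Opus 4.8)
The plan is to push the square down into $\sr{K}$ along the injective maps $\iota^{g(\lambda)}$ of Definition~\ref{def:websfoamsP} and to deduce the result from the commutativity of the faces of the fine cubulation of $[0,3]^b$ proved in Proposition~\ref{prop:commutativeSquaresW}. Fix a face of the standard cubulation contained in $T$, given by indices $1 \le i_1 < i_2 \le b$ and integers $\lambda_1 \ge \cdots \ge \lambda_{i_1-1} \ge j_1 + 1 > j_1 \ge \lambda_{i_1+1} \ge \cdots \ge \lambda_{i_2-1} \ge j_2 + 1 > j_2 \ge \lambda_{i_2+1} \ge \cdots \ge \lambda_b \ge 0$, with vertices $\lambda^{j_1+s,\,j_2+t}$ for $s,t \in \{0,1\}$ as in the proof of Lemma~\ref{lem:gradingFunctionP}, and let $\zeta^{\mathrm{lt}},\zeta^{\mathrm{tp}},\zeta^{\mathrm{bt}},\zeta^{\mathrm{rt}}$ be the four components of the differential of $\sr{P}$ around it; the claim is $\zeta^{\mathrm{lt}}\zeta^{\mathrm{tp}} = \zeta^{\mathrm{bt}}\zeta^{\mathrm{rt}}$. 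By Lemma~\ref{lem:differentialFactorsP} each such component satisfies $\iota^{g(\lambda^j)}\,\zeta = D\,\iota^{g(\lambda^{j+1})}$ for a map $D$ equal to $\phi$, $\psi$, or the composite $\chi\phi$ of components of the differential of $\sr{K}$, according as $j = 0$, $j$ is odd, or $j$ is even and positive. Since $\iota^{g(\lambda^{j_1,j_2})}$ is injective (Lemma~\ref{lem:relationsP}), it is enough to check $\iota^{g(\lambda^{j_1,j_2})}\zeta^{\mathrm{lt}}\zeta^{\mathrm{tp}} = \iota^{g(\lambda^{j_1,j_2})}\zeta^{\mathrm{bt}}\zeta^{\mathrm{rt}}$, and chasing both sides through Lemma~\ref{lem:differentialFactorsP} reduces this to an identity of bimodule maps between the relevant webs $V_r$,
\[
	D^{\mathrm{lt}}\,D^{\mathrm{tp}} \;=\; D^{\mathrm{bt}}\,D^{\mathrm{rt}},
\]
possibly after post-composition with $\iota^{g(\lambda^{j_1+1,j_2+1})}$, where each $D$ is $\phi$, $\psi$, or $\chi\phi$ taken in direction $i_1$ or $i_2$. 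The essential point is that the $\varepsilon$-context in which a direction-$i_1$ factor is taken is different on the ``bottom'' and ``top'' edges of the face — because the $i_2$-coordinate of $\lambda$ shifts by one — and symmetrically for the two direction-$i_2$ edges; this is exactly why the genuine two-direction commutativity of Proposition~\ref{prop:commutativeSquaresW} is needed, rather than mere commutativity of operators on disjoint blocks of variables.

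The identity $D^{\mathrm{lt}}D^{\mathrm{tp}} = D^{\mathrm{bt}}D^{\mathrm{rt}}$ is then verified by cases on the parities of $j_1$ and $j_2$, mirroring the six-case analysis in the proof of Lemma~\ref{lem:gradingFunctionP} ($j_1$ cannot be $0$ since $j_1 \ge j_2+1$, so $j_1$ is odd or even positive, while $j_2$ is $0$, odd, or even positive). When both the direction-$i_1$ and the direction-$i_2$ factor are single components — i.e. $j_1$ is odd and $j_2$ is $0$ or odd — the two sides of the identity are literally the two ways around one of the nine squares of Proposition~\ref{prop:commutativeSquaresW}, in the two cube directions $i_1$ and $i_2$, so there is nothing further to do. In the remaining cases, in which at least one factor is a composite $\chi\phi$ (this happens precisely when the corresponding $j_i$ is even and positive), one unfolds $\chi\phi$ into its two constituent components $\phi$ and $\chi$, with intermediate object $V_{r-1}$, and moves the factors coming from the other direction across them one component at a time. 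Each such elementary move is an instance of one of the nine squares of Proposition~\ref{prop:commutativeSquaresW}, of far commutativity, or of the mixed braid relations of Lemma~\ref{lem:mixedBraidRelation} applied to the descending and ascending strings, together with the $Q$- and $Z$-relations of Lemma~\ref{lem:relationsW} used to put the two composites into a common form — exactly the repertoire used in the ``bottom'', ``middle'', and ``center'' squares in the proof of Proposition~\ref{prop:commutativeSquaresW}.

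I expect the bulk of the work, and the only real obstacle, to be the bookkeeping in the composite cases, and in particular the case where $j_1$ and $j_2$ are both even and positive: there each side of $D^{\mathrm{lt}}D^{\mathrm{tp}} = D^{\mathrm{bt}}D^{\mathrm{rt}}$ unfolds into a fourfold composite of components of the differential of $\sr{K}$ whose sequence of intermediate objects oscillates between $V_r$ and $V_{r-1}$, and one must commute the two direction-$i_2$ components past the two direction-$i_1$ components while correctly updating the $\varepsilon$-contexts and the string subscripts at every step. No new identity beyond those already assembled in Proposition~\ref{prop:commutativeSquaresW} and Lemmas~\ref{lem:relationsW} and \ref{lem:mixedBraidRelation} is required; the difficulty is purely organizational, of the same flavor as — but somewhat heavier than — the case analysis already carried out for $\sr{K}$. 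A secondary minor point is that when $j_2 = 0$ the relevant $\varepsilon$-coordinate is $0$, placing an edge of the fine cubulation on the boundary face $x_{i_2} = 0$ of the subcube $[0,2]^b \subset [0,3]^b$; since Proposition~\ref{prop:commutativeSquaresW} covers all faces of $[0,3]^b$, this is harmless.
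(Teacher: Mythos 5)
Your overall strategy is the paper's: use Lemma~\ref{lem:differentialFactorsP} together with injectivity of the maps $\iota^{g(\lambda)}$ (Lemma~\ref{lem:relationsP}) to push each face of $\sr{P}$ down to an identity among the maps $\phi$, $\psi$, $\chi\,\phi$ of $\sr{K}$, then argue by cases on the parities of $j_1,j_2$, quoting Proposition~\ref{prop:commutativeSquaresW} directly when both factors are single components. Up to that point the proposal agrees with the paper, and those cases are indeed immediate.

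The gap is your claim that in the composite cases every elementary move is an instance of the nine squares, far commutativity, the mixed braid relations, or the relations of Lemma~\ref{lem:relationsW}, so that ``no new identity is required.'' That is not accurate, and the missing ingredient is precisely where the paper's composite cases do their work. After commuting the $\chi$-factor of $\chi\,\phi$ past the orthogonal component via a square of $\sr{K}$, you land on a component at $\varepsilon$-level $3$ (e.g.\ $\phi^{3,[0,1]}$), while the next square you need lives at level $0$; the paper bridges this with the observation that components of $\sr{K}$ assigned to distinct edges are literally equal when their defining data coincide ($\phi^{3,[0,1]} = \phi^{0,[0,1]}$, $\psi^{3,[1,2]} = \psi^{0,[1,2]}$, $\psi^{[1,2],0} = \psi^{[1,2],3}$, $\chi^{0,[2,3]} = \chi^{3,[2,3]}$). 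These wrap-around equalities are cheap --- they follow by inspecting the definition of the differential of $\sr{K}$, since the formula depends only on $r$ and the relevant tail coordinates --- but they are not consequences of the nine squares or of the lemmas you list, and without them the ``move one component at a time'' scheme stalls after the first move. More seriously, in the case where $j_1$ and $j_2$ are both even and positive, even after the squares and the wrap-around equalities the two sides agree only up to the further identity $\chi^{[2,3],0}\,\phi^{[0,1],0} = \chi^{[2,3],3}\,\phi^{[0,1],3}$, whose individual factors genuinely differ (the descending strings are $\alpha_0 = \partial_{b-1}\cdots\partial_r$ versus $\alpha_3 = s_{b-1}\,\partial_{b-2}\cdots\partial_r$); the paper proves it by writing out the strings and using $s^*_{b-1}\,\partial_{b-1} = -\partial_{b-1} = \partial^*_{b-1}\,s_{b-1}$. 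So your plan is sound and matches the paper in outline, but to complete it you must add these level-$0$/level-$3$ coincidences and carry out this extra same-direction computation in the doubly even case; neither is supplied by the toolkit you enumerate.
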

\begin{proof}
	Just as in the proof of Lemma~\ref{lem:gradingFunctionP}, fix $1 \leq i_1 < i_2 \leq b$ and integers satisfying \[
		\lambda_1 \ge \cdots \lambda_{i_1-1} \ge j_1 + 1 > j_1 \ge \lambda_{i_1+1} \ge \cdots \ge \lambda_{i_2-1} \ge j_2+1 \ge j_2 \ge \lambda_{i_2+1} \ge \cdots \ge \lambda_b \ge 0.
	\]Consider the following four edges of the standard cubulation of $\R^b$ that are contained in $T$, and their associated components of the differential of $\sr{P}$. \[
		\begin{tikzcd}[column sep={120pt,between origins},row sep={45pt,between origins}]
			\lambda^{j_1,j_2+1} \ar[d,swap,"\lambda^{j_1,[j_2,j_2+1]}"] & \lambda^{j_1+1,j_2+1} \ar[l,swap,"\lambda^{[j_1,j_1+1],j_2+1}"] \ar[d,swap,"\lambda^{j_1+1,[j_2,j_2+1]}"]\\
			\lambda^{j_1,j_2} & \lambda^{j_1+1,j_2} \ar[l,swap,"\lambda^{[j_1,j_1+1],j_2}"]
		\end{tikzcd} \hspace{30pt} \begin{tikzcd}[column sep={120pt,between origins},row sep={45pt,between origins}]
			W(\lambda^{j_1,j_2+1}) \ar[d,swap,"\zeta^{j_1,[j_2,j_2+1]}"] & W(\lambda^{j_1+1,j_2+1}) \ar[l,swap,"\zeta^{[j_1,j_1+1],j_2+1}"] \ar[d,swap,"\zeta^{j_1+1,[j_2,j_2+1]}"]\\
			W(\lambda^{j_1,j_2}) & W(\lambda^{j_1+1,j_2}) \ar[l,swap,"\zeta^{[j_1,j_1+1],j_2}"]
		\end{tikzcd}
	\]Set $\varepsilon_k \coloneq \varepsilon(\lambda_k)$ for $k \in\{1,\ldots,b\}\setminus\{i_1,i_2\}$ and consider the following slice of $\sr{K}$. \[
		\begin{tikzcd}[column sep={80pt,between origins}, row sep={50pt,between origins}]
			V(\varepsilon^{03}) \ar[d,swap,"\textstyle \chi^{0,[2,3]}"] & V(\varepsilon^{13}) \ar[l,swap,"\textstyle \phi^{[0,1],3}"] \ar[d,swap,"\textstyle \chi^{1,[2,3]}"] & V(\varepsilon^{23}) \ar[l,swap,"\textstyle \psi^{[1,2],3}"] \ar[d,swap,"\textstyle \chi^{2,[2,3]}"] & V(\varepsilon^{33}) \ar[l,swap,"\textstyle \chi^{[2,3],3}"] \ar[d,swap,"\textstyle \chi^{3,[2,3]}"]\\
			V(\varepsilon^{02}) \ar[d,swap,"\textstyle \psi^{0,[1,2]}"] & V(\varepsilon^{12}) \ar[l,swap,"\textstyle \phi^{[0,1],2}"] \ar[d,swap,"\textstyle \psi^{1,[1,2]}"] & V(\varepsilon^{22}) \ar[l,swap,"\textstyle \psi^{[1,2],2}"] \ar[d,swap,"\textstyle \psi^{2,[1,2]}"] & V(\varepsilon^{32}) \ar[l,swap,"\textstyle \chi^{[2,3],2}"] \ar[d,swap,"\textstyle \psi^{3,[1,2]}"]\\
			V(\varepsilon^{01}) \ar[d,swap,"\textstyle \phi^{0,[0,1]}"] & V(\varepsilon^{11}) \ar[l,swap,"\textstyle \phi^{[0,1],1}"] \ar[d,swap,"\textstyle \phi^{1,[0,1]}"] & V(\varepsilon^{21}) \ar[l,swap,"\textstyle \psi^{[1,2],1}"] \ar[d,swap,"\textstyle \phi^{2,[0,1]}"] & V(\varepsilon^{31}) \ar[l,swap,"\textstyle \chi^{[2,3],1}"] \ar[d,swap,"\textstyle \phi^{3,[0,1]}"]\\
			V(\varepsilon^{00}) & V(\varepsilon^{10}) \ar[l,swap,"\textstyle \phi^{[0,1],0}"] & V(\varepsilon^{20}) \ar[l,swap,"\textstyle \psi^{[1,2],0}"] & V(\varepsilon^{30}) \ar[l,swap,"\textstyle \chi^{[2,3],0}"]
		\end{tikzcd}
	\]

	\underline{Case}: $j_1$ is odd and $j_2 = 0$. Consider the following cube \[
		\begin{tikzcd}[column sep={80pt,between origins}, row sep={40pt,between origins}]
			& W(\lambda^{j_1,j_2+1}) \ar[ld,swap,"\zeta^{j_1,[j_2,j_2+1]}" {xshift=8pt}] \ar[dd,swap,"\iota^{g(\lambda^{j_1,j_2+1})}" {yshift=-15pt}] & & W(\lambda^{j_1+1,j_2+1}) \ar[ll,swap,"\zeta^{[j_1,j_1+1],j_2+1}"] \ar[ld,swap,"\zeta^{j_1+1,[j_2,j_2+1]}" {xshift=8pt}] \ar[dd,swap,"\iota^{g(\lambda^{j_1+1,j_2+1})}" {yshift=-15pt}]\\
			W(\lambda^{j_1,j_2}) \ar[dd,swap,"\iota^{g(\lambda^{j_1,j_2})}" {yshift=-15pt}] & & W(\lambda^{j_1+1,j_2}) \ar[ll,swap,crossing over,"\zeta^{[j_1,j_1+1],j_2}" {xshift=30pt}] &\\
			& q^{K(\lambda^{j_1,j_2+1})}V(\varepsilon^{11}) \ar[dl,swap,"\phi^{1,[0,1]}" {xshift=8pt}] & & q^{K(\lambda^{j_1+1,j_2+1})}V(\varepsilon^{21}) \ar[ll,swap,"\psi^{[1,2],1}" {xshift=30pt}] \ar[dl,swap,"\phi^{2,[0,1]}" {xshift=8pt}] \\
			q^{K(\lambda^{j_1,j_2})}V(\varepsilon^{10}) & & q^{K(\lambda^{j_1+1,j_2})}V(\varepsilon^{20}) \ar[ll,swap,"\psi^{[1,2],0}"] \ar[from=uu,crossing over,swap,"\iota^{g(\lambda^{j_1+1,j_2})}" {yshift=-15pt}] &
		\end{tikzcd}
	\]where $K(\lambda^{k_1,k_2}) \coloneqq H(\lambda^{k_1,k_2}) - G(\varepsilon^{\varepsilon(k_1),\varepsilon(k_2)}) + \xi(g(\lambda^{k_1,k_2}))$. The four vertical faces are commutative by Lemma~\ref{lem:differentialFactorsP}, and the bottom face is commutative by Proposition~\ref{prop:commutativeSquaresK}. Injectivity of the vertical maps implies that the top face is commutative.

	\underline{Case}: $j_1$ is even and positive and $j_2 = 0$. Just as in the previous case, it suffices to verify that \[
		\begin{tikzcd}[column sep=60pt]
			q^{K(\lambda^{j_1,j_2+1})} V(\varepsilon^{21}) \ar[d,swap,"\phi^{2,[0,1]}"] & q^{K(\lambda^{j_1+1,j_2+1})} V(\varepsilon^{11}) \ar[d,swap,"\phi^{1,[0,1]}"] \ar[l,swap,"\chi^{[2,3],1}\,\phi^{[0,1],1}"] \\
			q^{K(\lambda^{j_1,j_2})} V(\varepsilon^{20}) & q^{K(\lambda^{j_1+1,j_2})} V(\varepsilon^{10}) \ar[l,swap,"\chi^{[2,3],0}\,\phi^{[0,1],0}"]
		\end{tikzcd}
	\]is commutative. The key observation is that $\phi^{3,[0,1]}$ and $\phi^{0,[0,1]}$ are equal because $r(\varepsilon^{0,1}) = r(\varepsilon^{3,1})$ and the two edges $\varepsilon^{0,[0,1]}$ and $\varepsilon^{3,[0,1]}$ have the same last $b-i_2$ coordinates. Hence \begin{align*}
		\phi^{2,[0,1]}\,\chi^{[2,3],1}\,\phi^{[0,1],1} &= \chi^{[2,3],0}\,\phi^{3,[0,1]}\,\phi^{[0,1],1} = \chi^{[2,3],0}\,\phi^{0,[0,1]}\,\phi^{[0,1],1} = \chi^{[2,3],0}\,\phi^{[0,1],0}\,\phi^{1,[0,1]}
	\end{align*}as required.

	\underline{Case}: $j_1$ is even and positive and $j_2$ is odd. It suffices to verify that \[
		\begin{tikzcd}[column sep=60pt]
			q^{K(\lambda^{j_1,j_2+1})} V(\varepsilon^{22}) \ar[d,swap,"\psi^{2,[1,2]}"] & q^{K(\lambda^{j_1+1,j_2+1})} V(\varepsilon^{12}) \ar[d,swap,"\psi^{1,[1,2]}"] \ar[l,swap,"\chi^{[2,3],2}\,\phi^{[0,1],2}"] \\
			q^{K(\lambda^{j_1,j_2})} V(\varepsilon^{21}) & q^{K(\lambda^{j_1+1,j_2})} V(\varepsilon^{11}) \ar[l,swap,"\chi^{[2,3],1}\,\phi^{[0,1],1}"]
		\end{tikzcd}
	\]is commutative. The key here is to observe that $\psi^{3,[1,2]} = \psi^{0,[1,2]}$ so \[
		\psi^{2,[1,2]}\,\chi^{[2,3],2}\,\phi^{[0,1],2} = \chi^{[2,3],1}\,\psi^{3,[1,2]}\,\phi^{[0,1],2} = \chi^{[2,3],1}\,\psi^{0,[1,2]}\,\phi^{[0,1],2} = \chi^{[2,3],1}\,\phi^{[0,1],1}\,\psi^{1,[1,2]}.
	\]

	\underline{Case}: $j_1$ and $j_2$ are odd. This follows from $\psi^{1,[1,2]}\,\psi^{[1,2],2} = \psi^{[1,2],1}\,\psi^{2,[1,2]}$. 

	\underline{Case}: $j_1$ is odd and $j_2$ is even and positive. It suffices to verify that \[
		\begin{tikzcd}[column sep=50pt,row sep=25pt]
			q^{K(\lambda^{j_1,j_2+1})} V(\varepsilon^{11}) \ar[d,swap,"\chi^{1,[2,3]}\,\phi^{1,[0,1]}"] & q^{K(\lambda^{j_1+1,j_2+1})} V(\varepsilon^{21}) \ar[d,swap,"\chi^{2,[2,3]}\,\phi^{2,[0,1]}"] \ar[l,swap,"\psi^{[1,2],1}"] \\
			q^{K(\lambda^{j_1,j_2})} V(\varepsilon^{12}) & q^{K(\lambda^{j_1+1,j_2})} V(\varepsilon^{22}) \ar[l,swap,"\psi^{[1,2],2}"]
		\end{tikzcd}
	\]is commutative. We observe that $\psi^{[1,2],0} = \psi^{[1,2],3}$ because the last $b-i_1$ coordinates of $\varepsilon^{[1,2],0}$ and $\varepsilon^{[1,2],3}$ become the same after deleting the $0$'s and $3$'s. Thus \[
		\chi^{1,[2,3]}\,\phi^{1,[0,1]}\,\psi^{[1,2],1} = \chi^{1,[2,3]}\,\psi^{[1,2],0}\,\phi^{2,[0,1]} =  \chi^{1,[2,3]}\,\psi^{[1,2],3}\,\phi^{2,[0,1]} = \psi^{[1,2],2}\,\chi^{2,[2,3]}\,\phi^{2,[0,1]}. 
	\]

	\underline{Case}: $j_1$ and $j_2$ are even and positive. It suffices to show that \[
		\begin{tikzcd}[column sep=60pt,row sep=25pt]
			q^{K(\lambda^{j_1,j_2+1})} V(\varepsilon^{21}) \ar[d,swap,"\chi^{2,[2,3]}\,\phi^{2,[0,1]}"] & q^{K(\lambda^{j_1+1,j_2+1})} V(\varepsilon^{11}) \ar[d,swap,"\chi^{1,[2,3]}\,\phi^{1,[0,1]}"] \ar[l,swap,"\chi^{[2,3],1}\,\phi^{[0,1],1}"]\\
			q^{K(\lambda^{j_1,j_2})} V(\varepsilon^{22}) & q^{K(\lambda^{j_1+1,j_2})} V(\varepsilon^{12}) \ar[l,swap,"\chi^{[2,3],2}\,\phi^{[0,1],2}"]
		\end{tikzcd}
	\]is commutative. Observe that $\phi^{3,[0,1]} = \phi^{0,[0,1]}$ because the two edges have the same last $b - i_2$ coordinates. Hence \begin{align*}
		\chi^{2,[2,3]}\,\phi^{2,[0,1]}\,\chi^{[2,3],1}\,\phi^{[0,1],1} &= \chi^{2,[2,3]}\,\chi^{[2,3],0}\,\phi^{3,[0,1]}\,\phi^{[0,1],1}\\
		&= \chi^{2,[2,3]}\,\chi^{[2,3],0}\,\phi^{0,[0,1]}\,\phi^{[0,1],1} = \chi^{2,[2,3]}\,\chi^{[2,3],0}\,\phi^{[0,1],0}\,\phi^{1,[0,1]}
	\end{align*}Similarly, we have $\chi^{0,[2,3]} = \chi^{3,[2,3]}$ so \begin{align*}
		\chi^{[2,3],2}\,\phi^{[0,1],2}\,\chi^{1,[2,3]}\,\phi^{1,[0,1]} &= \chi^{[2,3],2}\,\chi^{0,[2,3]}\,\phi^{[0,1],3}\,\phi^{1,[0,1]}\\
		&= \chi^{[2,3],2}\,\chi^{3,[2,3]}\,\phi^{[0,1],3}\,\phi^{1,[0,1]} = \chi^{2,[2,3]}\,\chi^{[2,3],3}\,\phi^{[0,1],3}\,\phi^{1,[0,1]}
	\end{align*}It now suffices to show that $\chi^{[2,3],0}\,\phi^{[0,1],0} = \chi^{[2,3],3}\,\phi^{[0,1],3}$. Let $r \coloneq r(\lambda^{j_1,j_2}) - 1$, and let $\beta^*$ be the ascending string with largest subscript $r - 1$ obtained from the sequence $\varepsilon_{i_1+1},\ldots,\varepsilon_{i_2-1},\varepsilon_{i_2+1},\ldots,\varepsilon_b$ by deleting the $0$'s and $3$'s and replacing $1$ by $\partial^*$ and $2$ by $s^*$. For $k \in \{0,3\}$, let $\alpha_k$ be the descending string with smallest subscript $r$ obtained from the sequence $\varepsilon_{i_1+1},\ldots,\varepsilon_{i_2-1},k,\varepsilon_{i_2+1},\ldots,\varepsilon_b$ by deleting the $1$'s and $2$'s and replacing $0$ by $\partial$ and $3$ by $s$. It follows that \[
		\alpha_0 = \partial_{b-1}\,\partial_{b-2}\,\cdots\,\partial_r \hspace{30pt} \alpha_3 = s_{b-1}\,\partial_{b-2}\,\cdots\,\partial_r
	\]because $\varepsilon_{i_1+1},\ldots,\varepsilon_{i_2-1},\varepsilon_{i_2+1},\ldots,\varepsilon_{r+1} \in \{1,2\}$ and $\varepsilon_{r+2} = \cdots = \varepsilon_b = 0$. Thus \begin{align*}
		\chi^{[2,3],0}\,\phi^{[0,1],0} &= \hat{\beta}^* \, Z_{r(r-1)} \hat{\alpha}^*\hspace{-4pt}_0 \,\alpha_0\,Z_{(r-1)r}\,\beta = \hat{\beta}^*\,Z_{r(r-1)} s^*\hspace{-4pt}_r\,\cdots\,s^*\hspace{-4pt}_{b-2}\,s^*\hspace{-4pt}_{b-1}\,\partial_{b-1}\,\partial_{b-2}\,\cdots\,\partial_r\,Z_{(r-1)r}\,\beta\\
		\chi^{[2,3],3}\,\phi^{[0,1],3} &= \hat{\beta}^* \, Z_{r(r-1)} \hat{\alpha}^*\hspace{-4pt}_3 \,\alpha_3\,Z_{(r-1)r}\,\beta = \hat{\beta}^*\,Z_{r(r-1)} s^*\hspace{-4pt}_r\,\cdots\,s^*\hspace{-4pt}_{b-2}\,\partial^*\hspace{-4pt}_{b-1}\,s_{b-1}\,\partial_{b-2}\,\cdots\,\partial_r\,Z_{(r-1)r}\,\beta.
	\end{align*}The identity $s^*\hspace{-4pt}_{b-1}\,\partial_{b-1} = -\partial_{b-1} = \partial^*\hspace{-4pt}_{b-1}\, s_{b-1}$ finishes the proof.
\end{proof}

%%%%%%%%%%%%%%%%%%%%%%%%%%%%%%%%%%

\section{Main theorem}\label{sec:main_theorem}

In this section, we prove the following theorem. As before, $a,b,c,d$ are positive integers for which $a + b = c + d$ and $b = \min(a,b,c,d)$. Set $n \coloneqq a + b = c + d$ and $l \coloneqq c - b = a - d$. 

\begin{thm}\label{thm:mainThm}
	The complex $\sr{P} \coloneqq {}^b_a\sr{P}^c_d$ with its filtration $\sr{F}^0(\sr{P}) \subset \sr{F}^1(\sr{P}) \subset \cdots$ satisfies the following properties. \begin{enumerate}[noitemsep]
		\item The complexes $\sr{P}$ and $\sr{F}^k(\sr{P})$ for $k \ge 0$ are minimal.
		\item The subcomplex $\sr{F}^k(\sr{P})$ is homotopy equivalent to the Rickard complex of \[
			\begin{cases}
				\quad\begin{gathered}
					\labellist
					\pinlabel $b$ at -2 5.5
					\pinlabel $a$ at -2 0.5
					\endlabellist
					\includegraphics[width=.06\textwidth]{1halfTwist}
					\vspace{-10pt}
				\end{gathered}\hspace{3pt}\begin{gathered}
					\cdots\vspace{-5pt}
				\end{gathered}\hspace{3pt}\begin{gathered}
					\labellist
					\endlabellist
					\includegraphics[width=.06\textwidth]{1halfTwist}
					\vspace{-10pt}
				\end{gathered} \hspace{-3pt} \begin{gathered}
					\labellist
					\pinlabel $b$ at -1 7.2
					\pinlabel $a$ at -1 -1.5
					\pinlabel $d$ at 8.5 0.8
					\pinlabel $c$ at 8.5 5.5
					\endlabellist
					\includegraphics[width=0.034\textwidth]{rung_diag}
					\vspace{-10pt}
				\end{gathered}\vspace{20pt} \quad & k \text{ is even}\\
				\quad\begin{gathered}
					\labellist
					\pinlabel $b$ at -2 5.5
					\pinlabel $a$ at -2 0.5
					\endlabellist
					\includegraphics[width=.06\textwidth]{1halfTwist}
					\vspace{-3pt}
				\end{gathered}\hspace{3pt}\begin{gathered}
					\cdots
				\end{gathered}\hspace{3pt}\begin{gathered}
					\labellist
					\endlabellist
					\includegraphics[width=.06\textwidth]{1halfTwist}
					\vspace{-3pt}
				\end{gathered} \hspace{-3pt} \begin{gathered}
					\labellist
					\pinlabel $a$ at -1 7.1
					\pinlabel $b$ at -1 -1.9
					\pinlabel $d$ at 8.5 0.8
					\pinlabel $c$ at 8.5 5.5
					\endlabellist
					\includegraphics[width=0.034\textwidth]{rung}
					\vspace{-3pt}
				\end{gathered}\vspace{5pt} \quad & k \text{ is odd}
			\end{cases}
		\]where there are $k \ge 0$ positive crossings. The rung is colored by $c-b$ when $k$ is even and by $d - b$ when $k$ is odd.
		\item For $r \in \{1,\ldots,b\}$, the following four tensor products are contractible \[
			\sr{P} \otimes \quad \begin{gathered}
				\labellist
				\pinlabel $c$ at -2 5.3
				\pinlabel $d$ at -2 0.8
				\pinlabel $a+r$ at 11.5 0.5
				\pinlabel $b-r$ at 11.5 5.5
				\endlabellist
				\includegraphics[width=0.035\textwidth]{rung}
				\vspace{-3pt}
			\end{gathered} \hspace{25pt} \simeq 0 \hspace{20pt} \sr{P} \otimes \quad \begin{gathered}
				\labellist
				\pinlabel $c$ at -2 5.3
				\pinlabel $d$ at -2 0.8
				\pinlabel $b-r$ at 11.5 0.5
				\pinlabel $a+r$ at 11.5 5.5
				\endlabellist
				\includegraphics[width=0.035\textwidth]{rung_diag}
				\vspace{-3pt}
			\end{gathered} \hspace{25pt} \simeq 0 \hspace{20pt} \hspace{25pt}\begin{gathered}
				\labellist
				\pinlabel $b-r$ at -5 5.5
				\pinlabel $a+r$ at -5 0.5
				\pinlabel $a$ at 8.5 0.5
				\pinlabel $b$ at 8.5 5.5
				\endlabellist
				\includegraphics[width=0.035\textwidth]{rung_diag}
				\vspace{-3pt}
			\end{gathered} \quad \otimes \sr{P} \simeq 0 \hspace{20pt} \hspace{25pt}\begin{gathered}
				\labellist
				\pinlabel $a+r$ at -5 5.3
				\pinlabel $b-r$ at -5 0.8
				\pinlabel $a$ at 8.5 0.5
				\pinlabel $b$ at 8.5 5.5
				\endlabellist
				\includegraphics[width=0.035\textwidth]{rung}
				\vspace{-3pt}
			\end{gathered} \quad \otimes \sr{P} \simeq 0 \hspace{20pt} \vspace{2pt}
		\]where the rungs are colored by $c+r-b,d+r-b,b+r-b$, and $a+r-b$, respectively.
	\end{enumerate}
\end{thm}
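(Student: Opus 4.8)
The plan is to establish the three assertions in order: Part 1 (minimality) is essentially built into the construction; Part 2 (the homotopy equivalences) carries the technical weight and is proved by induction on $k$; and Part 3 (contractibility) is deduced from Part 2 together with the colimit presentation $\sr{P} = \bigcup_k \sr{F}^k(\sr{P})$ and the symmetry that lets two of the four tensor products be reduced to the other two.

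\textbf{Part 1 (minimality).} I would use the standard criterion: a bounded‑above complex of singular Bott--Samelson bimodules is minimal precisely when every component of its differential lies in the Jacobson radical of the additive category, equivalently when no component is an isomorphism between (shifted) indecomposable summands. For the edges $\lambda^{[0,1]}$ (the case $j=0$), the source $W(\lambda^{1})$ and target $W(\lambda^{0})$ are direct sums of shifts of the non‑isomorphic indecomposables $W_{r}$ and $W_{r-1}$, so those components are automatically radical. For the edges with $j$ even and positive, the relevant component $\pi^{g}\,p_{g}\,\chi\,\phi\,\iota^{g'}$ (Lemma~\ref{lem:differentialFactorsP}) carries a factor of the adjoint map $Z_{r(r-1)}$, which sends $1$ to the positive‑degree class $e_d(\mathbf D - x_r)$ (here $d\ge1$), hence is radical; and for the edges with $j$ odd, the component $\pi^{g}\,p_{g}\,\psi\,\iota^{g'}$ carries the factor $Q_r$, multiplication by $e_{l+t}(\mathbf C - x_r - \cdots - x_b)$ of degree $2(l+r)\ge2$. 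Since the radical is absorbed under pre‑ and post‑composition, every component is radical and $\sr{P}$ is minimal; because $\sr{F}^k(\sr{P})$ is a subcomplex, its differential is a restriction and it too is minimal. The one point that needs care here — and the minor obstacle for this part — is that the operators $s_i = \mathrm{Id} - (x_i-x_{i+1})\partial_i$ are \emph{invertible}, so one must check, by a grading count against the function $H$ of section~\ref{subsec:the_objects_of_P}, that a differential component never restricts to $\pm\mathrm{Id}$ on a matched pair of shifted $W_r$‑summands — equivalently, that the construction never produces a component consisting of $\partial$‑ and $s$‑operators alone.

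\textbf{Part 2 (homotopy equivalence to a Rickard complex).} I plan to induct on $k$. The base case $k=0$ is immediate: $\sr{F}^0(\sr{P}) = W(0,\ldots,0) = W^{(\,)}_0$, which by Definition~\ref{def:websfoamsP} is precisely the singular Bott--Samelson bimodule of the rung web of color $l = c-b$ with boundary data $(a,b),(d,c)$, i.e.\ the Rickard complex of the zero‑crossing braid with that rung. For the inductive step I would use that Rickard complexes are functorial under composition of colored tangles: composing a positive crossing with $(\sigma^{k-1}\cdot\text{rung})$ gives $\sigma^{k}\cdot\text{rung}$, so the Rickard complex of the latter is homotopy equivalent to $C\otimes\sr{F}^{k-1}(\sr{P})$, where $C$ is the $(b+1)$‑term Rickard complex of a single positive $(a,b)$‑colored crossing (whose objects are the single‑column webs and whose differential is built from $Z$, $Q$, $\partial$, $s$ exactly as the $r$‑direction of $\sr{K}$). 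It then suffices to produce a homotopy equivalence $\sr{F}^k(\sr{P}) \simeq C\otimes\sr{F}^{k-1}(\sr{P})$. The right‑hand side is a "product" complex indexed by $(T_{k-1}\cap\Z^b)\times\{0,\ldots,b\}$; one expands each object $W^{g'}_{r'}\otimes(\text{single column})$ into shifts of the $W^{g}_r$ via the merge–split calculus and then Gaussian‑eliminates all contractible summands. The combinatorics of which summands survive, and the identification of the surviving differential with the maps $\zeta$ of section~\ref{subsec:the_differential_of_P}, is exactly what the finite complex $\sr{K}={}^b_a\sr{K}^c_d$ of section~\ref{sec:constructionOfW} is engineered to bookkeep — the passage from the fine cubulation of $[0,3]^b$ (section~\ref{subsec:the_objects_of_W}) to the simplex $T_k$ governs precisely which objects of $C\otimes\sr{F}^{k-1}(\sr{P})$ remain after cancellation. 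I expect \emph{this} Gaussian elimination, together with verifying that the residual differential matches the explicit formulas (rather than merely counting the surviving objects, which is already encoded in the remark following Theorem~\ref{thm:mainIntroThm}), to be the main obstacle of the whole argument. The parity bookkeeping in the statement (rung colored $c-b$ versus $d-b$, attached on one side or the other) is automatic from the induction, since tensoring with $\sigma$ interchanges the two strands.

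\textbf{Part 3 (contractibility).} Using the colimit presentation $\sr{P} = \bigcup_k \sr{F}^k(\sr{P})$ and Part 2, for any of the four webs $X$ in the statement we have $\sr{P}\otimes X \simeq \varinjlim_k\bigl(\text{Rickard}(\sigma^k\cdot\text{rung})\otimes X\bigr)$. One first observes, from Part 2, the crossing‑invariance $\sr{P}\otimes\sigma^{\pm1}\simeq\sr{P}$ (pass to a cofinal subsystem of the colimit), which is the statement of Theorem~\ref{thm:mainIntroThm}(2) used to "pump" extra crossings against $X$. The incompatible rung $X$ composed with a crossing is then simplified by the colored skein / "fork‑sliding" moves that hold at the level of Rickard complexes: each such move either produces an edge whose color exceeds the ambient strand colors (so the web is zero) or slides the rung through the braid and creates an additional thin rung that gets absorbed into the next $\sr{F}^{k+1}$, strictly lowering complexity. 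Consequently the transition maps in the colimit are null‑homotopic, so the colimit is contractible — the same mechanism by which categorified projectors kill turnbacks in \cite{MR2901969,MR3205575,MR3811775}. Finally, the two "$X\otimes\sr{P}$" statements follow from the two "$\sr{P}\otimes X$" statements by applying the duality $f\leftrightarrow\overline f$ (equivalently, the left–right reflection of webs and foams), under which ${}^b_a\sr{P}^c_d$ corresponds to ${}^c_d\sr{P}^b_a$ and the four rungs are exchanged in pairs. Alternatively, for fixed colors one can instead exhibit an explicit contracting homotopy on $\sr{P}\otimes X$ adapted to the formulas for $\zeta$, which is feasible because everything is explicit, though I would expect the colimit argument to be cleaner.
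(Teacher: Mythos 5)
There are genuine gaps, and the most serious ones sit exactly where the paper has to work hardest. For Part 1, your criterion "each component contains a factor of strictly positive degree, hence is radical" is not valid: multiplication by a positive-degree polynomial on a decomposable object such as $V_r \cong [r]!\,[b-r]!\,W_r$ can have degree-zero, even unit, matrix entries between differently shifted copies of $W_r$ (already multiplication by $x_1$ on $\Z[x_1,x_2]$ over $\Z[x_1,x_2]^{\fk{S}_2}$, in the basis $\{1,x_1\}$, has a unit entry). The dangerous components are precisely the ones along edges with $j$ odd, $\pi^{g}\,p_{g}\,\beta^*\,Q_r\,\hat\beta\,\iota^{g'}$, which go between objects built from the \emph{same} indecomposable $W_r$ and do not factor through another type; "degree $\ge 2$" does not make them radical. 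The paper's proof of minimality is exactly the missing step: it works with the perverse filtration coming from Proposition~\ref{prop:homSpaces} and shows that the associated graded of $Q_t$ vanishes, using that multiplication by $e_i(\C)$ is central (so its associated graded is zero) and that the leftover term $(-1)^{l+t}h_{l+t}(x_t,\ldots,x_b)$ has the same associated graded as $e_{l+t}(\B - x_t - \cdots - x_b) = e_{l+t}(x_1,\ldots,x_{t-1}) = 0$. Your flagged worry about the invertibility of $s_i$ is a side issue; the $Z$-components are indeed harmless because they factor through a different indecomposable type, but the $Q$-components require this computation, which your argument does not supply.

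For Part 2, the inductive scheme "write Rickard$(\sigma^k\cdot\mathrm{rung}) \simeq C\otimes\sr{F}^{k-1}(\sr{P})$ and Gaussian-eliminate, with $\sr{K}$ bookkeeping the cancellation" defers the entire content to a simplification that the paper explicitly describes as infeasible and never performs. The paper's route is different: it proves the tensor-splitting of the subquotients, $\sr{U}_{k,r}({}^b_a\sr{P}^c_d) \cong \mathrm{rung}\otimes\sr{F}^{k-1}({}^{b-r}_{a+r}\sr{P}^c_d)$ (Proposition~\ref{prop:tensorSplitting}), runs a double induction on $b$ and on $k$, tensors $\sr{F}^k(\sr{P})$ with $k-1$ \emph{negative} crossings so that each connecting map $\mu_{(r-1)r}$ collapses (via Reidemeister II and homological perturbation) to an element $\nu_{(r-1)r}$ of the rank-one group of Proposition~\ref{prop:homSpaces}, pins $\nu_{(r-1)r} = \pm\zeta^{(r-1)r}$ by a primitivity argument (Lemma~\ref{lem:primitive}, the quotients by $e_i(\C-\B)$ or $e_i(\mathbf{D}-\B)$, and the explicit corner maps $e_{d-b+1}(\mathbf{D}-\B)$, $e_{c-b+1}(\C-\B)$), and then re-tensors with positive crossings and applies Lemma~\ref{lem:rungSlide}. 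None of these ingredients appears in your plan, so Part 2 is asserted rather than proved. For Part 3, your mechanism is also not justified: extracting crossing-invariance of $\sr{P}$ from Part 2 "by passing to a cofinal subsystem" requires compatibility of the filtered homotopy equivalences with the transition maps, which you have not arranged (and in the paper crossing-invariance is a \emph{consequence} of property 3, in the proof of Theorem~\ref{thm:mainIntroThm}, not an input); the claim that the transition maps become null-homotopic is not established; and the reduction of the two left-hand statements to the right-hand ones via $f\mapsto\ol{f}$ presupposes an identification $\ol{{}^b_a\sr{P}^c_d}\cong{}^c_d\sr{P}^b_a$ that is never checked. The paper instead shows, using property 2 and Lemma~\ref{lem:rungSlide}, that each ${}^{b-r}_{a+r}\sr{R}^b_a\otimes\sr{F}^k(\sr{P})$ (and likewise the other three products) is homotopy equivalent to a complex supported in cohomological degrees at most $-kr$, and concludes contractibility of the exhaustive colimit by the standard degree-bound lemma.
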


Properties $1,2$, and $3$ are proven in sections~\ref{subsec:minimality}, \ref{subsec:Rickard_complexes}, and \ref{subsec:contractibility}, respectively. In section~\ref{subsec:proof_of_intro_main_theorem}, we show that Theorem~\ref{thm:mainThm} specializes to Theorem~\ref{thm:mainIntroThm} when $a = b = c = d$. For $r \in \{0,\ldots,b\}$, we use the following notational shorthand.\vspace{5pt} \[
	W_r \coloneqq W^r_r = \hspace{5pt}\generalwrr
\]

\subsection{Minimality}\label{subsec:minimality}

The following notion of minimality is valid for complexes over an additive category like the category of singular Bott--Samelson (or Soergel) bimodules with fixed boundary data. 

\begin{df}
	A chain complex $C$ is \textit{minimal} if every homotopy equivalence from $C$ to $C$ is an isomorphism. 
\end{df}
\begin{lem}
	If $g\colon C \to D$ is a homotopy equivalence between a minimal complex $C$ and a complex $D$, then there is a chain map $f\colon D \to C$ for which $f\,g = \Id_C$ and $g\,f$ is homotopic to $\Id_D$.
	In particular, any complex that is homotopy equivalent to a minimal one admits a deformation retract onto it. Also, any two minimal complexes that are homotopy equivalent are isomorphic.
\end{lem}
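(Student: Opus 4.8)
The plan is to prove this via a standard argument about homotopy equivalences to minimal complexes, building everything from the splitting lemma for chain complexes over an additive category.

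First I would recall the structural fact that makes the notion of minimality usable: if $e\colon C \to C$ is an idempotent chain map up to homotopy on a complex $C$ over our additive category, or more simply if $h\colon C \to C$ is a chain map homotopic to the identity, then $h$ induces the identity on homology objects. The key mechanism is that for any chain map $g\colon C \to D$ that is a homotopy equivalence, with homotopy inverse $g'\colon D \to C$, the composite $g'g$ is homotopic to $\Id_C$. Since $C$ is minimal, $g'g$ is \emph{itself} an isomorphism — not merely a homotopy equivalence — because it is a homotopy self-equivalence of $C$. Set $f \coloneqq (g'g)^{-1} g'$. Then $fg = (g'g)^{-1}(g'g) = \Id_C$ exactly, and $gf = g(g'g)^{-1}g'$. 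It remains to check that $gf$ is homotopic to $\Id_D$. For this, note $gf = g(g'g)^{-1}g'$ and we know $gg' \simeq \Id_D$; one writes $gf \simeq gf \cdot (gg') = g(g'g)^{-1}(g'g)g' = gg' \simeq \Id_D$, using that $gg' \simeq \Id_D$ and that pre/post-composition with a fixed chain map preserves the homotopy relation. This gives the first claim: $fg = \Id_C$ and $gf \simeq \Id_D$.

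The second claim, that any complex $D$ homotopy equivalent to a minimal $C$ admits a deformation retract onto $C$, is then essentially a restatement of the first: the data $(g, f, \text{homotopy } gf \simeq \Id_D)$ with $fg = \Id_C$ \emph{on the nose} is precisely a deformation retract of $D$ onto $C$ (i.e. $C$ is a strong deformation retract in the chain-complex sense, or at least a deformation retract; if one wants the additional identity that the homotopy $H$ satisfies $fH = 0$ and $Hg = 0$, one invokes the standard trick of modifying $H$ — replace $H$ by $(\Id - gf)H(\Id - gf)$ and absorb a correction — but the plain notion of deformation retract requires only $fg = \Id$ and $gf \simeq \Id$, which we already have).

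For the final assertion, let $C$ and $C'$ both be minimal and suppose $g\colon C \to C'$ is a homotopy equivalence with homotopy inverse $g'$. By the first part applied to $g$, there is $f\colon C' \to C$ with $fg = \Id_C$ and $gf \simeq \Id_{C'}$; but $gf$ is then a homotopy self-equivalence of the minimal complex $C'$, hence an isomorphism, so $g$ has a two-sided inverse ($f$ composed with $(gf)^{-1}$), and $C \cong C'$ as complexes. I expect the only genuinely delicate point to be bookkeeping the homotopy relation through compositions — specifically verifying that homotopic chain maps remain homotopic after composing on either side with a third chain map, and that $g'g$ being a homotopy equivalence of $C$ forces it to be an isomorphism by the very definition of minimality. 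Both are routine, so the proof is short; the substance lies entirely in the observation that minimality upgrades "homotopy equivalence" to "isomorphism" for self-maps, which then bootstraps to rigidify the equivalence $g$ itself.
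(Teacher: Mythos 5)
Your proposal is correct and follows essentially the same route as the paper: minimality upgrades $g'g \simeq \Id_C$ to an isomorphism, you set $f = (g'g)^{-1}g'$, and deduce $fg = \Id_C$ and $gf \simeq \Id_D$, with the final isomorphism claim handled by the same bootstrap. The only cosmetic difference is that the paper writes down an explicit homotopy $h = h_D - g\,k\,h_C\,f'$ for $\Id_D - gf$, whereas you obtain $gf \simeq gg' \simeq \Id_D$ formally from compatibility of homotopies with composition; both verifications are valid.
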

\begin{proof}
	The proof is routine and included here for the sake of completeness.
	By hypothesis, there exists a chain map $f'\colon D \to C$ and homotopies $h_C,h_D$ such that \[
		\Id_C -\, f'\, g = d\,h_C + h_C\,d \hspace{30pt} \Id_D -\, g\, f' = d\,h_D + h_D\,d.
	\]Then $f'\,g\colon C \to C$ is a homotopy equivalence and hence an isomorphism by the hypothesis that $C$ is minimal. Let $k\colon C \to C$ be its inverse chain map satisfying $k\,f'\,g = \Id_C$. Set $f \coloneq k\,f'$ and $h \coloneq h_D - g\,k\,h_C\,f'$. Then $f\,g = \Id_C$ and \begin{align*}
		d\,h + h\,d &= \left(d\,h_D + h_D\,d\right) - g\,k\,\left(d\,h_C + h_C\,d\right)\,f'\\
		&= \left(\Id_D - \,g\,f'\right) - g\,k\,(\Id_C - \,f'\,g)\,f'\\
		&= \Id_D - \,g\,f' - g\,k\,f' + g\,(k\,f'\,g)\,f' = \Id_D - \,g\,f
	\end{align*}which proves the first claim. If $D$ is also minimal, then $g\,f$ has an inverse $j$ satisfying $j\,g\,f = \Id_D$ so actually \[
		g\,f = (j\,g\,f)\,g\,f = j\,g\,(f\,g)\,f = j\,g\,f = \Id_D.\qedhere
	\]
\end{proof}

To prove that $\sr{P}$ and $\sr{F}^k(\sr{P})$ are minimal, we use the perverse filtration on singular Bott--Samelson bimodules, which we explain in our context of webs with boundary data $c_L = (a,b)$ and $c_R = (d,c)$. The following proposition follows from \cite[Theorem 2.8]{MR3177365}, \cite{MR2844932}, and \cite[Appendix B]{hogancamp2021skein}. 

\begin{prop}\label{prop:homSpaces}
	The singular Bott--Samelson bimodules $W_0,W_1,\ldots,W_b$ are indecomposable as bimodules. They are pairwise distinct in the sense that $q^i W_r \cong q^j W_s$ if and only if $r = s$ and $i = j$. Any singular Bott--Samelson bimodule with the boundary data $c_L,c_R$ is isomorphic to a finite direct sum of shifted copies of these indecomposable bimodules. The number of copies of each shifted indecomposable bimodule appearing in a decomposition is independent of the choice of decomposition. 
	Lastly $\Hom^{k}(W_r,W_s) = 0$ for $k < |r-s|(d-b+|r-s|)$ and \[
		\Hom^{|r-s|(d-b+|r-s|)}(W_r,W_s) \cong \Z.
	\]
\end{prop}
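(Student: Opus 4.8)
The first three assertions of Proposition~\ref{prop:homSpaces} are standard consequences of the theory of singular Soergel bimodules: indecomposability, pairwise non-isomorphism, the Krull--Schmidt property, and the well-definedness of multiplicities all follow from the cited works, once one observes that the webs $W_0,\ldots,W_b$ are exactly the singular Bott--Samelson bimodules attached to the minimal-length coset representatives (equivalently, to the ``most merged'' webs) for the relevant pair of parabolics of $\mathfrak{S}_n$. So the content to be proven is really the last sentence: the vanishing range $\Hom^k(W_r,W_s) = 0$ for $k < |r-s|(d-b+|r-s|)$, together with the rank-one statement in the boundary degree $k = |r-s|(d-b+|r-s|)$. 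By symmetry under the duality $f \leftrightarrow f^\vee$ (Proposition~\ref{prop:duality}), which identifies $\Hom(W_r,W_s)$ with $\Hom(W_s,W_r)$ up to a grading shift that is symmetric in $r,s$, we may assume $r \le s$ and write $m \coloneqq s - r \ge 0$; the claim becomes $\Hom^k(W_r,W_s) = 0$ for $k < m(d-b+m)$ and $\Hom^{m(d-b+m)}(W_r,W_s) \cong \Z$.

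The plan is to compute $\Hom(W_r,W_s)$ geometrically via the $\U(n)$-equivariant cohomology of the relevant Bott--Samelson (here: partial flag) varieties, using Proposition~\ref{prop:H*BSVariety} and the duality/adjunction machinery of Propositions~\ref{prop:duality} and \ref{prop:adjunction}. Concretely, $W_r$ is the bimodule of a web all of whose trivalent vertices have been merged, so its Bott--Samelson variety $V_{W_r}$ is a single partial flag manifold $\mathrm{Fl}_r$ fibered over $\mathrm{Fl}_L$ and $\mathrm{Fl}_R$; explicitly $W_r$ is the bimodule $q^{?}\,\Z[x_1,\ldots,x_b]^{\mathfrak{S}_r \times \mathfrak{S}_{b-r}}$ with its evident $(R_L,R_R)$-action coming from the alphabets $\mathbf{E}_r$ and $\mathbf{F}_r$. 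Then $\Hom(W_r,W_s)$ is, after applying adjunction (Proposition~\ref{prop:adjunction}) to bend $W_s$ to the other side, the equivariant cohomology of the fiber product $V_{W_r} \times_{\mathrm{Fl}_R} V_{W_s^\vee}$ (equivalently $V_{W_r} \times_{\mathrm{Fl}_L} V_{W_s^\vee}$), with an overall grading shift $d$ read off from $\dim_\C \mathrm{Fl}_L - \dim_\C \mathrm{Fl}_R$. This fiber product is a tower of Grassmannian bundles, so its equivariant cohomology is a free module over $H^*_G(\mathrm{pt})$ whose Poincaré series I can write down explicitly as a product of Gaussian binomial coefficients. First I would identify the lowest nonzero degree of this graded free module: the bottom class corresponds to the fundamental class of the smallest orbit, i.e. the locus where the two partial flags are ``as transverse as possible,'' which is a Schubert-type subvariety whose codimension I must compute. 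A direct count — the two subspaces of dimensions $r$ and $s$ inside $\C^n$, together with the constraints imposed by $\mathrm{Fl}_L$ and $\mathrm{Fl}_R$, force a relative position whose codimension works out to $m(d-b+m) = (s-r)(d-b+s-r)$ after substituting $n = a+b = c+d$ and $l = c-b = a-d$ — and then checking that this bottom class spans a rank-one summand.

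An efficient alternative, which I would actually carry out to keep the bookkeeping light, is purely algebraic: present $\Hom(W_r,W_s)$ as a $\Hom$-space between explicit quotients of polynomial rings and use the theory of standard/costandard (``Bott--Samelson'') filtrations together with the formula, available in singular Soergel bimodule theory and in particular in \cite{MR2844932} and \cite[Appendix B]{hogancamp2021skein}, that $\Hom(B_w, B_v)$ is a free graded right-$R$-module whose graded rank is the structure constant $\sum_{x} h_{?}$ of the Hecke algebra pairing. For the maximally-merged webs $W_r$, this graded rank collapses to a single Kazhdan--Lusztig-type basis element, and its minimal degree is governed by the length of the shortest double-coset representative relating the two parabolics — again $m(d-b+m)$ — with the degree-minimal piece one-dimensional over $\Z$. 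The main obstacle I anticipate is not conceptual but combinatorial: correctly normalizing all the grading shifts (the $q^{-\sum a(v)b(v)}$ shift in the definition of $B_\Gamma$, the shift $q^d$ in Proposition~\ref{prop:duality}, and the shift in Proposition~\ref{prop:H*BSVariety}) so that the exponent comes out to exactly $|r-s|(d-b+|r-s|)$ rather than something off by a symmetric constant; I would pin this down by testing against the already-known cases $b=1$ and $b=2$ visible in $\sr{P}_1$ and $\sr{P}_2$ in the introduction, where all these $\Hom$-spaces can be read off directly from the stated complexes, and only then assert the general formula. Finally, the rank-one statement in top-of-the-vanishing-range degree is exactly what is needed in Section~\ref{subsec:minimality}: it shows every degree-zero endomorphism of an object of $\sr{P}$ or $\sr{F}^k(\sr{P})$ is, up to lower-filtration corrections, a scalar, whence a homotopy equivalence must be invertible.
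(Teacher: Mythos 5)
Your overall architecture matches the paper's treatment, which is largely citation-based: the paper attributes the classification statements to \cite{MR3177365,MR2844932,hogancamp2021skein} and merely remarks that the morphism-space formula ``can be verified directly using Proposition~\ref{prop:adjunction} and basic computations.'' One organizational difference worth noting: the paper deduces indecomposability and pairwise distinctness \emph{from} the $\Hom$ formula ($\Hom^0(W_r,W_r)\cong\Z$ forces indecomposability, and the vanishing in negative degrees in both directions forces $r=s$, $i=j$), rather than importing them from the literature as you do; also, the degree-preserving symmetry $\Hom(W_r,W_s)\cong\Hom(W_s,W_r)$ comes from the adjoint duality $f\mapsto f^*$, not from $f\mapsto f^\vee$ of Proposition~\ref{prop:duality}, which lands in the reversed boundary data.

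The genuine gap is in the only substantive claim, the graded computation of $\Hom(W_r,W_s)$, and neither of your sketched routes establishes it. In the geometric route, adjunction does \emph{not} identify $\Hom(W_r,W_s)$ with $H^*_G$ of the one-sided fiber product $V_{W_s}\times_{\mathrm{Fl}_R}V_{W_r}$: that cohomology is (up to shift) the glued bimodule $B_{W_sW_r^\vee}$ itself, whereas Proposition~\ref{prop:adjunction} gives $\Hom(W_r,W_s)\cong q^{\bullet}\Hom(R_L,B_{W_sW_r^\vee})$, i.e.\ bimodule maps out of the identity bimodule, and extracting the graded rank of \emph{that} space is exactly the remaining content (already for $b=1$, $l=0$ the fiber-product cohomology has rank $4$ over $R_L$ while $\Hom(W_1,W_1)$ has rank $2$). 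Your ``bottom class of the most transverse stratum'' heuristic does not follow from the setup (a smooth tower of Grassmannian bundles has its bottom cohomology class in degree $0$), and naive codimension counts in the orbit stratification do not produce the stated symmetric formula: the stratum indexed by $W_r$ has codimension $(b-r)(d-r)$ in $\mathrm{Fl}_L\times\mathrm{Fl}_R$, so the relative codimension for adjacent indices is $b+d-2r+1$, which differs from the correct minimal degree $d-b+1$ (witnessed by the foams $\zeta^{(r-1)r}$ of Lemma~\ref{lem:zetafoam}) unless $r=b$. The algebraic route via Williamson's $\Hom$ formula is the right one and is what the paper's citations point to, but the decisive evaluation --- that the graded rank has minimal term in degree $|r-s|(d-b+|r-s|)$ with coefficient $1$ --- is asserted rather than carried out, and ``the length of the shortest double-coset representative relating the two parabolics'' is not the correct description (the parabolics are fixed; the relevant statistic is a relative position between the two double cosets indexing $W_r$ and $W_s$, which still has to be computed). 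To close the gap you would either carry out that double-coset/Hecke-algebroid computation explicitly, or compute $\Hom(R_L,B_{W_sW_r^\vee})$ directly from the polynomial presentations of Definition~\ref{def:BSbimodule}.
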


The result concerning morphism spaces can be verified directly using Proposition~\ref{prop:adjunction} and basic computations. We note that this morphism space computation implies that the bimodules $W_0,\ldots,W_b$ are indecomposable and distinct. Indeed, if $W_r$ were isomorphic to a nontrivial direct sum $B \oplus C$, then the idempotent projections onto each factor would be linearly independent in $\Hom^0(W_r,W_r) \cong \Z$. Furthermore, if there is an isomorphism in $\Hom^i(W_r,W_s)$ for some $i\in\Z$, then its inverse isomorphism would lie in $\Hom^{-i}(W_s,W_r)$. Hence both $i$ and $-i$ are nonnegative so $i = 0$ and $r = s$.

Given a finite direct sum \[
	W = \bigoplus_{m=1}^M q^{i_m} W_{r_m}
\]of shifted copies of $W_0,\ldots,W_b$, define an increasing filtration $\cdots \subseteq \sr{G}^k(W) \subseteq \sr{G}^{k+1}(W) \subseteq \cdots$ on $W$ by letting $\sr{G}^k(W)$ denote the direct sum of the summands $q^{i_m} W_{r_m}$ for which $i_m \leq k$. Any bimodule map from one such direct sum to another \[
	\bigoplus_{m=1}^M q^{i_m} W_{r_m} \to \bigoplus_{t=1}^T q^{j_t} W_{s_t}
\]has the property that the component from $q^{i_m} W_{r_m}$ to $q^{j_t}W_{s_t}$ is zero if $i_m < j_t$ by Proposition~\ref{prop:homSpaces}, so the map preserves the filtration. 

Any singular Bott--Samelson bimodule $W$ with boundary data $c_L,c_R$ is isomorphic to a direct sum of shifted copies of $W_0,\ldots,W_b$ by Proposition~\ref{prop:homSpaces}. The filtration $\sr{G}^k(W)$ on $W$ induced by a such an isomorphism does not depend on the choice of isomorphism by the above observation. Furthermore, every bimodule map preserves the filtration and has an induced associated graded map. As usual, the associated graded map of a composite is the composite of the associated graded maps, and a map is an isomorphism if its associated graded map is an isomorphism. 

\begin{proof}[Proof of property 1 in Theorem~\ref{thm:mainThm}]
	Suppose $f\colon \sr{P} \to \sr{P}$ and $g\colon \sr{P} \to \sr{P}$ are chain maps for which there are homotopies $h,h'$ of $\sr{P}$ with $\Id_{\sr{P}} - \,f\,g = d\,h + h\,d$ and $\Id_{\sr{P}} - \,g\,f = d\,h' + h'\,d$. To show that $f$ and $g$ are isomorphisms, it suffices to show that the composites $f\,g$ and $g\,f$ are isomorphisms. We do so by showing that in each cohomological degree, their associated graded maps with respect to the perverse filtration $\sr{G}^k$ are the identity maps. By the equations $\Id_{\sr{P}} - \,f\,g = d\,h + h\,d$ and $\Id_{\sr{P}} - \,g\,f = d\,h' + h'\,d$, it suffices to show that the associated graded map of $d$ is zero. 

	We show that the associated graded map of each component of the differential of $\sr{P}$ is zero. By its definition in section~\ref{subsec:the_differential_of_P}, each component is a composite of maps where at least one of the maps $Z_{r(r+1)},Z_{(r+1)r},Q_t$ appears in the composition. It suffices to show that the associated graded maps of $Z_{r(r+1)},Z_{(r+1)r},Q_t$ are zero.

	To see that the associated graded maps of $Z_{r(r+1)}$ and $Z_{(r+1)r}$ are zero, we note that the decomposition of $V_r$ into indecomposables has only shifted copies of $W_r$. In particular $V_r \cong [r]![b-r]! W_r$. The associated graded map of any map between shifts of indecomposables with different subscripts is zero by Proposition~\ref{prop:homSpaces}. 

	Recall that $Q_t$ is multiplication by \[
		e_{l+t}(\C - x_t - \cdots - x_b) = \sum_{j=0}^{l+t} (-1)^j e_{l+t-j}(\C) h_j(x_t,\ldots,x_b).
	\]Multiplication by $e_i(\C)$ commutes with every bimodule map, so when the map $e_i(\C)\colon q^{2i}V_r \to V_r$ is expressed in terms of a decomposition $V_r \cong [r]![b-r]!W_r$, every component map is either zero or multiplication by $e_i(\C)$. Hence the associated graded map of $e_i(\C)\colon q^{2i}V_r \to V_r$ is zero for $i > 0$ so the associated graded map of $Q_t$ is equal to the associated graded map of $(-1)^{l+t}h_{l+t}(x_1,\ldots,x_b)$. Next, note that by similar reasoning, the associated graded map of $e_{l+t}(\B - x_t - \cdots - x_b)$ is also equal to the associated graded map of $(-1)^{l+t}h_{l+t}(x_1,\ldots,x_b)$. However, we have that $e_{l+t}(\B - x_t - \cdots - x_b) = e_{l+t}(x_1,\ldots,x_{t-1}) = 0$ so the associated graded map of $Q_t$ is zero.

	The same reasoning applies to show that $\sr{F}^k(\sr{P})$ is minimal for $k \ge 0$. 
\end{proof}

%%%%%%%%%%%%%%%%%%%%%%%%%%%%%%%%%%

\subsection{Rickard complexes}\label{subsec:Rickard_complexes}

Consider the subcomplex $\sr{F}^1(\sr{P})$ of $\sr{P}$. Recall from section~\ref{subsec:the_shape_of_P} that it consists of the objects $W(\lambda)$ of $\sr{P}$ for $\lambda$ in \begin{align*}
	T_1 \cap \Z^b &= \{(\lambda_1,\ldots,\lambda_b) \in \Z^b \:|\: 1 \ge \lambda_1 \ge \cdots \ge \lambda_b \ge 0\}\\
	&= \{(0,0,0,\ldots,0), (1,0,0,\ldots,0), (1,1,0,\ldots,0),\ldots,(1,1,1,\ldots,1)\}.
\end{align*}For $r \in \{0,\ldots,b\}$, let $1^r0^{b-r} \in T_1 \cap \Z^b$ be the tuple whose first $r$ entries are $1$ and whose remaining entries are $0$. 

\begin{lem}\label{lem:zetafoam}
	For $r \in \{0,\ldots,b\}$, we have $W(1^r0^{b-r}) = q^{r(d-b+1)}\,W_r$. The foam $\zeta^{(r-1)r}$ from $W(1^r0^{b-r})$ to $W(1^{r-1}0^{b-r+1})$ defined below as a composite makes the diagram on the right commute.\[
		\zeta^{(r-1)r} \coloneq\quad \begin{gathered}
			\includegraphics[width=.177\textwidth]{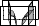}
			\\
			\includegraphics[width=.177\textwidth]{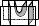}
			\vspace{-3pt}
		\end{gathered} \hspace{20pt}\begin{tikzcd}[column sep=75pt,row sep=30pt]
			q^{(r-1)(d-b+1)} \,W_{r-1} \ar[d,swap,"\textstyle\iota^{r-1}"] & q^{r(d-b+1)}\,W_r \ar[l,dotted,swap,"\textstyle\zeta^{(r-1)r}"] \ar[d,swap,"\textstyle\iota^{r}"]\\
			q^{(r-1)(d-b+1)+\binom{r-1}{2}+\binom{b-r+1}{2}} \,V_{r-1} & q^{r(d-b+1)+\binom{r}{2}+\binom{b-r}{2}}\, V_r \ar[l,swap,"\textstyle\partial_{b-1}\,\cdots\,\partial_r\,Z_{(r-1)r}" {xshift=5pt}]
		\end{tikzcd}
	\]
\end{lem}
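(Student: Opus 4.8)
The plan is to verify the two assertions in order: first the identification of the object $W(1^r0^{b-r})$, then the commutativity of the square defining $\zeta^{(r-1)r}$.

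\textbf{Step 1: computing $W(1^r0^{b-r})$.} By definition, $W(\lambda) = q^{H(\lambda)} W^{g(\lambda)}_{r(\lambda)}$. For $\lambda = 1^r0^{b-r}$ we have $r(\lambda) = r$, and the grouping of $(1,1,\ldots,1)$ (with $r$ ones) is $(1,1,\ldots,1)$, so $g(\lambda) = (1,\ldots,1)$ and $\xi(g(\lambda)) = \binom{b-r}{2}$ since all the $\binom{g_i}{2}$ vanish. Hence $W^{g(\lambda)}_r$ is the web $W^{1,\ldots,1}_r$, which in the notational shorthand of section~\ref{subsec:the_differential_of_P}'s surrounding material and Lemma~\ref{lem:relationsP} is just $W_r$ up to nothing extra — so I need to check $W^{1,\ldots,1}_r = W_r$. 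Actually $W_r \coloneqq W^r_r$, which is $W^{(r)}_r$ with the single block $r$, \emph{not} $W^{1,\ldots,1}_r$; but the claim $W(1^r0^{b-r}) = q^{r(d-b+1)} W_r$ must be read using whatever identification is in force, so I will double-check: for the complex $\sr{F}^1(\sr{P})$ the relevant webs have all rungs colored $1$, and the statement's $W_r$ on the right is the shorthand $W^r_r$. The content of the first assertion is therefore really the grading computation together with the observation that in the range $\lambda \in T_1$ the distinction between $W^{1,\ldots,1}_r$ and $W^r_r$ does not arise because these are isotopic webs (merging/unmerging parallel $1$-colored rungs into an $r$-colored one is not literally an isotopy, so I should instead note that the statement uses $W^{1,\ldots,1}_r$ under the name $W_r$ here, matching the $\sr{F}^1$ discussion). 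The main substantive point is the grading shift: I compute $H(1^r0^{b-r})$ by summing $\omega$ along the path $0^b \to 10^{b-1} \to 1^20^{b-2} \to \cdots \to 1^r0^{b-r}$, where each step decrements is really an \emph{increment} of the $(m{+}1)$-st coordinate from $0$ to $1$, i.e.\ the $j = 0$ case of the definition of $\omega$. There $\omega(\lambda^{[0,1]}) = \upsilon(\varepsilon^{[0,1]}) + \xi(g(\lambda^1)) - \xi(g(\lambda^0))$. Using the formula $\upsilon(\varepsilon^{[0,1]}) = 2(\sum_{k=i+1}^b \delta(\varepsilon_k - 0) + \delta(\varepsilon_k-1)) - d$ from section~\ref{subsec:the_objects_of_W}, and $\varepsilon_k = 0$ for all $k > i$ at the $m$-th step (here $i = m{+}1$), the sum is $b - i = b - m - 1$, giving $\upsilon = 2(b - m - 1) - d$. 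Meanwhile $\xi$ decreases by $\binom{b-m-1}{2} - \binom{b-m}{2} = -(b-m-1)$. So $H(\lambda^0) - H(\lambda^1) = \omega = 2(b-m-1) - d - (b - m - 1) = (b - m - 1) - d$, i.e.\ $H(1^{m+1}0^{b-m-1}) = H(1^m 0^{b-m}) - (b - m - 1) + d$. Summing $m = 0, \ldots, r-1$ telescopes to $H(1^r0^{b-r}) = \sum_{m=0}^{r-1}(d - b + m + 1) = r(d-b+1) + \binom{r}{2} - \binom{r}{2} \cdot 0$; more carefully $\sum_{m=0}^{r-1}(m+1) = \binom{r+1}{2}$, so $H(1^r0^{b-r}) = r(d-b) + \binom{r+1}{2}$. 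Comparing with the grading shift built into $W_r = W^{1,\ldots,1}_r$ versus how the statement writes $q^{r(d-b+1)}W_r$, the remaining discrepancy $\binom{r+1}{2} - r = \binom{r}{2}$ should be absorbed by the $q$-grading shift in Definition~\ref{def:BSbimodule} of $W^{1,\ldots,1}_r$ relative to the normalization of $W_r$; I will chase this bookkeeping to land exactly on $q^{r(d-b+1)}W_r$.

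\textbf{Step 2: commutativity of the square.} Here I want to show that the composite $\iota^{r-1} \circ \zeta^{(r-1)r}$ equals $(\partial_{b-1}\cdots\partial_r\, Z_{(r-1)r}) \circ \iota^r$ as maps $q^{r(d-b+1)}W_r \to q^{(\cdots)}V_{r-1}$. This is precisely the $j = 0$ case of Lemma~\ref{lem:differentialFactorsP}, specialized to the edge from $1^r0^{b-r}$ to $1^{r-1}0^{b-r+1}$ (incrementing coordinate $i = r$ from $0$ to $1$), since for that edge the component $\phi$ of the differential of $\sr{K}$ is $\partial_{b-1}\cdots\partial_r\, Z_{(r-1)r}$ (as computed in the proof of Lemma~\ref{lem:differentialFactorsP}, using $r(\varepsilon^1) = r$). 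So the square is literally the defining diagram of $\zeta$ from section~\ref{subsec:the_differential_of_P} together with Lemma~\ref{lem:differentialFactorsP}, once I match up the grading shifts: I need $G(\varepsilon^0)$, $G(\varepsilon^1)$, $\xi(g(\lambda^0))$, $\xi(g(\lambda^1))$ to combine so that $H(\lambda^j) - G(\varepsilon^{\varepsilon(\lambda_i)}) + \xi(g(\lambda^j))$ equals the exponent written in the diagram, namely $r(d-b+1) + \binom{r}{2} + \binom{b-r}{2}$ for $V_r$ and the analogous expression for $V_{r-1}$. I will verify this using the values of $G$ computed from the rule in section~\ref{subsec:the_objects_of_W} along the same path. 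Finally, I should confirm the foam picture: the displayed two-panel foam $\zeta^{(r-1)r}$ represents the bimodule map $\pi^{r-1}\,p_{r-1}\,\phi\,\iota^r$ — the top panel being $\iota^r$ followed by $Z_{(r-1)r}$ and the divided-difference merges realizing $\partial_{b-1}\cdots\partial_r$, and the bottom panel being $\pi^{r-1}p_{r-1}$ — so that the diagram commutes by construction of $\zeta$ in section~\ref{subsec:the_differential_of_P}.

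\textbf{Main obstacle.} The conceptual content is entirely contained in Lemma~\ref{lem:differentialFactorsP} (for commutativity) and Lemma~\ref{lem:gradingFunctionP} plus the explicit rules for $G$, $H$, $\xi$, $\upsilon$ (for the grading identifications); the genuine work is the grading-shift bookkeeping, reconciling $H(1^r0^{b-r})$, the $\binom{r}{2}$, $\binom{b-r}{2}$, $\binom{b-r+1}{2}$, $\binom{r-1}{2}$ terms, and the normalization implicit in writing $W_r$ for $W^{1,\ldots,1}_r$. I expect the only place something could go wrong is a sign or off-by-one in which coordinate is being incremented and hence which $\delta(\varepsilon_k - \cdot)$ sum appears in $\upsilon$; I will pin this down by cross-checking against the $b = 2$ table in Figure~\ref{fig:hgradingexamples}, where $H(1,1) = 8 = 2(d-b+1) = 2\cdot 1$ visibly fails unless the $W^{1,1}_2$ versus $W_2$ shift contributes, confirming that the shift is essential and telling me exactly its size.
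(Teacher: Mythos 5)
There is a genuine gap, and it is in your Step 1. The grouping of the constant sequence $(1,\ldots,1)$ (with $r$ ones) is the single block $(r)$, not $(1,\ldots,1)$: by definition the grouping collects maximal runs of equal entries. Hence $g(1^r0^{b-r}) = (r)$, so $W^{g(\lambda)}_{r(\lambda)} = W^r_r = W_r$ on the nose and $\xi(g(1^r0^{b-r})) = \binom{r}{2}+\binom{b-r}{2}$. With the correct $\xi$, your own telescoping of $\omega$ closes up: the increment $\xi(g(\lambda^1))-\xi(g(\lambda^0))$ acquires the extra term $\binom{m+1}{2}-\binom{m}{2}=m$ that you dropped, each step then contributes exactly $d-b+1$, and $H(1^r0^{b-r}) = r(d-b+1)$ with nothing left over. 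Your proposed repair --- absorbing the discrepancy $\binom{r}{2}$ into a normalization between $W^{1,\ldots,1}_r$ and $W_r$ --- cannot work: these bimodules are not related by a grading shift (e.g.\ $W^{1,1}_2\cong[2]W_2$, and in general $W^{1,\ldots,1}_r\cong[r]!\,W_r$), and in any case $W(\lambda)$ is defined as $q^{H(\lambda)}W^{g(\lambda)}_{r(\lambda)}$, leaving no room for renormalization. Your cross-check also misreads Figure~\ref{fig:hgradingexamples}: there $H(1,1)=2$ (the entry $8$ is $H(2,2)$), so the table confirms $H(1^r0^{b-r})=r(d-b+1)$ directly rather than signalling a missing shift. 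Note also that the paper gets the grading more cheaply: once the square is known to commute, homogeneity gives $H(1^r0^{b-r})-H(1^{r-1}0^{b-r+1}) = \deg\iota^r+\deg(\partial_{b-1}\cdots\partial_r\,Z_{(r-1)r})-\deg\iota^{r-1} = d-b+1$, with no appeal to $\omega$ at all.

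Your Step 2 also stops short of the actual content. Lemma~\ref{lem:differentialFactorsP} characterizes the differential component of $\sr{P}$, but the lemma at hand is about the explicit two-panel foam: one must show that this particular foam satisfies $\iota^{r-1}\,\zeta^{(r-1)r} = \partial_{b-1}\cdots\partial_r\,Z_{(r-1)r}\,\iota^r$, and ``confirming the foam picture'' is precisely the work. The paper does it by translating both panels into algebra via Example~\ref{example:bending}: the top foam is the inclusion of polynomials symmetric in $\{x_1,\ldots,x_{r-1},x_r\}$ and in $\{y\}\cup\mathbf{E}_{r-1}$ into polynomials separately symmetric in the four alphabets of the intermediate web, and the bottom foam is the quotient identifying $x_r$ with $y$ followed by $\partial_{b-1}\cdots\partial_r$; composing gives the claimed identity. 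You correctly identify that $\phi=\partial_{b-1}\cdots\partial_r\,Z_{(r-1)r}$ for this edge, but asserting that the drawn foam ``represents $\pi^{r-1}\,p_{r-1}\,\phi\,\iota^r$ by construction'' is assuming the conclusion rather than proving it.
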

\begin{proof}
	The foam $\zeta^{(r-1)r}$ is expressed as a composite for clarity where the intermediate web is \vspace{3pt}\[
		\begin{gathered}
			\labellist
			\pinlabel $b$ at -.7 4.8
			\pinlabel $a$ at -.7 0.2
			\pinlabel $r-1$ at 0.3 2.4
			\pinlabel $1$ at 3.9 2.4
			\pinlabel $b-r$ at 6.5 6
			\pinlabel $c$ at 14.1 4.8
			\pinlabel $d$ at 14.1 0.4
			\pinlabel $1$ at 9.3 2.6
			\pinlabel $l+r-1$ at 14.5 2.6
			\endlabellist
			\includegraphics[width=.135\textwidth]{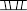}
			\vspace{-3pt}
		\end{gathered}
	\]Let $\{x_1,\ldots,x_{r-1}\}, \{x_r\},\{y\},\mathbf{E}_{r-1}$ be alphabets assigned to the four rungs colored by $r-1,1,1,l+r-1$, respectively, and let $\{x_{r+1},\ldots,x_b\}$ be the alphabet assigned to the top middle edge colored by $b-r$. Then the top foam in the composition is given by the inclusion of polynomials symmetric in $\{x_1,\ldots,x_{r-1},x_r\}$ and in $\{y\} \cup \mathbf{E}_{r-1}$ into polynomials separately symmetric in the four alphabets. Using the formulas provided in Example~\ref{example:bending}, the second foam in the composition is a quotient map identifying $x_r$ with $y$ followed $\partial_{b-1}\,\cdots\,\partial_r$. The composite $\iota^{r-1}\,\zeta^{(r-1)r}$ is therefore equal to $\partial_{b-1}\,\cdots\,\partial_r\,Z_{(r-1)r}\,\iota^r$.

	For the grading shift computations, note that $\deg(\partial_{b-1}\,\cdots\,\partial_r\,Z_{(r-1)r}) = d - 2(b-r)$ and $-\deg\iota^r = \binom{r}{2} + \binom{b-r}{2}$ so homogeneity of the maps in the commutative square gives \[
		H(1^r0^{b-r}) - H(1^{r-1}0^{b-r+1}) = \deg\iota^r + \deg(\partial_{b-1}\,\cdots\,\partial_r\,Z_{(r-1)r}) - \deg\iota^{r-1} = d-b+1.
	\]The value of $H(1^r0^{b-r})$ is then determined by $H(1^00^b) = H(0,\ldots,0) = 0$. 
\end{proof}

\begin{rem}
	By Proposition~\ref{prop:homSpaces}, $\Hom^{d-b+1}(W_r,W_{r-1}) \cong \Z$, and the foam $\zeta^{(r-1)r}\in\Hom^{d-b+1}(W_r,W_{r-1})$ is a generator. 
\end{rem}

By Lemmas~\ref{lem:differentialFactorsP} and \ref{lem:zetafoam}, the complex $\sr{F}^1(\sr{P})$ is \[
	\begin{tikzcd}[column sep=35pt]
		W_0 & t^{-1}q^{d-b+1}\,W_1 \ar[l,swap,"\zeta^{01}"] & t^{-2}q^{2(d-b+1)}\,W_2 \ar[l,swap,"\zeta^{12}"] & \cdots \ar[l,swap,"\zeta^{23}"] & t^{-b}q^{b(d-b+1)}\,W_b. \ar[l,swap,"\zeta^{(b-1)b}"]
	\end{tikzcd}
\]When $c = a$ and $d = b$, this complex is precisely the \emph{Rickard complex} assigned to the positive crossing \vspace{3pt} \[
	\quad\begin{gathered}
		\labellist
		\pinlabel $b$ at -2 5.5
		\pinlabel $a$ at -2 0.5
		\pinlabel $b$ at 14 0.8
		\pinlabel $a$ at 14 5.1
		\endlabellist
		\includegraphics[width=.06\textwidth]{1halfTwist}
		\vspace{-3pt}
	\end{gathered} \quad
\]When $c$ and $d$ are not specialized, it is the \emph{shifted} Rickard complex \cite{MR3412353,hogancamp2021skein} that Hogancamp--Rose--Wedrich show in \cite[Proposition 2.31]{hogancamp2021skein} is homotopy equivalent to the following tensor product complex. \vspace{3pt}\[
	\begin{gathered}
		\labellist
		\pinlabel $b$ at -2 5.5
		\pinlabel $a$ at -2 0.5
		\pinlabel $b$ at 14 0.8
		\pinlabel $a$ at 14 5.1
		\endlabellist
		\includegraphics[width=.06\textwidth]{1halfTwist}
		\vspace{-3pt}
	\end{gathered} \hspace{10pt} \otimes \hspace{10pt} \begin{gathered}
		\labellist
		\pinlabel $a$ at -2 5.1
		\pinlabel $b$ at -2 0.8
		\pinlabel $d$ at 8.5 0.8
		\pinlabel $c$ at 8.5 5.1
		\endlabellist
		\includegraphics[width=0.034\textwidth]{rung}
		\vspace{-3pt}
	\end{gathered} \quad = \quad \begin{gathered}
		\labellist
		\pinlabel $b$ at -2 5.5
		\pinlabel $a$ at -2 0.5
		\endlabellist
		\includegraphics[width=.06\textwidth]{1halfTwist}
		\vspace{-3pt}
	\end{gathered} \hspace{-3pt} \begin{gathered}
		\labellist
		\pinlabel $d$ at 8.5 0.8
		\pinlabel $c$ at 8.5 5.1
		\endlabellist
		\includegraphics[width=0.034\textwidth]{rung}
		\vspace{-3pt}
	\end{gathered}
\]Our convention for Rickard complexes places no grading shifts on the $W_0$ term which matches \cite{MR3412353,MR3545951}, while another standard convention places no shifts on the $W_b$ term. 

\begin{lem}\label{lem:rungSlide}
	If $a,b,c,d$ are nonnegative integers with $a + b = c + d$, then for each $k \ge 0$, there is a homotopy equivalence\[
		\left((t^{-1}q)^{\min(c,d) - \min(a,b)} q^{cd - ab}\right)^{-k} \hspace{15pt}\begin{gathered}
			\labellist
			\pinlabel $b$ at -2 5.5
			\pinlabel $a$ at -2 0.5
			\endlabellist
			\includegraphics[width=0.0343\textwidth]{rung_diag}
			\vspace{-3pt}
		\end{gathered} \hspace{-3pt} \begin{gathered}
			\labellist
			\endlabellist
			\includegraphics[width=.06\textwidth]{1halfTwist}
			\vspace{-3pt}
		\end{gathered}\hspace{3pt}\begin{gathered}
			\cdots\vspace{2pt}
		\end{gathered}\hspace{3pt}\begin{gathered}
			\labellist
			\pinlabel $d$ at 14 0.8
			\pinlabel $c$ at 14 5.5
			\endlabellist
			\includegraphics[width=.06\textwidth]{1halfTwist}
			\vspace{-3pt}
		\end{gathered} \quad\: \simeq \: \begin{cases}
			\quad\begin{gathered}
				\labellist
				\pinlabel $b$ at -2 5.5
				\pinlabel $a$ at -2 0.5
				\endlabellist
				\includegraphics[width=.06\textwidth]{1halfTwist}
				\vspace{-10pt}
			\end{gathered}\hspace{3pt}\begin{gathered}
				\cdots\vspace{-5pt}
			\end{gathered}\hspace{3pt}\begin{gathered}
				\labellist
				\endlabellist
				\includegraphics[width=.06\textwidth]{1halfTwist}
				\vspace{-10pt}
			\end{gathered} \hspace{-3pt} \begin{gathered}
				\labellist
				\pinlabel $b$ at -1 7.2
				\pinlabel $a$ at -1 -1.5
				\pinlabel $d$ at 8.5 0.8
				\pinlabel $c$ at 8.5 5.5
				\endlabellist
				\includegraphics[width=0.034\textwidth]{rung_diag}
				\vspace{-10pt}
			\end{gathered}\vspace{20pt} \quad & k \text{ is even}\\
			\quad\begin{gathered}
				\labellist
				\pinlabel $b$ at -2 5.5
				\pinlabel $a$ at -2 0.5
				\endlabellist
				\includegraphics[width=.06\textwidth]{1halfTwist}
				\vspace{-3pt}
			\end{gathered}\hspace{3pt}\begin{gathered}
				\cdots\vspace{0pt}
			\end{gathered}\hspace{3pt}\begin{gathered}
				\labellist
				\endlabellist
				\includegraphics[width=.06\textwidth]{1halfTwist}
				\vspace{-3pt}
			\end{gathered} \hspace{-3pt} \begin{gathered}
				\labellist
				\pinlabel $a$ at -1 7.1
				\pinlabel $b$ at -1 -1.9
				\pinlabel $d$ at 8.5 0.8
				\pinlabel $c$ at 8.5 5.5
				\endlabellist
				\includegraphics[width=0.034\textwidth]{rung}
				\vspace{-3pt}
			\end{gathered}\vspace{5pt} \quad & k \text{ is odd}
		\end{cases}
	\]where each diagram has $k$ positive crossings. 
\end{lem}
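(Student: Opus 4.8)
The plan is to prove \Cref{lem:rungSlide} by induction on $k$, using the fork-slide / Rickard-complex identities of Hogancamp--Rose--Wedrich. The base case $k=0$ is the trivial identity $\Gamma \simeq \Gamma$ for $\Gamma$ a single rung (the grading prefactor is $1$ when $k=0$). For the inductive step, the key input is the homotopy equivalence already recorded just after \Cref{lem:rungSlide} in the excerpt: the shifted Rickard complex $\sr{F}^1({}^b_a\sr{P}^c_d)$ attached to a positive crossing is homotopy equivalent to the composite of the positive crossing with a diagonal rung, i.e. $\sr{F}^1(\sr{P})$ applied in the appropriate slot performs a ``crossing past rung'' move. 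More precisely, a positive crossing composed with a diagonal rung of color $\min(a,b)$ slides to a diagonal rung of the complementary color composed with a positive crossing, up to the grading shift $(t^{-1}q)^{\min(c,d)-\min(a,b)}q^{cd-ab}$ that is bookkept in the statement; this is exactly \cite[Proposition 2.31]{hogancamp2021skein} combined with the fork-slide relations for singular Soergel bimodules.

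The induction would proceed as follows. Write the left-hand side as a rung $\rho$ of color $\min(a,b)$ followed by $k$ positive crossings $\sigma\cdots\sigma$. Apply the single-crossing slide to the pair ($\rho$, leftmost $\sigma$): this replaces $\rho\,\sigma$ by $\sigma'\,\rho'$ where $\sigma'$ is again a positive crossing and $\rho'$ is a rung whose color and orientation have switched (diagonal $\leftrightarrow$ straight, and color $\min(a,b) \leftrightarrow \min(c,d)$ relative to the new boundary data), picking up the stated grading shift. Now $\rho'$ is adjacent to the remaining $k-1$ crossings, and the inductive hypothesis applies to a composite of $k-1$ crossings with a rung on the left in the complementary orientation; one checks that the parity of $k-1$ produces exactly the orientation claimed for $k$, and that the accumulated grading shifts multiply correctly — each application of the single-crossing slide contributes one factor of $(t^{-1}q)^{\min(c,d)-\min(a,b)}q^{cd-ab}$, and there are $k$ of them. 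The bookkeeping of which of $a,b,c,d$ plays the role of ``$\min$'' at each stage is forced: after one slide the relevant boundary data is permuted so that the new rung has color $\min$ of the new adjacent colors, and since $a+b = c+d$ the quantity $\min(c,d)-\min(a,b)$ and $cd-ab$ are invariant under the relevant swaps up to sign, so the prefactor is uniform across all $k$ steps.

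The main obstacle I expect is the grading/shift bookkeeping rather than any conceptual difficulty: one must carefully track the $t$- and $q$-shifts through each fork-slide and confirm that the total shift telescopes to the $k$-th power of a single factor, and separately confirm that the combinatorics of ``diagonal rung of color $b-\min$'' versus ``straight rung of color $a-\min$'' flips correctly with the parity of $k$. A secondary point needing care is that the single-crossing slide in \cite[Proposition 2.31]{hogancamp2021skein} is stated for a specific configuration, so I would first isolate it as the $k=1$ case of the lemma (or cite it directly), verifying that the orientation and color of the output rung in that proposition match the ``$k$ odd'' line of the present statement, and that composing two such slides recovers the ``$k$ even'' line with the output rung back in its original orientation but with possibly-altered color. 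Once the $k=1$ case is pinned down, the induction is a formal consequence of associativity of tensor product of complexes and functoriality of homotopy equivalence.

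\medskip
\noindent\emph{(Remark for the author: if a self-contained argument is preferred over citing \cite[Prop. 2.31]{hogancamp2021skein} repeatedly, one can instead observe that both sides are homotopy equivalent to the Rickard complex of the relevant positive braid word read in the category with the extra rung absorbed into the boundary data, and invoke invariance of the Rickard complex under Markov/fork-slide moves; but the inductive route above is cleaner and uses only results already in the literature.)}
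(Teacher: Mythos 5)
Your proposal is correct and follows essentially the same route as the paper: the entire content of the lemma is the fork-sliding (and, for degenerate configurations, fork-twisting) homotopy equivalence of Hogancamp--Rose--Wedrich, iterated across the $k$ crossings exactly as in your induction. The paper's proof sidesteps the grading bookkeeping you flag as the main obstacle by first passing to the other standard normalization of the Rickard complex and then citing \cite[Proposition 2.27]{hogancamp2021skein} (fork-sliding and fork-twisting, rather than Proposition 2.31), so the per-crossing factor $(t^{-1}q)^{\min(c,d)-\min(a,b)}q^{cd-ab}$ is absorbed into the change of convention instead of being tracked slide by slide.
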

\begin{proof}
	We let $\left\llbracket - \right\rrbracket$ denote the other standard grading convention for the Rickard complex. In other words, we set \[
		\left\llbracket \quad\begin{gathered}
			\labellist
			\pinlabel $c$ at -2 5.2
			\pinlabel $d$ at -2 0.5
			\pinlabel $c$ at 13.8 0.2
			\pinlabel $d$ at 14 5.8
			\endlabellist
			\includegraphics[width=.06\textwidth]{1halfTwist}
			\vspace{-3pt}
		\end{gathered} \quad \right\rrbracket \coloneq (t^{-1}q)^{-\min(c,d)} \quad\begin{gathered}
			\labellist
			\pinlabel $c$ at -2 5.2
			\pinlabel $d$ at -2 0.5
			\pinlabel $c$ at 13.8 0.2
			\pinlabel $d$ at 14 5.8
			\endlabellist
			\includegraphics[width=.06\textwidth]{1halfTwist}
			\vspace{-3pt}
		\end{gathered} \quad
	\]which matches the notation in \cite{hogancamp2021skein}. The result now follows from the fork-sliding and fork-twisting homotopy equivalences \cite[Proposition 2.27]{hogancamp2021skein}. 
\end{proof}

\begin{rem}\label{rem:forkTwisting}
	If one of the four numbers $a,b,c,d$ is equal to $0$ in Lemma~\ref{lem:rungSlide}, then, the diagram on one side or the other is planar, following the convention that edges colored by zero are ignored. So in this case, the lemma reduces to the fork-twisting homotopy equivalence \cite[Proposition 2.27]{hogancamp2021skein}. 
\end{rem}

Now for any $k \ge 1$, consider the subcomplex $\sr{F}^k(\sr{P})$ which consists of the objects $W(\lambda)$ of $\sr{P}$ for $\lambda$ in \[
	T_k \cap \Z^b = \{(\lambda_1,\ldots,\lambda_b) \in \Z^b \:|\: k \ge \lambda_1 \ge\cdots\ge\lambda_b \ge 0\}.
\]The differential of $\sr{F}^k(\sr{P})$ consists of all components of the differential of $\sr{P}$ that are assigned to edges that connect vertices in $T_k \cap \Z^b$. Consider the following partition of $T_k \cap \Z^b$ into $b + 1$ disjoint sets \[
	T_k \cap \Z^b = U_{k,0} \sqcup U_{k,1} \sqcup \cdots \sqcup U_{k,b} \hspace{30pt} U_{k,r} \coloneq \{(\lambda_1,\ldots,\lambda_b) \in \Z^b \:|\: k = \lambda_1 = \cdots = \lambda_r > \lambda_{r+1} \ge \cdots \ge \lambda_b \ge 0 \}
\]Let $k^r0^{b-r} \in U_{k,r}$ be the lattice point whose first $r$ entries are $k$ and whose remaining entries are $0$. 
For $r \in \{0,\ldots,b\}$, let $\sr{U}_{k,r}(\sr{P})$ be the subquotient complex of $\sr{F}^k(\sr{P})$ consisting of the objects $W(\lambda)$ for $\lambda \in U_{k,r}$ with differential consisting of all components of the differential of $\sr{P}$ that are assigned to edges that connect vertices in $U_{k,r}$. 

\begin{example}
	When $a = b = c = d = 2$ and $k = 3$, the subquotient complexes $\sr{U}_{3,0}(\sr{P}), \sr{U}_{3,1}(\sr{P}),\sr{U}_{3,2}(\sr{P})$ of $\sr{F}^3(\sr{P})$ are\vspace{5pt} \[
		\begin{tikzpicture}[every node/.style={inner sep=2pt}, x=3cm, y=2cm]
			\node (a00) at (0,0) {$W_0$};
			\node (a10) at (1,0) {$t^{-1}q^1\,W_1$};
			\node (a20) at (2,0) {$t^{-2}q^3\,W_1$};
			\node (a30) at (3.3,0) {$t^{-3}q^5\,W_1$};

			\node (a11) at (1,1) {$t^{-2}q^2\,W_2$};
			\node (a21) at (2,1) {$t^{-3}q^5\,W_2^{1,1}$};
			\node (a31) at (3.3,1) {$t^{-4}q^7\,W_2^{1,1}$};

			\node (a22) at (2,2) {$t^{-4}q^8\,W_2$};
			\node (a32) at (3.3,2) {$t^{-5}q^{11}\,W_2^{1,1}$};

			\node (a33) at (3.3,3) {$t^{-6}q^{14}\,W_2$};

			\draw[{<[scale=1.5]}-] (a00)--(a10) node[midway,above] {$\pi Z_{01}$};
			\draw[{<[scale=1.5]}-] (a10)--(a20) node[midway,above] {$Q_1$};

			\draw[{<[scale=1.5]}-] (a10)--(a11) node[midway,left] {$Z_{12}\iota$};
			\draw[{<[scale=1.5]}-] (a20)--(a21) node[midway,left] {$Z_{12}$};

			\draw[{<[scale=1.5]}-] (a11)--(a21) node[midway,above] {$\pi Q_2s_1$};

			\draw[{<[scale=1.5]}-] (a21)--(a22) node[midway,left] {$Q_2\iota$};

			\begin{scope}[on background layer]
		    \path[
		      draw,
		      rounded corners, 
		      line width=0.6pt
		    ] 
		      ($ (a00.north west) + (-5pt, 0pt) $) --
		      ($ (a00.south west) + (-5pt,-5pt) $) --
		      ($ (a20.south east) + ( 5pt,-5pt) $) --
		      ($ (a22.north east) + ( 5pt, 5pt) $) -- 
		      ($ (a22.north west) + ( 0pt, 5pt) $) -- cycle;
		  	\end{scope}

		  	\draw[{<[scale=1.5]}-] (a30)--(a31) node[midway,left] {$Z_{12}$};
		  	\draw[{<[scale=1.5]}-] (a31)--(a32) node[midway,left] {$Q_{2}$};

		  	\node[draw, rounded corners, line width=0.6pt, inner sep=5pt, outer sep=2pt, fit=(a32) (a31) (a30)] (a3*g) {};

		  	\node[draw, rounded corners, line width=0.6pt, inner sep=5pt, outer sep=2pt, fit=(a33)] (a33g) {};

		  	\draw[{<[scale=1.5]}-, shorten <=8pt, shorten >=13pt] (a20)--(a30) node[midway,above] {$Z_{10}s^*\hspace{-4pt}_1\partial_1 Z_{01}$};

		  	\draw[{<[scale=1.5]}-, shorten <=4pt, shorten >=10pt] (a21)--(a31) node[midway,above] {$s^*\hspace{-4pt}_1Z_{21}Z_{12} \partial_1$};

		  	\draw[{<[scale=1.5]}-, shorten <=8pt, shorten >=8pt] (a22)--(a32) node[midway,above] {$\pi Z_{21}Z_{12} s_1$};

		  	\draw[{<[scale=1.5]}-, shorten <=8pt, shorten >=7pt] (a32)--(a33) node[midway,left] {$Z_{21}Z_{12}\iota$};
		\end{tikzpicture}\vspace{5pt}
	\]
\end{example}

For the following proposition, we set \[
	{}^0_n\sr{P}^c_d \coloneq \quad \begin{gathered}
			\labellist
			\pinlabel $0$ at -2 5.2
			\pinlabel $n$ at -2 0.3
			\pinlabel $d$ at 8.5 0.8
			\pinlabel $c$ at 8.5 5.5
			\endlabellist
			\includegraphics[width=0.035\textwidth]{rung_diag}
			\vspace{-3pt}
		\end{gathered} \quad = \quad \begin{gathered}
			\labellist
			\pinlabel $n$ at -2 0.3
			\pinlabel $d$ at 8.5 0.8
			\pinlabel $c$ at 8.5 5.5
			\endlabellist
			\includegraphics[width=0.035\textwidth]{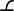}
			\vspace{-3pt}
		\end{gathered}
\]and $\sr{F}^k({}^0_n \sr{P}^c_d) = {}^0_n\sr{P}^c_d$ for all $k \ge 0$.

\begin{prop}\label{prop:tensorSplitting}
	For $r \in \{0,\ldots,b\}$ and $k \ge 1$, there is an identification \[
		\sr{U}_{k,r}\left({}^b_a \sr{P}^c_d\right) = t^{-kr}q^{H(k^r0^{b-r})} \quad \begin{gathered}
			\labellist
			\pinlabel $b$ at -2 5.5
			\pinlabel $a$ at -2 0.5
			\pinlabel $r$ at 5 2.7
			\pinlabel $a+r$ at 11.5 0.5
			\pinlabel $b-r$ at 11.5 5.5
			\endlabellist
			\includegraphics[width=0.035\textwidth]{rung}
			\vspace{-3pt}
		\end{gathered} \hspace{23pt} \otimes \sr{F}^{k-1}\left({}^{b-r}_{a+r}\sr{P}^c_d\right).
	\]Furthermore \[
		H(k^r0^{b-r}) = \begin{cases}
			r(k(a-b+r+1) - (c-b+r)) & k \text{ is odd }\\
			r(k(a-b+r+1) - (d-b+r)) & k \text{ is even.}
		\end{cases}
	\]
\end{prop}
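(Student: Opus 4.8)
The plan is to unwind the definitions of $\sr{U}_{k,r}(\sr{P})$ and of the tensor product $\sr{F}^{k-1}({}^{b-r}_{a+r}\sr{P}^c_d) \otimes (\text{rung})$, and match them vertex-by-vertex and edge-by-edge. First I would set up the bijection on objects. A lattice point $\lambda \in U_{k,r}$ has the form $\lambda = (k,\ldots,k,\lambda_{r+1},\ldots,\lambda_b)$ with $k > \lambda_{r+1} \ge \cdots \ge \lambda_b \ge 0$, so $\mu \coloneq (\lambda_{r+1},\ldots,\lambda_b) \in T_{k-1} \cap \Z^{b-r}$; this $\lambda \mapsto \mu$ is a bijection $U_{k,r} \to T_{k-1} \cap \Z^{b-r}$ which drops cohomological degree by exactly $kr$ (hence the $t^{-kr}$). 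Then I would verify that the web $W^{g(\lambda)}_{r(\lambda)}$ with boundary data $(a,b),(d,c)$ is obtained from the web $W^{g(\mu)}_{r(\mu)}$ with boundary data $(a+r,b-r),(d,c)$ by precomposing with the rung that changes $(a,b)$ to $(a+r,b-r)$: geometrically, the first $r$ rungs of $W^{g(\lambda)}_{r(\lambda)}$ (which are the ``$k$'' columns, grouped as a single block of size $r$ since all equal $k$) are exactly the rung $\generalwrr$-style splitting, leaving the sub-web on $(a+r,b-r)$ carrying the columns $\mu$. Comparing the grading-shift functions $H$ on the two simplices, using the explicit cocycle formula for $\omega$ from section~\ref{subsec:the_objects_of_P}, should give that $H_{(a,b)}(\lambda) - H_{(a+r,b-r)}(\mu)$ is the constant $H(k^r0^{b-r})$ plus the degree of the rung idempotent inclusion; this is the bookkeeping that pins down the $q$-shift $q^{H(k^r0^{b-r})}$.

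Next I would match differentials. The edges of the standard cubulation contained in $U_{k,r}$ are precisely those that decrement one of the coordinates $\lambda_{r+1},\ldots,\lambda_b$ (decrementing one of the first $r$ coordinates would leave $U_{k,r}$), and these correspond bijectively to the edges in $T_{k-1}\cap\Z^{b-r}$. So I must check that the component $\zeta$ of the differential of ${}^b_a\sr{P}^c_d$ assigned to such an edge agrees, under the identification of objects, with the component of the differential of ${}^{b-r}_{a+r}\sr{P}^c_d$ tensored with the identity on the rung. Both are defined (Lemma~\ref{lem:differentialFactorsP}) as the unique lift through the $\iota^g$ maps of a composite of $Z$'s, $Q$'s, $s$'s, $\partial$'s built from descending/ascending strings; since decrementing coordinate $i > r$ produces strings supported on subscripts $\ge r+1$ (the relevant $r(\varepsilon^1)$ for the $b$-alphabet picture equals $r + r(\mu\text{-picture})$), and the maps $Z_{(r'-1)r'}, Q_{r'}, s_{r'}, \partial_{r'}$ for $r' > r$ act only on the last $b - r$ of the $x$-variables, they are literally identified with the corresponding maps in the $(b-r)$-variable picture acting on the sub-web below the rung, and commute with the rung. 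The degree shifts $G$, $\xi$, $H$ all add up consistently by the same cocycle comparison. By Proposition~\ref{prop:commutativeSquaresP} the squares already commute on both sides, so uniqueness of the lift forces equality of the two complexes, not merely homotopy equivalence — and indeed the statement asserts an identification, not just an equivalence.

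The main obstacle I anticipate is the grading-shift bookkeeping: making precise that $H_{(a,b)}(\lambda) = H_{(a+r,b-r)}(\mu) + H(k^r0^{b-r})$ for all $\lambda \in U_{k,r}$, where the left $H$ is the grading function for ${}^b_a\sr{P}^c_d$ and the right one is for ${}^{b-r}_{a+r}\sr{P}^c_d$. Since $H$ is only defined as the primitive of the cocycle $\omega$, this amounts to checking that the two cocycles agree on every edge of $U_{k,r}$ after the coordinate shift. Here one must be careful: $\omega$ depends on $\upsilon$ (which involves $d$ and $l = a-d$, and $l$ changes from $a - d$ to $(a+r)-d$ when passing to the subweb) and on the $\xi(g(\cdot))$ correction terms (the grouping $g(\lambda)$ has an extra block of size $r$ compared to $g(\mu)$, contributing a fixed $\binom{r}{2}$ to $\xi$ for every $\lambda \in U_{k,r}$, which is constant and hence harmless). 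The $\upsilon$-contribution along edges in $U_{k,r}$ only sees coordinates $> r$ for the ``$\delta(\varepsilon_k - \cdot)$'' sums over $k = i+1,\ldots,b$, but the ``$\sum_{k=1}^{i-1}$'' sum in $\upsilon(\varepsilon^{[1,2]})$ picks up exactly $r$ extra $1$'s-or-$2$'s from the first $r$ coordinates being $\varepsilon(k) \in \{1,2\}$; I would check this extra $-2r$ is absorbed into the change $l \mapsto l + r$ (note $Q_r$ has degree $2l + 2r$, exactly accounting for the shift). Once this single identity is verified on the three edge-types and shown to be edge-independent up to the global constant $H(k^r0^{b-r})$, the rest of the proof is the routine matching described above.
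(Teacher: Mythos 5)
Your proposal is correct and takes essentially the same route as the paper: the same bijection $k^r\mu \leftrightarrow \mu$ between $U_{k,r}$ and $T_{k-1}\cap\Z^{b-r}$, the same rung factorization of the webs $W^{g(k^r\mu)}_{r(k^r\mu)}$ and of the $V$-webs, and the same identification of differential components via the tensor decomposition together with uniqueness of lifts through $\iota^g$ (Lemma~\ref{lem:differentialFactorsP}). The only difference is in the grading bookkeeping, and it is minor: you verify $H_{(a,b)}(k^r\mu) = H(k^r0^{b-r}) + H_{(a+r,b-r)}(\mu)$ by a direct comparison of the cocycles (which does work — the extra $-2r$ from the first $r$ coordinates being $1$ or $2$ is exactly absorbed by $l \mapsto l+r$, and the constant $\binom{r}{2}$ discrepancy in $\xi$ cancels in differences), whereas the paper sidesteps this computation by noting that once the differentials are identified and the shifts agree at the single vertex $k^r0^{b-r}$, homogeneity forces all remaining shifts to agree.
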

\begin{proof}
	There is a bijection between $U_{k,r} \subset \Z^b$ and $T_{k-1} \cap \Z^{b-r}$ given by \[
		k^r\lambda \coloneq (k,\ldots,k,\lambda_{r+1},\ldots,\lambda_b) \longleftrightarrow \lambda\coloneq (\lambda_{r+1},\ldots,\lambda_b)
	\]where $k > \lambda_{r+1} \ge \cdots \ge \lambda_b \ge 0$. Note $r(k^r\lambda) = r + r(\lambda)$ and that if $g(\lambda) = (g_1,\ldots,g_m)$, then the multiplicities of the parts of $k^r\lambda$ are $(r,g_1,\ldots,g_m)$. We therefore have the desired identification\vspace{2pt}\[
		W^{g(k^r\lambda)}_{r(k^r\lambda)} = \quad\begin{gathered}
			\labellist
			\pinlabel $b$ at -2 5.5
			\pinlabel $a$ at -2 0.5
			\pinlabel $r$ at 5 2.7
			\pinlabel $a+r$ at 11.5 0.5
			\pinlabel $b-r$ at 11.5 5.5
			\endlabellist
			\includegraphics[width=0.035\textwidth]{rung}
			\vspace{-3pt}
		\end{gathered} \hspace{23pt} \otimes W^{g(\lambda)}_{r(\lambda)}\vspace{2pt}
	\]at the level of webs. The grading shift in the proposition statement makes the objects associated to $k^r0^{b-r} \in U_{k,r}$ and $0^{b-r} \in T_{k-1} \cap \Z^{b-r}$ match. The formula for $H(k^r0^{b-r})$ is obtained by directly from the formula for $H$ given in section~\ref{subsec:the_objects_of_P}. To show that the grading shifts for the remaining objects match, it suffices to identify the components of their differentials. Note that if $r = b$, then each side of the desired identification consists of just a single web with no differential, so we may assume $r < b$.

	First note that the web ${}^b_a (V_{r+t})^c_d$ with boundary data $c_L = (a,b)$ and $c_R = (d,c)$ factors as the tensor product \[
		{}^b_a(V_{r+t})^c_d = \quad\begin{gathered}
		 	\labellist
			\pinlabel $b$ at -1 4.9
			\pinlabel $a$ at -1 0.2
			\pinlabel $1$ at 1.5 2.4
			\pinlabel $1$ at 4.5 2.4
			\pinlabel $\cdots$ at 7.5 2.4
			\pinlabel $b-r$ at 15.5 4.8
			\pinlabel $a+r$ at 15.5 0.4
			\pinlabel $1$ at 9.5 2.4
			\endlabellist
		 	\includegraphics[width=.135\textwidth]{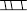}
		 	\vspace{-3pt}
		 \end{gathered} \hspace{25pt} \otimes {}^{b-r}_{a+r}(V_t)^c_d 
	\]for $t \in \{0,\ldots,b-r\}$. Fix $i \in \{r+1,\ldots,b\}$, an $r$-tuple $\eta = (\eta_1,\ldots,\eta_r) \in \{1,2\}^r$, and numbers $\varepsilon_{r+1},\ldots,\varepsilon_{i-1},\varepsilon_{i+1},\ldots,\varepsilon_b \in \{0,1,2,3\}$. For $j \in \{0,1,2,3\}$, let $\varepsilon^j \coloneq (\varepsilon_{r+1},\ldots,\varepsilon_{i-1},j,\varepsilon_{i+1},\ldots,\varepsilon_b)$ and $\eta\varepsilon^j = (\eta_1,\ldots,\eta_r,\varepsilon_{r+1},\ldots,\varepsilon_{i-1},j,\varepsilon_{i+1},\ldots,\varepsilon_b)$. It is straightforward to see that the component of the differential of ${}^b_a \sr{K}^c_d$ assigned to the edge $\eta\varepsilon^{[j,j+1]}$ of the cube $[0,3]^b$ connecting $\eta\varepsilon^j$ and $\eta\varepsilon^{j+1}$ respects the above tensor product decomposition. In particular, it is the tensor product of the identity on the left with the component of the differential of ${}^{b-r}_{a+r}\sr{K}^c_d$ assigned to the edge $\varepsilon^{[j,j+1]}$ of the cube $[0,3]^{b-r}$ connecting $\varepsilon^j$ and $\varepsilon^{j+1}$. 
	The key observation is that the components of the differentials are both defined in terms of the same sequence $\varepsilon_{i+1},\ldots,\varepsilon_b$. As for the indices of the expressions for the components of the differentials, we note that after tensoring with the identity on the left, the actions of $\partial_1,\ldots,\partial_{b-r-1}$ on ${}^{b-r}_{a+r}(V_t)^c_d$ become the actions of $\partial_{r+1},\ldots,\partial_{b-r}$ on ${}^b_a(V_{r+t})^c_d$. Additionally, the action of $Q_t$ on ${}^{b-r}_{a+r}(V_t)^c_d$ becomes the action of $Q_{r+t}$ on ${}^b_a(V_{r+t})^c_d$ because both are multiplication by $e_{c - (b-r) + t}(\C - x_{r+t} - \cdots - x_b)$. 

	Next, observe that the maps $\iota^{g(k^r\lambda)}$ and $\iota^{g(\lambda)}$ from section~\ref{subsec:webs_and_foams_in_P} satisfy $\iota^{g(k^r\lambda)} = \iota^r \otimes \iota^{g(\lambda)}$ where $\iota^r$ is the map \[
		\begin{gathered}
			\labellist
			\pinlabel $b$ at -1 4.9
			\pinlabel $a$ at -1 0.2
			\pinlabel $r$ at 1.5 2.4
			\pinlabel $b-r$ at 7.5 4.8
			\pinlabel $a+r$ at 7.5 0.4
			\endlabellist
			\includegraphics[width=0.052\textwidth]{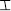}
			\vspace{-3pt}
		\end{gathered} \hspace{23pt} \to \quad\begin{gathered}
		 	\labellist
			\pinlabel $b$ at -1 4.9
			\pinlabel $a$ at -1 0.2
			\pinlabel $1$ at 1.5 2.4
			\pinlabel $1$ at 4.5 2.4
			\pinlabel $\cdots$ at 7.5 2.4
			\pinlabel $b-r$ at 15.5 4.8
			\pinlabel $a+r$ at 15.5 0.4
			\pinlabel $1$ at 9.5 2.4
			\endlabellist
		 	\includegraphics[width=.135\textwidth]{rung_exploded}
		 	\vspace{-3pt}
		 \end{gathered}
	\]induced by the inclusion $\Z[x_1,\ldots,x_r]^{\fk{S}_r} \hookrightarrow \Z[x_1,\ldots,x_r]$. The desired identification now follows from Lemma~\ref{lem:differentialFactorsP}. 
\end{proof}

We will use the following lemma in the proof of property 2 of Theorem~\ref{thm:mainThm}. Recall that a map $f\in \Hom^i(B,C)$ is \emph{primitive} if the equality $f = kg$ for an integer $k \in \Z$ and $g \in \Hom^i(B,C)$ implies that $k = \pm1$. Additionally, observe that we may consider the quotient of any singular Bott--Samelson bimodule with boundary data $c_L = (a,b)$ and $c_R = (d,c)$ by the ideal generated by $e_i(\C - \B)$ for $i > b-c$. Any bimodule map between such singular Bott--Samelson bimodules descends to a map on quotients. This procedure mimics forming a partial closure of the web in a rather weak way that is sufficient for our purposes. We may also instead quotient out by the ideal generated by $e_i(\mathbf{D} - \B)$ for $i > d-b$. 

\begin{lem}\label{lem:primitive}
	The maps $e_{d-b+1}(\mathbf{D} - \B) \in \Hom^{2(d-b+1)}(W_b,W_b)$ and $e_{c-b+1}(\C - \B) \in \Hom^{2(c-b+1)}(W_b,W_b)$ are primitive. Furthermore, the first map descends to a primitive map after quotienting by the ideal generated by $e_i(\C - \B)$ for $i > c-b$, while the second map descends to a primitive map after quotienting by the ideal generated by $e_i(\mathbf{D} - \B)$ for $i > d - b$.
\end{lem}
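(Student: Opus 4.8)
The plan is to reduce both primitivity claims to the observation that a single elementary symmetric polynomial becomes a polynomial coordinate after an aggressive quotient of $B_{W_b}$. First I would record the presentation of the bimodule of $W_b = W^b_b$: in this web the two ``rung'' columns degenerate ($b-r = 0$), and the edges $\mathbf{E}_b,\F_b$ are eliminated by their vertex relations, so Definition~\ref{def:BSbimodule} (or Proposition~\ref{prop:H*BSVariety}) gives, up to an overall grading shift,
\[
	B_{W_b} \;\cong\; \Sym(\A) \otimes \Sym(\B) \otimes \Sym(\C) \otimes \Sym(\mathbf{D}) \big/ \big( e_i(\A + \B) - e_i(\C + \mathbf{D}) : i \ge 1 \big),
\]
with $R_L = \Sym(\A)\otimes\Sym(\B)$ and $R_R = \Sym(\mathbf{D})\otimes\Sym(\C)$. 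Since $e_i(\B)$ acts on the left and $e_i(\C),e_i(\mathbf{D})$ on the right, multiplication by $e_{d-b+1}(\mathbf{D}-\B)$ and by $e_{c-b+1}(\C-\B)$ are genuine bimodule endomorphisms, and these are the maps named in the statement (they have the stated $q$-degrees since $e_i$ has $q$-degree $2i$). The only general fact I would invoke is: if $F$ is an endomorphism of a singular Bott--Samelson bimodule $B$ and the element $F(1)\in B$ is primitive in the abelian group $B$, then $F$ is a primitive endomorphism (write $F = kG$ and evaluate at $1$); and if $B \twoheadrightarrow Q$ is any quotient bimodule and $F(1)$ maps to a primitive element of $Q$, then $F(1)$, hence $F$, is primitive. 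The same remark applies to the quotients $\overline{W_b}$ in the ``descends to a primitive map'' clause, so all four assertions will follow from one computation in a suitable quotient.

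Then I would carry out the quotient for the first map. Consider $B_{W_b} \twoheadrightarrow B_{W_b}/(e_i(\B),\,e_i(\C) : i \ge 1)$. Because $e_i(\C - \B) = e_i(\C) + \sum_{j \ge 1} (-1)^j e_{i-j}(\C) h_j(\B)$ lies in the ideal $(e_i(\B),e_i(\C):i\ge 1)$ for every $i \ge 1$, this quotient map factors through $\overline{W_b} = B_{W_b}/(e_i(\C - \B) : i > c-b)$. In the quotient the defining relations collapse to $e_i(\A) = e_i(\mathbf{D})$ for all $i \ge 1$; since $\A$ has rank $a = d + l \ge d$ and $\mathbf{D}$ has rank $d$, this identifies $e_i(\mathbf{D})$ with $e_i(\A)$ for $1 \le i \le d$ and forces $e_i(\A) = 0$ for $d < i \le a$, so the quotient ring is the polynomial ring $\Z[e_1(\A),\dots,e_d(\A)]$. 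Under this map $e_{d-b+1}(\mathbf{D} - \B) = \sum_{j\ge 0}(-1)^j e_{d-b+1-j}(\mathbf{D}) h_j(\B) \mapsto e_{d-b+1}(\mathbf{D}) = e_{d-b+1}(\A)$, because every $h_j(\B)$ with $j \ge 1$ dies. As $b \ge 1$ and $d \ge b$ we have $1 \le d-b+1 \le d$, so $e_{d-b+1}(\A)$ is one of the polynomial generators and is therefore primitive; this proves primitivity of $e_{d-b+1}(\mathbf{D}-\B)$ in $\overline{W_b}$, in $B_{W_b}$, and as endomorphisms of both.

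The second map is handled by the mirror argument: quotient $B_{W_b}$ by $(e_i(\B),\,e_i(\mathbf{D}) : i \ge 1)$, which factors through $B_{W_b}/(e_i(\mathbf{D}-\B) : i > d-b)$ for the same reason, collapses the relations to $e_i(\A) = e_i(\C)$ (with $\A$ of rank $a = c + (d-b) \ge c$), and hence yields $\Z[e_1(\A),\dots,e_c(\A)]$; there $e_{c-b+1}(\C-\B)$ maps to $e_{c-b+1}(\C) = e_{c-b+1}(\A)$, a polynomial generator since $1 \le c-b+1 \le c$. I expect the only delicate points to be mundane: getting the web combinatorics of $W^b_b$ and the induced grading shift exactly right so that the displayed presentation of $B_{W_b}$ is correct, and checking the one-line containment ensuring each chosen ideal contains the ``partial closure'' ideal so that the quotient factors through $\overline{W_b}$. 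Neither is a real obstacle — the first is a direct application of Definition~\ref{def:BSbimodule}, and the second is recorded above — and everything else is formal.
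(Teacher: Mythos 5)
Your proposal is correct and takes essentially the same approach as the paper: detect primitivity by passing to an explicit quotient in which the bimodule becomes a free polynomial ring, and use that primitivity pulls back along surjections. The only (harmless) difference is that the paper computes the partial-closure quotient itself, by adjoining an alphabet $\mathbf{X}$ of size $c-b$ to get $\Sym(\B)\otimes\Sym(\mathbf{D})\otimes\Sym(\mathbf{X})$, where $e_{d-b+1}(\mathbf{D}-\B)$ is a primitive polynomial, whereas you pass to the coarser quotient killing all $e_i(\B),e_i(\C)$ (resp.\ $e_i(\B),e_i(\mathbf{D})$), where the element becomes the single generator $e_{d-b+1}(\A)$ (resp.\ $e_{c-b+1}(\A)$); your one-line ideal-containment check ensures this factors through the quotient named in the lemma, so all four assertions follow.
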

\begin{proof}
	We note that quotienting out by $e_i(\C - \B)$ for $i > c - b$ is equivalent to first tensoring with $\Sym(\mathbf{X})$ where $\mathbf{X}$ is an alphabet of size $c - b$, and then quotienting by $e_i(\C - \B) - e_i(\mathbf{X})$ for $i \ge 1$. From this description, we see that the quotient of $q^{cd}W_b$ by this ideal is \begin{align*}
		\frac{\Sym(\A) \otimes \Sym(\B) \otimes \Sym(\C) \otimes\Sym(\mathbf{D})\otimes\Sym(\mathbf{X})}{(e_i(\A + \B) - e_i(\C + \mathbf{D}), e_i(\C - \B) - e_i(\mathbf{X})\:|\: i\ge 1)} &= \frac{\Sym(\A) \otimes \Sym(\B) \otimes \Sym(\mathbf{D})\otimes \Sym(\mathbf{X})}{(e_i(\A + \B) - e_i(\B + \mathbf{X} + \mathbf{D}) \:|\: i \ge 1)}\\
		&= \Sym(\B) \otimes\Sym(\mathbf{D})\otimes\Sym(\mathbf{X})
	\end{align*}It is clear then that $e_{d-b+1}(\mathbf{D} - \B)$ descends to a primitive map on this quotient. Note that descending to a primitive map implies that the original map is primitive. A similar argument establishes the analogous results for the endomorphism $e_{c-b+1}(\C - \B)$.
\end{proof}

\begin{proof}[Proof of property 2 in Theorem~\ref{thm:mainThm}]
	We prove the result by strong induction on $b \ge 1$. While it is possible to formulate the statement so that $b = 0$ is the base case, we explain the case $b = 1$ to illustrate the argument. 

	\underline{Base case}: $b = 1$. We induct on $k\ge 0$. The cases $k = 0$ and $k = 1$ have already been established. For the inductive step, let $k \ge 2$ and consider the $b+1 = 2$ subquotient complexes $\sr{U}_{k,0}({}^1_a\sr{P}^c_d)$ and $\sr{U}_{k,1}({}^1_a\sr{P}^c_d)$ of $\sr{F}^k({}^1_a\sr{P}^c_d)$ \[
		\sr{F}^k({}^1_a\sr{P}^c_d) = \quad \begin{gathered}	
		\begin{tikzpicture}[every node/.style={inner sep=2pt}, x=2.5cm, y=2cm]
			\node (a0) at (0,0) {$W(0)$};
			\node (a1) at (1,0) {$t^{-1}W(1)$};
			\node (a2) at (2,0) {$\:\cdots\:$};
			\node (a3) at (3,0) {$t^{-k+1}W(k-1)$};
			\node (a4) at (4.3,0) {$t^{-k}W(k)$};

			\draw[{<[scale=1.5]}-] (a0)--(a1) node[midway,above] {};
			\draw[{<[scale=1.5]}-] (a1)--(a2) node[midway,above] {};
			\draw[{<[scale=1.5]}-] (a2)--(a3) node[midway,above] {};

			\node[draw, rounded corners, line width=0.6pt, inner sep=5pt, outer sep=2pt, fit=(a0) (a1) (a2) (a3)] (a*g) {};

		  	\node[draw, rounded corners, line width=0.6pt, inner sep=5pt, outer sep=2pt, fit=(a4)] (a4g) {};

		  	\draw[{<[scale=1.5]}-, shorten <=0pt, shorten >=0pt] (a*g)--(a4g) node[midway,above] {$\tau$};
		\end{tikzpicture}
		\vspace{-5pt}
		\end{gathered} 
	\]which we also write as \[
		\sr{F}^k({}^1_a\sr{P}^c_d) =\: \begin{tikzcd}
			\sr{U}_{k,0}({}^1_a\sr{P}^c_d) & \sr{U}_{k,1}({}^1_a\sr{P}^c_d). \ar[l,swap,"\mu_{01}"]
		\end{tikzcd}
	\]We may think of $\mu_{01}$ as a chain map from $t\sr{U}_{k,1}({}^1_a\sr{P}^c_d)$ to $\sr{U}_{k,0}({}^1_a\sr{P}^c_d)$. Since $\sr{U}_{k,0}({}^1_b\sr{P}^c_d) = \sr{F}^{k-1}({}^1_a\sr{P}^c_d)$, the inductive hypothesis and Lemma~\ref{lem:rungSlide} give a homotopy equivalence \[
		\sr{U}_{k,0}({}^1_a\sr{P}^c_d) \simeq \left((t^{-1}q)^{\min(c,d) - 1} q^{cd - a}\right)^{1-k} \hspace{15pt}\begin{gathered}
			\labellist
			\pinlabel $1$ at -2 5.3
			\pinlabel $a$ at -2 0.5
			\endlabellist
			\includegraphics[width=0.0343\textwidth]{rung_diag}
			\vspace{-3pt}
		\end{gathered} \hspace{-3pt} \begin{gathered}
			\labellist
			\endlabellist
			\includegraphics[width=.06\textwidth]{1halfTwist}
			\vspace{-3pt}
		\end{gathered}\hspace{3pt}\begin{gathered}
			\cdots\vspace{2pt}
		\end{gathered}\hspace{3pt}\begin{gathered}
			\labellist
			\pinlabel $d$ at 14 0.8
			\pinlabel $c$ at 14 5.2
			\endlabellist
			\includegraphics[width=.06\textwidth]{1halfTwist}
			\vspace{-3pt}
		\end{gathered} \quad
	\]where the diagram has $k-1$ positive crossings. Note that the fork-twisting equivalence of Remark~\ref{rem:forkTwisting} gives\begin{align*}
		\sr{U}_{k,1}({}^1_a\sr{P}^c_d) &= t^{-k}q^{H(k)} \quad \begin{gathered}
			\labellist
			\pinlabel $a$ at -2 0.8
			\pinlabel $1$ at -2 5.3
			\endlabellist
			\includegraphics[width=0.035\textwidth]{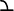}
			\vspace{-3pt}
		\end{gathered} \hspace{-3pt}\begin{gathered}
			\labellist
			\pinlabel $d$ at 8.5 0.8
			\pinlabel $c$ at 8.5 5.2
			\endlabellist
			\includegraphics[width=0.035\textwidth]{rung_partial}
			\vspace{-3pt}
		\end{gathered} \quad \simeq t^{-k}q^{H(k)}\left((t^{-1}q)^{\min(c,d)} q^{cd}\right)^{1-k} \quad\begin{gathered}
			\labellist
			\pinlabel $a$ at -2 0.8
			\pinlabel $1$ at -2 5.3
			\endlabellist
			\includegraphics[width=0.035\textwidth]{rung_partial_flip}
			\vspace{-3pt}
		\end{gathered} \hspace{-3pt}\begin{gathered}
			\labellist
			\endlabellist
			\includegraphics[width=0.035\textwidth]{rung_partial}
			\vspace{-3pt}
		\end{gathered} \hspace{-3pt} \begin{gathered}
			\labellist
			\endlabellist
			\includegraphics[width=.06\textwidth]{1halfTwist}
			\vspace{-3pt}
		\end{gathered}\hspace{3pt}\begin{gathered}
			\cdots\vspace{2pt}
		\end{gathered}\hspace{3pt}\begin{gathered}
			\labellist
			\pinlabel $d$ at 14 0.8
			\pinlabel $c$ at 14 5.2
			\endlabellist
			\includegraphics[width=.06\textwidth]{1halfTwist}
			\vspace{-3pt}
		\end{gathered} \quad
	\end{align*}where the diagram has $k-1$ positive crossings. Using the formula for $H(k)$ given in Proposition~\ref{prop:tensorSplitting} with $r = 1 = b$, we have for $k \ge 1$ \[
		H(k) = \begin{cases}
			kn-c & k \text{ is odd}\\
			kn-d & k \text{ is even}
		\end{cases}
	\]and $H(0) = 0$. We now tensor $\sr{F}^k({}^1_a\sr{P}^c_d)$ on the right with $k-1$ negative crossings and obtain \begin{align*}
		\sr{F}^k({}^1_a\sr{P}^c_d) \otimes \hspace{7pt}\begin{gathered}
			\labellist
			\pinlabel $c$ at -2 5.2
			\pinlabel $d$ at -2 0.8
			\endlabellist
			\includegraphics[width=.06\textwidth]{1negHalfTwist}
			\vspace{-3pt}
		\end{gathered}\hspace{3pt}\begin{gathered}
			\cdots\vspace{2pt}
		\end{gathered}&\hspace{3pt}\begin{gathered}
			\labellist
			\endlabellist
			\includegraphics[width=.06\textwidth]{1negHalfTwist}
			\vspace{-3pt}
		\end{gathered} \: = \begin{tikzcd}[ampersand replacement=\&]
			\sr{U}_{k,0}({}^1_a\sr{P}^c_d) \otimes \hspace{7pt}\begin{gathered}
				\labellist
				\pinlabel $c$ at -2 5.2
				\pinlabel $d$ at -2 0.8
				\endlabellist
				\includegraphics[width=.06\textwidth]{1negHalfTwist}
				\vspace{-3pt}
			\end{gathered}\hspace{3pt}\begin{gathered}
				\cdots\vspace{2pt}
			\end{gathered}\hspace{3pt}\begin{gathered}
				\labellist
				\endlabellist
				\includegraphics[width=.06\textwidth]{1negHalfTwist}
				\vspace{-3pt}
			\end{gathered} \& \sr{U}_{k,1}({}^1_a\sr{P}^c_d) \otimes \hspace{7pt}\begin{gathered}
				\labellist
				\pinlabel $c$ at -2 5.2
				\pinlabel $d$ at -2 0.8
				\endlabellist
				\includegraphics[width=.06\textwidth]{1negHalfTwist}
				\vspace{-3pt}
			\end{gathered}\hspace{3pt}\begin{gathered}
				\cdots\vspace{2pt}
			\end{gathered}\hspace{3pt}\begin{gathered}
				\labellist
				\endlabellist
				\includegraphics[width=.06\textwidth]{1negHalfTwist}
				\vspace{-3pt}
			\end{gathered}\ar[l,swap,"\mu_{01}\otimes\Id" {yshift=2pt,xshift=0pt}]
		\end{tikzcd}\\
		&\simeq \begin{cases}
			\left((t^{-1}q)^{\min(c,d) - 1} q^{cd-a}\right)^{1-k} \left(\hspace{3pt}\begin{tikzcd}[ampersand replacement=\&]
				\begin{gathered}
					\labellist
					\pinlabel $1$ at -2 5.3
					\pinlabel $a$ at -2 0.5
					\pinlabel $d$ at 8.5 0.8
					\pinlabel $c$ at 8.5 5.2
					\endlabellist
					\includegraphics[width=0.0343\textwidth]{rung_diag}
					\vspace{-3pt}
				\end{gathered}\hspace{7pt} \& t^{-1}q^d \quad\begin{gathered}
					\labellist
					\pinlabel $a$ at -2 0.8
					\pinlabel $1$ at -2 5.3
					\endlabellist
					\includegraphics[width=0.035\textwidth]{rung_partial_flip}
					\vspace{-3pt}
				\end{gathered} \hspace{-3pt}\begin{gathered}
					\labellist
					\pinlabel $d$ at 8.5 0.8
					\pinlabel $c$ at 8.5 5.2
					\endlabellist
					\includegraphics[width=0.035\textwidth]{rung_partial}
					\vspace{-3pt}
				\end{gathered}\hspace{3pt} \ar[l,swap,"\nu_{\,01}"]
			\end{tikzcd}\right) & k \text{ is odd }\vspace{7pt}\\
			\left((t^{-1}q)^{\min(c,d) - 1} q^{cd-a}\right)^{1-k} \left(\hspace{3pt}\begin{tikzcd}[ampersand replacement=\&]
				\begin{gathered}
					\labellist
					\pinlabel $1$ at -2 5.3
					\pinlabel $a$ at -2 0.5
					\pinlabel $c$ at 8.5 0.5
					\pinlabel $d$ at 8.5 5.8
					\endlabellist
					\includegraphics[width=0.0343\textwidth]{rung_diag}
					\vspace{-3pt}
				\end{gathered}\hspace{7pt} \& t^{-1}q^c \quad\begin{gathered}
					\labellist
					\pinlabel $a$ at -2 0.8
					\pinlabel $1$ at -2 5.3
					\endlabellist
					\includegraphics[width=0.035\textwidth]{rung_partial_flip}
					\vspace{-3pt}
				\end{gathered} \hspace{-3pt}\begin{gathered}
					\labellist
					\pinlabel $c$ at 8.5 0.5
					\pinlabel $d$ at 8.5 5.8
					\endlabellist
					\includegraphics[width=0.035\textwidth]{rung_partial}
					\vspace{-3pt}
				\end{gathered}\hspace{3pt} \ar[l,swap,"\nu_{\,01}"]
			\end{tikzcd}\right) & k \text{ is even }
		\end{cases}
	\end{align*}by homotopy invariance under the Reidemeister II move \cite[Proposition 2.25]{hogancamp2021skein} and the homological perturbation lemma (see for example \cite[section 3.1]{MR4903257}). By Proposition~\ref{prop:homSpaces}, the morphism space in which $\nu_{01}$ lives has rank $1$ and is generated by the foam $\zeta^{01}$ in Lemma~\ref{lem:zetafoam} \[
		\nu_{01} \in \begin{cases}
			\Hom^d(W_0,W_1) = \Z\cdot\zeta^{01} & k\text{ is odd }\\
			\Hom^c(W_0,W_1) = \Z\cdot\zeta^{01} & k\text{ is even }
		\end{cases}
	\]Consider the map sending $\mu_{01}$ to $\nu_{01}$ from the space of chain maps up to homotopy from $t\sr{U}_{k,1}({}^1_a\sr{P}^c_d)$ to $\sr{U}_{k,0}({}^1_a\sr{P}^c_d)$ to the morphism space from $W_0$ to $W_1$ of the appropriate degree depending on the parity of $k$. Again by homotopy invariance under the Reidemeister II move, this map is an isomorphism. See for example \cite[Lemma 3.14]{MR4903257} for more details.

	We now show that $\mu_{01}$ is a generator of the space of chain maps up to homotopy, which implies that $\nu_{01}$ is also a generator. Since $\sr{U}_{k,1}({}^1_a\sr{P}^c_d)$ has zero differential, the chain map $\mu_{01}$ is a generator if and only if the component map $\tau\colon W(k) \to W(k-1)$ is primitive. Note that \[
		\tau = \begin{cases}
			Z_{10}Z_{01} = e_d(\mathbf{D} - \B) & k\text{ is odd }\\
			Q_1 = e_{c}(\C - \B) & k \text{ is even }
		\end{cases}
	\]so $\tau$ is primitive by Lemma~\ref{lem:primitive}. Hence $\nu_{01}$ is a generator so $\nu_{01} = \pm\zeta^{01}$. It follows that the two term complex with differential $\nu_{01}$ is isomorphic to the shifted Rickard complex so \begin{align*}
		\sr{F}^k({}^1_a\sr{P}^c_d) \otimes \hspace{7pt}\begin{gathered}
			\labellist
			\pinlabel $c$ at -2 5.2
			\pinlabel $d$ at -2 0.8
			\endlabellist
			\includegraphics[width=.06\textwidth]{1negHalfTwist}
			\vspace{-3pt}
		\end{gathered}\hspace{3pt}\begin{gathered}
			\cdots\vspace{2pt}
		\end{gathered}\hspace{3pt}\begin{gathered}
			\labellist
			\endlabellist
			\includegraphics[width=.06\textwidth]{1negHalfTwist}
			\vspace{-3pt}
		\end{gathered} \:& \simeq\: \left((t^{-1}q)^{\min(c,d) - 1} q^{cd-a}\right)^{1-k} \quad \begin{gathered}
			\labellist
			\pinlabel $1$ at -2 5.3
			\pinlabel $a$ at -2 0.5
			\endlabellist
			\includegraphics[width=.06\textwidth]{1halfTwist}
			\vspace{-3pt}
		\end{gathered} \hspace{-3pt} \begin{gathered}
			\labellist
			\endlabellist
			\includegraphics[width=0.034\textwidth]{rung}
			\vspace{-3pt}
		\end{gathered}\\
		&\simeq\: \left((t^{-1}q)^{\min(c,d) - 1} q^{cd-a}\right)^{-k}\hspace{3pt} \quad \begin{gathered}
			\labellist
			\pinlabel $1$ at -2 5.3
			\pinlabel $a$ at -2 0.5
			\endlabellist
			\includegraphics[width=0.0343\textwidth]{rung_diag}
			\vspace{-3pt}
		\end{gathered} \hspace{-3pt} \begin{gathered}
			\labellist
			\endlabellist
			\includegraphics[width=.06\textwidth]{1halfTwist}
			\vspace{-3pt}
		\end{gathered}
	\end{align*}Finally, we tensor once more on the right, now with $k-1$ positive crossings, and obtain $\sr{F}^k({}^1_a\sr{P}^c_d)$ by Reidemeister II invariance. Thus \begin{align*}
		\sr{F}^k({}^1_a\sr{P}^c_d) &\simeq \: \left((t^{-1}q)^{\min(c,d) - 1} q^{cd-a}\right)^{-k}\hspace{3pt} \quad \begin{gathered}
			\labellist
			\pinlabel $1$ at -2 5.3
			\pinlabel $a$ at -2 0.5
			\endlabellist
			\includegraphics[width=0.0343\textwidth]{rung_diag}
			\vspace{-3pt}
		\end{gathered} \hspace{-3pt} \begin{gathered}
			\labellist
			\endlabellist
			\includegraphics[width=.06\textwidth]{1halfTwist}
			\vspace{-3pt}
		\end{gathered}\hspace{-3pt} \begin{gathered}
			\labellist
			\endlabellist
			\includegraphics[width=.06\textwidth]{1halfTwist}
			\vspace{-3pt}
		\end{gathered}\hspace{3pt}\begin{gathered}
			\cdots\vspace{2pt}
		\end{gathered}\hspace{3pt}\begin{gathered}
			\labellist
			\pinlabel $d$ at 14 0.8
			\pinlabel $c$ at 14 5.2
			\endlabellist
			\includegraphics[width=.06\textwidth]{1halfTwist}
			\vspace{-3pt}
		\end{gathered}
	\end{align*}which after one more application of Lemma~\ref{lem:rungSlide} proves the result. 

	\underline{\smash{Inductive step}}. Let $b \ge 2$. We again proceed by induction on $k \ge 0$. The cases $k = 0$ and $k = 1$ have already been established. Let $k \ge 2$, and consider the $b + 1$ subquotient complexes of $\sr{F}^k({}^b_a\sr{P}^c_d)$ \[
		\sr{F}^k({}^b_a\sr{P}^c_d) = \begin{tikzcd}[column sep=50pt]
			\sr{U}_{k,0}({}^b_a\sr{P}^c_d) & \sr{U}_{k,1}({}^b_a\sr{P}^c_d) \ar[l,swap,"\mu_{01}"] & \cdots \ar[l,swap,"\mu_{12}"] & \sr{U}_{k,b}({}^b_a\sr{P}^c_d) \ar[l,swap,"\mu_{(b-1)b}"]
		\end{tikzcd}
	\]We show that $\mu_{(r-1)r}$, viewed as a chain map from $t\sr{U}_{k,r}({}^b_a\sr{P}^c_d)$ to $\sr{U}_{k,r-1}({}^b_a\sr{P}^c_d)$, is primitive in the space of chain maps up to homotopy. Let $\tau_{(r-1)r}$ be the component of $\mu_{(r-1)r}$ from $W(k^r(k-1)^{b-r})$ to $W(k^{r-1}(k-1)^{b-r+1})$, which are the objects of $t\sr{U}_{k,r}({}^b_a \sr{P}^c_d)$ and $\sr{U}_{k,r}({}^b_a\sr{P}^c_d)$ of lowest cohomological degree. Suppose for the sake of contradiction that $\mu_{(r-1)r}$ is not primitive in the space of chain maps up to homotopy. Then there is a homotopy $h$ from $t\sr{U}_{k,r}({}^b_a\sr{P}^c_d)$ to $\sr{U}_{k,r-1}({}^b_a\sr{P}^c_d)$ such that $\mu_{(r-1)r} + dh + hd$ is not primitive as a chain map. Observe that $h$ must vanish on $W(k^r(k-1)^{b-r})$ by cohomological degree considerations, so in particular, $\tau_{(r-1)r} + hd\colon W(k^r(k-1)^{b-r}) \to W(k^{r-1}(k-1)^{b-r+1})$ is not a primitive bimodule map. Note that if $r = b$, the differential $d$ of $\sr{U}_{k,r}({}^b_a\sr{P}^c_d)$ is zero. When $r < b$, there is only one nontrivial component of the differential $d$ of $\sr{U}_{k,r}({}^b_a\sr{P}^c_d)$ out of $W(k^r(k-1)^{b-r})$. \[
		\begin{tikzcd}[column sep=80pt]
			W(k^{r-1}(k-1)^{b-r+1}) & W(k^r(k-1)^{b-r}) \ar[l,swap,"\textstyle\tau_{(r-1)r}"] \ar[d,swap,"\textstyle d"]\\
			& W(k^r(k-1)^{b-r-1}(k-2)) \ar[lu,"\textstyle h"]
		\end{tikzcd}
	\]

	\underline{Case}: $k$ is odd. Then $\tau_{(r-1)r}$ fits into the following commutative diagram by Lemma~\ref{lem:differentialFactorsP}. \[
		\begin{tikzcd}[column sep=120pt,row sep=35pt]
			W(k^{r-1}(k-1)^{b-r+1}) \ar[d,swap,"\textstyle\iota^{g(k^{r-1}(k-1)^{b-r+1})}"] & W(k^r(k-1)^{b-r}) \ar[l,swap,"\textstyle\tau_{(r-1)r}"] \ar[d,swap,"\textstyle\iota^{g(k^r(k-1)^{b-r})}"]\\
			q^{\bullet_{r-1}}V_b & q^{\bullet_r} V_b \ar[l,swap,"\textstyle\partial^*\hspace{-4pt}_r\,\cdots\,\partial^*\hspace{-4pt}_{b-1}\,Z_{b(b-1)}\,Z_{(b-1)b}\,s_{b-1}\,\cdots\,s_r"]
		\end{tikzcd}
	\]where $\bullet_{r-1} = H(k^{r-1}(k-1)^{b-r+1})+\xi(g(k^{r-1}(k-1)^{b-r+1}))$ and $\bullet_r = H(k^r(k-1)^{b-r}) + \xi(g(k^r(k-1)^{b-r}))$. Suppose $r = b$. The fact that $\tau_{(r-1)r}$ is not primitive implies that its composite with $\iota^{g(k^{b-1}(k-1))}$ is also not primitive. By commutativity of the above diagram and by further composing with $\partial_1\,\cdots\,\partial_{r-1}$ on the left, we find that \[
		\partial_1\,\cdots\,\partial_{b-1}\, e_d(\mathbf{D} - x_b) \,\iota^b = e_{d-b+1}(\mathbf{D} - \B)\,\iota^b
	\]is not primitive, where we have used the Leibniz rule for $\partial_i$, the identity $s_i\iota^b = \iota^b$, and the fact that $Z_{b(b-1)}\,Z_{(b-1)b} = e_d(\mathbf{D} - x_b)$. But then $e_{d-b+1}(\mathbf{D} - \B) = \pi^b\,p_b\,e_{d-b+1}(\mathbf{D} - \B)\,\iota^b \in \Hom(W_b,W_b)$ is not primitive, contradicting Lemma~\ref{lem:primitive}. 

	Now assume $r < b$, and note that Lemma~\ref{lem:differentialFactorsP} implies that $d\colon W(k^r(k-1)^{b-r}) \to W(k^r(k-1)^{b-r-1}(k-2))$ is $Q_b = e_{c}(\C - x_b)$. Consider quotients by the ideal generated by $e_i(\C - \B)$ for $i > c - b$. We claim that $e_c(\C - x_b)$ descends to zero in the quotient. Just as in the proof of Lemma~\ref{lem:primitive}, the quotient of $W_b$ by this ideal is a shifted copy of \[
		\Z[x_1,\ldots,x_b] \otimes \Sym(\mathbf{D}) \otimes \Sym(\mathbf{X})
	\]where $|\mathbf{X}| = c - b$. The element $e_c(\C - x_b)$ is sent to $e_c(x_1 + \cdots + x_b + \mathbf{X} - x_b) = e_c(x_1 + \cdots + x_{b-1} + \mathbf{X}) = 0$ because $\{x_1,\ldots,x_{b-1}\} \cup \mathbf{X}$ is an alphabet of size $c-1$. Next, because $\tau_{(r-1)r} + hd$ is not primitive, it descends to a non-primitive map on quotients. But $d$ descends to zero so $\tau_{(r-1)r}$ itself must descend to a non-primitive map on quotients. By an argument similar to the one used in the case of $r = b$, we find that this implies that $e_{d-b+1}(\mathbf{D} - \B) \in \Hom(W_b,W_b)$ descends to a non-primitive map on quotients, contradicting Lemma~\ref{lem:primitive}. 

	\underline{Case}: $k$ is even. Then $\tau_{(r-1)r}$ fits into the following commutative diagram by Lemma~\ref{lem:differentialFactorsP}. \[
		\begin{tikzcd}[column sep=100pt,row sep=35pt]
			W(k^{r-1}(k-1)^{b-r+1}) \ar[d,swap,"\textstyle\iota^{g(k^{r-1}(k-1)^{b-r+1})}"] & W(k^r(k-1)^{b-r}) \ar[l,swap,"\textstyle\tau_{(r-1)r}"] \ar[d,swap,"\textstyle\iota^{g(k^r(k-1)^{b-r})}"]\\
			q^{\bullet_{r-1}}V_b & q^{\bullet_r} V_b \ar[l,swap,"\textstyle\partial^*\hspace{-4pt}_r\,\cdots\,\partial^*\hspace{-4pt}_{b-1}\,Q_b\,s_{b-1}\,\cdots\,s_r"]
		\end{tikzcd}
	\]If $r = b$, then the non-primitivity of $\tau_{(r-1)r}$ implies that of \[
		\partial_1\,\cdots\,\partial_{b-1}\,e_c(\C - x_b)\,s_{b-1}\,\cdots\,s_r\,\iota^b = e_{c-b+1}(\C - \B)\,\iota^b
	\]and therefore $e_{c-b+1}(\C - \B) \in \Hom(W_b,W_b)$, contradicting Lemma~\ref{lem:primitive}. 

	Assume $r < b$, and note that $d\colon W(k^r(k-1)^{b-r}) \to W(k^r(k-1)^{b-r-1}(k-2))$ factors through $Z_{(b-1)b}$. Consider quotients by the ideal generated by $e_i(\mathbf{D} - \B)$ for $i > d - b$. Unfortunately, the map $Z_{(b-1)b}$ does not need to descend to the zero map. However, its adjoint $Z_{b(b-1)} \in \Hom^d(V_{b-1},V_b)$ does, which we now verify. Just as in the proof of Lemma~\ref{lem:primitive}, the quotient of $V_b$ by the ideal generated by $e_i(\mathbf{D} - \B)$ for $i > d -b$ is $\Z[x_1,\ldots,x_b] \otimes \Sym(\C) \otimes \Sym(\mathbf{Y})$ where $|\mathbf{Y}| = d - b$ and $e_i(\A) = e_i(\C + \mathbf{Y})$ and $e_i(\mathbf{D}) = e_i(x_1 + \cdots + x_b + \mathbf{Y})$. The map $Z_{b(b-1)}$ sends $1 \in V_{b-1}$ to $e_d(\mathbf{D} - x_b)\in V_b$ which is descends to $e_d(x_1 + \cdots + x_{b-1} + \mathbf{Y}) = 0$ since $\{x_1,\ldots,x_{b-1}\} \cup \mathbf{Y}$ is an alphabet of size $d-1$. 

	Because $\tau_{(r-1)r} + hd$ is not primitive, its adjoint $\tau_{(r-1)r}^* + d^*h^*$ is also not primitive. The adjoint $d^*$ factors through $Z_{b(b-1)}$ and hence descends to zero in the quotient, so $\tau_{(r-1)r}^*$ descends to a non-primitive map in the quotient. Hence, the adjoint of \[
		\iota^b\,p_b\,\partial_1\,\cdots\,\partial_{b-1}\,Q_b\, \iota^b = e_{c-b+1}(\C - \B) \in \Hom(W_b,W_b)
	\]descends to a non-primitive map in the quotient. The map $e_{c-b+1}(\C - \B)$ is self-adjoint, so the fact that it descends to a non-primitive map in the quotient contradicts Lemma~\ref{lem:primitive}. 

	Altogether, this establishes that $\mu_{(r-1)r}$ is primitive in the space of chain maps from $t\sr{U}_{k,r}({}^b_a\sr{P}^c_d)$ to $\sr{U}_{k,r-1}({}^b_a\sr{P}^c_d)$.
	Next, by Proposition~\ref{prop:tensorSplitting}, the inductive hypotheses, and Lemma~\ref{lem:rungSlide}, we have \begin{align*}
		\sr{U}_{k,r}({}^b_a\sr{P}^c_d) &= t^{-kr}q^{H(k^r0^{b-r})} \quad \begin{gathered}
			\labellist
			\pinlabel $b$ at -2 5.5
			\pinlabel $a$ at -2 0.5
			\pinlabel $r$ at 5 2.7
			\pinlabel $a+r$ at 11.5 0.5
			\pinlabel $b-r$ at 11.5 5.5
			\endlabellist
			\includegraphics[width=0.035\textwidth]{rung}
			\vspace{-3pt}
		\end{gathered} \hspace{23pt} \otimes \sr{F}^{k-1}\left({}^{b-r}_{a+r}\sr{P}^c_d\right)\\
		&\simeq t^{-kr}q^{H(k^r0^{b-r})}\left((t^{-1}q)^{\min(c,d) - b+r} q^{cd - (a+r)(b-r)}\right)^{1-k} \quad \begin{gathered}
			\labellist
			\pinlabel $b$ at -2 5.5
			\pinlabel $a$ at -2 0.5
			\pinlabel $r$ at 5 2.7
			\endlabellist
			\includegraphics[width=0.0343\textwidth]{rung}
			\vspace{-3pt}
		\end{gathered} \hspace{-3pt} \begin{gathered}
			\labellist
			\endlabellist
			\includegraphics[width=0.0343\textwidth]{rung_diag}
			\vspace{-3pt}
		\end{gathered} \hspace{-3pt} \begin{gathered}
			\labellist
			\endlabellist
			\includegraphics[width=.06\textwidth]{1halfTwist}
			\vspace{-3pt}
		\end{gathered}\hspace{3pt}\begin{gathered}
			\cdots\vspace{2pt}
		\end{gathered}\hspace{3pt}\begin{gathered}
			\labellist
			\pinlabel $d$ at 14 0.8
			\pinlabel $c$ at 14 5.2
			\endlabellist
			\includegraphics[width=.06\textwidth]{1halfTwist}
			\vspace{-3pt}
		\end{gathered} \quad
	\end{align*}where there are $k-1$ positive crossings in the diagram. We tensor on the right with $k-1$ negative crossings and obtain \begin{align*}
		\sr{F}^k({}^b_a&\sr{P}^c_d) \otimes \hspace{7pt}\begin{gathered}
			\labellist
			\pinlabel $c$ at -2 5.2
			\pinlabel $d$ at -2 0.8
			\endlabellist
			\includegraphics[width=.06\textwidth]{1negHalfTwist}
			\vspace{-3pt}
		\end{gathered}\hspace{3pt}\begin{gathered}
			\cdots\vspace{2pt}
		\end{gathered}\hspace{3pt}\begin{gathered}
			\labellist
			\endlabellist
			\includegraphics[width=.06\textwidth]{1negHalfTwist}
			\vspace{-3pt}
		\end{gathered}\\
		&\simeq \begin{cases}
			((t^{-1}q)^{\min(c,d)-b}q^{cd-ab})^{1-k}\left(\begin{tikzcd}[ampersand replacement=\&,column sep=30pt]
			 	W_0 \& q^{d-b+1}W_1 \ar[l,swap,"\nu_{\,01}"] \& \cdots \ar[l,swap,"\nu_{\,12}"] \& q^{b(d-b+1)}W_b \ar[l,swap,"\nu_{\,(b-1)b}"]
			\end{tikzcd}\right) & k \text{ is odd }\vspace{7pt}\\
			((t^{-1}q)^{\min(c,d)-b}q^{cd-ab})^{1-k}\left(\begin{tikzcd}[ampersand replacement=\&,column sep=30pt]
			 	W_0 \& q^{c-b+1}W_1 \ar[l,swap,"\nu_{\,01}"] \& \cdots \ar[l,swap,"\nu_{\,12}"] \& q^{b(c-b+1)}W_b \ar[l,swap,"\nu_{\,(b-1)b}"]
			\end{tikzcd}\right) & k \text{ is even }
		\end{cases}
	\end{align*}The grading shifts on the right-hand side are computed using Proposition~\ref{prop:tensorSplitting}, and the map $\nu_{(r-1)r}$ is induced by the map $\mu_{(r-1)r}$. Because $\mu_{(r-1)r}$ is primitive in the space of chain maps up to homotopy, it follows that $\mu_{(r-1)r}$ is primitive. By Lemma~\ref{prop:homSpaces}, it follows that $\mu_{(r-1)r}$ and $\zeta^{(r-1)r}$ agree up to a sign as they are both generators of a free abelian group of rank $1$. Hence, we have\begin{align*}
		\sr{F}^k({}^b_a\sr{P}^c_d) \otimes \hspace{7pt}\begin{gathered}
			\labellist
			\pinlabel $c$ at -2 5.2
			\pinlabel $d$ at -2 0.8
			\endlabellist
			\includegraphics[width=.06\textwidth]{1negHalfTwist}
			\vspace{-3pt}
		\end{gathered}\hspace{3pt}\begin{gathered}
			\cdots\vspace{2pt}
		\end{gathered}\hspace{3pt}\begin{gathered}
			\labellist
			\endlabellist
			\includegraphics[width=.06\textwidth]{1negHalfTwist}
			\vspace{-3pt}
		\end{gathered} \:& \simeq\: \left((t^{-1}q)^{\min(c,d) - b} q^{cd-ab}\right)^{1-k} \quad \begin{gathered}
			\labellist
			\pinlabel $b$ at -2 5.3
			\pinlabel $a$ at -2 0.5
			\endlabellist
			\includegraphics[width=.06\textwidth]{1halfTwist}
			\vspace{-3pt}
		\end{gathered} \hspace{-3pt} \begin{gathered}
			\labellist
			\endlabellist
			\includegraphics[width=0.034\textwidth]{rung}
			\vspace{-3pt}
		\end{gathered}\\
		&\simeq\: \left((t^{-1}q)^{\min(c,d) - b} q^{cd-ab}\right)^{-k}\hspace{3pt} \quad \begin{gathered}
			\labellist
			\pinlabel $b$ at -2 5.3
			\pinlabel $a$ at -2 0.5
			\endlabellist
			\includegraphics[width=0.0343\textwidth]{rung_diag}
			\vspace{-3pt}
		\end{gathered} \hspace{-3pt} \begin{gathered}
			\labellist
			\endlabellist
			\includegraphics[width=.06\textwidth]{1halfTwist}
			\vspace{-3pt}
		\end{gathered}
	\end{align*}so by tensoring once more on the right with $k-1$ positive crossings, we obtain \begin{align*}
		\sr{F}^k({}^b_a\sr{P}^c_d) &\simeq \: \left((t^{-1}q)^{\min(c,d) - b} q^{cd-ab}\right)^{-k}\hspace{3pt} \quad \begin{gathered}
			\labellist
			\pinlabel $b$ at -2 5.3
			\pinlabel $a$ at -2 0.5
			\endlabellist
			\includegraphics[width=0.0343\textwidth]{rung_diag}
			\vspace{-3pt}
		\end{gathered} \hspace{-3pt} \begin{gathered}
			\labellist
			\endlabellist
			\includegraphics[width=.06\textwidth]{1halfTwist}
			\vspace{-3pt}
		\end{gathered}\hspace{-3pt} \begin{gathered}
			\labellist
			\endlabellist
			\includegraphics[width=.06\textwidth]{1halfTwist}
			\vspace{-3pt}
		\end{gathered}\hspace{3pt}\begin{gathered}
			\cdots\vspace{2pt}
		\end{gathered}\hspace{3pt}\begin{gathered}
			\labellist
			\pinlabel $d$ at 14 0.8
			\pinlabel $c$ at 14 5.2
			\endlabellist
			\includegraphics[width=.06\textwidth]{1halfTwist}
			\vspace{-3pt}
		\end{gathered}
	\end{align*}which proves the result. 
\end{proof}

\subsection{Contractibility}\label{subsec:contractibility}

\begin{proof}[Proof of property 3 of Theorem~\ref{thm:mainThm}]
	Let $r \in \{1,\ldots,b\}$, and let \vspace{2pt}\[
		{}^{c}_{d}\sr{R}^{b-r}_{a+r} \coloneq \quad\begin{gathered}
			\labellist
			\pinlabel $c$ at -2 5.3
			\pinlabel $d$ at -2 0.8
			\pinlabel $a+r$ at 11.5 0.5
			\pinlabel $b-r$ at 11.5 5.5
			\endlabellist
			\includegraphics[width=0.035\textwidth]{rung}
			\vspace{-3pt}
		\end{gathered}\hspace{25pt} \hspace{20pt} {}^{c}_{d}\sr{R}^{a+r}_{b-r} \coloneq \quad\begin{gathered}
			\labellist
			\pinlabel $c$ at -2 5.3
			\pinlabel $d$ at -2 0.8
			\pinlabel $b-r$ at 11.5 0.8
			\pinlabel $a+r$ at 11.5 5.3
			\endlabellist
			\includegraphics[width=0.035\textwidth]{rung_diag}
			\vspace{-3pt}
		\end{gathered}\hspace{25pt} \hspace{20pt} {}^{b-r}_{a+r}\sr{R}^b_a \coloneq \hspace{25pt}\begin{gathered}
			\labellist
			\pinlabel $b-r$ at -5 5.5
			\pinlabel $a+r$ at -5 0.5
			\pinlabel $a$ at 8.5 0.5
			\pinlabel $b$ at 8.5 5.5
			\endlabellist
			\includegraphics[width=0.035\textwidth]{rung_diag}
			\vspace{-3pt}
		\end{gathered}\quad \hspace{20pt} {}^{a+r}_{b-r}\sr{R}^b_a \coloneq \hspace{25pt}\begin{gathered}
			\labellist
			\pinlabel $a+r$ at -5 5.3
			\pinlabel $b-r$ at -5 0.8
			\pinlabel $a$ at 8.5 0.5
			\pinlabel $b$ at 8.5 5.5
			\endlabellist
			\includegraphics[width=0.035\textwidth]{rung}
			\vspace{-3pt}
		\end{gathered}\quad\vspace{2pt}
	\]where the rungs are colored by $c+r-b,d+r-b,b+r-b,a+r-b$, respectively. We first show that ${}^{b-r}_{a+r}\sr{R}^b_a \otimes {}^b_a \sr{P}^c_d$ is contractible. 
	By property 2 of Theorem~\ref{thm:mainThm} and Lemma~\ref{lem:rungSlide}, we have homotopy equivalences \begin{align*}
		{}^{b-r}_{a+r}\sr{R}^b_a \otimes \sr{F}^k({}^b_a\sr{P}^c_d) &\simeq \begin{cases}
			\hspace{25pt}\begin{gathered}
				\labellist
				\pinlabel $b-r$ at -5 5.5
				\pinlabel $a+r$ at -5 0.5
				\pinlabel $r$ at 1.5 2.5
				\endlabellist
				\includegraphics[width=0.034\textwidth]{rung_diag}
				\vspace{-10pt}
			\end{gathered}\hspace{-3pt}\begin{gathered}
				\labellist
				\endlabellist
				\includegraphics[width=.06\textwidth]{1halfTwist}
				\vspace{-10pt}
			\end{gathered}\hspace{3pt}\begin{gathered}
				\cdots\vspace{-5pt}
			\end{gathered}\hspace{3pt}\begin{gathered}
				\labellist
				\endlabellist
				\includegraphics[width=.06\textwidth]{1halfTwist}
				\vspace{-10pt}
			\end{gathered} \hspace{-3pt} \begin{gathered}
				\labellist
				\pinlabel $b$ at -1 7.2
				\pinlabel $a$ at -1 -1.5
				\pinlabel $d$ at 8.5 0.8
				\pinlabel $c$ at 8.5 5.5
				\endlabellist
				\includegraphics[width=0.034\textwidth]{rung_diag}
				\vspace{-10pt}
			\end{gathered}\vspace{20pt} \quad & k \text{ is even}\\
			\hspace{25pt}\begin{gathered}
				\labellist
				\pinlabel $b-r$ at -5 5.5
				\pinlabel $a+r$ at -5 0.5
				\pinlabel $r$ at 1.5 2.5
				\endlabellist
				\includegraphics[width=0.034\textwidth]{rung_diag}
				\vspace{-3pt}
			\end{gathered}\hspace{-3pt}\begin{gathered}
				\labellist
				\endlabellist
				\includegraphics[width=.06\textwidth]{1halfTwist}
				\vspace{-3pt}
			\end{gathered}\hspace{3pt}\begin{gathered}
				\cdots
			\end{gathered}\hspace{3pt}\begin{gathered}
				\labellist
				\endlabellist
				\includegraphics[width=.06\textwidth]{1halfTwist}
				\vspace{-3pt}
			\end{gathered} \hspace{-3pt} \begin{gathered}
				\labellist
				\pinlabel $a$ at -1 7.1
				\pinlabel $b$ at -1 -1.9
				\pinlabel $d$ at 8.5 0.8
				\pinlabel $c$ at 8.5 5.5
				\endlabellist
				\includegraphics[width=0.034\textwidth]{rung}
				\vspace{-3pt}
			\end{gathered}\vspace{5pt} \quad & k \text{ is odd}
		\end{cases}\\[10pt]
		&\simeq \begin{cases}
			((t^{-1}q)^r q^{ab-(a+r)(b-r)})^k\hspace{25pt}\begin{gathered}
				\labellist
				\pinlabel $b-r$ at -5 5.5
				\pinlabel $a+r$ at -5 0.5
				\endlabellist
				\includegraphics[width=.06\textwidth]{1halfTwist}
				\vspace{-5pt}
			\end{gathered}\hspace{3pt}\begin{gathered}
				\cdots\vspace{-5pt}
			\end{gathered}\hspace{3pt}\begin{gathered}
				\labellist
				\endlabellist
				\includegraphics[width=.06\textwidth]{1halfTwist}
				\vspace{-5pt}
			\end{gathered} \hspace{-3pt} \begin{gathered}
				\labellist
				\pinlabel $r$ at 1.5 2.5
				\endlabellist
				\includegraphics[width=0.034\textwidth]{rung_diag}
				\vspace{-5pt}
			\end{gathered}\hspace{-3pt}\begin{gathered}
				\labellist
				\pinlabel $b$ at 0.5 7.2
				\pinlabel $a$ at 0.5 -1.5
				\pinlabel $d$ at 8.5 0.8
				\pinlabel $c$ at 8.5 5.5
				\endlabellist
				\includegraphics[width=0.034\textwidth]{rung_diag}
				\vspace{-5pt}
			\end{gathered}\vspace{20pt} \quad & k \text{ is even}\\
			((t^{-1}q)^r q^{ab-(a+r)(b-r)})^k\hspace{25pt}\begin{gathered}
				\labellist
				\pinlabel $b-r$ at -5 5.5
				\pinlabel $a+r$ at -5 0.5
				\endlabellist
				\includegraphics[width=.06\textwidth]{1halfTwist}
				\vspace{-3pt}
			\end{gathered}\hspace{3pt}\begin{gathered}
				\cdots
			\end{gathered}\hspace{3pt}\begin{gathered}
				\labellist
				\endlabellist
				\includegraphics[width=.06\textwidth]{1halfTwist}
				\vspace{-3pt}
			\end{gathered} \hspace{-3pt} \begin{gathered}
				\labellist
				\pinlabel $r$ at 1.5 2.5
				\endlabellist
				\includegraphics[width=0.034\textwidth]{rung}
				\vspace{-3pt}
			\end{gathered}\hspace{-3pt}\begin{gathered}
				\labellist
				\pinlabel $a$ at 0.5 7.1
				\pinlabel $b$ at 0.5 -1.9
				\pinlabel $d$ at 8.5 0.8
				\pinlabel $c$ at 8.5 5.5
				\endlabellist
				\includegraphics[width=0.034\textwidth]{rung}
				\vspace{-3pt}
			\end{gathered}\vspace{5pt} \quad & k \text{ is odd}
		\end{cases}
	\end{align*}Observe that the complex in the second line is bounded above in cohomological degree by $-kr$. Because ${}^{b-r}_{a+r}\sr{R}^b_a\otimes{}^b_a\sr{P}^c_d$ has an exhaustive filtration by subcomplexes \[
		{}^{b-r}_{a+r}\sr{R}^b_a\otimes\sr{F}^0({}^b_a\sr{P}^c_d) \subset {}^{b-r}_{a+r}\sr{R}^b_a\otimes\sr{F}^1({}^b_a\sr{P}^c_d) \subset \cdots
	\]where the $k$th term is homotopy equivalent to a complex bounded above in cohomological degree by $-kr$, which goes to $-\infty$ as $k\to\infty$, standard techniques imply that ${}^{b-r}_{a+r}\sr{R}^b_a\otimes{}^b_a\sr{P}^c_d$ is contractible. See for example \cite[Lemma 3.26]{MR3663594} and \cite[Lemma 2.38]{MR3811775}.

	Similar computations using Property 2 of Theorem~\ref{thm:mainThm} and Lemma~\ref{lem:rungSlide} show that each of \[
		\sr{F}^k({}^b_a\sr{P}^c_d) \otimes {}^c_d\sr{R}^{b-r}_{a+r} \qquad \sr{F}^k({}^b_a\sr{P}^c_d) \otimes {}^c_d\sr{R}^{a+r}_{b-r} \qquad {}^{a+r}_{b-r} \sr{R}^b_a \otimes {}^b_a \sr{P}^c_d
	\]are homotopy equivalence to a complex bounded above in cohomological degree by $-kr$, so all four tensor products in the theorem statement are contractible by the same reasoning.
\end{proof}

\subsection{Proof of Theorem~\ref{thm:mainIntroThm}}\label{subsec:proof_of_intro_main_theorem}

\begin{proof}[Proof of Theorem~\ref{thm:mainIntroThm}]
	By definition of the Rickard complex assigned to a positive crossing, we have \[
		\quad\begin{gathered}
			\labellist
			\pinlabel $b$ at -2 5
			\pinlabel $b$ at -2 0.5
			\pinlabel $b$ at 14 0.5
			\pinlabel $b$ at 14 5.1
			\endlabellist
			\includegraphics[width=.06\textwidth]{1negHalfTwist}
			\vspace{-3pt}
		\end{gathered}\quad \otimes \sr{P}_b = \begin{tikzcd}[column sep=35pt]
			W_0 \otimes \sr{P}_b & t^{-1}q^1\, W_1 \otimes \sr{P}_b \ar[l,swap,"\zeta^{01}"] & \cdots \ar[l,swap,"\zeta^{12}"] & t^{-b}q^b\, W_b \otimes \sr{P}_b \ar[l,swap,"\zeta^{(b-1)b}"]
		\end{tikzcd}
	\]By property 3 of Theorem~\ref{thm:mainThm}, we know that $W_r \otimes \sr{P}_b$ is contractible for $r \in \{1,\ldots,b\}$ so the complex retracts onto $W_0 \otimes \sr{P} = \sr{P}$. The same reasoning implies that $\sr{P}_b$ is also invariant under tensoring with the crossing on the right. Furthermore, the tensor square \[
		\sr{P}_b \otimes \sr{P}_b = \bigoplus_{\lambda \in T \cap \Z^b} t^{-|\lambda|}q^{H(\lambda)} \,W_{r(\lambda)}^{g(\lambda)}\otimes \sr{P}_b
	\]deformation retracts onto the term corresponding to $\lambda = (0,\ldots,0)$ so $\sr{P}_b$ is idempotent. 

	The Euler characteristic $p_b$ of $\sr{P}_b$, viewed as an endomorphism of $\Lambda^b(V) \otimes \Lambda^b(V)$, has the property that $W_r\,p_b = 0$ for $r\in \{1,\ldots,b\}$ and the coefficient of $W_0$ in the expression of $p_b$ in terms of the basis $W_0,W_1,\ldots,W_r$ is $1$. This characterizes the idempotent projection onto the highest-weight irreducible summand. 
\end{proof}

\begin{rem}
	By the same reasoning as in the proof of Theorem~\ref{thm:mainIntroThm}, the complex $\sr{P} = {}^b_a \sr{P}^{b}_a$ is idempotent and its Euler characteristic is the idempotent endomorphism of $\Lambda^a(V) \otimes \Lambda^b(V)$ that projects onto the irreducible summand corresponding to the $2$-column Young diagram whose column lengths are $a$ and $b$.
\end{rem}

\phantomsection
\addcontentsline{toc}{section}{\protect\numberline{}References}
\raggedright
\bibliography{minimal}
\bibliographystyle{alpha}

\vfill

\textit{Department of Mathematics, Princeton University, Princeton NJ 08540}

\textit{School of Mathematics, Institute for Advanced Study, Princeton NJ 08540}

\textit{Email addresses:} \hspace{2pt} {joshuaxw@princeton.edu} \hspace{2pt} {jxwang@ias.edu}

\end{document}